\documentclass[12pt]{amsart}
\usepackage[hidelinks]{hyperref}
\usepackage[T1]{fontenc}

\usepackage[english]{babel}

\usepackage{tikz-cd}
\usepackage{geometry}

\usepackage{amsmath}
\usepackage{amsfonts}
\usepackage{amssymb}
\usepackage{amsthm}
\usepackage{graphicx}
\usepackage{dsfont}

\usepackage{tikz}
\usepackage{chngcntr}
\counterwithin{figure}{section}

\usepackage{float}
\usetikzlibrary{calc,decorations.pathmorphing,shapes}
\usepackage{soul} 
\usepackage{url}

\usepackage{multicol}
\usepackage{lineno}
\usepackage[linewidth=1pt]{mdframed}
\usepackage{xcolor}
\usepackage{todonotes}
\usepackage{menukeys}
\usepackage{enumitem}

\usepackage[capitalize]{cleveref}
\newcommand{\myref}[1]{\hyperref[#1]{\namecref{#1} \ref{#1}}}

\theoremstyle{definition}
\newtheorem{thm}{Theorem}[section]

\newtheorem{defn}[thm]{Definition}
\newtheorem{lemma}[thm]{Lemma}
\newtheorem{prop}[thm]{Proposition}
\newtheorem{ex}[thm]{Example}
\newtheorem*{thm211}{Theorem \ref{THM_CombinatorialCharacterization_SpreaoutMap}}
\newtheorem*{thm315}{Theorem \ref{THM_TreeCollection}}
\newtheorem*{cor316}{Corollary \ref{COR_RaySPaceChar}}
\newtheorem*{thm318}{Theorem \ref{COR_EndSpaceChar}}
\newtheorem*{thm323}{Theorem \ref{THM_CompletelyUltrametrizableChar}}
\newtheorem*{thm48}{Theorem \ref{COR_QuasiRaySPaceChar}}
\newtheorem*{thm510}{Theorem \ref{THM_ScatteredIsEnd}}
\newtheorem*{cor511}{Corollary \ref{THM_CountSpecRaySubSpaceChar}}
\newtheorem*{thm64}{Theorem \ref{COR_ConvSeq&UncountTopsNotProd}}
\newtheorem*{thm69}{Theorem \ref{COR_ConvSeq&Omega1NotProd}}
\newtheorem*{question612}{Question \ref{QUESTION_FirsCountRayProd0}}

\newtheorem{cor}[thm]{Corollary}
\newtheorem{claim}[thm]{Claim}
\newtheorem{rmk}[thm]{Remark}
\newtheorem{question}[thm]{Question}

\newtheorem*{Counter-Examples from Scales}{Counter-Examples from Scales}
\newtheorem*{Pseudo-Power}{Pseudo-Power Theorem}

\numberwithin{equation}{thm}

\newcommand{\set}[1]{\left\{ {#1} \right\}}
\newcommand{\seq}[1]{{\left( {#1} \right)}}
\newcommand{\com}[1]{}
\newcommand{\interior}[1]{%
  {\kern0pt#1}^{\mathrm{o}}%
}

\DeclareMathOperator{\succs}{succ}
\DeclareMathOperator{\Rtop}{top}
\newcommand{\downward}[1]{\left\lceil {#1} \right\rceil}

\newcommand{\sdom}[1]{\succs(\dom({#1}))}

\DeclareMathOperator{\dom}{dom}

\DeclareMathOperator{\diam}{diam}

\newcommand{\ali}{\textsc{Alice}}
\newcommand{\bob}{\textsc{Bob}}

\newcommand{\TreeGame}{\mathrm{RayGame}}
\newcommand{\PTreeGame}{\mathrm{QuasiRayGame}}

\newcommand{\RR}{{\mathbb{R}}}

\newcommand{\ZZ}{{\mathbb{Z}}}

\newcommand{\bbS}{{\mathbb{S}}}
\newcommand{\PP}{{\mathbb{P}}}

\newcommand{\mB}{{\mathcal{B}}}
\newcommand{\mC}{{\mathcal{C}}}

\newcommand{\mF}{{\mathcal{F}}}

\newcommand{\mP}{{\mathcal{P}}}

\newcommand{\mR}{{\mathcal{R}}}
\newcommand{\mS}{{\mathcal{S}}}

\newcommand{\mU}{{\mathcal{U}}}

\newtheorem*{thm*}{Pseudo-Power Theorem}
\makeatletter
\AtBeginEnvironment{thm*}{\def\@currentlabel{Pseudo-Power Theorem}}
\makeatother

\newtheorem*{thm**}{Counter-Examples from Scales}
\makeatletter
\AtBeginEnvironment{thm**}{\def\@currentlabel{Counter-Examples from Scales}}
\makeatother

\DeclareFontFamily{U}{rcjhbltx}{}
\DeclareFontShape{U}{rcjhbltx}{m}{n}{<->rcjhbltx}{}
\DeclareSymbolFont{hebrewletters}{U}{rcjhbltx}{m}{n}

\newtheorem*{theorem*}{Theorem}
\newtheorem*{conjecture*}{Conje
cture}
\DeclareMathSymbol{\lamed}{\mathord}{hebrewletters}{108}
\DeclareMathSymbol{\mem}{\mathord}{hebrewletters}{109}
\DeclareMathSymbol{\ayin}{\mathord}{hebrewletters}{96}
\DeclareMathSymbol{\tsadi}{\mathord}{hebrewletters}{118}
\DeclareMathSymbol{\qof}{\mathord}{hebrewletters}{113}
\DeclareMathSymbol{\shin}{\mathord}{hebrewletters}{152}
\DeclareMathSymbol{\resh}{\mathord}{hebrewletters}{114}

\usepackage{lineno}
\title[Ray and End Spaces]{Ray and End Spaces: Characterizations and Classification up to Homeomorphism}

\author{Matheus Duzi}
\address{Instituto de Ciências Matemáticas e de Computação, Universidade de São Paulo}
\email{matheus.duzi.costa@usp.br}
\author{Gabriel Fernandes $^\ast$ }
\address{Instituto de Ciências Matemáticas e de Computação, Universidade de São Paulo}
\email{fernandes@icmc.usp.br}
\thanks{$^\ast$ The second named author acknowledges the support of Fundação de Amparo à Pesquisa do Estado
de São Paulo (FAPESP) through grant no. 2025/09425-1.}
\author{Paulo Magalhães Júnior}
\address{Instituto Federal do Rio Grande do Norte, Campus Currais Novos}
\email{paulo.magalhaes@ifrn.edu.br}

\date{}

\begin{document}
\begin{abstract}
    We provide a combinatorial characterization for pairs of order-theoretic trees with homeomorphic ray spaces, answering an open problem proposed by Kurkofka and Pitz. 
    This solution is inspired by the introduction of a transfinite topological game, which allows us to characterize not only ray spaces through the existence of winning strategies for one of the players, but also their homeomorphism classes. 
    As applications of these results, we obtain a new topological characterization for graph-theoretic end spaces (thus obtaining yet another solution to a recently solved problem of Diestel), as well as for edge-end spaces and completely ultrametrizable spaces. 
    We also introduce a generalization of the class of ray spaces (which is strict, as witnessed by the Sorgenfrey line). 
    Furthermore, we establish that, for subspaces with cardinality less than continuum of end spaces, the scattered property is equivalent to the property of being, itself, an end space. 
    At last, we determine that ray spaces in a couple of classes fail to have their product with any non-discrete space as a ray space. 

    \smallskip
    \noindent \textbf{Keywords:} ray space, tree order, transfinite game, end space, infinite graph
\end{abstract}
\maketitle
\section{Introduction}
The so-called \emph{ends} of graphs, as classes of rays which cannot be separated by finitely many vertices, were first introduced in \cite{halin} and have since been used to study important problems in infinite combinatorics, as can be seen, e.g., in: 

\begin{itemize}
    \item The network flow theory is extended in \cite{maxflowmincut} to infinite graphs by using the concept of ends to understand and control flows that ``escape to infinity'';
    \item It is shown in \cite{arboricityandtree-packing} that ends are essential for extending the Nash-Williams tree-packing theorem to locally finite graphs;
    \item Ends are demonstrated in \cite{cycle-cocycle} to be a necessary component for the Faithful Cycle Cover Conjecture to hold in locally finite graphs.
\end{itemize}

The study of infinite trees and their associated topological spaces, on the other hand, has been a central theme in infinite combinatorics and set-theoretic topology --
recall that an \emph{order tree} is a partially ordered set $(T, \le)$ with a minimum $r$ (the root) such that, for every node $t \in T$, the set of predecessors 
\[
    \lceil t \rceil := \{s \in T : s \le t\}
\]
is well-ordered.
 A primary object of interest in this field is the \emph{ray space} $\mR(T)$, which consists of the set of all \emph{rays} (i.e., downward closed chains in $T$ with no maximal element) of a tree $T$, equipped with the subspace topology of the power set $\mP(T)\simeq 2^T$. 
As established in recent literature (such as in  \cite{pitz2023characterisingpathraybranch} and \cite{carvalho2026coveringpropertiesendray}), these spaces provide a robust framework for investigating the interplay between discrete combinatorial structures and continuous topological properties.
Indeed, we highlight the following example, which determines an intimate relation between the class of ray spaces of trees and the class of end spaces of graphs:
\begin{thm}[Theorem 1 in \cite{representationtheorem}]\label{THM_PitzRepresentation}
    A space $X$ is homeomorphic to the end space of some graph if and only if it is homeomorphic to the ray space of some special tree.
\end{thm}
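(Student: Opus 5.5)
The plan is to prove the two directions separately. Throughout, \emph{special} means that $T$ is a countable union of antichains, $T=\bigcup_{n<\omega}A_n$. Equivalently, there is a strictly increasing map $T\to\mathbb{Q}$ on chains. The key consequence is that $T$ has no uncountable chain, so every ray of $T$ has countable cofinality. The homeomorphisms will in both cases be checked on basic open sets. In $\mR(T)$ a basic neighbourhood of a ray $R$ is determined by a node $t\in R$ together with finitely many nodes excluded above it; in the end space a basic neighbourhood of an end $\varepsilon$ is $\hat C(S,\varepsilon)$ for a finite vertex set $S$.

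\emph{From special trees to graphs.} Let $T=\bigcup_n A_n$ with the $A_n$ disjoint antichains, and write $n(t)$ for the index with $t\in A_{n(t)}$. I will build a graph $G_T$ on vertex set $T$. Each $t$ is joined to its immediate predecessor (when it exists) and to those $s<t$ with $n(s)<n(t)$ that are maximal in $\lceil t\rceil$ among nodes of index $n(s)$. There are at most $n(t)$ such nodes, because each $A_k$ meets the chain $\lceil t\rceil$ in at most one point. Hence every $t$ has only finitely many edges going down.

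The first key step is to show that for each $t$ the set $\lfloor t\rfloor:=\{u:u\ge t\}$ is separated from the rest of $G_T$ by a finite set $S_t\subseteq\lceil t\rceil$. I would try to show this with $S_t$ the nodes of $\lceil t\rceil$ of index $\le$ the indices appearing at $t$, using that the edges are chosen by minimal index. Granting this, every ray $R$ of $T$ determines an end $\varepsilon_R$: I would take a cofinal $\omega$-sequence in $R$, which exists because $R$ has countable cofinality, and connect its terms by paths through $\lceil\cdot\rceil$. One then checks that $R\mapsto\varepsilon_R$ is a bijection onto the ends that are not dominated by a vertex, and it maps the basic sets of $\mR(T)$ to the sets $\hat C(S_t,\cdot)$. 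If dominated ends (or ends living at nodes) cause trouble, I would add rays or kill domination by subdividing and attaching nothing else, so that every end comes from a ray.

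\emph{From graphs to special trees.} Given $G$, I will build by transfinite recursion a tree $T_G$ whose nodes are pairs $(S,C)$, with $S$ a finite vertex set and $C$ a component of $G-S$, ordered by reverse inclusion of $C$ together with inclusion of $S$. Fix a well-ordering of $V(G)$. A successor of $(S,C)$ is obtained by adding to $S$ the least vertex of $C$ (together with a finite connecting set, to keep separators finite) and passing to a component. At limits I take the intersection of the chain of components whenever it is still separated by a finite set. Rays of $T_G$ then correspond to directions, hence to ends, by a Robertson--Seymour--Thomas / Diestel--Leader style direction argument. Basic sets $\hat C(S,\varepsilon)$ correspond to the cones above nodes.

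\emph{Main obstacle.} The hard step is making $T_G$ special. Limit stages may produce chains of length $\omega_1$, and a tree of height $\omega_1$ need not be special. To handle this I would collapse the construction so that every chain of $T_G$ only records strictly increasing separator-size data indexed by $\mathbb{Q}$. Concretely, I would replace the raw tree by its subtree of nodes at which the separator $S$ genuinely changes, contract the remaining non-branching chains, and use a ``tree-decomposition of finite adhesion displaying all ends'' in place of naive recursion. The resulting nodes can then be mapped to $\mathbb{Q}$ by encoding each node by a finite sequence of choices, which gives specialness. Verifying that this contraction preserves the ray space up to homeomorphism is where I expect most of the work to lie.
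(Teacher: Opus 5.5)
The paper gives no proof of this statement; it quotes it from Kurkofka and Pitz~\cite{representationtheorem}. Your proposal therefore has to stand on its own, and it has a genuine gap in each direction.

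\textbf{From special trees to graphs.} Your first key step is false for the edge rule you chose. The clause ``maximal in $\lceil t\rceil$ among nodes of index $n(s)$'' is vacuous, since each $A_k$ meets $\lceil t\rceil$ at most once. So $t$ is simply joined to its predecessor and to every $s<t$ with $n(s)<n(t)$.

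Let $T$ consist of a chain $r_0<r_1<\cdots$ with a single top $a$, and a chain $a=b_0<b_1<\cdots$ above $a$. Then $\mR(T)$ is a two-point discrete space. For any antichain partition, $n$ is injective on the chain $\{r_k\}\cup\{b_j\}$. Hence $n(b_j)$ is unbounded, and every $r_k$ is adjacent to some $b_j$. Each $b_j$ has only finitely many down-neighbours, so there are infinitely many disjoint edges between the graph-rays $r_0r_1\cdots$ and $b_0b_1\cdots$. Consequently $G_T$ has exactly one end. In particular $\lfloor a\rfloor$ has infinitely many neighbours below $a$ and is not finitely separated.

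The missing idea is to let a node see only nodes of record-low index below it: join $w<u$ to $u$ if and only if $n(w)<n(v)$ for every $v$ with $w<v<u$. This rule has two consequences.
\begin{itemize}
\item Every limit node $u$ has down-neighbours cofinal in $\lceil\mathring u\rceil$. This is what makes rays through infinitely, even uncountably, many tops of a ray $R$ converge to $R$.
\item The set $\lfloor s\rfloor\setminus\{s\}$ is separated from the rest by $\{s\}\cup\{w<s: n(w)<n(v)\text{ for all }v\text{ with }w<v\le s\}$, which has at most $n(s)+1$ nodes.
\end{itemize}
With this rule every top of $R$ dominates the end of $R$. So the correspondence must be onto all ends, not only the undominated ones, and ``killing domination'' is the wrong move.

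\textbf{From graphs to special trees.} The crux is named but not resolved. It consists of two things: obtaining a \emph{special} tree, and deciding what happens at a limit stage where the intersection of a chain of components is not finitely separated. The tools you suggest cannot work in general.
\begin{itemize}
\item An uncountable clique lies inside a single part of every tree-decomposition of finite adhesion. So no such decomposition displays all ends of the graph in Example~\ref{EX_ProdNotRay}, whose end space $A_{\aleph_1}$ is not even first countable.
\item Encoding nodes by finite sequences of choices describes a tree of height at most $\omega$. Its ray space is completely ultrametrizable (see the proof of Theorem~\ref{THM_CompletelyUltrametrizableChar}).
\end{itemize}
Any tree representing $A_{\aleph_1}$ must have nodes of infinite height, for instance a ray with $\aleph_1$ tops. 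Specialness must therefore come from a real argument about how the finite separators are chosen, and you do not supply one. Likewise, the homeomorphism between rays of $T_G$ and ends, including dominated ends, is only asserted.
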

Here, a tree $T$ is called \emph{special} if it can be obtained as a countable union of antichains (i.e., sets of pairwise incomparable elements of $T$) and the set of ends $\Omega(G)$ of a graph $G$ is equipped with the topology generated by the basic open sets of the form, for $\varepsilon \in \Omega(G)$ and finite set of vertices $F$ in $G$,
\[
    \Omega(\varepsilon, F) = \set{\eta\in \Omega(G): \text{ all $R\in \varepsilon$ and $R'\in \eta$ cannot be separated by $F$}}.
\]

One of the fundamental problems concerning ray spaces is their classification: Kurkofka and Pitz explicitly ask, in Open problem 5.5 of \cite{representationtheorem}, how to combinatorially determine when two trees $T$ and $T'$ yield homeomorphic ray spaces. 
A significant contribution of this paper is providing a complete answer to this question, for which we dedicate \myref{SEC_CombinatorialCharacterization}.  

Indeed, for a tree $T$, $t\in T$, finite $F\subseteq T$ and a ray $R\subseteq T$, consider
\begin{align*}
    \succs(t) &:= \set{s>t: \downward{s} = \downward{t}\cup \{s\} },\\
    \succs(T) &:= \{r\}\cup \left(\bigcup_{t\in T}\succs(t)\right),\\
    \Rtop(R) &:= \set{s\in T: \downward{s} = R\cup\{s\}},\\
    [t,F] &:= \set{R\in \mR(T): t\in R \text{ and } \forall s\in F(s\notin R)},
\end{align*}
where $r\in T$ is the root of $T$.
We then introduce the partially ordered set $\bbS(T)$ associated to a tree $T$, whose elements are certain collections of pairs $(t,F)$, with $t\in T$ and finite $F\subset T$. 
Thus, by considering 
\[
    \bigvee \mS := \bigcup_{(t,F)\in \mS}[t,F] \subseteq \mR(T)
\]
for each $\mS\in \bbS(T)$, we are able to provide:

\begin{thm211}
    Suppose $T$ and $T'$ are pruned trees. Then $\mR(T)\simeq \mR(T')$ if and only if there is a map $\psi\colon T\to \bbS(T')$ satisfying all of the following properties:
    \begin{enumerate}[label=(\roman*)]
        \item If $r\in T$ is the root of $T$, then $\psi(r) = \{(r',\emptyset)\}$, where $r'\in T'$ is the root of $T'$.
        \item If $t\le s$ in $T$, then $\psi(t)\le \psi(s)$ in $\bbS(T')$.
        \item If $t,s\in T$ are incompatible, then $\psi(t)$ and $\psi(s)$ are ray-disjoint.
        
        \item If $t\in T$, then
        \[\bigvee \psi(t) = \bigcup_{s\in \succs(t)}\bigvee\psi(s)\]
        
        \item If $R$ is a ray in $T$, then there is a unique ray $R'$ in $T'$ such that 
        \begin{equation*}
            \bigcap_{t\in R}\left(\bigvee\psi(t)\right)\setminus \bigcup_{s\in \Rtop(R)}\left(\bigvee\psi(s)\right)  = \{R'\}
        \end{equation*}
        and, for each $t'\in R'$ and each finite $F'\subseteq \Rtop(R')$, there are $t\in R$ and finite $F\subseteq \Rtop(R)$ such that
        \begin{equation*}
            \bigvee\psi(t)\setminus \bigcup_{s\in F}\left(\bigvee\psi(s)\right) \subseteq [t',F'].
        \end{equation*}
    \end{enumerate}
\end{thm211}

Let us say that a space $X$ is a \emph{ray space} if there is some tree $T$ for which $X$ is homeomorphic to $\mR(T)$. 
It was then shown in \cite{pitz2023characterisingpathraybranch} (Theorem 3.6) that a Hausdorff space $X$ is a ray space if and only if $X$ admits a subbase satisfying a certain list of properties. 

On the other hand, the initial idea for our characterization in \myref{THM_CombinatorialCharacterization_SpreaoutMap} comes from a topological characterization for ray spaces of our own, presented in \myref{SEC_TopologicalCharacterization}: we present, in \myref{DEF_TreeGame}, a transfinite topological game $\TreeGame(X)$ to be played over the space $X$.
Thus, letting $\Sigma(X)$ be the (possibly empty) collection of all winning strategies for $\ali$ in said game, we prove the following theorem.
\begin{thm315}
    For a given space $X$, the collection 
    \[\set{\sdom{\sigma}: \sigma\in \Sigma(X)}\]
    comprises, up to isomorphism, the class of all pruned trees whose ray space is homeomorphic to $X$.
\end{thm315}
Here, $\dom(\sigma)$ denotes the tree consisting of every sequence of $\bob$'s moves in the domain of $\sigma$, ordered by inclusion.
With this, we thus obtain:
\begin{cor316}
    For a non-empty topological space $X$, the following are equivalent:
    \begin{enumerate}[label=(\alph*)]
        \item $X$ is a ray space.
        \item $\ali$ has a winning strategy in $\TreeGame(X)$.
    \end{enumerate}
\end{cor316}

We are further able to obtain a handful of applications from this equivalence, the main of which we briefly elaborate in this section. 

For starters, let us say that a space $X$ is an \emph{end space} if it is homeomorphic to the end space of some graph $G$.
Recall that Diestel asked in Problem 5.1 of \cite{Diestelquestion} for topological properties over a space $X$ which could characterize $X$ being an end space.
An answer to this question came in \cite{pitz2023characterisingpathraybranch} (Theorem 3.6) with the existence of a subbase satisfying, again, a certain list of properties.
We can also find an alternative answer in
\cite{PauloGustavoDavide} (Theorem 2.1), where one of the properties of the first characterization is replaced by the existence of a winning strategy for one of the players in a certain topological game over the space $X$.
With our \myref{THM_TreeCollection}, on the other hand, we are able to provide a completely different answer to Diestel's problem:
\begin{thm318}
    A topological space $X$ is an end space if and only if there is a winning strategy $\sigma$ for $\ali$ in $\TreeGame(X)$ such that $\sdom{\sigma}$ is a special tree.
\end{thm318}

As completely ultrametrizable spaces comprise another class of spaces which are homeomorphic to specific kinds of ray spaces, we also obtain:

\begin{thm323}
    A topological space $X$ is completely ultrametrizable if and only if there is some strategy $\sigma$ for $\ali$ in $\TreeGame(X)$ which always wins at the $\omega$th inning.
\end{thm323}

\myref{COR_RaySPaceChar} further allows us to generalize the concept of ray spaces in \myref{SEC_QuasiRay}: let us say that a topological space $(X,\tau)$ is a \emph{quasi-ray space} if there is some coarser topology $\tau'\subseteq \tau$ such that $(X,\tau')$ is a ray space.
Then, by defining a transfinite topological game $\PTreeGame(X)$ which is ``easier'' for $\ali$ in comparison to $\TreeGame(X)$, we show that
\begin{thm48}
    For a non-empty topological space $X$, the following are equivalent:
    \begin{itemize}
        \item[(a)] $X$ is a quasi-ray space.
        \item[(b)] $\ali$ has a winning strategy in $\PTreeGame(X)$.
    \end{itemize}
\end{thm48}
We are thus able to determine that a few spaces, such as the Sorgenfrey line, $2^{\omega_1}$ or subspaces of ordinals, are quasi-ray spaces, despite not satisfying the strict requirements to be ray spaces.

Finding the class of subspaces of end spaces that are themselves end spaces is a problem that has been tackled over the last few years. 
It was shown in \cite{approximating} that every open subset of an end space is also an end space, while it was demonstrated in \cite{pitz2023characterisingpathraybranch} that every closed subspace is an end space as well. Most recently, it was proven in \cite{PauloGustavoDavide} that these results can be generalized to  $G_\delta$ subspaces, where
this problem was recently formalized in Question 4.6.
The applicability of our characterizations is then further highlighted in \myref{SEC_RaySubspaces}, where we partially answer this open problem with 
\begin{thm510}
    Let $T$ be a special tree and suppose $X\subseteq \mR(T)$ is scattered. Then $X$ is an end space. 
\end{thm510}

\begin{cor511}
    Let $G$ be a graph and suppose $X \subseteq \Omega(G)$ satisfies $|X| < 2^{\aleph_0}$. Then the following are equivalent:
    \begin{enumerate}[label=(\alph*)]
        \item $X$ is scattered (i.e., every non-empty $Y\subseteq X$ has an isolated point).
        \item $X$ is an end space.
    \end{enumerate}
\end{cor511}

Furthermore, continuing the trend started in \cite{PauloGustavoDavide}, we determine that the finite product of ray spaces within certain classes is never a ray space. Namely, we show that
\begin{thm64}
    Suppose $T$ is a pruned tree with a ray containing uncountably many tops. Then the following are equivalent for a tree $T'$: 
    \begin{enumerate}[label=(\alph*)]
        \item $\mR(T')$ is discrete.
        \item $\mR(T)\times \mR(T')$ is a ray space.
    \end{enumerate}
\end{thm64}
\begin{thm69}
    Suppose $T$ is a pruned tree with an uncountable ray. Then the following are equivalent for a tree $T'$: 
    \begin{enumerate}[label=(\alph*)]
        \item $\mR(T')$ is discrete.
        \item $\mR(T)\times \mR(T')$ is a ray space.
    \end{enumerate}
\end{thm69}

Hence, we narrow down the search for the maximal class of ray spaces which is closed under finite products to the question:

\begin{question612}
    Suppose $T$ and $T'$ are trees in which:
    \begin{enumerate}[label=(\roman*)]
        \item every branch is countable;
        \item every ray has at most countably many tops.
    \end{enumerate}
    Is $\mR(T)\times \mR(T')$ then a ray space?
\end{question612}

Finally, we end this paper with \myref{SEC_FinalRmk} by presenting some concluding remarks and open questions regarding the determinacy of the games introduced.

\subsection{Preliminaries and notation}

 A down-closed chain in $T$ is a path, and a path without a maximal element is a ray. The height of a node $t \in T$, denoted $\text{ht}(t)$, is the order type of its set of strict predecessors $\lceil \mathring{t}\rceil := \lceil t \rceil \setminus \{t\}$.

If $t < t'$ and no element $x \in T$ satisfies $t < x < t'$, then $t'$ is an immediate successor of $t$. The set of immediate successors of $t$ is $\text{succ}(t)$, and the successor set of the tree $T$ with root $r$ is defined as $\text{succ}(T) = \{r\} \cup \bigcup_{t \in T} \text{succ}(t)$. This collection contains the root along with all non-limit elements of $T$.

For a ray $R \subseteq T$, a node $t \in T$ is a top of $R$ if $\lceil \mathring{t}\rceil = R$. The set of all tops of $R$ is denoted $\text{top}(R) = \{ t \in T : \lceil \mathring{t}\rceil = R \}$, or equivalently $\lceil t\rceil=R\cup\{t\}$. In a general tree order, a ray may have zero, one, or multiple tops.

For $s\in T$, we write $\lfloor s\rfloor:=\{t\in T:s\le t\}$ for the upper cone of $s$.

For any ordinal $\alpha$, the restriction of $T$ to heights strictly less than $\alpha$ is denoted $T \upharpoonright \alpha = \{ t \in T : \text{ht}(t) < \alpha \}$, which forms a rooted subtree under the induced ordering. Similarly, the $\alpha$-th level of the tree is denoted $T(\alpha) = \{ t \in T : \text{ht}(t) = \alpha \}$.

We shall write $T\cong T'$ when $T$ and $T'$ are order-isomorphic. 

By a \emph{space}, unless otherwise specified, we mean a \emph{topological space}. When $X$ and $Y$ are homeomorphic spaces, we shall write $X\simeq Y$.
\section{A combinatorial characterization for homeomorphic ray spaces}\label{SEC_CombinatorialCharacterization}
To provide a combinatorial characterization of when two trees have homeomorphic ray spaces, we rely on a poset constructed from their partial orders. We begin by establishing the preliminary definitions required for this construction.

\begin{defn}\label{DEFN_Spreadout}
    For a tree $T$, we denote by $\PP(T)$  the set of all pairs $(t,F)$, such that $t\in T$ and $F\subset T$ is a finite set of tops in $T$ which are above $t$. 

    \sloppy We say that a collection $\mC\subseteq \PP(T)$ is \emph{pairwise disjoint} if, for all distinct $(t,F),(t',F')\in \mC$, 
    \[
       t'\le t \implies \exists s'\in F' \, (s'\le t).
    \]
    We then say that two pairwise disjoint collections $\mC,\mC'\subseteq\PP(T)$ are \emph{ray-disjoint} if $\mC\cap \mC' = \emptyset$ and $\mC\cup\mC'$ is pairwise disjoint as well.

    For a collection $\mC\subseteq \PP(T)$, we write
    \[
        \bigvee \mC = \bigcup_{(t,F)\in \mC}[t,F]\subseteq \mR(T).
    \]

    We say that a non-empty family $\mS \subseteq \PP(T)$ is \emph{spread out} if it is pairwise disjoint and for all $R\in \mR(T)$:

    \begin{itemize}
        \item[(a)] $(\bigcup\{\lceil t \rceil : \exists F ( ( t,F) \in \mS) \wedge  R \not\subseteq \lceil t \rceil  \})\cap R$ is not cofinal in $R$.
    \end{itemize}
    \begin{itemize}
        \item[(b)] $(\bigcup\{\lceil t \rceil : \exists F\ ( t,F) \in \mS\})\cap \Rtop(R)$ is finite.
       
    \end{itemize}
\end{defn}

To motivate the choice of vocabulary, we point out the following characterizations (the proofs for the trivial ones are left for the reader):

\begin{prop}\label{PROP_PairwiseDisjointPairCollection}
    Suppose $T$ is a tree and $(t,F),(t',F')\in \PP(T)$. Then $[t,F]\cap[t',F'] = \emptyset$ if and only if, one of the following holds:\begin{itemize}
        \item $t$ and $t'$ are incompatible.
        \item $t'\le t $ and there is $ s'\in F'$ such that  $ s'\le t$. 
        \item $t \le t'$ and there is $s \in F$ such that $s \le t'$.
    \end{itemize}  
    
    \sloppy In particular, $\mC\subseteq \PP(T)$ is pairwise disjoint if and only if the elements of $\set{[t,F]:(t,F)\in \mC}$ are pairwise disjoint.
\end{prop}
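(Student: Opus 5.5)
I would prove the two directions of the equivalence separately. The converse direction is verified case by case, and the forward direction is proved by contrapositive, building a ray that lies in both sets.

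\emph{Sufficiency.} Suppose first that $t$ and $t'$ are incompatible. Any ray in $[t,F]\cap[t',F']$ would contain both $t$ and $t'$. Since a ray is a chain, this is impossible.

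Suppose next that $t'\le t$ and some $s'\in F'$ satisfies $s'\le t$. Any $R\in[t,F]$ contains $t$. Since $R$ is down-closed, $R$ also contains $s'$. Then $R\notin[t',F']$. The third case is symmetric.

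\emph{Necessity.} I argue by contrapositive: assume none of the three conditions holds, and exhibit a ray in the intersection.

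If none holds, then $t$ and $t'$ are comparable. Without loss of generality $t'\le t$, and no $s'\in F'$ lies below $t$. I must also account for $F$, and there are two subcases.
\begin{itemize}
\item If $t<t'$ strictly, then the third condition is excluded, so no $s\in F$ is $\le t'$.
\item If $t=t'$, then neither $F$ nor $F'$ contains an element $\le t$.
\end{itemize}
In both subcases, set $u=\max(t,t')$. No element of $F\cup F'$ is $\le u$. Indeed, elements of $F$ are above $t$, so if one were $\le u$, then $t<s\le u=t'$ would place us in the excluded third case; the same reasoning applies to $F'$.

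The key step is to find a ray $R\ni u$ avoiding the finite set $F\cup F'$ of tops. A ray through $u$ automatically contains $t$ and $t'$, so such an $R$ lies in $[t,F]\cap[t',F']$.

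This is where I expect the only real obstacle. A ray through $u$ need not exist unless the tree is pruned, that is, unless every node lies in some ray, or unless $[t,F]\neq\emptyset$ is tacitly assumed. I would use prunedness, which is the standing assumption in the theorems where this is applied.

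Here is how I would build $R$. Take any maximal chain $B\ni u$ extending $\lceil u\rceil$; one can choose it so that it has no maximal element. Each $s\in F\cup F'$ is a top, say of the ray $\lceil\mathring s\rceil$, and $s>u$.

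The goal is a ray through $u$ containing none of the finitely many $s\in F\cup F'$. Since $F\cup F'$ is finite, I would choose the ray inductively. Consider the minimal (by height) element $s\in F\cup F'$ that the current ray passes through. Replace the ray, from $s$ on, by one that still contains the proper ray $\lceil\mathring s\rceil$ but not $s$. The ray $\lceil\mathring s\rceil$ itself is a ray containing $u$ (since $u<s$), and it contains no elements of $F\cup F'$ of height at least $\text{ht}(s)$.

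Simplest of all: let $s_0$ be a minimal element of $F\cup F'$ above $u$ in some chosen branch. If no element of $F\cup F'$ lies above $u$, any ray through $u$ works. Otherwise take $s_0\in F\cup F'$ of minimal height with $s_0>u$, and set $R=\lceil\mathring{s_0}\rceil$. This is a ray containing $u$, and by minimality of height it contains no element of $F\cup F'$. Since $u<s_0$, the ray exists even without prunedness.

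The final clause of the proposition follows immediately. By definition, pairwise disjointness of $\mC$ requires, for distinct pairs with $t'\le t$, some $s'\in F'$ with $s'\le t$. This is exactly the second or third condition, with the roles of the two pairs symmetrized. When $t,t'$ are incomparable, the implication in the definition holds vacuously, and this matches the first condition.
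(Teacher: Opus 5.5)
The paper leaves this proposition to the reader. Your argument is the natural direct verification, and it is correct.

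The key step works as you say: choose $s_0\in F\cup F'$ of least height among the elements strictly above $u$, and take the ray $\lceil\mathring{s_0}\rceil$. You are also right that the statement needs a tacit hypothesis such as prunedness, or just $[t,F]\neq\emptyset$. For instance, if $m$ is a maximal node above the root $r$, then $[r,\emptyset]\cap[m,\emptyset]=\emptyset$, although none of the three conditions holds. You also rightly read ``above'' in the definition of $\PP(T)$ as \emph{strictly} above. If $t\in F$ were allowed for a limit node $t$, then $[t,\{t\}]=\emptyset$ would give another counterexample.

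Three repairs to the write-up are needed.
\begin{enumerate}
\item Your orientation flips midway. Having assumed $t'\le t$, you should take $u=t$, with subcases $t'<t$ and $t'=t$. Your bullet ``$t<t'$ strictly'' and the phrase ``$t<s\le u=t'$'' are written for the opposite case $t\le t'$. In the consistent version the needed fact is immediate. $F$ lies strictly above $u=t$. No element of $F'$ is $\le u$, precisely because the second condition fails. So each element of $F\cup F'$ is either strictly above $u$ or incomparable with it, and a chain through $u$ can only meet the former.
\item Delete the exploratory ``replace the ray'' paragraph; your final argument supersedes it.
\item In the last clause, make explicit that the second condition forces $t'<t$ strictly, since $t'<s'\le t$. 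The reversed implication in the definition of pairwise disjointness is then vacuous. The clause should also be read for the indexed family $([t,F])_{(t,F)\in\mC}$, because distinct pairs can yield the same set. For example, $[t,\emptyset]=[t^+,\emptyset]$ when $t$ has a unique immediate successor $t^+$.
\end{enumerate}
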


\begin{cor}\label{PROP_RayDisjointCollection}
    Two pairwise disjoint collections $\mC$ and $\mC'$ in a tree $T$ are ray-disjoint if and only if 
    \[\left(\bigvee\mC\right) \cap \left(\bigvee\mC'\right) = \emptyset.\]
\end{cor}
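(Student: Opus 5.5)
The plan is to reduce the statement to \myref{PROP_PairwiseDisjointPairCollection}, applied to the single collection $\mC\cup\mC'$. Recall that $\mC$ and $\mC'$ are ray-disjoint precisely when (1) $\mC\cap\mC'=\emptyset$ and (2) $\mC\cup\mC'$ is pairwise disjoint. By \myref{PROP_PairwiseDisjointPairCollection}, (2) holds if and only if the basic sets $[t,F]$, for $(t,F)\in\mC\cup\mC'$, are pairwise disjoint across distinct pairs. Since $\mC$ and $\mC'$ are each pairwise disjoint already, (2) reduces to the following: for $(t,F)\in\mC$ and $(t',F')\in\mC'$ with $(t,F)\neq(t',F')$, we have $[t,F]\cap[t',F']=\emptyset$.

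For the direction ($\Rightarrow$), assume $\mC$ and $\mC'$ are ray-disjoint. Any $(t,F)\in\mC$ and $(t',F')\in\mC'$ are distinct pairs by (1), so $[t,F]\cap[t',F']=\emptyset$ by the reduction above. Distributing the intersection over the unions gives $\left(\bigvee\mC\right)\cap\left(\bigvee\mC'\right)=\bigcup_{(t,F)\in\mC,\,(t',F')\in\mC'}[t,F]\cap[t',F']=\emptyset$.

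For the direction ($\Leftarrow$), assume $\left(\bigvee\mC\right)\cap\left(\bigvee\mC'\right)=\emptyset$. Then every $[t,F]$ with $(t,F)\in\mC$ is disjoint from every $[t',F']$ with $(t',F')\in\mC'$, so (2) follows from the reduction. For (1), suppose towards a contradiction that $(t,F)\in\mC\cap\mC'$. Then $[t,F]\subseteq\left(\bigvee\mC\right)\cap\left(\bigvee\mC'\right)=\emptyset$, so it suffices to show $[t,F]\neq\emptyset$.

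This nonemptiness is the one genuine obstacle. It needs the standing conventions that the tree is pruned (every node lies on a ray) and that the tops in $F$ lie strictly above $t$. Under these assumptions, I would argue as follows.
\begin{enumerate}[label=(\roman*)]
    \item Start with a ray $R$ containing $t$.
    \item If $R$ meets $F$, let $s$ be the $\le$-least element of $R\cap F$ (this is well defined, since $R\cap F$ is a nonempty finite chain). Replace $R$ by $\downward{s}\setminus\{s\}$. Because $s$ is a top, this set is a ray. It contains $t$, since $t<s$. It also avoids $F$: every element of $F$ lying on it would be an element of $R\cap F$ strictly below $s$, contradicting minimality.
\end{enumerate}
So after a single replacement we already obtain a ray in $[t,F]$, which gives the required contradiction. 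If the paper's convention allowed $t\in F$ or non-pruned trees, then (1) could genuinely fail for such degenerate pairs, and the statement would have to be read modulo empty basic sets.
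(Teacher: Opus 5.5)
Your argument is correct and is essentially the intended one: the paper gives no separate proof, leaving this corollary to the reader as an immediate consequence of \myref{PROP_PairwiseDisjointPairCollection} applied to $\mC\cup\mC'$, which is exactly your reduction. Your extra step showing $[t,F]\neq\emptyset$ is what secures $\mC\cap\mC'=\emptyset$ in the backward direction. It rightly makes explicit two conventions the paper uses tacitly, namely that every node lies on a ray and that the tops in $F$ lie strictly above $t$; the paper already relies on them in the ``only if'' half of \myref{PROP_PairwiseDisjointPairCollection}.
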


\begin{prop}\label{PROP_PairwiseDisjoint_InwardSpreadout}
    Suppose $T$ is a tree and $\mC\subseteq \PP(T)$ is pairwise disjoint. If $R\in \bigvee\mC$, then $R$  satisfies (a) and (b) of \myref{DEFN_Spreadout} for $\mC$.
\end{prop}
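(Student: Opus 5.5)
Throughout, fix $R\in\bigvee\mC$ and choose $(t_0,F_0)\in\mC$ with $R\in[t_0,F_0]$, so that $t_0\in R$ and no element of $F_0$ lies in $R$. The plan is to bound everything the conditions (a) and (b) of \myref{DEFN_Spreadout} refer to by data attached to the single finite set $F_0$. The one structural fact used repeatedly is this. If $(t,F)\in\mC$ is distinct from $(t_0,F_0)$ and $t_0\le t$, then the definition of pairwise disjointness yields some $s_0\in F_0$ with $s_0\le t$. Moreover, since $\downward{t}$ is a well-ordered chain, any two of its initial segments are comparable, and a node of a given height in $\downward{t}$ is unique.

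For (b), note that the tops of $R$ are pairwise incomparable, so each chain $\downward{t}$ contains at most one of them. Suppose $\downward{t}$ contains a top $x$ of $R$. Then $R\subseteq\downward{t}$, hence $t_0\le t$, and $t\neq t_0$ because $t_0\in R$. If $(t,F)=(t_0,F_0)$ we would get $t=t_0$, which was just excluded, so $(t,F)\neq(t_0,F_0)$ and there is $s_0\in F_0$ with $s_0\le t$. Now $s_0$ and $x$ both lie in the chain $\downward{t}$. If $s_0<x$, then $s_0\in R$, which is impossible. Hence $x\le s_0$, and $x$ is the unique top of $R$ below $s_0$. Consequently the set in (b) has at most $|F_0|$ elements.

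For (a), I would show that the set in question is contained in
\[
B:=\downward{t_0}\cup\bigcup\set{R\cap\downward{\mathring{s}}: s\in F_0,\ R\not\subseteq \downward{\mathring{s}}},
\]
where $\downward{\mathring{s}}=\downward{s}\setminus\{s\}$. Each piece of $B$ is a proper initial segment of the well-ordered set $R$: for $\downward{t_0}$ this holds because $R$ has no maximum, and for the other pieces by assumption. A proper initial segment of $R$ is bounded by the least element of $R$ it misses, so each piece is non-cofinal, and a finite union of such pieces is non-cofinal.

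The key step, and the main obstacle, is the inclusion itself. Take $(t,F)\in\mC$ with $R\not\subseteq\downward{t}$, and suppose $R\cap\downward{t}\not\subseteq\downward{t_0}$. Then $t_0<u\le t$ for some $u\in R$, so $t\neq t_0$, and the fact above gives $s_0\in F_0$ with $s_0\le t$. Put $R_0=\downward{\mathring{s_0}}$; both $R_0$ and $R\cap\downward{t}$ are initial segments of the chain $\downward{t}$, so they are comparable. Suppose $R_0\subsetneq R\cap\downward{t}$. Then the node of $\downward{t}$ at height $\mathrm{ht}(s_0)$, which is $s_0$ itself, would lie in $R$, a contradiction. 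Hence $R\cap\downward{t}\subseteq R_0$. Finally, $R\not\subseteq R_0$, since otherwise $R\subseteq\downward{s_0}\subseteq\downward{t}$. Therefore $R\cap\downward{t}$ is one of the pieces of $B$, which completes the argument for (a).
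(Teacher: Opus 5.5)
Your proof is correct and follows the same strategy as the paper. You fix the pair $(t_0,F_0)\in\mC$ with $R\in[t_0,F_0]$ and use pairwise disjointness: every other pair $(t,F)\in\mC$ with $t\ge t_0$ must have some $s\in F_0$ with $s\le t$. Your argument for (b) is essentially the paper's.

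For (a), your version is more careful than the paper's. The paper's one-line justification claims that every $(t,F)\in\mC$ with $R\not\subseteq\lceil t\rceil$ satisfies $t\not\ge t_0$. That claim is false as stated. Let $Q$ be an $\omega$-ray starting at the root $r$ with two distinct tops $s'$ and $s''$, and let $R$ be a ray containing $Q\cup\{s''\}$. Then $\mC=\{(r,\{s'\}),(s',\emptyset)\}$ is pairwise disjoint and $R\in[r,\{s'\}]$. Yet $s'\ge r$ even though $R\not\subseteq\lceil s'\rceil$. Nodes like $s'$, which leave $R$ after passing through a member of $F_0$, are exactly what the extra pieces $R\cap\lceil\mathring{s}\rceil$ of your set $B$ (for $s\in F_0$ with $R\not\subseteq\lceil\mathring{s}\rceil$) account for. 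So your argument fills a small gap in the published one. It also never uses that the members of $F_0$ are tops.
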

\begin{proof}
    Let $(t_R,F_R)\in \mC$ be such that $R\in [t_R,F_R]$. 
    Because $\mC$ is pairwise disjoint, it follows from \myref{PROP_PairwiseDisjointPairCollection} that such $(t_R,F_R)$ is unique.
    Furthermore, every $(t,F)\in \mC$ distinct from $(t_R,F_R)$ is such that either
    $t_R \not\le t$ or there is some $s'\in F_R$ with $s'\le t$.
    Hence:
    \begin{itemize}
        \item (a) in \myref{DEFN_Spreadout} holds for $R$, since every $(t,F)\in \mC$ with $t$ not being above $R$ must be such that $t \not\ge t_R\in R$.
        \item (b) in \myref{DEFN_Spreadout}  holds for $R$, since every $(t,F)\in \mC$ such that $t$ is above $R$ (and, thus, above $t_R$) must be such that $t$ is above some member of the finite set $F_R$.
       
    \end{itemize}
\end{proof}

\begin{prop}\label{PROP_spreadout_clopen_eqv}
    Suppose $T$ is a tree and $\mS\subseteq \PP(T)$ is pairwise disjoint. Then $\mS$ is spread out in $T$ if and only if $\bigvee \mS$ is clopen in $\mR(T)$.
\end{prop}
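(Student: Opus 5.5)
Since each $[t,F]$ is open in $\mR(T)$, the union $\bigvee\mS$ is always open. So the statement reduces to: for pairwise disjoint $\mS$, the complement $\mR(T)\setminus\bigvee\mS$ is open if and only if conditions (a) and (b) of \myref{DEFN_Spreadout} hold for every ray $R$. Openness of $[t,F]$ is clear, since membership of finitely many nodes in $R$ is decided by a basic open set of $2^T$. By \myref{PROP_PairwiseDisjoint_InwardSpreadout}, rays in $\bigvee\mS$ satisfy (a) and (b) automatically. It therefore suffices to show the following: a ray $R\notin\bigvee\mS$ has a neighbourhood disjoint from $\bigvee\mS$ if and only if $R$ satisfies (a) and (b). I will use that the sets $[t,F]$, with $t\in R$ and $F$ a finite set of tops, form a neighbourhood base at $R$. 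This is standard: a basic open neighbourhood fixes finitely many nodes in and out of $R$. The nodes in $R$ are handled by taking $t$ to be their maximum. A node $u\notin R$ either lies above a top of $R$, in which case that top is put into $F$, or is incompatible with some $t\in R$, in which case $t$ is raised.

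For sufficiency, assume (a) and (b) hold for $R\notin\bigvee\mS$. By (a), choose $t_0\in R$ above every element of $R\cap\lceil t\rceil$, where $(t,F)$ ranges over the pairs in $\mS$ with $R\not\subseteq\lceil t\rceil$. By (b), let $F_0$ be the finite set $\Rtop(R)\cap\bigcup\{\lceil t\rceil:(t,F)\in\mS\}$. I claim that $[t_0,F_0]\cap[t,F]=\emptyset$ for every $(t,F)\in\mS$, and I will check this with \myref{PROP_PairwiseDisjointPairCollection} by cases on $t$.
\begin{itemize}
\item If $t\in R$, then $R\notin[t,F]$ forces some $s\in F$ to lie in $R$. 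Since $F$ consists of nodes above $t$, we have $t\le s$, and we may raise $t_0$ to lie above all such $s$ as well. Only finitely many such constraints arise for a fixed $(t,F)$, but there could be infinitely many pairs $(t,F)$ with $t\in R$, so this needs care. Here I will use pairwise disjointness: all such $t$ are comparable, and distinct pairs must be separated by an element of $F$ that lies in $R$. Alternatively I would fold this into condition (a) by viewing these $t$ as nodes not above $R$.
\item If $t$ lies strictly above $R$, then $t$ is above some top $u\in F_0$, and $u\le t$ gives disjointness.
\item Otherwise $R\not\subseteq\lceil t\rceil$. Then $R\cap\lceil t\rceil$ is below $t_0$, so $t$ is incompatible with $t_0$ or $t<t_0$. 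In the latter case $t\in R$, which is the first case.
\end{itemize}

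For necessity, assume $\bigvee\mS$ is closed and $R\notin\bigvee\mS$, and pick a basic neighbourhood $[t_0,F_0]$ of $R$ disjoint from $\bigvee\mS$. If (b) failed, infinitely many tops of $R$ would lie below some $t$ with $(t,F)\in\mS$; choosing such a top $u\notin F_0$, any ray through $t$ that avoids the finitely many points of $F$ would lie in $[t_0,F_0]\cap[t,F]$. This last step uses the (implicit) assumption that rays through $t$ avoiding $F$ exist, which holds if $[t,F]$ is non-empty; pairs with empty $[t,F]$ must be discarded or treated as harmless, and I would state that convention. For (a), if the intersections $R\cap\lceil t\rceil$ with $R\not\subseteq\lceil t\rceil$ were cofinal, some such $t$ would have $t\wedge R$ above $t_0$. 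Then $t\ge t_0$, $t$ is not above any top of $R$, and so no element of $F_0$ lies below $t$, giving $[t,F]\cap[t_0,F_0]\ne\emptyset$ by \myref{PROP_PairwiseDisjointPairCollection}, provided $[t,F]\ne\emptyset$.

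The main obstacle is the bookkeeping in the sufficiency direction for pairs $(t,F)\in\mS$ with $t\in R$, since a uniform $t_0$ must handle possibly infinitely many of them. I would use pairwise disjointness together with $R\notin\bigvee\mS$ to show that only boundedly many constraints are relevant, or absorb them into (a). The emptiness edge case in the necessity direction is the second point needing attention.
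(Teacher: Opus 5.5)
Your overall plan is sound. In the direction ``(a) and (b) $\Rightarrow$ $\bigvee\mS$ closed'' it follows the paper's route: a node $t_0\in R$ from (a), the finite set $F_0$ from (b), and \myref{PROP_PairwiseDisjointPairCollection} to check that $[t_0,F_0]$ misses $\bigvee\mS$. For the converse you argue directly by contradiction. The paper instead adjoins to $\mS$ a basic neighbourhood $(t_R,F_R)$ of $R$ that misses $\bigvee\mS$, notes that the enlarged family is still pairwise disjoint with $R$ in its join, and reads off (a) and (b) from \myref{PROP_PairwiseDisjoint_InwardSpreadout}. Both versions work; the paper's is shorter because it reuses that proposition.

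The gap is the case $t\in R$ in the first direction, which you leave open. Your fallback of folding it into (a) cannot work. Condition (a) only bounds the sets $\lceil t\rceil\cap R$, that is, the nodes $t$ themselves, and your $t_0$ is already above all of them. What disjointness of $[t,F]$ from $[t_0,F_0]$ with $t\le t_0$ actually requires is a member of $F$ below $t_0$, and (a) says nothing about the tops in $F\cap R$. For example, let $t<u$ in $R$ with $u$ of limit height, and $\mS=\{(t,\{u\})\}$. Then any $t_0\in R$ with $t<t_0<u$ is admissible for (a), and (b) gives $F_0=\emptyset$, yet $\lceil\mathring{u}\rceil\in[t_0,\emptyset]\cap[t,\{u\}]$.

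Your other suggestion, using pairwise disjointness, does close the gap once you note where the separating node sits:
\begin{enumerate}
\item Take $t_0$ strictly above the set in (a). Condition (a) allows this, and your third case needs it. Then every pair $(t,F)\in\mS$ with $t\in R$ has $t<t_0$.
\item If some other $(t',F')\in\mS$ has $t\le t'\in R$, pairwise disjointness gives $u\in F$ with $u\le t'<t_0$. This is exactly the witness \myref{PROP_PairwiseDisjointPairCollection} needs.
\item Hence at most one pair lacks such a witness, namely one whose first coordinate strictly exceeds those of all other pairs lying in $R$. For that pair, $R\notin[t,F]$ yields $u\in F\cap R$, and you raise $t_0$ above $u$. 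This does not disturb your other two cases.
\end{enumerate}
The paper's proof of this direction does not spell this case out either, so you have found the one real subtlety there.

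Finally, the convention $[t,F]\neq\emptyset$ is not optional; the statement itself fails without it. Take a ray $\{t_n:n\in\omega\}$ with a leaf $\ell_n$ attached as an immediate successor of each $t_n$. The family $\{(\ell_n,\emptyset):n\in\omega\}$ is pairwise disjoint and its join is empty, hence clopen, but it violates (a) at the ray. So you must assume non-emptiness outright, for instance by working in pruned trees with the tops in $F$ strictly above $t$. The paper assumes this implicitly through \myref{PROP_PairwiseDisjointPairCollection}.
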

\begin{proof}
    Suppose $\mS$ is spread out. Then, being the union of standard basic sets, $\bigvee \mS$ is open. 
    In order to show that $\bigvee\mS$ is also closed, let $R\in \mR(T)\setminus \bigvee\mS$. By item (a) in \myref{DEFN_Spreadout}, there must be an $s\in R$ such that 
    \[
        \lfloor s\rfloor \cap \set{t:(t,F)\in \mS} = \emptyset.
    \]
    Furthermore, item (b) in \myref{DEFN_Spreadout} tells us that there must be a finite $G\subseteq \Rtop(R)$ such that every element of $\set{t:(t,F)\in \mS}$ which is above $R$ is, in fact, above $G$.
    Thus, $R\in [s,G]\subseteq \mR(T)\setminus \bigvee\mS$ (so $\bigvee\mS$ is closed as well).

    \sloppy Now suppose $\bigvee\mS$ is clopen and let $R\in \mR(T)$ be given. 
    In view of \myref{PROP_PairwiseDisjoint_InwardSpreadout}, $R\in\bigvee\mS$ already tells us that $R$ satisfies (a) and (b) of \myref{DEFN_Spreadout} for $\mS$.
    
    So, without loss of generality, we may assume that $R\in \mR(T)\setminus \bigvee\mS$. 
    Since $\bigvee\mS$ is closed, there must then be a $t_R\in R$ and a finite $F_R\subseteq \Rtop(R)$ such that $[t_R,F_R]\subseteq \mR(T)\setminus \bigvee\mS$.
    In view of \myref{PROP_PairwiseDisjointPairCollection}, this means that $\mS' = \mS\cup\{(t_R,F_R)\}$ is pairwise disjoint.
    Hence, since $R\in \bigvee\mS'$, it follows again from \myref{PROP_PairwiseDisjoint_InwardSpreadout} that $R$ satisfies (a) and (b) of \myref{DEFN_Spreadout} for $\mS'$.
    But, because $\mS\subset \mS'$, this means that $R$ also satisfies (a) and (b) of \myref{DEFN_Spreadout} for $\mS$, as desired.
\end{proof}

\begin{cor}\label{COR_RayDisjointSpreadoutFamily}
    Suppose $T$ is a tree and $\mS,\mS'$ are spread out families in $T$. Then $\mS,\mS'$ are ray-disjoint if and only if $\mS\cap\mS' = \emptyset$ and $\mS\cup\mS'$ is also spread out.
\end{cor}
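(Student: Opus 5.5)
The plan is to reduce the corollary to the two preceding characterizations, \myref{PROP_RayDisjointCollection} and \myref{PROP_spreadout_clopen_eqv}. Spread out families are pairwise disjoint by definition, so the notion of ray-disjointness applies to $\mS,\mS'$. Throughout I will use the identity
\[
    \bigvee(\mS\cup\mS') = \left(\bigvee\mS\right)\cup\left(\bigvee\mS'\right),
\]
which is immediate from the definition of $\bigvee$.

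For the forward direction, assume $\mS,\mS'$ are ray-disjoint. By definition, $\mS\cap\mS'=\emptyset$ and $\mS\cup\mS'$ is pairwise disjoint. Since $\mS$ and $\mS'$ are spread out, \myref{PROP_spreadout_clopen_eqv} makes $\bigvee\mS$ and $\bigvee\mS'$ clopen in $\mR(T)$. By the identity above, $\bigvee(\mS\cup\mS')$ is then a union of two clopen sets, hence clopen. The family $\mS\cup\mS'$ is non-empty and pairwise disjoint, so applying \myref{PROP_spreadout_clopen_eqv} again shows that it is spread out.

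For the converse, assume $\mS\cap\mS'=\emptyset$ and $\mS\cup\mS'$ is spread out. Spread out families are pairwise disjoint, so $\mS\cup\mS'$ is pairwise disjoint. Together with $\mS\cap\mS'=\emptyset$, this is exactly the definition of $\mS,\mS'$ being ray-disjoint.

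Neither direction has a real obstacle; the only things to check are the union identity for $\bigvee$ and the requirement in \myref{DEFN_Spreadout} that a spread out family be non-empty. The latter holds automatically because $\mS\cup\mS'\supseteq\mS\neq\emptyset$. Notably, \myref{PROP_RayDisjointCollection} is not even needed, although it gives the equivalent formulation $\bigvee\mS\cap\bigvee\mS'=\emptyset$ of the ray-disjointness hypothesis.
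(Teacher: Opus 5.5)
Your proof is correct and follows essentially the same route as the paper: the converse is immediate from the definitions, and the forward direction applies \myref{PROP_spreadout_clopen_eqv} twice, together with the identity $\bigvee(\mS\cup\mS')=(\bigvee\mS)\cup(\bigvee\mS')$. Your explicit check that $\mS\cup\mS'$ is non-empty is worth keeping, because that proposition as stated does not exclude the empty family.
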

\begin{proof}
    It is clear that $\mS\cap\mS' = \emptyset$ and $\mS\cup\mS'$ being spread out implies that $\mS,\mS'$ are ray-disjoint.

    On the other hand, by \myref{PROP_spreadout_clopen_eqv} applied to $\mS$ and $\mS'$, we have that 
    \[
        \bigvee\left(\mS\cup\mS'\right) = \left(\bigvee\mS\right)\cup\left(\bigvee\mS'\right)
    \]
    is clopen. 
    
    Thus, if $\mS$ and $\mS'$ are ray-disjoint (meaning that $\mS\cap\mS' = \emptyset$ and $\mS\cup\mS'$ is pairwise disjoint), it follows from \myref{PROP_spreadout_clopen_eqv} that $\mS\cup \mS'$ is spread out.
\end{proof}

\begin{defn}\label{DEF_ShorterPair}
    Suppose $T$ is a tree and $(t,F),(t',F')\in \PP(T)$. We say that $(t,F)$ is \emph{shorter} than $(t',F')$ (or, equivalently, that $(t',F')$ is \emph{longer} than $(t,F)$), writing $(t,F)\succeq (t',F')$, if $t'\le t$ and $F'$ is above $F$, i.e., every member of $F'$ is above or equal to some member of $F$ (see \myref{FIG_ShorterPair}).
\begin{figure}
    \centering

\tikzset{every picture/.style={line width=0.75pt}} 

\begin{tikzpicture}[x=0.70pt,y=0.70pt,yscale=-1,xscale=1]

\draw  [dash pattern={on 0.84pt off 2.51pt}] (164.51,272.55) -- (24.27,12.56) -- (303.92,12.12) -- cycle ;
\draw [color={rgb, 255:red, 208; green, 2; blue, 27 }  ,draw opacity=1 ] [dash pattern={on 0.84pt off 2.51pt}]  (148.33,177.67) .. controls (127.67,225) and (185.67,188.33) .. (164.51,272.55) ;
\draw [shift={(148.33,177.67)}, rotate = 113.59] [color={rgb, 255:red, 208; green, 2; blue, 27 }  ,draw opacity=1 ][fill={rgb, 255:red, 208; green, 2; blue, 27 }  ,fill opacity=1 ][line width=0.75]      (0, 0) circle [x radius= 3.35, y radius= 3.35]   ;
\draw [color={rgb, 255:red, 74; green, 144; blue, 226 }  ,draw opacity=1 ] [dash pattern={on 0.84pt off 2.51pt}]  (175,139) .. controls (179.67,175.67) and (149.67,168.33) .. (148.33,177.67) ;
\draw [shift={(175,139)}, rotate = 82.75] [color={rgb, 255:red, 74; green, 144; blue, 226 }  ,draw opacity=1 ][fill={rgb, 255:red, 74; green, 144; blue, 226 }  ,fill opacity=1 ][line width=0.75]      (0, 0) circle [x radius= 3.35, y radius= 3.35]   ;
\draw [color={rgb, 255:red, 74; green, 144; blue, 226 }  ,draw opacity=1 ] [dash pattern={on 0.84pt off 2.51pt}]  (120.33,95) .. controls (127.14,105.55) and (148.01,123.73) .. (158.14,127.84) .. controls (168.27,131.95) and (175.65,134.43) .. (175,139) ;
\draw [shift={(120.33,95)}, rotate = 57.17] [color={rgb, 255:red, 74; green, 144; blue, 226 }  ,draw opacity=1 ][fill={rgb, 255:red, 74; green, 144; blue, 226 }  ,fill opacity=1 ][line width=0.75]      (0, 0) circle [x radius= 3.35, y radius= 3.35]   ;
\draw [color={rgb, 255:red, 74; green, 144; blue, 226 }  ,draw opacity=1 ] [dash pattern={on 0.84pt off 2.51pt}]  (171.67,87) .. controls (173,107) and (176.33,129.67) .. (175,139) ;
\draw [shift={(171.67,87)}, rotate = 86.19] [color={rgb, 255:red, 74; green, 144; blue, 226 }  ,draw opacity=1 ][fill={rgb, 255:red, 74; green, 144; blue, 226 }  ,fill opacity=1 ][line width=0.75]      (0, 0) circle [x radius= 3.35, y radius= 3.35]   ;
\draw [color={rgb, 255:red, 74; green, 144; blue, 226 }  ,draw opacity=1 ] [dash pattern={on 0.84pt off 2.51pt}]  (217.67,88.33) .. controls (219,108.33) and (176.33,110.33) .. (175,119.67) ;
\draw [shift={(217.67,88.33)}, rotate = 86.19] [color={rgb, 255:red, 74; green, 144; blue, 226 }  ,draw opacity=1 ][fill={rgb, 255:red, 74; green, 144; blue, 226 }  ,fill opacity=1 ][line width=0.75]      (0, 0) circle [x radius= 3.35, y radius= 3.35]   ;
\draw [color={rgb, 255:red, 74; green, 144; blue, 226 }  ,draw opacity=1 ] [dash pattern={on 0.84pt off 2.51pt}]  (125.12,101) .. controls (135.56,112.74) and (148.46,123.92) .. (158.14,127.84) .. controls (168.27,131.95) and (175.65,134.43) .. (175,139) ;
\draw [shift={(123.67,99.33)}, rotate = 49.14] [color={rgb, 255:red, 74; green, 144; blue, 226 }  ,draw opacity=1 ][line width=0.75]    (10.93,-4.9) .. controls (6.95,-2.3) and (3.31,-0.67) .. (0,0) .. controls (3.31,0.67) and (6.95,2.3) .. (10.93,4.9)   ;
\draw [color={rgb, 255:red, 74; green, 144; blue, 226 }  ,draw opacity=1 ] [dash pattern={on 0.84pt off 2.51pt}]  (172.15,93.76) .. controls (173.54,112.92) and (176.29,129.99) .. (175,139) ;
\draw [shift={(172,91.67)}, rotate = 86.19] [color={rgb, 255:red, 74; green, 144; blue, 226 }  ,draw opacity=1 ][line width=0.75]    (10.93,-4.9) .. controls (6.95,-2.3) and (3.31,-0.67) .. (0,0) .. controls (3.31,0.67) and (6.95,2.3) .. (10.93,4.9)   ;
\draw [color={rgb, 255:red, 74; green, 144; blue, 226 }  ,draw opacity=1 ] [dash pattern={on 0.84pt off 2.51pt}]  (215.41,95.43) .. controls (201.66,113.64) and (180.17,108.03) .. (175,119.67) ;
\draw [shift={(216.67,93.67)}, rotate = 123.91] [color={rgb, 255:red, 74; green, 144; blue, 226 }  ,draw opacity=1 ][line width=0.75]    (10.93,-4.9) .. controls (6.95,-2.3) and (3.31,-0.67) .. (0,0) .. controls (3.31,0.67) and (6.95,2.3) .. (10.93,4.9)   ;
\draw [color={rgb, 255:red, 208; green, 2; blue, 27 }  ,draw opacity=1 ] [dash pattern={on 0.84pt off 2.51pt}]  (85,48.33) .. controls (110.33,78.33) and (103.67,78.33) .. (120.33,95) ;
\draw [shift={(85,48.33)}, rotate = 49.82] [color={rgb, 255:red, 208; green, 2; blue, 27 }  ,draw opacity=1 ][fill={rgb, 255:red, 208; green, 2; blue, 27 }  ,fill opacity=1 ][line width=0.75]      (0, 0) circle [x radius= 3.35, y radius= 3.35]   ;
\draw [color={rgb, 255:red, 208; green, 2; blue, 27 }  ,draw opacity=1 ] [dash pattern={on 0.84pt off 2.51pt}]  (101.67,32.33) .. controls (114.33,63) and (120.33,77) .. (120.33,95) ;
\draw [shift={(101.67,32.33)}, rotate = 67.56] [color={rgb, 255:red, 208; green, 2; blue, 27 }  ,draw opacity=1 ][fill={rgb, 255:red, 208; green, 2; blue, 27 }  ,fill opacity=1 ][line width=0.75]      (0, 0) circle [x radius= 3.35, y radius= 3.35]   ;
\draw [color={rgb, 255:red, 208; green, 2; blue, 27 }  ,draw opacity=1 ] [dash pattern={on 0.84pt off 2.51pt}]  (136.33,28.33) .. controls (133,59) and (116.33,51) .. (116.33,69) ;
\draw [shift={(136.33,28.33)}, rotate = 96.2] [color={rgb, 255:red, 208; green, 2; blue, 27 }  ,draw opacity=1 ][fill={rgb, 255:red, 208; green, 2; blue, 27 }  ,fill opacity=1 ][line width=0.75]      (0, 0) circle [x radius= 3.35, y radius= 3.35]   ;
\draw [color={rgb, 255:red, 208; green, 2; blue, 27 }  ,draw opacity=1 ] [dash pattern={on 0.84pt off 2.51pt}]  (176.33,26.33) .. controls (166.33,71.67) and (223,46.33) .. (171.67,87) ;
\draw [shift={(176.33,26.33)}, rotate = 102.44] [color={rgb, 255:red, 208; green, 2; blue, 27 }  ,draw opacity=1 ][fill={rgb, 255:red, 208; green, 2; blue, 27 }  ,fill opacity=1 ][line width=0.75]      (0, 0) circle [x radius= 3.35, y radius= 3.35]   ;
\draw [color={rgb, 255:red, 208; green, 2; blue, 27 }  ,draw opacity=1 ] [dash pattern={on 0.84pt off 2.51pt}]  (104.6,39.93) .. controls (116.53,68.77) and (120.33,77.45) .. (120.33,95) ;
\draw [shift={(103.67,37.67)}, rotate = 67.56] [color={rgb, 255:red, 208; green, 2; blue, 27 }  ,draw opacity=1 ][line width=0.75]    (10.93,-4.9) .. controls (6.95,-2.3) and (3.31,-0.67) .. (0,0) .. controls (3.31,0.67) and (6.95,2.3) .. (10.93,4.9)   ;
\draw [color={rgb, 255:red, 208; green, 2; blue, 27 }  ,draw opacity=1 ] [dash pattern={on 0.84pt off 2.51pt}]  (135.23,35.64) .. controls (129.44,60.31) and (118.61,47.23) .. (116.33,69) ;
\draw [shift={(135.67,33.67)}, rotate = 101.71] [color={rgb, 255:red, 208; green, 2; blue, 27 }  ,draw opacity=1 ][line width=0.75]    (10.93,-4.9) .. controls (6.95,-2.3) and (3.31,-0.67) .. (0,0) .. controls (3.31,0.67) and (6.95,2.3) .. (10.93,4.9)   ;
\draw [color={rgb, 255:red, 208; green, 2; blue, 27 }  ,draw opacity=1 ] [dash pattern={on 0.84pt off 2.51pt}]  (175.52,33.12) .. controls (171.99,66.69) and (219.69,51.39) .. (171.67,87) ;
\draw [shift={(175.8,31)}, rotate = 99.07] [color={rgb, 255:red, 208; green, 2; blue, 27 }  ,draw opacity=1 ][line width=0.75]    (10.93,-4.9) .. controls (6.95,-2.3) and (3.31,-0.67) .. (0,0) .. controls (3.31,0.67) and (6.95,2.3) .. (10.93,4.9)   ;
\draw  [color={rgb, 255:red, 74; green, 144; blue, 226 }  ,draw opacity=1 ][dash pattern={on 0.84pt off 2.51pt}] (109.67,77) -- (231.67,77) -- (231.67,110.33) -- (109.67,110.33) -- cycle ;
\draw  [color={rgb, 255:red, 208; green, 2; blue, 27 }  ,draw opacity=1 ][dash pattern={on 0.84pt off 2.51pt}] (61.67,15.67) -- (207,15.67) -- (207,71.67) -- (61.67,71.67) -- cycle ;
\draw [color={rgb, 255:red, 208; green, 2; blue, 27 }  ,draw opacity=1 ] [dash pattern={on 0.84pt off 2.51pt}]  (89.6,53.31) .. controls (109.76,79.62) and (104,78.67) .. (120.33,95) ;
\draw [shift={(88.33,51.67)}, rotate = 52.27] [color={rgb, 255:red, 208; green, 2; blue, 27 }  ,draw opacity=1 ][line width=0.75]    (10.93,-4.9) .. controls (6.95,-2.3) and (3.31,-0.67) .. (0,0) .. controls (3.31,0.67) and (6.95,2.3) .. (10.93,4.9)   ;
\draw [color={rgb, 255:red, 208; green, 2; blue, 27 }  ,draw opacity=1 ] [dash pattern={on 0.84pt off 2.51pt}]  (376.8,32.71) .. controls (384.8,118.85) and (426,118.85) .. (459.6,121.65) ;
\draw [shift={(376.8,32.71)}, rotate = 84.69] [color={rgb, 255:red, 208; green, 2; blue, 27 }  ,draw opacity=1 ][fill={rgb, 255:red, 208; green, 2; blue, 27 }  ,fill opacity=1 ][line width=0.75]      (0, 0) circle [x radius= 3.35, y radius= 3.35]   ;
\draw [color={rgb, 255:red, 208; green, 2; blue, 27 }  ,draw opacity=1 ] [dash pattern={on 0.84pt off 2.51pt}]  (377.7,40.35) .. controls (383.63,79.3) and (390.99,93.04) .. (404.06,104.78) .. controls (417.39,116.76) and (430.13,119.53) .. (459.6,121.65) ;
\draw [shift={(377.33,37.93)}, rotate = 81.63] [color={rgb, 255:red, 208; green, 2; blue, 27 }  ,draw opacity=1 ][line width=0.75]    (10.93,-4.9) .. controls (6.95,-2.3) and (3.31,-0.67) .. (0,0) .. controls (3.31,0.67) and (6.95,2.3) .. (10.93,4.9)   ;
\draw  [dash pattern={on 0.84pt off 2.51pt}] (475.51,272) -- (335.27,12) -- (614.92,11.57) -- cycle ;
\draw [color={rgb, 255:red, 208; green, 2; blue, 27 }  ,draw opacity=1 ] [dash pattern={on 0.84pt off 2.51pt}]  (459.33,177.11) .. controls (438.67,224.45) and (496.67,187.78) .. (475.51,272) ;
\draw [shift={(459.33,177.11)}, rotate = 113.59] [color={rgb, 255:red, 208; green, 2; blue, 27 }  ,draw opacity=1 ][fill={rgb, 255:red, 208; green, 2; blue, 27 }  ,fill opacity=1 ][line width=0.75]      (0, 0) circle [x radius= 3.35, y radius= 3.35]   ;
\draw [color={rgb, 255:red, 74; green, 144; blue, 226 }  ,draw opacity=1 ] [dash pattern={on 0.84pt off 2.51pt}]  (486,138.45) .. controls (490.67,175.11) and (460.67,167.78) .. (459.33,177.11) ;
\draw [shift={(486,138.45)}, rotate = 82.75] [color={rgb, 255:red, 74; green, 144; blue, 226 }  ,draw opacity=1 ][fill={rgb, 255:red, 74; green, 144; blue, 226 }  ,fill opacity=1 ][line width=0.75]      (0, 0) circle [x radius= 3.35, y radius= 3.35]   ;
\draw [color={rgb, 255:red, 74; green, 144; blue, 226 }  ,draw opacity=1 ] [dash pattern={on 0.84pt off 2.51pt}]  (431.33,94.45) .. controls (438.14,105) and (459.01,123.18) .. (469.14,127.29) .. controls (479.27,131.4) and (486.65,133.88) .. (486,138.45) ;
\draw [shift={(431.33,94.45)}, rotate = 57.17] [color={rgb, 255:red, 74; green, 144; blue, 226 }  ,draw opacity=1 ][fill={rgb, 255:red, 74; green, 144; blue, 226 }  ,fill opacity=1 ][line width=0.75]      (0, 0) circle [x radius= 3.35, y radius= 3.35]   ;
\draw [color={rgb, 255:red, 74; green, 144; blue, 226 }  ,draw opacity=1 ] [dash pattern={on 0.84pt off 2.51pt}]  (482.67,86.45) .. controls (484,106.45) and (487.33,129.11) .. (486,138.45) ;
\draw [shift={(482.67,86.45)}, rotate = 86.19] [color={rgb, 255:red, 74; green, 144; blue, 226 }  ,draw opacity=1 ][fill={rgb, 255:red, 74; green, 144; blue, 226 }  ,fill opacity=1 ][line width=0.75]      (0, 0) circle [x radius= 3.35, y radius= 3.35]   ;
\draw [color={rgb, 255:red, 74; green, 144; blue, 226 }  ,draw opacity=1 ] [dash pattern={on 0.84pt off 2.51pt}]  (528.67,87.78) .. controls (530,107.78) and (487.33,109.78) .. (486,119.11) ;
\draw [shift={(528.67,87.78)}, rotate = 86.19] [color={rgb, 255:red, 74; green, 144; blue, 226 }  ,draw opacity=1 ][fill={rgb, 255:red, 74; green, 144; blue, 226 }  ,fill opacity=1 ][line width=0.75]      (0, 0) circle [x radius= 3.35, y radius= 3.35]   ;
\draw [color={rgb, 255:red, 74; green, 144; blue, 226 }  ,draw opacity=1 ] [dash pattern={on 0.84pt off 2.51pt}]  (436.12,100.44) .. controls (446.56,112.18) and (459.46,123.36) .. (469.14,127.29) .. controls (479.27,131.4) and (486.65,133.88) .. (486,138.45) ;
\draw [shift={(434.67,98.78)}, rotate = 49.14] [color={rgb, 255:red, 74; green, 144; blue, 226 }  ,draw opacity=1 ][line width=0.75]    (10.93,-4.9) .. controls (6.95,-2.3) and (3.31,-0.67) .. (0,0) .. controls (3.31,0.67) and (6.95,2.3) .. (10.93,4.9)   ;
\draw [color={rgb, 255:red, 74; green, 144; blue, 226 }  ,draw opacity=1 ] [dash pattern={on 0.84pt off 2.51pt}]  (483.15,93.21) .. controls (484.54,112.36) and (487.29,129.44) .. (486,138.45) ;
\draw [shift={(483,91.11)}, rotate = 86.19] [color={rgb, 255:red, 74; green, 144; blue, 226 }  ,draw opacity=1 ][line width=0.75]    (10.93,-4.9) .. controls (6.95,-2.3) and (3.31,-0.67) .. (0,0) .. controls (3.31,0.67) and (6.95,2.3) .. (10.93,4.9)   ;
\draw [color={rgb, 255:red, 74; green, 144; blue, 226 }  ,draw opacity=1 ] [dash pattern={on 0.84pt off 2.51pt}]  (526.41,94.87) .. controls (512.66,113.08) and (491.17,107.47) .. (486,119.11) ;
\draw [shift={(527.67,93.11)}, rotate = 123.91] [color={rgb, 255:red, 74; green, 144; blue, 226 }  ,draw opacity=1 ][line width=0.75]    (10.93,-4.9) .. controls (6.95,-2.3) and (3.31,-0.67) .. (0,0) .. controls (3.31,0.67) and (6.95,2.3) .. (10.93,4.9)   ;
\draw [color={rgb, 255:red, 208; green, 2; blue, 27 }  ,draw opacity=1 ] [dash pattern={on 0.84pt off 2.51pt}]  (396,47.78) .. controls (421.33,77.78) and (414.67,77.78) .. (431.33,94.45) ;
\draw [shift={(396,47.78)}, rotate = 49.82] [color={rgb, 255:red, 208; green, 2; blue, 27 }  ,draw opacity=1 ][fill={rgb, 255:red, 208; green, 2; blue, 27 }  ,fill opacity=1 ][line width=0.75]      (0, 0) circle [x radius= 3.35, y radius= 3.35]   ;
\draw [color={rgb, 255:red, 208; green, 2; blue, 27 }  ,draw opacity=1 ] [dash pattern={on 0.84pt off 2.51pt}]  (412.67,31.78) .. controls (425.33,62.45) and (431.33,76.45) .. (431.33,94.45) ;
\draw [shift={(412.67,31.78)}, rotate = 67.56] [color={rgb, 255:red, 208; green, 2; blue, 27 }  ,draw opacity=1 ][fill={rgb, 255:red, 208; green, 2; blue, 27 }  ,fill opacity=1 ][line width=0.75]      (0, 0) circle [x radius= 3.35, y radius= 3.35]   ;
\draw [color={rgb, 255:red, 208; green, 2; blue, 27 }  ,draw opacity=1 ] [dash pattern={on 0.84pt off 2.51pt}]  (447.33,27.78) .. controls (444,58.45) and (427.33,50.45) .. (427.33,68.45) ;
\draw [shift={(447.33,27.78)}, rotate = 96.2] [color={rgb, 255:red, 208; green, 2; blue, 27 }  ,draw opacity=1 ][fill={rgb, 255:red, 208; green, 2; blue, 27 }  ,fill opacity=1 ][line width=0.75]      (0, 0) circle [x radius= 3.35, y radius= 3.35]   ;
\draw [color={rgb, 255:red, 208; green, 2; blue, 27 }  ,draw opacity=1 ] [dash pattern={on 0.84pt off 2.51pt}]  (487.33,25.78) .. controls (477.33,71.11) and (534,45.78) .. (482.67,86.45) ;
\draw [shift={(487.33,25.78)}, rotate = 102.44] [color={rgb, 255:red, 208; green, 2; blue, 27 }  ,draw opacity=1 ][fill={rgb, 255:red, 208; green, 2; blue, 27 }  ,fill opacity=1 ][line width=0.75]      (0, 0) circle [x radius= 3.35, y radius= 3.35]   ;
\draw [color={rgb, 255:red, 208; green, 2; blue, 27 }  ,draw opacity=1 ] [dash pattern={on 0.84pt off 2.51pt}]  (415.6,39.37) .. controls (427.53,68.22) and (431.33,76.9) .. (431.33,94.45) ;
\draw [shift={(414.67,37.11)}, rotate = 67.56] [color={rgb, 255:red, 208; green, 2; blue, 27 }  ,draw opacity=1 ][line width=0.75]    (10.93,-4.9) .. controls (6.95,-2.3) and (3.31,-0.67) .. (0,0) .. controls (3.31,0.67) and (6.95,2.3) .. (10.93,4.9)   ;
\draw [color={rgb, 255:red, 208; green, 2; blue, 27 }  ,draw opacity=1 ] [dash pattern={on 0.84pt off 2.51pt}]  (446.23,35.09) .. controls (440.44,59.75) and (429.61,46.67) .. (427.33,68.45) ;
\draw [shift={(446.67,33.11)}, rotate = 101.71] [color={rgb, 255:red, 208; green, 2; blue, 27 }  ,draw opacity=1 ][line width=0.75]    (10.93,-4.9) .. controls (6.95,-2.3) and (3.31,-0.67) .. (0,0) .. controls (3.31,0.67) and (6.95,2.3) .. (10.93,4.9)   ;
\draw [color={rgb, 255:red, 208; green, 2; blue, 27 }  ,draw opacity=1 ] [dash pattern={on 0.84pt off 2.51pt}]  (486.52,32.56) .. controls (482.99,66.13) and (530.69,50.84) .. (482.67,86.45) ;
\draw [shift={(486.8,30.45)}, rotate = 99.07] [color={rgb, 255:red, 208; green, 2; blue, 27 }  ,draw opacity=1 ][line width=0.75]    (10.93,-4.9) .. controls (6.95,-2.3) and (3.31,-0.67) .. (0,0) .. controls (3.31,0.67) and (6.95,2.3) .. (10.93,4.9)   ;
\draw  [color={rgb, 255:red, 74; green, 144; blue, 226 }  ,draw opacity=1 ][dash pattern={on 0.84pt off 2.51pt}] (420.67,76.45) -- (542.67,76.45) -- (542.67,109.78) -- (420.67,109.78) -- cycle ;
\draw  [color={rgb, 255:red, 208; green, 2; blue, 27 }  ,draw opacity=1 ][dash pattern={on 0.84pt off 2.51pt}] (372.67,15.11) -- (518,15.11) -- (518,71.11) -- (372.67,71.11) -- cycle ;
\draw [color={rgb, 255:red, 208; green, 2; blue, 27 }  ,draw opacity=1 ] [dash pattern={on 0.84pt off 2.51pt}]  (400.6,52.76) .. controls (420.76,79.07) and (415,78.11) .. (431.33,94.45) ;
\draw [shift={(399.33,51.11)}, rotate = 52.27] [color={rgb, 255:red, 208; green, 2; blue, 27 }  ,draw opacity=1 ][line width=0.75]    (10.93,-4.9) .. controls (6.95,-2.3) and (3.31,-0.67) .. (0,0) .. controls (3.31,0.67) and (6.95,2.3) .. (10.93,4.9)   ;

\draw (159.33,169.07) node [anchor=north west][inner sep=0.75pt]  [color={rgb, 255:red, 208; green, 2; blue, 27 }  ,opacity=1 ]  {$t'$};
\draw (184.67,129.07) node [anchor=north west][inner sep=0.75pt]  [color={rgb, 255:red, 74; green, 144; blue, 226 }  ,opacity=1 ]  {$t$};
\draw (238.67,81.73) node [anchor=north west][inner sep=0.75pt]  [color={rgb, 255:red, 74; green, 144; blue, 226 }  ,opacity=1 ]  {$F$};
\draw (222,30.4) node [anchor=north west][inner sep=0.75pt]  [color={rgb, 255:red, 208; green, 2; blue, 27 }  ,opacity=1 ]  {$F'$};
\draw (270,23.07) node [anchor=north west][inner sep=0.75pt]    {$T$};
\draw (470.33,168.51) node [anchor=north west][inner sep=0.75pt]  [color={rgb, 255:red, 208; green, 2; blue, 27 }  ,opacity=1 ]  {$t'$};
\draw (495.67,128.51) node [anchor=north west][inner sep=0.75pt]  [color={rgb, 255:red, 74; green, 144; blue, 226 }  ,opacity=1 ]  {$t$};
\draw (549.67,81.18) node [anchor=north west][inner sep=0.75pt]  [color={rgb, 255:red, 74; green, 144; blue, 226 }  ,opacity=1 ]  {$F$};
\draw (533,29.85) node [anchor=north west][inner sep=0.75pt]  [color={rgb, 255:red, 208; green, 2; blue, 27 }  ,opacity=1 ]  {$F'$};
\draw (582,23.51) node [anchor=north west][inner sep=0.75pt]    {$T$};

\end{tikzpicture}
    \caption{On the left, we have an example of a pair $(t,F)$ which is shorter than another $(t',F')$, according to \myref{DEF_ShorterPair}. On the right, however, $(t,F)$ is not considered shorter than $(t',F')$, since an element of $F'$ is not above any element of $F$.}
    \label{FIG_ShorterPair}
\end{figure}

    We then consider $\bbS(T)$ as the partially ordered collection of all spread out families in $T$, in which $\mS'\ge\mS$ (reading ``\emph{$\mS'$ refines $\mS$}'') if and only if for every $(t',F')\in \mS'$ there is some $(t,F)\in \mS$ such that $(t',F')\succeq (t,F)$.
\end{defn}

\begin{ex}
    Suppose $T$ is a tree with $r\in T$ being its root. Since we do not consider the empty family as spread out, note that $\{(r,\emptyset)\}$ is the \emph{least refined} spread out family in $T$ (meaning that $\mS\ge \{(r,\emptyset)\}$ for every $\mS\in \bbS(T)$).
\end{ex}

The following couple of technical results, of utmost importance for the main theorem of this section, tell us that we can always find the most \emph{optimal} partition of a clopen set comprised of standard basic open sets in a ray space.

\begin{lemma}\label{LEMMA_BroadestPair}
    Suppose $T$ is a tree, $U\subseteq \mR(T)$ is clopen and $R\in U$. If $\mF\subseteq \PP(T)$ is the set of pairs $(t,F)$ such that $R\in [t,F]\subseteq U$ and $F$ is minimal in this regard (meaning that $[t,F\setminus \{s\}]$ is not contained in $U$ for any $s\in F$), then $\mF$ contains a longest member $(t_R,F_R)$ (i.e.,  a minimum with respect to $\succeq$).
\end{lemma}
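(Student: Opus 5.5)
The idea is to build $(t_R,F_R)$ in two stages: first take the stem $t_R$ as low as possible along $R$, then push the finite set of excluded tops as high as possible. After that, compare an arbitrary $(t,F)\in\mF$ against it.

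\textbf{The stem.} Let $t_R$ be the least $t\in R$ for which there is a finite $F$ with $(t,F)\in\PP(T)$ and $R\in[t,F]\subseteq U$. Such $t$ exist because $U$ is open, and $R$ is well ordered. By the choice of $t_R$, every $(t,F)\in\mF$ has $t\in R$ and hence $t_R\le t$. This gives the first half of $(t,F)\succeq(t_R,F_R)$.

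\textbf{Reducing to tops of $R$.} Fix a finite $G$ with $[t_R,G]\subseteq U$ and $R\in[t_R,G]$. Let $Q$ be a ray through $t_R$ with $Q\notin U$. Then $Q$ must contain some $g\in G$. If $g$ is not a top of $R$, then either $g$ lies above some top of $R$ or $g$ is incomparable to $R$ beyond its splitting point. I will check that in both cases $g$ can be replaced by a top of $R$ without losing the containment in $U$. This uses closedness of $U$ and the observation that $Q\supseteq R$ whenever $Q$ contains a top of $R$. So I will work only with finite $G\subseteq\Rtop(R)$, or more generally with finite sets of tops lying above $\Rtop(R)$.

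\textbf{The excluded tops.} Among all finite sets $G$ of tops above $t_R$ with $R\in[t_R,G]\subseteq U$ and $G$ minimal, I choose $F_R$ to be \emph{highest}. The intended meaning is that no element $s\in F_R$ can be replaced by the finitely many points $F'\cap\lfloor s\rfloor$ of another admissible set $F'$ lying strictly above $s$.

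To see that such a highest choice exists, I will argue as follows. Replacing $s$ by a finite set of strictly higher tops inside $\lfloor s\rfloor$ keeps the containment $[t_R,\cdot]\subseteq U$ exactly when
\[
\{Q\in\mR(T): s\in Q\}\setminus U\subseteq\bigcup_{f}\{Q: f\in Q\}.
\]
Now $\{Q: s\in Q\}\setminus U$ is a closed subset of the clopen cone above $s$. Every ray in it passes through a first point strictly above $s$ where it leaves all rays that remain in $U$. I will take $F_R\cap\lfloor s\rfloor$ to be exactly these first points. They form a finite set because one admissible finite set already covers the closed set, and any other cover must contain points below each of them. Minimality of $F_R$ then follows from the definition of these first points.

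\textbf{Comparison.} Let $(t,F)\in\mF$ and $s\in F_R$. Take a ray $Q\ni s$ with $Q\notin U$; such a $Q$ exists by minimality of $F_R$. Then $Q\ni t$, because $Q\supseteq R\ni t$. Since $Q\notin U\supseteq[t,F]$, some $f\in F\cap Q$ exists.

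If every such $f$ satisfied $f>s$, then the elements $F\cap\lfloor s\rfloor$ would cover $\{Q: s\in Q\}\setminus U$. That would contradict the ``first point'' choice of $F_R\cap\lfloor s\rfloor$. Therefore some $f\in F$ satisfies $f\le s$. This shows that $F_R$ is above $F$, so $(t,F)\succeq(t_R,F_R)$, as required.

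\textbf{Main obstacle.} The step I expect to be hardest is the existence and finiteness of the highest choice of $F_R$, that is, showing the ``first exit points'' above each $s$ form a finite set of tops. I would attack it via the covering argument above, using that $U$ is clopen so that its complement inside each cone is closed and covered by finitely many basic sets.
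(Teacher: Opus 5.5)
\textbf{The reduction to tops of $R$ is false.} Your stem $t_R$ (the least admissible node of $R$) is the same as the paper's. The trouble starts in the step \emph{Reducing to tops of $R$}, and your comparison step relies on it through ``$Q\ni t$, because $Q\supseteq R$''.

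Let $T$ consist of:
an $\omega$-chain $a_0<a_1<\cdots$ with $a_0$ the root;
an $\omega$-chain $b_0<b_1<\cdots$ with $b_0\in\succs(a_0)\setminus\{a_1\}$;
a top $s$ of $B=\{a_0\}\cup\set{b_n:n\in\omega}$;
an $\omega$-chain above $s$.
Then $\mR(T)$ consists of three isolated points: $R=\set{a_n:n\in\omega}$, $B$, and the branch $S$ through $s$.

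Take $U=\{R,B\}$. The least stem is $t_R=a_0$, and it forces $F_R=\{s\}$, although $R$ has no tops at all. The only ray witnessing the minimality of $F_R$ is $S$, and $S$ does not contain $a_1$. In fact $(a_1,\emptyset)\in\mF$, and $(a_1,\emptyset)\not\succeq(a_0,\{s\})$ because $s$ lies above no member of $\emptyset$. On the other hand, any $\succeq$-minimum of $\mF$ would have to be $(a_0,\{s\})$. So $\mF$ has no minimum: read literally for all of $\mF$, the statement fails, and pairs with a larger stem cannot be compared with $(t_R,F_R)$.

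\textbf{What does hold.} The true statement is that $(t_R,F_R)$ is the $\preceq$-minimum of $\mF(t_R)=\set{(t_R,F):(t_R,F)\in\mF}$, and this is what the paper's proof actually concludes in its last line. For such pairs your comparison needs no reduction: a ray $Q\ni s$ contains $t_R$ simply because $s>t_R$. Your replacement argument then works as a substitute for the paper's trick with $H=\set{\max\{s,s'\}}$. If no $f\in F$ lies below $s$, then a minimal admissible subset of $(F_R\setminus\{s\})\cup(F\cap\lfloor s\rfloor)$ gives a pair in $\mF(t_R)$ strictly $\prec$-below $(t_R,F_R)$. This, however, requires $(t_R,F_R)$ to be already known to be $\preceq$-minimal in $\mF(t_R)$.

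\textbf{The existence of a minimal element is not established.} That existence is the real content of the lemma, and your sketch does not supply it.
First, the ``first exit points'' are not well defined. Closedness of $U$ separates a ray $Q\notin U$ from $U$ only by some $[x,G]$ with $G\subseteq\Rtop(Q)$, not by a cone $[x,\emptyset]$. In the tree above with $U=\{R,S\}$, every node of $B\notin U$ lies on $S\in U$, so $B$ never ``leaves all rays that remain in $U$''.
Second, your finiteness claim (``any other cover must contain points below each of them'') is exactly the minimum property you are trying to prove, so the argument is circular.

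The paper obtains a $\preceq$-minimal element by Zorn's lemma on $(\mF(t_R),\succeq)$, showing that every chain has a lower bound:
\begin{enumerate}
\item Closedness of $U$ gives an $F_0$ in the chain beyond which each $s\in F_0$ is refined by a single element. Otherwise infinitely repeated splitting would produce a limit ray lying in $U$, and its basic neighbourhood inside $U$ would contradict the minimality of the sets in the chain.
\item Each resulting chain $\mC(s)$ either has a maximum or converges to a ray $R_s$. Such an $R_s$ lies in $U$ by closedness and cannot be a branch, so a basic neighbourhood $[t,F(s)]\subseteq U$ with $\emptyset\neq F(s)\subseteq\Rtop(R_s)$ provides the replacement for $s$.
\item The pair $(t_R,\bigcup_{s\in F_0}F(s))$ is then the required lower bound.
\end{enumerate}
Only after this does directedness (the paper's $H$, or your variant) upgrade a minimal element to a minimum.
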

\begin{proof}
    Let us begin, using the well order in $R$, by fixing the least $t_R\in R$ for which there exists some finite set of tops $F'\subset T$ such that $R\in [t_R,F']\subseteq U$.

    Let $\mF(t_R)=\set{(t_R,F)\in \PP(T):(t_R,F)\in \mF}$ and note that $(t_R,F)\succ (t_R,F')$ if and only if $F'$ is above $F$.

    In order to show that we can apply Kuratowski-Zorn's Lemma to $(\mF(t_R),\succeq)$, suppose $\mC$ is a chain in $\mF(t_R)$. Minimality of the finite sets which appear in $\mF$ tells us that, if $(t_R,F), (t_R,F')\in \mC$ are such that $(t_R,F')\preceq(t_R,F)$, then for every $s\in F$ there is some $s'\in F'$ such that $s'\ge s$.

    We claim that there is some $F_0$ such that $(t_R,F_0)\in \mC$ and, for every $(t_R,F')\in \mC$ such that $(t_R,F')\preceq (t_R,F_0)$ and $s\in F_0$, there is a unique $s'\in F'$ with $s'\ge s$. 
    Indeed, suppose not. Then we can find a $\succeq$-decreasing sequence $((t_R,F_n):n\in\omega)$ in $\mC$, together with a sequence of pairs $(s^0_n,s^1_n:n\in\omega)$ such that, for every $n\in\omega$, $s^0_{n+1},s^1_{n+1}$ are distinct elements of $F_{n+1}$ and $s^0_{n+1},s^1_{n+1} \ge s^0_n$.
    
    Once again, minimality of each $F_n$ tells us that, in fact, $s^0_{n+1},s^1_{n+1}> s^0_n$. 
    In this case, consider the ray $R'\in\mR(T)$ in which $(s^0_n:n\in\omega)$ is cofinal. For each $n\in\omega$, choose a ray $R_n\in\mR(T)$ of which $s^0_n$ is a top. The minimality of $F_n$ implies that its members are pairwise incomparable, so $R_n\in[t_R,F_n]\subseteq U$. Moreover, $R_n\to R'$: every basic neighborhood $[t,F]$ of $R'$ contains $R_n$ for all sufficiently large $n$, because $t<s^0_n$ eventually and every member of $F\subseteq\Rtop(R')$ lies strictly above $s^0_n$. Since $U$ is closed, it follows that $R'\in U$. 
    However, since $U$ is open, this would tell us that there must be some $t\in R'$ and a finite set $F\subset T$ of tops of $R'$ such that $[t,F]\subseteq U$, which would contradict the minimality of the $F_n$'s above such $t$ (since, in this case, $[t,F_n\setminus{\set{s^1_n}}]$ would still be contained in $U$).

    Thus, we may fix such an $F_0$ and note that, for each $s\in F_0$, the set 
    \[
        \mC(s) = \set{s'\in \lfloor s \rfloor: \exists F'\subset T ( s'\in F'\wedge  (t_R,F')\in \mC \wedge (t_R,F')\preceq (t_R,F_0))}
    \]
    is a chain in $T$. 
    If $\mC(s)$ contains a maximal element $s'$, then we let $F(s) = \{s'\}$. 
    Otherwise, let us consider the ray $R_s\in \mR(T)$ in which $\mC(s)$ is cofinal. Then, since $U$ is closed and  $R_s$ is the accumulation point of the set of rays 
    \[
        \set{R''\in \mR(T): \text{ there is some $s''\in \mC(s)$ which is a top of $R''$}}\subseteq U,
    \]
    $R_s\in U$. Note $R_s$ cannot be a branch of $T$: otherwise, the fact that $U$ is open would tell us that there would be some $t\in R_s$ with $[t,\emptyset]\subseteq U$, contradicting the minimality of any $F$ such that $(t_R,F)\preceq (t_R,F_0)$ and $F\cap \mC(s)$ is above $t$. There must then be some $t\in R_s$ and a finite set $F(s)\subset T$ of tops of $R_s$ such that $[t,F(s)]\subseteq U$.

    In this case, let $F = \bigcup_{s\in F_0}F(s)$ (see \myref{FIG_LongestPair}). 
    We claim that
    \begin{equation}\label{EQ_LongestPair}
        [t_R,F] = \set{R_s:s\in F_0 \text{  and $\mC(s)$ does not attain a maximum}}\cup\left(\bigcup_{\substack{(t_R,F')\in \mC \\ (t_R,F')\preceq (t_R,F_0)}}[t_R,F']\right).
    \end{equation}
    Indeed, 
    suppose $R'\in [t_R,F]$. 
    Note that $[t_R,F_0]\subseteq [t_R,F']$ for every $(t_R,F')\in \mC$ such that $(t_R,F')\preceq (t_R,F_0)$, so we may assume that $R'\notin [t_R,F_0]$, without loss of generality.
    In this case, let $s\in F_0\cap R'$. 
    If $R'\neq R_s$, then there is a $(t_R,F')\in \mF(t_R)$  with $(t_R,F')\prec (t_R,F_0)$ and an $s'\in F'$ such that $s'$ is above $R'$. 
    Hence, $R'\in [t_R,F']$, as desired.

\begin{figure}
    \centering

\tikzset{every picture/.style={line width=0.75pt}} 

\begin{tikzpicture}[x=0.75pt,y=0.75pt,yscale=-1,xscale=1]

\draw  [dash pattern={on 0.84pt off 2.51pt}] (329.51,275) -- (189.27,15) -- (468.92,14.57) -- cycle ;
\draw [color={rgb, 255:red, 0; green, 0; blue, 0 }  ,draw opacity=1 ] [dash pattern={on 0.84pt off 2.51pt}]  (313.33,180.11) .. controls (292.67,227.45) and (350.67,190.78) .. (329.51,275) ;
\draw [shift={(313.33,180.11)}, rotate = 113.59] [color={rgb, 255:red, 0; green, 0; blue, 0 }  ,draw opacity=1 ][fill={rgb, 255:red, 0; green, 0; blue, 0 }  ,fill opacity=1 ][line width=0.75]      (0, 0) circle [x radius= 3.35, y radius= 3.35]   ;
\draw [color={rgb, 255:red, 0; green, 0; blue, 0 }  ,draw opacity=1 ] [dash pattern={on 0.84pt off 2.51pt}]  (276.5,124) .. controls (283.31,134.55) and (292.03,160.54) .. (299,166) .. controls (305.97,171.46) and (313.99,175.55) .. (313.33,180.11) ;
\draw [shift={(276.5,124)}, rotate = 57.17] [color={rgb, 255:red, 0; green, 0; blue, 0 }  ,draw opacity=1 ][fill={rgb, 255:red, 0; green, 0; blue, 0 }  ,fill opacity=1 ][line width=0.75]      (0, 0) circle [x radius= 3.35, y radius= 3.35]   ;
\draw [color={rgb, 255:red, 0; green, 0; blue, 0 }  ,draw opacity=1 ] [dash pattern={on 0.84pt off 2.51pt}]  (310,128.11) .. controls (311.33,148.11) and (314.67,170.78) .. (313.33,180.11) ;
\draw [shift={(310,128.11)}, rotate = 86.19] [color={rgb, 255:red, 0; green, 0; blue, 0 }  ,draw opacity=1 ][fill={rgb, 255:red, 0; green, 0; blue, 0 }  ,fill opacity=1 ][line width=0.75]      (0, 0) circle [x radius= 3.35, y radius= 3.35]   ;
\draw [color={rgb, 255:red, 0; green, 0; blue, 0 }  ,draw opacity=1 ] [dash pattern={on 0.84pt off 2.51pt}]  (355,114) .. controls (349.5,133) and (314.67,147.28) .. (313.33,156.61) ;
\draw [shift={(355,114)}, rotate = 106.14] [color={rgb, 255:red, 0; green, 0; blue, 0 }  ,draw opacity=1 ][fill={rgb, 255:red, 0; green, 0; blue, 0 }  ,fill opacity=1 ][line width=0.75]      (0, 0) circle [x radius= 3.35, y radius= 3.35]   ;
\draw [color={rgb, 255:red, 0; green, 0; blue, 0 }  ,draw opacity=1 ] [dash pattern={on 0.84pt off 2.51pt}]  (278.94,129.9) .. controls (285.58,143.33) and (291.81,157.36) .. (299,166) .. controls (306.53,175.04) and (313.99,175.55) .. (313.33,180.11) ;
\draw [shift={(278,128)}, rotate = 63.43] [color={rgb, 255:red, 0; green, 0; blue, 0 }  ,draw opacity=1 ][line width=0.75]    (10.93,-4.9) .. controls (6.95,-2.3) and (3.31,-0.67) .. (0,0) .. controls (3.31,0.67) and (6.95,2.3) .. (10.93,4.9)   ;
\draw [color={rgb, 255:red, 0; green, 0; blue, 0 }  ,draw opacity=1 ] [dash pattern={on 0.84pt off 2.51pt}]  (310.48,134.87) .. controls (311.87,154.03) and (314.62,171.11) .. (313.33,180.11) ;
\draw [shift={(310.33,132.78)}, rotate = 86.19] [color={rgb, 255:red, 0; green, 0; blue, 0 }  ,draw opacity=1 ][line width=0.75]    (10.93,-4.9) .. controls (6.95,-2.3) and (3.31,-0.67) .. (0,0) .. controls (3.31,0.67) and (6.95,2.3) .. (10.93,4.9)   ;
\draw [color={rgb, 255:red, 0; green, 0; blue, 0 }  ,draw opacity=1 ] [dash pattern={on 0.84pt off 2.51pt}]  (351.75,119.79) .. controls (338.11,138.72) and (318.51,144.97) .. (313.33,156.61) ;
\draw [shift={(353,118)}, rotate = 123.91] [color={rgb, 255:red, 0; green, 0; blue, 0 }  ,draw opacity=1 ][line width=0.75]    (10.93,-4.9) .. controls (6.95,-2.3) and (3.31,-0.67) .. (0,0) .. controls (3.31,0.67) and (6.95,2.3) .. (10.93,4.9)   ;
\draw [color={rgb, 255:red, 208; green, 2; blue, 27 }  ,draw opacity=1 ] [dash pattern={on 0.84pt off 2.51pt}]  (268.5,72) .. controls (275,102.5) and (271,104.5) .. (276.5,124) ;
\draw [shift={(268.5,72)}, rotate = 77.97] [color={rgb, 255:red, 208; green, 2; blue, 27 }  ,draw opacity=1 ][fill={rgb, 255:red, 208; green, 2; blue, 27 }  ,fill opacity=1 ][line width=0.75]      (0, 0) circle [x radius= 3.35, y radius= 3.35]   ;
\draw [color={rgb, 255:red, 208; green, 2; blue, 27 }  ,draw opacity=1 ] [dash pattern={on 0.84pt off 2.51pt}]  (308,63.33) .. controls (309,92.33) and (310,110.11) .. (310,128.11) ;
\draw [shift={(308,63.33)}, rotate = 88.03] [color={rgb, 255:red, 208; green, 2; blue, 27 }  ,draw opacity=1 ][fill={rgb, 255:red, 208; green, 2; blue, 27 }  ,fill opacity=1 ][line width=0.75]      (0, 0) circle [x radius= 3.35, y radius= 3.35]   ;
\draw [color={rgb, 255:red, 208; green, 2; blue, 27 }  ,draw opacity=1 ] [dash pattern={on 0.84pt off 2.51pt}]  (347.67,39.11) .. controls (347.67,39) and (347.33,35.33) .. (351,42.67) ;
\draw [shift={(347.67,39.11)}, rotate = 270] [color={rgb, 255:red, 208; green, 2; blue, 27 }  ,draw opacity=1 ][fill={rgb, 255:red, 208; green, 2; blue, 27 }  ,fill opacity=1 ][line width=0.75]      (0, 0) circle [x radius= 3.35, y radius= 3.35]   ;
\draw [color={rgb, 255:red, 208; green, 2; blue, 27 }  ,draw opacity=1 ] [dash pattern={on 0.84pt off 2.51pt}]  (308,68) .. controls (309.33,94) and (310,110.11) .. (310,128.11) ;
\draw [color={rgb, 255:red, 208; green, 2; blue, 27 }  ,draw opacity=1 ] [dash pattern={on 0.84pt off 2.51pt}]  (357.59,44.83) .. controls (356.31,79.44) and (355.65,82.31) .. (355,114) ;
\draw [shift={(357.67,42.67)}, rotate = 92.08] [color={rgb, 255:red, 208; green, 2; blue, 27 }  ,draw opacity=1 ][line width=0.75]    (10.93,-4.9) .. controls (6.95,-2.3) and (3.31,-0.67) .. (0,0) .. controls (3.31,0.67) and (6.95,2.3) .. (10.93,4.9)   ;
\draw  [color={rgb, 255:red, 0; green, 0; blue, 0 }  ,draw opacity=1 ][dash pattern={on 0.84pt off 2.51pt}] (267,106.5) -- (362.5,106.5) -- (362.5,145.89) -- (267,145.89) -- cycle ;
\draw  [color={rgb, 255:red, 208; green, 2; blue, 27 }  ,draw opacity=1 ][dash pattern={on 0.84pt off 2.51pt}] (260.33,29.78) -- (377.33,29.78) -- (377.33,85.78) -- (260.33,85.78) -- cycle ;
\draw [color={rgb, 255:red, 208; green, 2; blue, 27 }  ,draw opacity=1 ] [dash pattern={on 0.84pt off 2.51pt}]  (269.5,77.5) .. controls (273.33,99) and (272.67,106.33) .. (276.5,124) ;
\draw [color={rgb, 255:red, 208; green, 2; blue, 27 }  ,draw opacity=1 ] [dash pattern={on 0.84pt off 2.51pt}]  (358,37.45) .. controls (358,37.33) and (357.67,33.67) .. (361.33,41) ;
\draw [shift={(358,37.45)}, rotate = 270] [color={rgb, 255:red, 208; green, 2; blue, 27 }  ,draw opacity=1 ][fill={rgb, 255:red, 208; green, 2; blue, 27 }  ,fill opacity=1 ][line width=0.75]      (0, 0) circle [x radius= 3.35, y radius= 3.35]   ;
\draw [color={rgb, 255:red, 208; green, 2; blue, 27 }  ,draw opacity=1 ] [dash pattern={on 0.84pt off 2.51pt}]  (368.33,39.11) .. controls (368.33,39) and (368,35.33) .. (371.67,42.67) ;
\draw [shift={(368.33,39.11)}, rotate = 270] [color={rgb, 255:red, 208; green, 2; blue, 27 }  ,draw opacity=1 ][fill={rgb, 255:red, 208; green, 2; blue, 27 }  ,fill opacity=1 ][line width=0.75]      (0, 0) circle [x radius= 3.35, y radius= 3.35]   ;

\draw (324.33,171.51) node [anchor=north west][inner sep=0.75pt]  [color={rgb, 255:red, 0; green, 0; blue, 0 }  ,opacity=1 ]  {$t_{R}$};
\draw (369.67,117.68) node [anchor=north west][inner sep=0.75pt]  [color={rgb, 255:red, 0; green, 0; blue, 0 }  ,opacity=1 ]  {$F_{0}$};
\draw (387,32.85) node [anchor=north west][inner sep=0.75pt]  [color={rgb, 255:red, 208; green, 2; blue, 27 }  ,opacity=1 ]  {$F$};
\draw (434,24.51) node [anchor=north west][inner sep=0.75pt]    {$T$};

\end{tikzpicture}
    \caption{Representation of the pair $(t_R,F)$, obtained as the infimum of the chain $\mC$ in the proof of \myref{LEMMA_BroadestPair}.}
    \label{FIG_LongestPair}
\end{figure}

    On the other hand, by construction of $F$, $R_s\in [t_R,F]$ for every $s\in F_0$ such that $\mC(s)$ does not attain a maximum and $(t_R,F)\preceq (t_R,F')$ for every $(t_R,F')\in \mC$ with $(t_R,F')\preceq (t_R,F_0)$. 
    Hence, equality \ref{EQ_LongestPair} has been demonstrated. 
    
    In particular, $(t_R,F)\in \mF(t_R)$ and $(t_R,F)\preceq (t_R, F')$ for every $(t_R,F')\in \mC$, so $(t_R,F)$ is, indeed, an infimum of $\mC$ in $\mF(t_R)$. 

    Now, using Kuratowski-Zorn's Lemma, let $(t_R,F_R)\in \mF(t_R)$ be a $\preceq$-minimal element in $\mF(t_R)$. We claim that $(t_R,F_R)$ must, in fact, be a $\preceq$-minimum element in $\mF(t_R)$, which concludes the proof. 
    Indeed, fix $(t_R,F')\in\mF(t_R)$ and let
    \[
        H=\set{\max\{s,s'\}:s\in F_R,\ s'\in F',\text{ and $s,s'$ are comparable}}.
    \]
    Since the nodes in every ray form a chain, we have
    \[
        [t_R,H]=[t_R,F_R]\cup[t_R,F']\subseteq U.
    \]
    Choose an inclusion-minimal $H_0\subseteq H$ such that $[t_R,H_0]\subseteq U$. Then $(t_R,H_0)\in\mF(t_R)$. Every member of $H_0$ is above a member of $F_R$ and a member of $F'$, and hence
    \[
        (t_R,H_0)\preceq(t_R,F_R)
        \qquad\text{and}\qquad
        (t_R,H_0)\preceq(t_R,F').
    \]
    The $\preceq$-minimality of $(t_R,F_R)$ now gives $(t_R,H_0)=(t_R,F_R)$, so $(t_R,F_R)\preceq(t_R,F')$. Thus $(t_R,F_R)$ is a $\preceq$-minimum element of $\mF(t_R)$, as desired.
\end{proof}

\begin{cor}\label{COR_Clopen_SpreadoutPartition}
    Suppose $T$ is a tree and $U\subseteq \mR(T)$ is clopen. If, for each $R\in U$, $(t_R,F_R)\in \PP(T)$ is as in \myref{LEMMA_BroadestPair}, then $\set{[t_R,F_R]:R\in U}$ is a partition of $U$.

    In particular, $\mS_U = \set{(t_R,F_R):R\in U}$ is spread out in $T$.
\end{cor}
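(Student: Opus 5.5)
The plan is to show that the sets $[t_R,F_R]$, for $R\in U$, cover $U$, are contained in $U$, and are pairwise equal or disjoint. The second part then follows from the first together with \myref{PROP_spreadout_clopen_eqv}.

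Covering and containment are immediate. By \myref{LEMMA_BroadestPair}, $R\in[t_R,F_R]\subseteq U$ for every $R\in U$. The substance is the following claim: if $R,R'\in U$ and $[t_R,F_R]\cap[t_{R'},F_{R'}]\neq\emptyset$, then $(t_R,F_R)=(t_{R'},F_{R'})$.

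To prove the claim, I will first show that $R'\in[t_R,F_R]$ forces $(t_{R'},F_{R'})=(t_R,F_R)$. Recall from the proof of \myref{LEMMA_BroadestPair} that $t_R$ is the least $t\in R$ for which some finite set of tops $F$ gives $R\in[t,F]\subseteq U$, and that $F_R$ is the $\preceq$-minimum among the minimal $F$ for this $t_R$.

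Suppose $R'\in[t_R,F_R]$. Then $t_R\in R'$, and $[t_R,F_R]\subseteq U$ witnesses that $t_{R'}\le t_R$.

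Conversely, $R'\in[t_{R'},F_{R'}]\subseteq U$ with $t_{R'}\le t_R$, so $t_{R'}\in R$. The set $F_{R'}$ consists of tops above $t_{R'}$. Those members lying in $R$ cannot be removed naively, so I will argue as follows. Both $R$ and $R'$ contain $t_R$. Since $U$ is open and clopen, the union $[t_{R'},F_{R'}]\cup[t_R,F_R]$ equals $[t_{R'},H]$ for a suitable merged set of tops $H$, exactly as in the last paragraph of the lemma's proof. Indeed, every element of $F_{R'}$ that lies at or below $t_R$ is incompatible with $R'$, and hence cannot lie on $R$'s path below $t_R$ in a way that excludes $R$. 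I expect this to show $R\in[t_{R'},H]\subseteq U$, so the minimality of $t_R$ gives $t_R\le t_{R'}$, and therefore $t_R=t_{R'}$.

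With equal first coordinates, $F_R$ and $F_{R'}$ both belong to the common family $\mF(t_R)=\mF(t_{R'})$. The point is that membership of $(t,F)$ in $\mF$ depends only on the condition $[t,F]\subseteq U$ together with minimality of $F$, and does not depend on which ray in $[t,F]$ is the distinguished one. Since $R'\in[t_R,F_R]$, both pairs are minimum elements of the same family, so they coincide.

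For general intersecting pairs, I take $R''\in[t_R,F_R]\cap[t_{R'},F_{R'}]$. Applying the step above twice gives $(t_{R''},F_{R''})=(t_R,F_R)=(t_{R'},F_{R'})$. This proves the claim, and hence the partition.

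For the second statement, $\mS_U$ is pairwise disjoint by \myref{PROP_PairwiseDisjointPairCollection}, since distinct pairs in $\mS_U$ give disjoint sets. Moreover $\bigvee\mS_U=U$ is clopen, so \myref{PROP_spreadout_clopen_eqv} shows that $\mS_U$ is spread out. If $U=\emptyset$, the family is empty and the statement is degenerate, so I will assume $U$ is nonempty.

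The main obstacle is the step $t_R=t_{R'}$. Specifically, I must check carefully that $\mF$ is determined by the pair $(t,F)$ rather than by $R$, and that the least-$t$ choice is the same for every ray in $[t_R,F_R]$. I would try first the direct route: show that for every ray $Q\in[t_R,F_R]$ and every $t<t_R$ in $Q$ (equivalently, in $R$, since they agree below $t_R$), no $[t,F]$ with $Q\in[t,F]\subseteq U$ exists. I would derive this from the corresponding fact for $R$ by merging tops, using that $[t,F]\cup[t_R,F_R]$ is a basic set of the form $[t,H]$ containing $R$.
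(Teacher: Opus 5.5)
Your route differs from the paper's, and it is the more careful of the two. The paper reads \myref{LEMMA_BroadestPair} as giving a pair that is longest among \emph{all} members of $\mF$, whatever their first coordinate. Then $\tilde R\in[t_R,F_R]\cap[t_{R'},F_{R'}]$ yields $[t_{\tilde R},F_{\tilde R}]\supseteq[t_R,F_R]\cup[t_{R'},F_{R'}]$, and minimality of the two outer pairs finishes in two lines. However, the lemma's proof only shows that $(t_R,F_R)$ is the minimum of $\mF(t_R)$, the pairs whose base is exactly $t_R$. Across bases the literal $\succeq$-comparison can even fail: a pair $(t,\emptyset)$ with $t_R<t\in R$ and $[t,\emptyset]\subseteq U$ is never $\succeq(t_R,F_R)$ when $F_R\neq\emptyset$. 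So the paper tacitly needs a cross-base comparison, and your base-agreement step is exactly what supplies it. Your reduction to the case $R'\in[t_R,F_R]$ (applied twice via $R''$) is fine. So is the final appeal to \myref{PROP_spreadout_clopen_eqv}, including your remark that $U\neq\emptyset$ is needed.

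Two steps are not right as written, though both are easily repaired.

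First, the lemma's merge does not transfer verbatim, because there both pairs share a base. Here $t_{R'}\le t_R$, and merging via $\max\{s,s'\}$ over comparable pairs drops the members of $F_{R'}$ that are incomparable with $t_R$. These members are comparable with no member of $F_R$, so rays through them would be admitted even though they lie in neither set. Also, no member of $F_{R'}$ lies at or below $t_R$ at all, since it would then lie in $R'\ni t_R$. A clean fix needs no union identity. Let $F^b$ be the members of $F_{R'}$ incomparable with $t_R$; all other members are above $t_R$. Then $R\in[t_{R'},F^b\cup F_R]\subseteq U$, because $R$ avoids $F^b$ (it contains $t_R$) and every ray in this set falls into one of two cases:
\begin{itemize}
\item it contains $t_R$, and then it lies in $[t_R,F_R]$;
\item it misses $t_R$, hence misses all members of $F_{R'}$ above $t_R$, and then it lies in $[t_{R'},F_{R'}]$.
\end{itemize}
Leastness of $t_R$ then gives $t_R\le t_{R'}$.

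Second, there is no ``common family''. $\mF$ depends on the distinguished ray through the condition $Q\in[t,F]$. Writing $\mF_Q(t)$ for it, in general $\mF_R(t_R)\neq\mF_{R'}(t_R)$, and $R'\in[t_R,F_R]$ only gives $(t_R,F_R)\in\mF_{R'}(t_R)$. You must add the converse direction:
\begin{itemize}
\item Since $(t_R,F_{R'})$ is the minimum of $\mF_{R'}(t_R)$, the set $F_{R'}$ is above $F_R$.
\item Hence $R\in[t_R,F_R]\subseteq[t_R,F_{R'}]$, so $(t_R,F_{R'})\in\mF_R(t_R)$, and therefore $F_R$ is above $F_{R'}$.
\item Minimal sets of tops are antichains, so this mutual aboveness forces $F_R=F_{R'}$.
\end{itemize}
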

\begin{proof}
    Suppose $R,R'\in U$ are such that $[t_{R},F_{R}]\cap [t_{R'},F_{R'}]\neq\emptyset$ and fix $\tilde{R}\in [t_{R},F_{R}]\cap [t_{R'},F_{R'}]$. 
    Then minimality of $(t_{\tilde{R}},F_{\tilde{R}})$ tells us that $(t_{\tilde{R}},F_{\tilde{R}})$ is longer than or equal to both $(t_{R},F_{R})$ and $ (t_{R'},F_{R'})$. However, this entails $[t_{\tilde{R}},F_{\tilde{R}}]$ contains both $[t_{R},F_{R}]$ and $ [t_{R'},F_{R'}]$ (thus, in particular, it also contains $R$ and $R'$). Hence, minimality of $(t_{R},F_{R})$ and $ (t_{R'},F_{R'})$ entails
    \[
        (t_{R},F_{R}) = (t_{\tilde{R}},F_{\tilde{R}}) = (t_{R'},F_{R'}),
    \]
    as desired.

    \sloppy Finally, since $\set{[t_R,F_R]:R\in U}$ is pairwise disjoint and its union is equal to the clopen $U$, it follows from \myref{PROP_PairwiseDisjointPairCollection} and \myref{PROP_spreadout_clopen_eqv} that $\mS_U = \set{(t_R,F_R):R\in U}$ is spread out in $T$.
\end{proof}

Finally, we have every necessary tool at hand to show our combinatorial characterization:

\begin{thm}\label{THM_CombinatorialCharacterization_SpreaoutMap}
    Suppose $T$ and $T'$ are pruned trees. Then $\mR(T)\simeq \mR(T')$ if and only if there is a map $\psi\colon T\to \bbS(T')$ satisfying all of the following properties:
    \begin{enumerate}[label=(\roman*)]
        \item\label{item_CombChar_RootCondition} If $r\in T$ is the root of $T$, then $\psi(r) = \{(r',\emptyset)\}$, where $r'\in T'$ is the root of $T'$.
        \item\label{item_CombChar_OrderMorphismCondition} If $t\le s$ in $T$, then $\psi(t)\le \psi(s)$ in $\bbS(T')$.
        \item\label{item_CombChar_IncompatibleCondition} If $t,s\in T$ are incompatible, then $\psi(t)$ and $\psi(s)$ are ray-disjoint.
        
        \item\label{item_CombChar_CoveringCondition} If $t\in T$, then
        \[\bigvee \psi(t) = \bigcup_{s\in \succs(t)}\bigvee\psi(s)\]
        
        \item\label{item_CombChar_LimitCondition} If $R$ is a ray in $T$, then there is a unique ray $R'$ in $T'$ such that 
        \begin{equation}\label{EQ_CombChar_SpreadoutMapLimitPartition1}
            \bigcap_{t\in R}\left(\bigvee\psi(t)\right)\setminus \bigcup_{s\in \Rtop(R)}\left(\bigvee\psi(s)\right)  = \{R'\}
        \end{equation}
        and, for each $t'\in R'$ and each finite $F'\subseteq \Rtop(R')$, there are $t\in R$ and finite $F\subseteq \Rtop(R)$ such that
        \begin{equation}\label{EQ_CombChar_SpreadoutMapLimitPartition2}
            \bigvee\psi(t)\setminus \bigcup_{s\in F}\left(\bigvee\psi(s)\right) \subseteq [t',F'].
        \end{equation}
    \end{enumerate}
\end{thm}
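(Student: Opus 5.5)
We prove the two implications separately. The forward direction transports the standard basic clopen sets along a homeomorphism. The backward direction reads off a homeomorphism directly from condition \ref{item_CombChar_LimitCondition}.

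\textbf{Forward direction.} Fix a homeomorphism $h\colon \mR(T)\to\mR(T')$. For each $t\in T$, the set $[t,\emptyset]$ is clopen in $\mR(T)$; this is precisely the case $\mS=\{(t,\emptyset)\}$ of \myref{PROP_spreadout_clopen_eqv}, since such a singleton is trivially pairwise disjoint and clearly spread out. Hence $U_t:=h[[t,\emptyset]]$ is clopen in $\mR(T')$. We then set $\psi(t):=\mS_{U_t}$, the canonical spread out family given by \myref{COR_Clopen_SpreadoutPartition}.

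The conditions are checked as follows.
\begin{itemize}
\item Condition \ref{item_CombChar_RootCondition}: we have $U_r=\mR(T')$, whose canonical family is $\{(r',\emptyset)\}$.
\item Condition \ref{item_CombChar_IncompatibleCondition}: incompatible $t,s$ give disjoint sets $[t,\emptyset]$ and $[s,\emptyset]$, so we conclude by \myref{PROP_RayDisjointCollection}. We must also check $\psi(t)\cap\psi(s)=\emptyset$, which is automatic because the sets $[t',F']$ involved are nonempty (the trees are pruned).
\item Condition \ref{item_CombChar_CoveringCondition}: since $T$ is pruned, $[t,\emptyset]=\bigcup_{s\in\succs(t)}[s,\emptyset]$, and $h$ preserves unions.
\item Condition \ref{item_CombChar_OrderMorphismCondition}: for $t\le s$ we have $U_s\subseteq U_t$. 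Given $R'\in U_s$, the pair $(t^{s}_{R'},F^{s}_{R'})$ is the longest pair for $U_s$ in the sense of \myref{LEMMA_BroadestPair}, and any pair contained in $U_s$ is contained in $U_t$. So we must show that the longest pair of $U_t$ at $R'$ is longer than every pair for $U_s$ at $R'$. I would argue this via the minimum property in \myref{LEMMA_BroadestPair}. After shrinking $F$ to a minimal subset, the pair lies in $\mF$ for $U_t$, and the minimum is $\preceq$ it; shrinking $F$ only makes the pair longer in the relevant sense, which needs a short check.
\item Condition \ref{item_CombChar_LimitCondition}: for a ray $R$ of $T$, the left side of (\ref{EQ_CombChar_SpreadoutMapLimitPartition1}) is $h\bigl[\bigcap_{t\in R}[t,\emptyset]\setminus\bigcup_{s\in\Rtop(R)}[s,\emptyset]\bigr]=h[\{R\}]$. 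So $R'=h(R)$, and (\ref{EQ_CombChar_SpreadoutMapLimitPartition2}) is exactly continuity of $h$ at $R$, because the sets $[t,F]$ with $t\in R$ and $F\subseteq\Rtop(R)$ finite form a neighbourhood base of $R$.
\end{itemize}

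\textbf{Backward direction.} Define $h(R)$ to be the $R'$ given by \ref{item_CombChar_LimitCondition}. The second clause of \ref{item_CombChar_LimitCondition} gives continuity of $h$ directly. Note that $\bigvee\psi(t)\setminus\bigcup_{s\in F}\bigvee\psi(s)$ contains $h[[t,F]]$: if $R_1\in[t,F]$, then $t\in R_1$ and no $s\in F$ lies in $R_1$, and by conditions \ref{item_CombChar_OrderMorphismCondition} and \ref{item_CombChar_IncompatibleCondition} this forces $h(R_1)\in\bigvee\psi(t)\setminus\bigvee\psi(s)$. I expect this containment to be the main technical check. It uses an auxiliary claim: $h(R_1)\in\bigvee\psi(u)$ if and only if $u\in R_1\cup\Rtop(R_1)$ lies on the branch of $R_1$. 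For the forward half, take $u\in R_1$; condition \ref{item_CombChar_LimitCondition} gives $h(R_1)\in\bigvee\psi(u)$ for $u\in R_1$, and \ref{item_CombChar_IncompatibleCondition} excludes nodes incomparable with the branch. The remaining case is $u$ above a top of $R_1$, which is excluded since $h(R_1)\notin\bigvee\psi(s)$ for $s\in\Rtop(R_1)$ and $\psi(u)\ge\psi(s)$.

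\textbf{Bijectivity and openness.} Injectivity follows from the auxiliary claim: distinct rays $R_1\ne R_2$ are separated by some node $u\in R_1\setminus R_2$, and then $h(R_1)\in\bigvee\psi(u)\not\ni h(R_2)$. For surjectivity, take $R'\in\mR(T')$. Using \ref{item_CombChar_RootCondition} and \ref{item_CombChar_CoveringCondition}, together with disjointness of successors from \ref{item_CombChar_IncompatibleCondition}, we build recursively the unique chain $R=\{t:R'\in\bigvee\psi(t)\}$. This set is down-closed by \ref{item_CombChar_OrderMorphismCondition}. At limit stages, the first clause of \ref{item_CombChar_LimitCondition} says that either $R'=h(R)$ or $R'$ lies in $\bigvee\psi(s)$ for some top $s$, so the recursion continues. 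Because $T$ is pruned and the covering condition \ref{item_CombChar_CoveringCondition} holds, $R$ has no maximum, so the recursion must terminate at a limit with $R'=h(R)$. Finally, openness follows since $h[[t,F]]=\bigvee\psi(t)\setminus\bigcup_{s\in F}\bigvee\psi(s)$, which is open (indeed clopen) by \myref{PROP_spreadout_clopen_eqv}; equality holds by bijectivity together with the auxiliary claim.
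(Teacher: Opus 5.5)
Your backward direction is sound and essentially the paper's. Your auxiliary claim should simply read $h(R_1)\in\bigvee\psi(u)\iff u\in R_1$, which is what your case analysis proves, and it uses (ii) only through $\bigvee\psi(s)\subseteq\bigvee\psi(t)$. The forward direction is also the paper's construction, and the genuine gap is the step in (ii) that you left as ``a short check''. You need a $\succeq$-minimum of all of $\mF$ for $U_t$ at $R'$. However, the proof of \myref{LEMMA_BroadestPair} only yields a minimum inside $\mF(t_R)$, the pairs whose first coordinate is the least admissible node. Your shrunken pair $(t^s_{R'},F'')$ can have a strictly larger first coordinate. In that case the top set $F^t_{R'}$ of the pair of $U_t$ at $R'$ can contain a top incomparable with $t^s_{R'}$, and such a top is above no member of $F''$. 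Concretely, let $T'$ have root $r'$ with successors $a,b$, an $\omega$-chain above $a$ (ray $A$), an $\omega$-chain above $b$ (ray $B$) with a single top $x$, and an $\omega$-chain above $x$ (ray $C$); then $\mR(T')$ is a discrete three-point space. Let $T$ have a root with successors $t,u$, let $t$ have successors $s,v$, put $\omega$-chains above $s,v,u$, and send these three rays to $A,B,C$. Then $U_t=\{A,B\}=[r',\{x\}]$ and $U_s=\{A\}$, and the canonical families are $\psi(t)=\{(r',\{x\})\}$ and $\psi(s)=\{(a,\emptyset)\}$. But $(a,\emptyset)\succeq(r',\{x\})$ fails, because $x$ is above no member of $\emptyset$. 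Both pairs lie in $\mF$ for $A$ in $U_t$ and are $\succeq$-incomparable, so the lemma's minimum of $\mF$ need not exist.

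This cannot be repaired by choosing $\psi$ more cleverly: with $\succeq$ as in \myref{DEF_ShorterPair}, the forward implication is false. Let $T$ be the comb of rays (spine $D$, side rays $E_n$, height $\omega$). Let $T'$ have a spine $B=\{b_0<b_1<\cdots\}$, side rays $A_n$ branching at $b_n$, a single top $x$ of $B$, and a ray $C$ through $x$. Both ray spaces are convergent sequences. Suppose $\psi$ satisfied (i)--(v).
\begin{itemize}
\item Each $P_{R'}=\{u\in T:R'\in\bigvee\psi(u)\}$ is a ray, and $R'\mapsto P_{R'}$ is injective by (v).
\item $P_B=D$: otherwise, for the first off-spine node $e$ of $P_B$, injectivity gives $\bigvee\psi(e)=\{B\}$, which is not open.
\item Pick $t\in D\setminus P_C$. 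The member of $\psi(t)$ containing $B$ must exclude $C$, and $x$ is the only top of $T'$, so it is $(b_m,\{x\})$ for some $m$.
\item Fix $n\ge m$. Then $t\in P_{A_n}$, so we may pick $s>t$ in $P_{A_n}\setminus D$. By (ii) and pairwise disjointness of $\psi(t)$, the member of $\psi(s)$ containing $A_n$ is $\succeq(b_m,\{x\})$.
\item Its top set must therefore be $\{x\}$, and its first coordinate is a node of $A_n$ below $x$. So it is some $(b_k,\{x\})$ with $k\le n$, whence $B\in\bigvee\psi(s)$, contradicting $s\notin D=P_B$.
\end{itemize}
The paper's proof asserts (ii) ``by minimality'' and has the same gap.

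What your construction does prove is (ii) with $\succeq$ weakened to inclusion of the basic sets. Write $(t^t_{R'},F^t_{R'})$ for the pair of $U_t$ at $R'$, and note that $t^t_{R'}\le t^s_{R'}$. Express $[t^t_{R'},F^t_{R'}]\cup[t^s_{R'},F^s_{R'}]$ as a single $[t^t_{R'},H]\subseteq U_t$, keeping the members of $F^t_{R'}$ incomparable with $t^s_{R'}$ and adding the maxima of comparable pairs. Take a minimal $H_0\subseteq H$. The minimum inside $\mF(t^t_{R'})$ then gives $[t^s_{R'},F^s_{R'}]\subseteq[t^t_{R'},H_0]\subseteq[t^t_{R'},F^t_{R'}]$. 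That weaker form is all the backward direction uses.
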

\begin{proof}
    Suppose $f\colon \mR(T)\to \mR(T')$ is a homeomorphism.
    Let $\psi\colon T\to\bbS(T')$ be such that $\psi(t) = \set{(t_R,F_R):R\in f[[t,\emptyset]]}$ for every $t\in T$, where each pair $(t_R,F_R)$ is as in \myref{LEMMA_BroadestPair} for $R\in f[[t,\emptyset]]$ (so that, by \myref{COR_Clopen_SpreadoutPartition}, $\psi(t)$ is a spread out family in $T'$ for every $t\in T$).
    In particular, note that, for every $t\in T$,
    \begin{equation}\label{EQ_CombChar_SpreadoutMapLimitPartition0}
        \bigvee\psi(t) = f[[t,\emptyset]].
    \end{equation}
    
    We claim that $\psi$ satisfies conditions (i)--(v) in \myref{THM_CombinatorialCharacterization_SpreaoutMap}.

    Indeed, note that $\psi(r) = \{(r',\emptyset)\}$, where $r'\in T'$ is the root of $T'$ (since $f[[r,\emptyset]] = \mR(T')$ and $(r',\emptyset)$ is the longest pair in $\PP(T')$), so $\psi$ satisfies  condition \ref{item_CombChar_RootCondition}.

    On the other hand, minimality of each $(t',F')\in \psi(t)$ with respect to the clopen $f[[t,\emptyset]]$ tells us that $\psi(s)\ge \psi(t)$ whenever $s\ge t$ (since $f[[s,\emptyset]]\subseteq f[[t,\emptyset]]$, in this case). Thus, condition \ref{item_CombChar_OrderMorphismCondition} holds for $\psi$.

    Now suppose $t,s\in T$ are incompatible. The fact that $f[[t,\emptyset]]\cap f[[s,\emptyset]]=\emptyset$ tells us that $\psi(t)$ and $\psi(s)$ are ray-disjoint (in view of \myref{EQ_CombChar_SpreadoutMapLimitPartition0} and  \myref{PROP_RayDisjointCollection}). Therefore, $\psi$ also satisfies condition \ref{item_CombChar_IncompatibleCondition}.

    Moreover, note that
    \[
        \bigvee \psi(t) = f[[t,\emptyset]] = \bigcup_{s\in \succs(t)} f[[s,\emptyset]] = \bigcup_{s\in \succs(t)}\bigvee\psi(s)
    \]
    for every $t\in T$, so it is shown that condition \ref{item_CombChar_CoveringCondition} also holds for $\psi$.
         
    At last, for each ray $R$ in $T$, note that
    \[
        \left(\bigcap_{t\in R}[t,\emptyset]\right) \setminus \bigcup_{s\in \Rtop(R)}[s,\emptyset] = \{R\},
    \]
    so
    \[
        \left(\bigcap_{t\in R}\bigvee\psi(t)\right) \setminus \bigcup_{s\in \Rtop(R)}\left(\bigvee\psi(s)\right) = \left(\bigcap_{t\in R}f[[t,\emptyset]]\right) \setminus \bigcup_{s\in \Rtop(R)}\left(f[[s,\emptyset]]\right) = \{f(R)\}.
    \]
    
    Furthermore, since
    \[
        \set{[t,\emptyset]\setminus \left(\bigcup_{s\in F}[s,\emptyset] \right) = [t,F]:t\in R \text{ and  } F\subseteq\Rtop(R) \text{ is finite } }
    \]
    is a local basis for $R\in \mR(T)$, it follows from the fact that $f$ is a homeomorphism that
    \begin{gather*}
        \set{\bigvee\psi(t)\setminus \left(\bigcup_{s\in F}\left(\bigvee\psi(s)\right) \right) : t\in R \text{ and finite } F\subseteq\Rtop(R)} \\
        =\set{f[[t,\emptyset]]\setminus \left(\bigcup_{s\in F}f[[s,\emptyset]] \right):t\in R \text{ and finite } F\subseteq\Rtop(R)}
    \end{gather*}
     is a local basis for $f(R)\in\mR(T')$. 
     Thus, for all $t'\in f(R)$ and finite $F'\subseteq \Rtop(f(R))$, we can find a $t\in R$ and a finite $F\subseteq \Rtop(R)$ such that
     \[
        \bigvee\psi(t)\setminus \bigcup_{s\in F}\left(\bigvee\psi(s)\right) \subseteq [t',F'].
     \]
    Hence, the defined $\psi$ satisfies conditions (i)--(v).

    Now suppose that $\psi\colon T\to \bbS(T')$ satisfies conditions (i)--(v). 
    We can then let $f\colon \mR(T)\to \mR(T')$ be such that, for each $R\in \mR(T)$, $f(R)=R'\in\mR(T')$ is the unique ray mentioned in condition \ref{item_CombChar_LimitCondition}, \myref{EQ_CombChar_SpreadoutMapLimitPartition1}.

    We claim that $f$ is a homeomorphism. Indeed:
    
    \begin{description}
        \item[$f$ is injective] Suppose $R_0,R_1\in \mR(T)$ are distinct. 
        If $R_0\subsetneq R_1$ (or vice-versa), then condition \ref{item_CombChar_LimitCondition} guarantees that $f(R_0)\neq f(R_1)$. 
        Otherwise,
        we can find the least $\beta$ such that $R_0(\beta) \neq R_1(\beta)$. 
        This means that $R_0(\beta)$ and $R_1(\beta)$ are incompatible in $T$, so it follows from 
        condition \ref{item_CombChar_IncompatibleCondition} that $\bigvee\psi(R_0(\beta))$ and $\bigvee\psi(R_1(\beta))$ are disjoint (see \myref{PROP_RayDisjointCollection}). 
        Since $f(R_0)\in\bigvee\psi(R_0(\beta))$ and $f(R_1) \in \bigvee\psi(R_1(\beta))$ (by definition of $f$, according to condition \ref{item_CombChar_LimitCondition}, \myref{EQ_CombChar_SpreadoutMapLimitPartition1}), we thus obtain that $f(R_0)\neq f(R_1)$, as desired.

        \item[$f$ is surjective] Given $R'\in \mR(T')$, let us show that 
        \begin{equation}\label{EQ_CombChar_SpreadoutMapSurjectivity}
            R' = f(R), \text{ where } R = \set{t\in T: R'\in \bigvee \psi(t)}.
        \end{equation}
        In order to see that $R$ is a ray in $T$, let us first note that $R'\in \bigvee\psi(r) = \mR(T')$, so $r\in R\neq\emptyset$. 
        On the other hand, if $t,s\in T$ are such that $R'\in \bigvee\psi(t)$ and $R'\in \bigvee\psi(s)$, then $\psi(t)$ and $\psi(s)$ are not ray-disjoint (see \myref{PROP_RayDisjointCollection}), so it follows from condition \ref{item_CombChar_IncompatibleCondition} that $t$ and $s$ must be compatible (thus, $R$ is a chain in $T$).
        Now suppose $R'\in \bigvee \psi(t)$ for some $t\in T$. 
        Then, by condition \ref{item_CombChar_CoveringCondition}, $R'\in \bigvee\psi(s)$ for some $s\in\succs(t)$. 
        Hence, $R$ does not have a maximal element and is, therefore, a ray in $T$.

        At last, since $R'\in\bigvee \psi(t)$ for every $t\in R$ and $R'\notin \bigvee \psi(s)$ for every $s\in \Rtop(R)$ (by definition of $R$), we thus conclude (using condition \ref{item_CombChar_LimitCondition}, \myref{EQ_CombChar_SpreadoutMapLimitPartition1}) that $f(R) = R'$.

        \item[$f$ is open] Suppose $R\in \mR(T)$, together with $t\in R$ and a finite $F\subseteq \Rtop(R)$ are given. In view of \myref{EQ_CombChar_SpreadoutMapSurjectivity}, note that $R'\in f[[t,F]]$ if and only if $R'\in \bigvee\psi(t)$ and $R'\notin \bigvee\psi(s)$ for any $s\in F$. Thus,
        \begin{equation}\label{EQ_CombChar_SpreadoutMapBasicImage}
            f[[t,F]] = \bigvee\psi(t)\setminus \bigcup_{s\in F}\left(\bigvee\psi(s)\right).
        \end{equation}

        Finally, since $\bigvee\psi(p)$ is clopen for every $p\in T$ (see \myref{PROP_spreadout_clopen_eqv}), we conclude from \myref{EQ_CombChar_SpreadoutMapBasicImage} that $f[[t,F]]$ is open, as desired.

        \item[$f$ is continuous] Suppose $U\subseteq \mR(T')$ is a nonempty open set. Since $f$ has already been shown to be a bijection, $f^{-1}(U)\neq\emptyset$, so let us fix $R\in f^{-1}(U)$. Since $f(R)\in U$, let us fix $t'\in f(R)$ and a finite $F'\subseteq\Rtop(f(R))$ such that $[t',F']\subseteq U$. Then condition \ref{item_CombChar_LimitCondition}, \myref{EQ_CombChar_SpreadoutMapLimitPartition2}, tells us that there exists a $t\in R$ and a finite $F\subseteq \Rtop(R)$ such that 
        \[
            \bigvee\psi(t)\setminus \bigcup_{s\in F}\left(\bigvee\psi(s)\right) \subseteq [t',F'].
        \]
        Then, by \myref{EQ_CombChar_SpreadoutMapBasicImage},  $f[[t,F]]\subseteq [t',F']\subseteq U$. We conclude that $R\in [t,F]\subseteq f^{-1}(U)$, so $f$ is continuous.
    \end{description} 
\end{proof}

\begin{rmk}
    We should point out that, in the presence of conditions \ref{item_CombChar_OrderMorphismCondition} and \ref{item_CombChar_IncompatibleCondition}, condition \ref{item_CombChar_CoveringCondition} in \myref{THM_CombinatorialCharacterization_SpreaoutMap} is equivalent to 
    \begin{itemize}
        \item[(iv')] If $t\in T$, then $\bigcup_{s\in \succs(t)}\psi(s)$ is a maximal (with respect to inclusion) collection such that
        \[
        \psi(t)\le\left(\bigcup_{s\in \succs(t)}\psi(s)\right)\in \bbS(T').
        \]
    \end{itemize}

    \sloppy Indeed, assume that conditions \ref{item_CombChar_OrderMorphismCondition}, \ref{item_CombChar_IncompatibleCondition} and \ref{item_CombChar_CoveringCondition} hold. Then condition \ref{item_CombChar_OrderMorphismCondition} tells us that $\psi(t)\le\bigcup_{s\in \succs(t)}\psi(s)$, while \ref{item_CombChar_IncompatibleCondition} tells us that $\set{[t,F]:(t,F)\in \bigcup_{s\in \succs(t)}\psi(s)}$ partitions $\bigcup_{s\in \succs(t)}\left(\bigvee\psi(s)\right)$ (see \myref{PROP_RayDisjointCollection}) and thus, since $\bigvee\psi(t) = \bigcup_{s\in \succs(t)}\left(\bigvee\psi(s)\right)$ (by condition \ref{item_CombChar_CoveringCondition}) is clopen (by \myref{PROP_spreadout_clopen_eqv}), we conclude (in view of \myref{COR_RayDisjointSpreadoutFamily}) that $\bigcup_{s\in \succs(t)}\psi(s)\in \bbS(T')$.

    If, in this case, $(t',F')\in \PP(T')$ is such that 
    \[
        \psi(t)\le\left(\{(t',F')\}\cup\bigcup_{s\in \succs(t)}\psi(s)\right)\in \bbS(T'),
    \]
    then, in particular, $[t',F']\subseteq \bigvee\psi(t)$.
    However, $\left(\{(t',F')\}\cup\bigcup_{s\in \succs(t)}\psi(s)\right)\in \bbS(T')$ tells us that $\left(\{(t',F')\}\cup\bigcup_{s\in \succs(t)}\psi(s)\right)$ is pairwise disjoint. Hence, since $\bigvee\psi(t) = \bigcup_{s\in \succs(t)}\left(\bigvee\psi(s)\right)$ (again, by condition \ref{item_CombChar_CoveringCondition}), it follows that $(t',F')\in \bigcup_{s\in \succs(t)}\psi(s)$, so condition (iv') holds.
    
    On the other hand, suppose that condition (iv') holds. 
    In view of \myref{PROP_spreadout_clopen_eqv}, the fact that $\bigcup_{s\in \succs(t)}\psi(s)\in \bbS(T')$ tells us that
    \[
        \bigvee\left( \bigcup_{s\in \succs(t)}\psi(s)\right)= \bigcup_{s\in \succs(t)}\left(\bigvee\psi(s)\right)
    \]
    is clopen, while its maximality tells us that $\overline{\bigcup_{s\in \succs(t)}\left(\bigvee\psi(s)\right)} = \bigvee \psi(t)$, so condition \ref{item_CombChar_CoveringCondition} holds.
\end{rmk}

Note that the partially ordered set $\bbS(T)$ always contains an order isomorphic copy of $T$, namely: $\set{\{(t,\emptyset)\}:t\in T}\subseteq \bbS(T)$.

Thus, it would be tempting to conclude that \myref{THM_CombinatorialCharacterization_SpreaoutMap} establishes that $\bbS(T)$ contains an isomorphic copy of \emph{every} pruned tree $T'$ whose ray space is homeomorphic to the ray space of $T$. 
However, we note that the map $\psi$ in \myref{THM_CombinatorialCharacterization_SpreaoutMap} does not need to be an order embedding -- in fact, the only property that it lacks to this end is injectivity (since it can still be the case that $t<s$ in $T$, but $\psi(t)=\psi(s)$).

With this in mind, it is only natural to ask:

\begin{question}
    Suppose $T$ and $T'$ are pruned  trees such that $\mR(T)\simeq \mR(T')$. Is there a map $\psi\colon T\to \bbS(T')$ satisfying conditions (i)--(v) of \myref{THM_CombinatorialCharacterization_SpreaoutMap} which is, additionally, injective? If not, is there some isomorphic copy of $T$ contained in $\bbS(T')$?
\end{question}

\section{The ray game}\label{SEC_TopologicalCharacterization}

Even though \myref{THM_CombinatorialCharacterization_SpreaoutMap} is placed in the self-contained \myref{SEC_CombinatorialCharacterization}, that combinatorial characterization was originally inspired by the following transfinite topological game, together with the upcoming topological characterization in \myref{COR_TopTreeCharacterization} (we further elaborate on how this ``inspiration'' is realized in \myref{RMK_WinStratSpreadoutMap}). Nevertheless, 
the study of this game, on its own, will be shown to be particularly fruitful in this and the remaining sections of this paper.

In what follows, we say that $\mP$ is a \emph{partition} of a set $X$ if $\mP$ is a pairwise disjoint collection of non-empty sets such that $\bigcup\mP = X$.

\begin{defn}\label{DEF_TreeGame}
    Given a non-empty topological space $X$, the so-called \emph{ray game over $X$}, denoted by $\TreeGame(X)$, is the game played between $\ali$ and $\bob$ which is recursively defined as follows: in the first inning, $\ali$ chooses an open partition $\mP_0$ for $X$, while $\bob$ responds by picking $U_0\in \mP_0$. In the $\alpha$-th inning:
    \begin{description}
        \item[if $\alpha=\beta+1$] then $\bob$ chose a $U_\beta\in \mP_\beta$ in the previous inning, so $\ali$ must now choose an open partition $\mP_{\beta+1}$ for $U_{\beta}$. In this case, $\bob$ must then respond with a $U_{\beta+1}\in\mP_{\beta+1}$.
        \item[if $\alpha$ is a limit ordinal] then we consider two cases:
        \begin{itemize}
            \item If there exists a pairwise disjoint collection $\mP_\alpha$ of open sets such that, for some $x_\alpha\in X$,
            \begin{equation} \label{EQ_RayGameContinues1}
                \bigcap_{\beta<\alpha} U_\beta   = \left(\bigcup \mP_\alpha\right) \sqcup \{x_\alpha\} 
            \end{equation}
            and
            \begin{equation} \label{EQ_RayGameContinues2}
                \set{U_\beta\setminus \bigcup\mF: \beta<\alpha \text{ and finite } \mF \subset \mP_\alpha} \text{ is a local basis for $x_\alpha$,}
            \end{equation}
            then the game proceeds with $\ali$ choosing one such pair $(\mP_\alpha,x_\alpha)$.
            In this case, if $\mP_\alpha\neq\emptyset$, the game continues with $\bob$ choosing a $U_\alpha\in \mP_\alpha$. Otherwise (i.e., if $\mP_\alpha=\emptyset$), this run of the game ends and $\ali$ is declared the winner.

            \item Otherwise (i.e., if no pair $(\mP_\alpha, x_\alpha)$ satisfying \myref{EQ_RayGameContinues1} and \myref{EQ_RayGameContinues2} exists), this run of the game ends and $\bob$ is declared the winner.
        \end{itemize}
    \end{description}
\end{defn}

Suppose $\seq{\mP_0, U_0, \dotsc, (\mP_\gamma,x_\gamma), U_\gamma, \dotsc }$ is a run of the game $\TreeGame(X)$ that finished at the $\alpha$-th inning. Note that $\alpha$ must be a limit ordinal. Furthermore, at every limit ordinal $\gamma<\alpha$, $\ali$ chooses an $x_\gamma\in X$ which is not in $U_\beta$ for any $\gamma<\beta<\alpha$.
Since there are $|X|^+$ limit ordinals in $|X|^+$, this implies that $\alpha<|X|^+$.
In other words:

\begin{prop}\label{PROP_RayGameRunLength}
    For a space $X$, every run of the game $\TreeGame(X)$ has length $<|X|^+$.
\end{prop}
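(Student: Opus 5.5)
We argue by injection: to each limit inning we attach a distinct point of $X$, which bounds the number of limit innings by $|X|$. Suppose a run $\seq{\mP_0, U_0, \dotsc, (\mP_\gamma,x_\gamma), U_\gamma, \dotsc }$ of $\TreeGame(X)$ ends at inning $\alpha$. First, $\alpha$ is a limit ordinal, since successor innings never end the game: \bob{} always has a legal response there, because open partitions consist of non-empty sets.

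The key observation is that the chosen sets are decreasing, $U_\delta\subseteq U_\beta$ whenever $\beta\le\delta$. At successor steps $U_{\beta+1}\in\mP_{\beta+1}$ is a partition of $U_\beta$. At limit steps $U_\gamma\in\mP_\gamma$, and by \myref{EQ_RayGameContinues1} we have $U_\gamma\subseteq\bigcap_{\beta<\gamma}U_\beta$. Transfinite induction then gives the monotonicity.

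Now let $\gamma<\alpha$ be a limit inning at which the game continued. By \myref{EQ_RayGameContinues1}, the point $x_\gamma$ is not in $\bigcup\mP_\gamma\ni U_\gamma$, so $x_\gamma\notin U_\beta$ for every $\beta$ with $\gamma\le\beta<\alpha$. On the other hand, $x_\gamma\in\bigcap_{\beta<\gamma}U_\beta$. Take limits $\gamma<\gamma'<\alpha$. Then $x_{\gamma'}\in U_\gamma$ (since $\gamma<\gamma'$), whereas $x_\gamma\notin U_\gamma$, so $x_\gamma\neq x_{\gamma'}$. Hence the map $\gamma\mapsto x_\gamma$ on limit ordinals below $\alpha$ is injective into $X$.

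If $\alpha\ge|X|^+$, then there are $|X|^+$ limit ordinals below $\alpha$, which contradicts this injectivity. Therefore $\alpha<|X|^+$.

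One point needs care. A run could in principle fail to end at all, so the claim should be read as saying that every play has length $<|X|^+$. The same injection applies to any partial play: it shows that no play can reach inning $|X|^+$. The only slightly delicate step is the monotonicity at limit stages, which we have read off from \myref{EQ_RayGameContinues1}.
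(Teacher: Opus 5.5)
Your proof is correct and follows the same route as the paper: the points $x_\gamma$ chosen at limit innings avoid all later $U_\beta$, which gives an injection of the limit innings into $X$, and one then counts limit ordinals below $|X|^+$. The paper does this in a single line; you make explicit the monotonicity of Bob's moves and the injectivity of $\gamma\mapsto x_\gamma$, and you note that no play can run past $|X|^+$. As in the paper's own argument, the counting step (that there are $|X|^+$ limit ordinals below $|X|^+$) needs $X$ to be infinite: for finite $X$ every run has length at least $\omega>|X|^+$, so the statement should be read for infinite $X$.
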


In view of the ray game winning criteria for $\ali$, it is clear that:

\begin{lemma}\label{LEMMA_WRunEndOfRun}
    A run of the game $\TreeGame(X)$ is won by $\ali$ if and only if it ends at a limit ordinal $\alpha$ and $\ali$ chose a valid pair $(\emptyset,x_{\alpha})$.
\end{lemma}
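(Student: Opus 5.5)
The proof is an inspection of \myref{DEF_TreeGame}: we list every way a run can end and see who is declared the winner in each case. At a successor inning $\alpha=\beta+1$ the run never ends. $\bob$ has just chosen $U_\beta\in\mP_\beta$, and since $\mP_\beta$ is a partition its members are non-empty. So $U_\beta\neq\emptyset$, which means $\ali$ can always choose an open partition of $U_\beta$ (for instance $\{U_\beta\}$) and $\bob$ can always answer with a member of it. The same holds at inning $0$, because $X$ is non-empty. Hence a run can only stop at a limit inning $\alpha$, and the definition lists exactly two ways for that to happen.

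The first way is that no pair $(\mP_\alpha,x_\alpha)$ satisfying \myref{EQ_RayGameContinues1} and \myref{EQ_RayGameContinues2} exists. The definition then declares $\bob$ the winner.

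The second way is that such a pair exists and $\ali$ picks one with $\mP_\alpha=\emptyset$. The definition then declares $\ali$ the winner.

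If instead $\ali$ picks a valid pair with $\mP_\alpha\neq\emptyset$, then $\bob$ chooses some $U_\alpha\in\mP_\alpha$ and the run goes on. A non-empty $\mP_\alpha$ does contain a possible choice for $\bob$, although its members might a priori be empty; at worst this is a convention about empty moves, and it does not give a new way for the run to stop.

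For the forward implication, suppose $\ali$ wins a run. By the previous paragraphs the run ends at some limit $\alpha$, and the only terminal case in which $\ali$ is declared the winner is the one where she picked a valid pair with $\mP_\alpha=\emptyset$. For the converse, suppose the run ends at a limit $\alpha$ with $\ali$ having chosen a valid pair $(\emptyset,x_\alpha)$. Then the definition directly declares $\ali$ the winner. This is also consistent with the run stopping there: \myref{EQ_RayGameContinues1} with $\mP_\alpha=\emptyset$ says $\bigcap_{\beta<\alpha}U_\beta=\{x_\alpha\}$, so $\bob$ has no set left to choose.

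There is no real obstacle. The only point that needs care is confirming that the run cannot terminate at a successor stage, or at a limit stage with $\mP_\alpha\neq\emptyset$. This follows from the non-emptiness of the members of partitions, and from the remark before \myref{PROP_RayGameRunLength} that every finished run stops at a limit ordinal.
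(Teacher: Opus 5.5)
Your proposal is correct and follows the same route as the paper, which gives no separate proof and only observes that the lemma is clear from the winning criteria of \myref{DEF_TreeGame}: the only clause that declares $\ali$ the winner is the limit-stage clause in which she picks a valid pair with $\mP_\alpha=\emptyset$. That observation already gives both directions, so your analysis of successor stages is not needed; this helps, because your aside about possibly empty members of $\mP_\alpha$ is slightly off: if $\bob$ picked $U_\alpha=\emptyset$, the only open partition of $\emptyset$ at inning $\alpha+1$ is the empty collection, so $\bob$ could not reply and the run would stall at a successor stage, unless one reads $\mP_\alpha$ as consisting of non-empty sets, as the paper implicitly does.
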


\begin{defn} \label{DEF_StrategyAlice}
    A \emph{strategy $\sigma$ for} $\ali$ \emph{in a game $G$} is a recursively defined map such that $t\in\dom(\sigma)$ if and only if $t$ is a sequence $\seq{B_\beta:\beta<\alpha}$ such that, for every $\beta<\alpha$,
\[
        \seq{\sigma\seq{\,},B_0, \dotsc, \sigma\seq{B_\eta:\eta<\beta}, B_\beta}
\]
is a sequence of moves $\ali$ and $\bob$ can make in $G$ (in particular, note that $\dom(\sigma)$ contains the empty sequence, which we denote by $\seq{\,}$). 
In this case, note that
\[
    \sdom{\sigma}=\{\seq{\,}\}\cup\set{ \seq{B_\beta:\beta<\xi}  \in \dom(\sigma) : \exists \zeta ( \zeta + 1 = \xi) }.
\]
\end{defn}

Essentially, a strategy $\sigma$ for $\ali$ in a game $G$ is a map which determines a unique move for $\ali$ in $G$ in any situation at which she may find herself, if she follows $\sigma$'s instructions.

\begin{defn} \label{DEF_CompatibleStrategy} We say that a run
\[
    \seq{A_0, B_0, \dotsc, A_\beta, B_\beta}
\]
of length $\alpha$ is \emph{compatible} with $\sigma$ if, for every $\gamma<\alpha$,
\[
    A_\gamma = \sigma\seq{B_\beta:\beta<\gamma}.
\]
Furthermore, $\sigma$ is a \emph{winning} strategy for $\ali$ if $\ali$ wins at every finished run which is compatible with $\sigma$.
\end{defn}

\begin{cor}\label{LEMMA_WStratEndOfRun}
    A strategy $\sigma$ for $\ali$ in $\TreeGame(X)$ is a winning strategy if and only if every sequence of $\bob$'s moves $\seq{U_\beta:\beta<\alpha}$ compatible with $\sigma$ is such that $\seq{U_\beta:\beta<\alpha}\in \dom(\sigma)$. In other words, there is no sequence of $\bob$'s responses to $\sigma$ for which $\sigma$ has no viable response.
\end{cor}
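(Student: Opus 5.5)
The plan is to unwind the definitions on both sides; the statement is essentially a reformulation of \myref{DEF_CompatibleStrategy} combined with \myref{LEMMA_WRunEndOfRun}. Since runs of $\TreeGame(X)$ can only end at limit innings, and the sole way for $\bob$ to win is for $\ali$ to have no legal move there, the proof reduces to checking that ``$\sigma$ has a response'' coincides with ``$\ali$ does not lose at this position''.

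For ($\Rightarrow$), I assume $\sigma$ is winning and fix a sequence $\seq{U_\beta:\beta<\alpha}$ of $\bob$'s moves compatible with $\sigma$, meaning each initial segment $\seq{U_\eta:\eta<\beta}$ lies in $\dom(\sigma)$ and $U_\beta$ is a legal reply to $\sigma\seq{U_\eta:\eta<\beta}$. I would argue by contradiction, supposing $\seq{U_\beta:\beta<\alpha}\notin\dom(\sigma)$. At a successor $\alpha=\beta+1$, $U_\beta$ is a nonempty open set, so $\ali$ always has a legal move (for instance $\{U_\beta\}$). Hence failure to lie in $\dom(\sigma)$ can only happen at a limit $\alpha$, and it must be because no pair $(\mP_\alpha,x_\alpha)$ satisfying \myref{EQ_RayGameContinues1} and \myref{EQ_RayGameContinues2} exists. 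The case $\alpha=0$ is also excluded, since $\seq{\,}\in\dom(\sigma)$. Consequently the run $\seq{\sigma\seq{\,},U_0,\dotsc}$ is a finished run compatible with $\sigma$ in which $\bob$ is declared the winner. This contradicts the assumption that $\sigma$ is winning.

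For ($\Leftarrow$), I take a finished run compatible with $\sigma$. By \myref{PROP_RayGameRunLength} it has some length, and since it is finished, it ended at a limit inning $\alpha$. The sequence $\seq{U_\beta:\beta<\alpha}$ of $\bob$'s moves is compatible with $\sigma$, so by hypothesis it lies in $\dom(\sigma)$. This means $\sigma$ supplies a legal pair $(\mP_\alpha,x_\alpha)$. If $\mP_\alpha\neq\emptyset$, the game would continue, contradicting that the run ended at $\alpha$. Therefore $\mP_\alpha=\emptyset$, and by \myref{LEMMA_WRunEndOfRun} $\ali$ wins.

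The only delicate point is the bookkeeping in the definition of $\dom(\sigma)$: one has to verify that membership of a position in $\dom(\sigma)$ is exactly legality of $\sigma$'s prescribed move there. In particular, at a limit stage a legal move exists precisely when some valid pair exists. I expect no real obstacle beyond stating this carefully.
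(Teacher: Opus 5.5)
Your proposal is correct and takes the same route as the paper, which states this corollary without proof as an immediate consequence of \myref{LEMMA_WRunEndOfRun} and the definition of a winning strategy. For ($\Leftarrow$), a finished compatible run must end with $\sigma$ playing a valid pair, and that pair is forced to be $(\emptyset,x_\alpha)$. For ($\Rightarrow$), a missing response can only occur at a limit inning where no valid pair exists, and that gives a finished run won by \bob. Your reading of $\dom(\sigma)$ as the set of positions where $\sigma$'s prescribed move is legal is the intended one: read literally, \myref{DEF_StrategyAlice} would put every compatible sequence in $\dom(\sigma)$ and make the criterion trivial. So you were right to flag this as the only delicate point.
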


\sloppy The following technical lemmas will be useful in determining when a run of $\TreeGame(-)$ is won by $\ali$.

\begin{lemma}\label{LEMMA_WStratClopens}
    \sloppy Suppose $R = \seq{\mP_0, U_0, \dotsc, (\mP_\gamma,x_\gamma), U_\gamma, \dotsc, (\emptyset, x_\alpha) }$ is a run of the game $\TreeGame(X)$ that is finished at the limit ordinal $\alpha$ and is won by $\ali$. Then, for every $\beta<\alpha$ and $U\in \mP_\beta$, $U$ is clopen in $X$.
\end{lemma}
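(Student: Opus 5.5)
The plan is a transfinite induction on $\beta<\alpha$. The inductive claim is that every $U\in\mP_\beta$ is clopen in $X$; in particular Bob's choice $U_\beta\in\mP_\beta$ is then clopen. The hypothesis that the run is finished and won by $\ali$ is used only to guarantee that every move $\mP_\beta$ with $\beta<\alpha$ is legal in the sense of \myref{DEF_TreeGame}. At limit stages this means the pair $(\mP_\beta,x_\beta)$ satisfies \myref{EQ_RayGameContinues1} and \myref{EQ_RayGameContinues2}.

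\emph{Openness.} At $\beta=0$ the members of $\mP_0$ are open by definition. At a successor $\beta=\gamma+1$, each member of $\mP_{\gamma+1}$ is open in $U_\gamma$. Since $U_\gamma$ is open in $X$ by the inductive hypothesis, each member is open in $X$. At limit stages, $\mP_\beta$ is a collection of open sets by definition.

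\emph{Closedness.} The argument splits by the type of $\beta$.
\begin{itemize}
\item For $\beta=0$: if $U\in\mP_0$, then $X\setminus U=\bigcup(\mP_0\setminus\{U\})$, which is open.
\item For $\beta=\gamma+1$: if $U\in\mP_{\gamma+1}$, then
\[
X\setminus U=(X\setminus U_\gamma)\cup\bigcup(\mP_{\gamma+1}\setminus\{U\}).
\]
The first set is open because $U_\gamma$ is closed by the inductive hypothesis, and the second set is a union of open sets.
\item For limit $\beta$: fix $U\in\mP_\beta$ and $y\notin U$. We find an open neighbourhood of $y$ disjoint from $U$, using \myref{EQ_RayGameContinues1} to split into three cases.
\begin{itemize}
\item If $y\notin\bigcap_{\gamma<\beta}U_\gamma$, pick $\gamma<\beta$ with $y\notin U_\gamma$. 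Then $X\setminus U_\gamma$ works: it is open by the inductive hypothesis, and it misses $U\subseteq U_\gamma$.
\item If $y\in V$ for some $V\in\mP_\beta$ with $V\neq U$, then $V$ is an open neighbourhood of $y$ that is disjoint from $U$, because $\mP_\beta$ is pairwise disjoint.
\item If $y=x_\beta$, apply \myref{EQ_RayGameContinues2} with $\mF=\{U\}$. It shows that $U_\gamma\setminus U$ (for any $\gamma<\beta$) contains an open neighbourhood of $x_\beta$, and this neighbourhood is disjoint from $U$.
\end{itemize}
Hence $X\setminus U$ is open.
\end{itemize}

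The only step needing care is this limit case. It is exactly where both defining conditions of a legal limit move are used: \myref{EQ_RayGameContinues1} shows the three cases are exhaustive, and \myref{EQ_RayGameContinues2} handles the point $x_\beta$. The inductive hypothesis is needed there in the form that all earlier $U_\gamma$ are closed.
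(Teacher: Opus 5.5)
Your proof is correct and follows the same plan as the paper: transfinite induction on the inning, with the successor case handled through the partition of the clopen set $U_\gamma$, and the limit case driven by the local-basis condition \myref{EQ_RayGameContinues2} applied with $\mF=\{U\}$. The only real difference is in the limit step. The paper reads that condition as saying $U_\gamma\setminus U$ is itself open, and then concludes that $U=U_\gamma\setminus(U_\gamma\setminus U)$ is closed. You instead argue pointwise on $X\setminus U$, using \myref{EQ_RayGameContinues1} to make your three cases exhaustive. Your version only needs members of the local basis to be neighbourhoods of $x_\beta$, not open sets, so it is a slightly more careful rendering of the same step. You also treat explicitly two points the paper leaves implicit: the base case $\beta=0$, and the fact that members of $\mP_{\gamma+1}$ are open in $X$.
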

\begin{proof}
    Clearly, for every $\gamma<\alpha$, $U_\gamma\in \mP_\gamma$ is open.
    By induction, assume that every $U\in \mP_\beta$ is closed for every $\beta<\gamma < \alpha$. 
    
    If $\gamma = \beta+1$, then $\mP_\gamma$ is an open partition of the clopen set $U_\beta\in \mP_\beta$, so every $U\in\mP_\gamma$ is closed.
    
    If $\gamma$ is a limit ordinal, fix $W\in \mP_\gamma$. Since 
    \[\set{U_\beta\setminus \bigcup\mF: \beta<\gamma \text{ and finite } \mF \subset \mP_\gamma}\]
    is a local basis for $x_\gamma$, setting $\mF=\{W\}$ it follows that $U_{\beta} \setminus W$ is open in $X$ and $X \setminus (U_{\beta} \setminus W)= (X \setminus U_{\beta}) \cup (U_{\beta} \cap W)$ is closed in $X$. From our induction hypothesis, $U_{\beta}$ is closed in $X$, hence $ U_{\beta} \cap W = U_{\beta} \cap ((X \setminus U_\beta) \cup (U_{\beta} \cap W) )$ is closed in $X$. 
    In particular, this implies that $\bigcap_{\beta<\gamma}(U_{\beta} \cap W)$ is closed in $X$. 
    Since $W\subseteq \bigcap_{\beta<\gamma}U_{\beta}$, we conclude that $W$ is closed in $X$. 
\end{proof}

\begin{lemma}\label{LEMMA_WRunOrdinal}
    Suppose $\seq{\mP_0,U_0,\dotsc,(\mP_\gamma,x_\gamma),U_\gamma,\dotsc,(\emptyset,x_\alpha)}$ is a run of the game $\TreeGame(X)$ finished at the limit ordinal $\alpha$ and won by $\ali$. Then
    \[
        \set{x_\gamma:\gamma\le\alpha\text{ and }\gamma\text{ is a limit ordinal}},
    \]
    as a subspace of $X$, is homeomorphic to
    \[
        \set{\gamma:\gamma\le\alpha\text{ and }\gamma\text{ is a limit ordinal}}
    \]
    as a subspace of the ordered space $\alpha+1$.
\end{lemma}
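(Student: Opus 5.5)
We will show that the map $h\colon \gamma\mapsto x_\gamma$, defined on the set $L$ of limit ordinals $\gamma\le\alpha$, is a homeomorphism onto $\set{x_\gamma:\gamma\in L}$. To organize this, we use the nested clopen sets of the run (clopen by \myref{LEMMA_WStratClopens}). For a limit $\gamma$, \myref{EQ_RayGameContinues1} gives
\[
x_\gamma\in\bigcap_{\beta<\gamma}U_\beta, \qquad x_\gamma\notin U_\gamma,
\]
since $U_\gamma\in\mP_\gamma$ is disjoint from $\{x_\gamma\}$. Moreover $U_\delta\subseteq U_\beta$ whenever $\beta\le\delta$. Consequently, for limits $\gamma,\delta$,
\[
x_\delta\in U_\beta \iff \beta<\delta .
\]
Indeed, if $\beta<\delta$ then $x_\delta\in U_\beta$. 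If $\beta\ge\delta$ then $U_\beta\subseteq U_\delta$ and $x_\delta\notin U_\delta$, so $x_\delta\notin U_\beta$. This already gives injectivity: if $\gamma<\delta$, then $x_\delta\in U_\gamma$ while $x_\gamma\notin U_\gamma$.

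For continuity and openness we compare neighbourhood bases on both sides. In $\alpha+1$, a basic neighbourhood of a limit $\gamma$, restricted to $L$, is $L\cap(\beta,\gamma]$ with $\beta<\gamma$. In $X$, by \myref{EQ_RayGameContinues2}, a local basis at $x_\gamma$ consists of the sets $U_\beta\setminus\bigcup\mF$ with $\beta<\gamma$ and $\mF\subseteq\mP_\gamma$ finite. We intersect these with $\set{x_\delta:\delta\in L}$.

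First, the condition $x_\delta\in U_\beta$ means exactly that $\delta>\beta$. Second, consider a point $x_\delta$ with $\delta>\gamma$. It lies in $U_\gamma$, which is a single member of $\mP_\gamma$. So taking $\mF=\{U_\gamma\}$ removes every $x_\delta$ with $\delta>\gamma$. It removes nothing with $\delta\le\gamma$, because those points lie outside $U_\gamma$. Conversely, any finite $\mF\subseteq\mP_\gamma$ removes only points $x_\delta$ with $\delta>\gamma$, since points with $\delta\le\gamma$ are not in $\bigcup\mP_\gamma$: such a point is either $x_\gamma$ itself or lies outside $U_{\delta}\supseteq U_\gamma$. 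Hence
\[
\bigl(U_\beta\setminus\bigcup\mF\bigr)\cap h[L]=\set{x_\delta:\beta<\delta\le\gamma}\cup(\text{possibly some }x_\delta,\ \delta>\gamma),
\]
and equality with $h[L\cap(\beta,\gamma]]$ holds when $U_\gamma\in\mF$.

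The conclusion follows in two steps. To show $h^{-1}$ is continuous at $x_\gamma$: given $(\beta,\gamma]$, the open set $U_\beta\setminus U_\gamma$ contains $x_\gamma$ and meets $h[L]$ exactly in $h[L\cap(\beta,\gamma]]$. To show $h$ is continuous at $\gamma$: any neighbourhood $U_\beta\setminus\bigcup\mF$ of $x_\gamma$ contains $h[L\cap(\beta,\gamma]]$. The case $\gamma=\alpha$ is the same, with $\mP_\alpha=\emptyset$, so only $\mF=\emptyset$ occurs.

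The only delicate point is bookkeeping at the index $\gamma$ itself: checking that $x_\gamma\notin U_\gamma$, and that the points $x_\delta$ with $\delta\le\gamma$ avoid $\bigcup\mP_\gamma$. Both follow from the disjoint union in \myref{EQ_RayGameContinues1} together with the monotonicity of the sets $U_\beta$.
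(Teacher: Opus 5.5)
Your proof is correct and follows essentially the same route as the paper's. It uses the same map $\gamma\mapsto x_\gamma$, gets openness from the identity $h[L\cap(\beta,\gamma]]=(U_\beta\setminus U_\gamma)\cap h[L]$ together with \myref{LEMMA_WStratClopens}, and gets continuity from the local basis in \myref{EQ_RayGameContinues2} plus the fact that the points $x_\delta$ with $\delta\le\gamma$ avoid $\bigcup\mP_\gamma$. One small slip in that last fact: for $\delta<\gamma$ the containment you need is $U_\delta\supseteq\bigcap_{\beta<\gamma}U_\beta\supseteq\bigcup\mP_\gamma$ (from \myref{EQ_RayGameContinues1}), not just $U_\delta\supseteq U_\gamma$, though your closing paragraph shows this is what you meant.
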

\begin{proof}
    Let
    \[
        Y=\set{x_\gamma:\gamma\le\alpha\text{ and }\gamma\text{ is a limit ordinal}
        }
    \]
    and let
    \[
        f\colon\set{\gamma:\gamma\le\alpha\text{ and }\gamma\text{ is a limit ordinal}}
        \longrightarrow Y
    \]
    be defined by $f(\gamma)=x_\gamma$. Then $f$ is a bijection.

    For notational convenience, set $U_\alpha=\emptyset$. For all limit ordinals $\eta<\xi\le\alpha$, the rules of $\TreeGame(X)$ imply that $x_\eta\notin U_\xi$ (using this convention when $\xi=\alpha$), while $x_\xi\in U_\eta$. Thus, for all limit ordinals $\beta<\gamma\le\alpha$,
    \begin{equation}\label{EQ_WRunOrdinal}
        f[(\beta,\gamma]]=(U_\beta\setminus U_\gamma)\cap Y.
    \end{equation}
    By \myref{LEMMA_WStratClopens}, the right-hand side is open in $Y$ when $\gamma<\alpha$; when $\gamma=\alpha$, it is $U_\beta\cap Y$, which is also open in $Y$. Hence, $f$ is open.

    Now suppose $W\subseteq Y$ is a non-empty open set, with $W=U\cap Y$ for an open set $U\subseteq X$. We show that every $\gamma\in f^{-1}(W)$ has an open interval neighborhood contained in $f^{-1}(W)$.

    If $\gamma=\alpha$, then $\mP_\alpha=\emptyset$, and \myref{EQ_RayGameContinues2} yields some $\beta<\alpha$ such that $U_\beta\subseteq U$. Therefore,
    \[
        \alpha\in(\beta,\alpha]=f^{-1}[U_\beta\cap Y]\subseteq f^{-1}(W).
    \]

    If $\gamma<\alpha$, then \myref{EQ_RayGameContinues2} yields a finite $\mF\subseteq\mP_\gamma$ and a $\beta<\gamma$ such that
    \[
        x_\gamma\in U_\beta\setminus\bigcup\mF\subseteq U.
    \]
    Since $x_\gamma\notin U_\gamma$, we may enlarge $\mF$ so that $U_\gamma\in\mF$. Moreover, every $V\in\mP_\gamma\setminus\{U_\gamma\}$ is disjoint from $Y$: points $x_\delta$ with $\delta\le\gamma$ lie outside $\bigcup\mP_\gamma$, while points $x_\delta$ with $\delta>\gamma$ lie in $U_\gamma$. Consequently, by \myref{EQ_WRunOrdinal},
    \[
        x_\gamma\in f[(\beta, \gamma]] = (U_\beta\setminus U_\gamma)\cap Y \subseteq (U_\beta\setminus \bigcup\mF)\cap Y\subseteq U\cap Y = W,
    \]
    so $f^{-1}(W)$ is open and $f$ is continuous.
    
\end{proof}

Let us now explore the existence of winning strategies in the ray game for a few examples of topological spaces:

\begin{prop}\label{PROP_RayGameConnectedSpace}
    If $X$ is a connected space with $|X|\ge 2$, then $\TreeGame(X)$ has a single run, and this run is won by $\bob$.
\end{prop}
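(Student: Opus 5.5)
The plan is to trace the game forward inning by inning and show that $\ali$ never has a genuine choice and $\bob$ never has one either, until the game stops at inning $\omega$ with no admissible limit move.

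\textbf{Finite innings.} Since $X$ is connected, the only open partition of $X$ is $\{X\}$. Any partition into two or more non-empty open sets would write $X$ as a disjoint union of two non-empty open sets. So in the first inning $\ali$ must play $\mP_0=\{X\}$ and $\bob$ must answer $U_0=X$. By induction, at each successor inning $n+1$ we have $U_n=X$, so $\ali$ must again play $\{X\}$ and $\bob$ must again pick $X$. Hence every run of the game agrees with $(\{X\},X,\{X\},X,\dotsc)$ through all finite innings, and $\bigcap_{n<\omega}U_n=X$.

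\textbf{Inning $\omega$.} Suppose, towards a contradiction, that $\ali$ has an admissible pair $(\mP_\omega,x_\omega)$. Condition \myref{EQ_RayGameContinues1} then gives $X=\left(\bigcup\mP_\omega\right)\sqcup\{x_\omega\}$, where $\mP_\omega$ consists of pairwise disjoint open sets. Condition \myref{EQ_RayGameContinues2} says the sets $X\setminus\bigcup\mF$, for finite $\mF\subseteq\mP_\omega$, form a local base at $x_\omega$. Each such set is open.

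\textbf{Main step.} We derive a contradiction from connectedness and $|X|\ge 2$. Since $|X|\ge2$, the family $\mP_\omega$ contains some non-empty $W$.

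1. The set $X\setminus W$ is open. Indeed, it equals $\{x_\omega\}\cup\bigcup(\mP_\omega\setminus\{W\})$. It is a neighborhood of $x_\omega$, because $X\setminus W$ is one of the basic sets (take $\mF=\{W\}$). It is also a neighborhood of each of its other points, since those lie in open members of $\mP_\omega$ other than $W$.
2. The set $W$ is open by assumption.
3. Therefore $X=W\sqcup(X\setminus W)$ splits $X$ into two disjoint open sets. Both are non-empty: $W\neq\emptyset$, and $x_\omega\in X\setminus W$. This contradicts connectedness.

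So no admissible pair exists at inning $\omega$, and the unique run ends there with $\bob$ declared the winner.

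The only delicate point is the openness of $X\setminus W$, which uses \myref{EQ_RayGameContinues2} at $x_\omega$ and openness of the other cells of $\mP_\omega$ elsewhere. This is essentially the clopen argument from \myref{LEMMA_WStratClopens}, specialized to this situation. Note that the empty family $\mP_\omega=\emptyset$ is excluded because it would force $X=\{x_\omega\}$, contradicting $|X|\ge2$.
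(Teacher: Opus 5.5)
Your proof is correct and follows the paper's route: connectedness forces \ali{} to play the trivial partition $\{X\}$ at every finite inning, so the intersection at inning $\omega$ is $X$ and no admissible pair $(\mP_\omega,x_\omega)$ can exist. The paper leaves that last step implicit; your check that $X\setminus W$ would be a non-empty proper clopen set supplies it, and is the argument of \myref{LEMMA_WStratClopens} specialized to this case.
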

\begin{proof}
    Since $X$ is a connected space, its only non-empty clopen set is $X$ itself; thus, $\ali$ can only choose the trivial partition $\{X\}$. 
    Hence, $\TreeGame(X)$ has a single run, which ends at the $\omega$-th inning and is won by $\bob$.
\end{proof}

Analogously to the definition for $\ali$, a strategy $\sigma$ for $\bob$ in a game $G$ is a recursively defined map such that $t\in\dom(\sigma)$ if and only if $t$ is a sequence $\seq{A_\beta:\beta\le\alpha}$ such that, for every $\beta\le\alpha$,
\[
        \seq{A_0, \sigma(A_0), \dotsc, A_\beta, \sigma\seq{A_\eta:\eta\le\beta}}
\]
is a sequence of moves $\ali$ and $\bob$ can make in $G$.

We then say that a run
\[
    \seq{A_0, B_0, \dotsc, A_\beta, B_\beta}
\]
of length $\alpha$ is \emph{compatible} with $\sigma$ if, for every $\gamma<\alpha$,
\[
    B_\gamma = \sigma\seq{A_\beta:\beta\le\gamma}.
\]
Furthermore, $\sigma$ is a \emph{winning} strategy for $\bob$ if $\bob$ wins at every finished run which is compatible with $\sigma$.

\begin{ex}
    In view of \myref{PROP_RayGameConnectedSpace}, $\bob$ has a trivial winning strategy in $\TreeGame(\RR)$.
\end{ex}

With the help of the following simple \myref{LEMMA_Omega1Partition}, we prove in \myref{EX_omega1_BobWins} that there is a winning strategy in $\TreeGame(\omega_1)$ for $\bob$:

\begin{lemma}\label{LEMMA_Omega1Partition}
    Suppose $\mP$ is an open partition of $\omega_1$ with the usual order topology. Then there must exist some $V\in\mP$ such that $\omega_1\setminus V$ is countable.
\end{lemma}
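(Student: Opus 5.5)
The plan is a pressing-down argument on the club of limit ordinals below $\omega_1$.

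\emph{Reduction to a uncountable closed piece.} Since $\mP$ is a partition of $\omega_1$ by open sets, each $V\in\mP$ is also closed: its complement is the union of the other members. Suppose, towards a contradiction, that $\omega_1\setminus V$ is countable for no $V\in\mP$. First I will find two distinct members $V_0,V_1\in\mP$ which are both unbounded in $\omega_1$. If some $V_0$ is unbounded, then $\omega_1\setminus V_0$ is uncountable by assumption, hence unbounded. Since it is covered by the other members, either one other member is unbounded, or the other members are all bounded. In the second case there must be uncountably many of them, since countably many bounded sets have bounded union. If no member at all is unbounded, $\mP$ consists of bounded sets, and again uncountably many of them are needed to cover $\omega_1$.

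\emph{The case of two unbounded members.} Each $V_i$ is closed and unbounded, hence a club in $\omega_1$. Two clubs intersect, which contradicts disjointness. To see this directly, interleave $\alpha_0<\beta_0<\alpha_1<\beta_1<\cdots$ with $\alpha_n\in V_0$ and $\beta_n\in V_1$. The supremum is a limit of both sequences, so by closedness it lies in $V_0\cap V_1$.

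\emph{The case where every member is bounded.} This is the main step. For each limit ordinal $\gamma<\omega_1$, let $V_\gamma\in\mP$ be the member containing $\gamma$. Since $V_\gamma$ is open, there is $f(\gamma)<\gamma$ with $(f(\gamma),\gamma]\subseteq V_\gamma$. Now $f$ is regressive on the stationary set of limit ordinals. By Fodor's lemma, $f$ is constant, say equal to $\delta$, on an uncountable set $S$. For $\gamma<\gamma'$ in $S$, the interval $(\delta,\gamma']$ contains $(\delta,\gamma]$, so $V_\gamma=V_{\gamma'}$. Hence the common member contains $\bigcup_{\gamma\in S}(\delta,\gamma]=(\delta,\omega_1)$, so its complement is contained in $\delta+1$ and is countable. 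This contradicts the case assumption that every member is bounded.

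Alternatively, the last step can avoid Fodor with an elementary closure argument. Build an increasing sequence of countable ordinals $\alpha_n$ such that the members meeting $\alpha_n+1$ are all contained in $\alpha_{n+1}$. This is possible because those members are countably many and each is bounded. At $\alpha=\sup\alpha_n$, the member $V\ni\alpha$ is open, so it meets some $\alpha_n+1$. Then $V\subseteq\alpha_{n+1}<\alpha$, which is a contradiction. I would present this version, since it is self-contained. The only care needed is confirming that clopenness of members and boundedness behave as claimed.
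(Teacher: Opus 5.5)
There is a gap in your case analysis, and it falls exactly on the version you say you would present. After assuming every member has uncountable complement, you promise two unbounded members, but your argument only yields ``one other member is unbounded, or the other members are all bounded''. That leaves open the case where exactly one member $V_0$ is unbounded and $\omega_1\setminus V_0$ is covered by uncountably many bounded members. Your main step is stated only for ``every member is bounded'', and the elementary closure argument genuinely needs that. Once $V_0$ meets $\alpha_n+1$, it cannot be contained in any countable $\alpha_{n+1}$, so the justification ``each is bounded'' fails and the recursion breaks.

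The Fodor paragraph does not have this problem. It never uses boundedness, and it directly produces a member containing a tail $(\delta,\omega_1)$. That contradicts the global hypothesis rather than the case assumption. So that paragraph is by itself a complete, direct proof of the lemma, with no contradiction setup or case split needed. It is a genuinely different route from the paper: it trades an explicit construction for the pressing-down lemma.

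To repair the elementary version, use what you already have. If $V_0$ is unbounded, then $V_0$ is closed, and $\omega_1\setminus V_0$ is closed too because $V_0$ is open. By hypothesis $\omega_1\setminus V_0$ is uncountable, hence unbounded. So $V_0$ and $\omega_1\setminus V_0$ are two disjoint clubs, and your interleaving argument rules this case out. The correct split is therefore ``some member is unbounded'' versus ``all members are bounded''.

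The paper handles the same issue differently. It uses the club argument only to show there is at most one unbounded member $V^*$. It then runs your closure argument along an increasing sequence $x_0<x_1<\cdots$ chosen outside $V^*$, each $x_{n+1}$ also avoiding the members of the earlier points. The supremum cannot lie in $V^*$, since the open set $V^*$ would then contain a tail of the $x_n$. Either repair makes your elementary version complete.
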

\begin{proof}
    Striving for a contradiction, suppose $\omega_1\setminus V$ is uncountable for every $V\in\mP$. Since every member of the open partition $\mP$ is clopen, every unbounded member is a club. There is therefore at most one unbounded member of $\mP$, because two disjoint clubs in $\omega_1$ cannot exist. Let $V^*$ be this unique unbounded member if it exists, and let $V^*=\emptyset$ otherwise.

    We recursively construct a strictly increasing sequence $\seq{x_n:n\in\omega}$ such that $x_i$ and $x_j$ belong to different members of $\mP$ whenever $i\neq j$:
    \begin{itemize}
        \item First, let $x_0=\min(\omega_1\setminus V^*)$.
        \item Assuming that $\seq{x_k:k\le n}$ has already been defined, for each $k\le n$ let $V_k$ be the unique member of $\mP$ containing $x_k$. Each $V_k$ is bounded, and so is $\bigcup_{k\le n}V_k$. Define
        \[
            x_{n+1}=\min\left(\omega_1\setminus\left(V^*\cup\bigcup_{k\le n}V_k\right)\right).
        \]
    \end{itemize}
    Notice that $i<j$ implies $x_i<x_j$. Let $x_\infty=\sup\set{x_n:n\in\omega}\in\omega_1$, and let $V\in\mP$ contain $x_\infty$. Since $V$ is open, it contains all but finitely many members of $\seq{x_n:n\in\omega}$, contradicting the construction.
\end{proof}

\begin{ex}\label{EX_omega1_BobWins}
    Let us recursively define a winning strategy in $\TreeGame(\omega_1)$ for $\bob$ as follows: 
    
    In the first inning, since $\mP_0$ is an open partition of $\omega_1$, we can use \myref{LEMMA_Omega1Partition} to find some $U_0\in \mP_0$ and an $\eta_0<\omega_1$ such that $[\eta_0,\omega_1)\subseteq U_0$. In this case, we let $\sigma\seq{\mP_0} = U_0$.
        
    At the $\gamma$-th inning, with $\gamma<\omega_1$, suppose $\seq{U_\beta:\beta<\gamma}$ is the sequence of open sets chosen by $\sigma$ in the previous innings, and $\seq{\eta_\beta:\beta<\gamma}$ is such that $[\eta_\beta, \omega_1) \subseteq U_\beta$ for every $\beta<\gamma$. 
        
    If $\gamma = \beta+1<\omega_1$ for some $\beta$, then $\mP_\gamma = \mP_{\beta+1}$ is an open partition of $[\eta_\beta, \omega_1) \subseteq U_\beta$. We can apply \myref{LEMMA_Omega1Partition} to $\mP=\set{U \setminus \eta_{\beta} : U \in \mP_{\gamma}}\cup\{[0,\eta_{\beta})\}$ to find some $U_{\gamma}\in \mP_\gamma$ and an $\eta_{\gamma}<\omega_1$ such that $[\eta_\gamma, \omega_1) \subseteq U_\gamma$. We thus set $\sigma\seq{\mP_\delta:\delta\le\gamma} = U_\gamma$.

    If $\gamma<\omega_1$ is a limit ordinal, since $\gamma$ is countable, we can let $\theta := \sup(\set{\eta_\beta:\beta<\gamma}\cup\{x_{\gamma}+1\})$. Then $\mP_\gamma$ induces the open partition $\set{ U \cap [\theta,\omega_1) : U \in \mP_\gamma}$ of $[\theta,\omega_{1})$, since
    \[
        [\theta, \omega_1) \subseteq \bigcap_{\beta<\gamma}U_\beta = \left(\bigcup \mP_\gamma \right) \sqcup \{x_\gamma\}.
    \]
    It follows from \myref{LEMMA_Omega1Partition} applied to $\{[0,\theta)\} \cup \set{ U \cap [\theta,\omega_1) : U \in \mP_\gamma}$ that there is some $U_{\gamma}\in \mP_\gamma$ and an $\eta_{\gamma}<\omega_1$ such that $[\eta_\gamma, \omega_1) \subseteq U_\gamma$. 
    In this case, let $\sigma\seq{\mP_\delta:\delta\le\gamma} = U_\gamma$.

    Otherwise, if $\gamma\ge\omega_1$, we define $\sigma$ arbitrarily.

    We claim that the constructed $\sigma$ is a winning strategy for Bob.
    Indeed, striving for a contradiction, suppose 
    \[
        R = \seq{\mP_0, U_0, \dotsc, (\mP_\gamma,x_\gamma), U_\gamma, \dotsc, (\emptyset, x_\alpha) }
    \] 
    is a run of length $\alpha$ in which $\bob$ played according to $\sigma$, but $\ali$ wins.
    Note that every run played according to $\sigma$ has length $\ge \omega_1$, so $\alpha\ge\omega_1$.
    However, in view of \myref{LEMMA_WRunOrdinal}, this means that 
    \[
         \omega_1+1\simeq \set{\gamma:\gamma\le\omega_1 \text{ is limit}}\subseteq \set{\gamma:\gamma\le\alpha \text{ is limit}}\simeq \set{x_\gamma:\gamma\le\alpha \text{ is limit}}\subseteq \omega_1,
    \]
    which is impossible, since $\omega_1$ is first countable and $\omega_1 \in \omega_1+1$ does not have a countable local basis.
    Hence, $\bob$ must be the winner of $R$. 
\end{ex}

If $(X,\tau)$ is a topological space and $\sigma$ is a strategy for $\ali$ in $\TreeGame(X)$, then $\sdom{\sigma}$ is a pruned subtree of $\tau^{<|X|^+}$ (ordered by inclusion).

\begin{lemma}\label{LEMMA_TreeStratIso}
    If $T$ is a pruned tree, then there is a winning strategy $\sigma$ for $\ali$ in $\TreeGame(\mR(T))$ such that $\sdom{\sigma}\cong T$.
\end{lemma}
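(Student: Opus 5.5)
The plan is to let $\ali$ play the canonical partitions coming from the tree itself, so that the positions of the game correspond to the successor nodes of $T$. At the first inning $\ali$ plays $\mP_0=\{[r,\emptyset]\}=\{\mR(T)\}$, where $r$ is the root; since $T$ is pruned, $[t,\emptyset]\neq\emptyset$ for every $t\in T$. We will maintain the invariant that, after $\bob$'s move at inning $\beta$, we have $U_\beta=[t_\beta,\emptyset]$ for some $t_\beta\in\succs(T)$, and that $\seq{t_\beta:\beta<\alpha}$ is strictly increasing and enumerates the successor nodes of the path $\{s: s\le t_\beta \text{ for some } \beta\}$.

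At a successor inning $\beta+1$, $\ali$ plays $\mP_{\beta+1}=\set{[s,\emptyset]:s\in\succs(t_\beta)}$. This is an open partition of $[t_\beta,\emptyset]$: every ray through $t_\beta$ is not maximal at $t_\beta$, so it contains exactly one immediate successor of $t_\beta$. The members are non-empty because $T$ is pruned. At a limit inning $\alpha$, let $R=\bigcup_{\beta<\alpha}\downward{t_\beta}$. This is a ray, because the $t_\beta$ are strictly increasing and $\alpha$ is a limit. $\ali$ plays $x_\alpha=R$ and $\mP_\alpha=\set{[s,\emptyset]:s\in\Rtop(R)}$. We have $\bigcap_{\beta<\alpha}[t_\beta,\emptyset]=\set{R'\in\mR(T):R\subseteq R'}$. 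Such an $R'$ equals $R$ or contains exactly one top of $R$, which gives \eqref{EQ_RayGameContinues1} with a disjoint union. Condition \eqref{EQ_RayGameContinues2} is the statement that the sets $[t,F]$ with $t\in R$ and finite $F\subseteq\Rtop(R)$ form a local base at $R$. Indeed, $U_\beta\setminus\bigcup\mF=[t_\beta,F]$, and every $t\in R$ lies below some $t_\beta$. If $\Rtop(R)=\emptyset$, the run ends and $\ali$ wins. Otherwise $\bob$ picks some $[s,\emptyset]$ with $s$ a top of $R$, which is again a successor node, so the invariant persists.

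Winning is then immediate from \myref{LEMMA_WStratEndOfRun}. At every position compatible with $\sigma$ the recipe above provides a legal move, so there is no sequence of $\bob$'s responses for which $\sigma$ has no viable response. For the isomorphism, define $\Phi\colon\sdom{\sigma}\to T$ by $\Phi(\seq{\,})=r$ and $\Phi(\seq{U_\beta:\beta\le\zeta})=t_\zeta$, the node with $U_\zeta=[t_\zeta,\emptyset]$. This node is well defined because distinct nodes give distinct sets, by prunedness. Compatible sequences of $\bob$'s moves correspond bijectively to paths in $T$ ending in a chosen node. $\Phi$ maps successor-length positions bijectively onto $\succs(T)$, and it is order preserving in both directions.

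The main obstacle is to reconcile the tree structure. $\sdom{\sigma}$ is defined as the set of successor-length positions ordered by inclusion, while $T$ also has limit nodes. Here $\sdom{\sigma}$ will be isomorphic to $\succs(T)$ as a suborder, and limit nodes of $T$ are the tops $s$ of rays $R$. In the game these appear as $\bob$'s choice $[s,\emptyset]$ at the limit inning, so that position has successor length $\alpha+1$. Rays with no top contribute nothing. So I expect the correct bookkeeping to be that every node of $T$, whether successor or limit, is matched with the position of length one more than the number of $\bob$-moves preceding it. In other words, I will index by $\bob$'s move at which the node is chosen and check carefully that $\sdom{\sigma}$, as the paper defines it, captures exactly these positions, rather than a literal bijection between lengths and heights. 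Verifying that this indexing makes $\Phi$ a bijection onto all of $T$ is the step to handle carefully.
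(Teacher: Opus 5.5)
\textbf{Gap: the first inning adds an extra root, so $\Phi$ is not injective.} You have $\ali$ open with $\mP_0=\{\mR(T)\}=\{[r,\emptyset]\}$. Then $\bob$'s only reply is $U_0=[r,\emptyset]$, so $t_0=r$. The position $\seq{U_0}$ has successor length $1$ and therefore belongs to $\sdom{\sigma}$, alongside the root $\seq{\,}$. Your $\Phi$ sends both $\seq{\,}$ and $\seq{U_0}$ to $r$, so it is not injective. In fact, for your $\sigma$, $\sdom{\sigma}$ is $T$ with a new root adjoined below $r$, and that new root has exactly one immediate successor. This is not isomorphic to $T$ in general. For example, $T=2^{<\omega}$ is pruned and its root has two immediate successors, whereas the root of your $\sdom{\sigma}$ has one. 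Equivalently, your $t_\beta$ has height $\beta$, while the position $\seq{U_\gamma:\gamma\le\beta}$ has height $1+\beta$ in $\sdom{\sigma}$, and these disagree at every finite level.

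The fix is what the paper does: let $\ali$ open with $\set{[s,\emptyset]:s\in\succs(r)}$. Then $\bob$'s $\beta$-th move is $[t_\beta,\emptyset]$ with $t_\beta$ of height $1+\beta$. The map $\Phi(\seq{\,})=r$, $\Phi(\seq{U_\gamma:\gamma\le\zeta})=t_\zeta$ is then an order isomorphism; the paper writes its inverse as $t\mapsto\seq{[s,\emptyset]:r<s<t}^\smallfrown[t,\emptyset]$.

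\textbf{Limit nodes are not an obstacle.} The issue you single out as the main difficulty resolves automatically. A top $s$ of $R$ chosen by $\bob$ at a limit inning $\alpha$ gives a position of successor length $\alpha+1$. Since $1+\alpha=\alpha$ for infinite $\alpha$, heights still match, and $\Phi$ reaches the limit nodes of $T$ without extra bookkeeping. You should drop the remark that $\sdom{\sigma}$ is isomorphic to $\succs(T)$: it is false in general and contradicts the statement you are proving.

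Apart from the opening move, your argument agrees with the paper's proof: the successor partitions, the verification of \myref{EQ_RayGameContinues1} and \myref{EQ_RayGameContinues2} at limit innings, and winning via \myref{LEMMA_WStratEndOfRun}. Your definition $R=\bigcup_{\beta<\alpha}\downward{t_\beta}$ also handles the root more cleanly than the paper's $\set{t_\beta:\beta<\alpha}$, which omits it.
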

\begin{proof}
     We define the desired strategy $\sigma$ as follows: in the first inning, let 
    $$\sigma\seq{\,} = \set{[t, \emptyset]: t\in T \text{ is a successor of the root in } T}.$$

    Assuming the winning strategy has been defined up to the $\alpha$-th inning, with $\seq{[t_\beta,\emptyset]:\beta<\alpha}$ being $\bob$'s choices such that $\set{t_\beta:\beta<\alpha}$ is downward closed in $T$:
    \begin{itemize}
        \item If $\alpha = \delta+1$, let $$\sigma\seq{[t_\beta,\emptyset]:\beta<\alpha} = \set{[t,\emptyset]: t\in T \text{ is a successor of $t_\delta$ in } T}.$$
        \item If $\alpha$ is a limit ordinal, note that $x_\alpha = \set{t_\beta:\beta<\alpha}\in \mR(T)$. Furthermore, if 
        \[
            \mP_\alpha = \set{[t,\emptyset]: t\in \Rtop(x_\alpha)},
        \]
        then it is clear that
        \[
            \set{[t_\beta,\emptyset]\setminus \bigcup\mF: \beta<\alpha \text{ and finite } \mF \subseteq \mP_\alpha} = \set{[t,F]: t\in x_\alpha \text{ and finite } F\subseteq\Rtop(x_\alpha)}
        \]
        is a local basis for $x_\alpha$, so we may set $\sigma\seq{[t_\beta,\emptyset]:\beta<\alpha} = (\mP_\alpha,x_\alpha)$.
    \end{itemize}
    In view of \myref{LEMMA_WStratEndOfRun}, $\sigma$ is a winning strategy.
    Moreover, letting $r$ denote the root of $T$, the map
$
\Phi \colon T \longrightarrow \sdom{\sigma}
$
defined by
\[
\Phi(r)=\seq{\,}
\quad\text{and}\quad
\Phi(t)=\seq{[s,\emptyset] : r<s<t}^{\smallfrown}[t,\emptyset]
\quad\text{for } t\in T\setminus\{r\}
\]
is an order isomorphism. 
\end{proof}

\begin{lemma}\label{LEMMA_StratHomeo}
    If $X$ is a space and $\sigma$ is a winning strategy for $\ali$ in $\TreeGame(X)$, then $\mR(\sdom{\sigma})\simeq X$.
\end{lemma}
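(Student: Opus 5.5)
We construct an explicit map $f\colon \mR(\sdom{\sigma})\to X$ and show it is a homeomorphism. Write $S=\sdom{\sigma}$. A ray $\mathcal{R}$ in $S$ is a chain of successor-length sequences of $\bob$'s moves with no maximum. Its union is a sequence $\seq{U_\beta:\beta<\alpha}$ with $\alpha$ a limit ordinal, and this sequence lies in $\dom(\sigma)$. This is because every initial segment is compatible with $\sigma$, and, $\sigma$ being winning, \myref{LEMMA_WStratEndOfRun} gives that $\sigma$ always has a response. So $\sigma\seq{U_\beta:\beta<\alpha}=(\mP_\alpha,x_\alpha)$ is defined, and we set $f(\mathcal{R})=x_\alpha$. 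Conversely, the tops of $\mathcal{R}$ in $S$ are exactly the extensions $\seq{U_\beta:\beta<\alpha}^\smallfrown V$ with $V\in\mP_\alpha$. For $t=\seq{U_\beta:\beta\le\delta}\in S$, the natural image of the basic set $[t,\emptyset]$ should be $U_\delta$. Hence we aim to prove
\[
  f[[t,F]] = U_\delta\setminus \bigcup\set{V: t^\smallfrown\!\cdots{}^\smallfrown V\in F},
\]
with the root mapping to $X$.

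\textbf{Surjectivity.} Given $x\in X$, let $\bob$ play against $\sigma$ by always choosing the unique member of the current partition containing $x$. At successor stages the partitions cover $U_\beta$, so this is always possible. At a limit stage $\alpha$ we have $x\in\bigcap_{\beta<\alpha}U_\beta=(\bigcup\mP_\alpha)\sqcup\{x_\alpha\}$, so either $x=x_\alpha$ or $\bob$ can continue. By \myref{PROP_RayGameRunLength} the run cannot go on forever, and since $\sigma$ is winning it ends with some $\mP_\alpha=\emptyset$ (\myref{LEMMA_WRunEndOfRun}). Therefore at some limit stage $x=x_\alpha$, and the corresponding ray maps to $x$.

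\textbf{Injectivity.} Suppose two distinct rays $\mathcal{R}\neq\mathcal{R}'$ are given. If they branch at some successor node, their images lie in disjoint members of some partition. Otherwise one is a proper initial segment of the other, and then $x_\alpha\notin U_\alpha$ while the longer run's point lies in $U_\alpha$. The same bookkeeping shows that $f(\mathcal{R})\in U_\delta$ if and only if the node $\seq{U_\beta:\beta\le\delta}$ belongs to $\mathcal{R}$, which yields the displayed formula for $f[[t,F]]$.

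\textbf{Continuity and openness.} Openness follows because each $U_\delta$ and each $V\in\mP_\alpha$ is clopen. This is \myref{LEMMA_WStratClopens}, applied to any finished winning run extending the position, and such a run exists by the surjectivity argument. Continuity at $\mathcal{R}$ with $f(\mathcal{R})=x_\alpha$ follows from condition \myref{EQ_RayGameContinues2}. The sets $U_\beta\setminus\bigcup\mF$, with $\beta<\alpha$ and $\mF\subseteq\mP_\alpha$ finite, form a local base at $x_\alpha$, and each is exactly $f[[t,F]]$ for the corresponding node $t\in\mathcal{R}$ and finite set of tops $F$.

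The main obstacle is the careful identification of $f^{-1}(U_\delta)$ with the cone above $t$, including rays ending exactly at $t$'s limit predecessors. It is also where the clopenness lemma has to be applied along positions rather than along a single completed run.
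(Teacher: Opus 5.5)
Your proposal is correct and follows essentially the same route as the paper: you define the map by $\sigma$'s chosen point at the limit position of each ray, get surjectivity by letting $\bob$ follow $x$, and get injectivity from the branching versus proper-initial-segment dichotomy. Openness comes from \myref{LEMMA_WStratClopens} and continuity from \myref{EQ_RayGameContinues2}, as in the paper; you additionally make explicit two points the paper leaves implicit, namely why the clopenness lemma applies at every position (via a finished run through it) and the identification of $f^{-1}(U_\delta)$ with the cone above $t$.
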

\begin{proof}
    By \myref{LEMMA_WStratEndOfRun}, note that every $R = \seq{U_\beta:\beta<\alpha}\in \mR(\sdom{\sigma})$ is such that $R\in \dom(\sigma)$. 
    In this case, let $g(R)$ be such that $\sigma(R) = (\mP,g(R))$.
    We claim that this $g\colon \mR(\sdom{\sigma})\to X$ is a homeomorphism:
    \begin{description}
        \item[$g$ is injective] Suppose $R,R'\in \mR(\sdom{\sigma})$ are distinct. If $R\subsetneq R'$ (or vice versa), then $\TreeGame(X)$'s rules tell us that $g(R)\notin R'(\beta)$ for any $\beta\ge |R|$, so $g(R)\neq g(R')$. Otherwise, there exists a least ordinal $\beta$ such that $R(\beta) \neq R'(\beta)$. 
        This implies that $R(\beta)$ and $R'(\beta)$ are two distinct open sets in the same partition chosen by $\sigma$ at the $\beta$-th inning, and are thus disjoint. Since $g(R)\in R(\beta)$ and $g(R')\in R'(\beta)$, it follows that $g(R)\neq g(R')$.

        \item[$g$ is surjective] Given $x\in X$, let $R = \seq{U_\beta:\beta<\alpha}$ be the maximal sequence of open sets containing $x$ that $\bob$ can play against $\sigma$.
        Then $\sigma(R) = (\mP, x)$ for some $\mP$, so that $g(R) = x$, as desired. 

        \item[$g$ is open] Let
        \[B = \left[\seq{U_\theta:\theta\le \beta}, \set{R^\smallfrown V:V\in \mF}\right]\]
        be a standard basic open set in $\mR(\sdom{\sigma})$, where $\mF \subseteq \mP$ is a finite subset of the partition $\mP$ such that $\sigma(R)=(\mP,a)$ and $R\in \mR(\sdom{\sigma})$ is such that $R \supseteq \seq{U_\theta:\theta\le \beta}$. Note that $g[B] = U_\beta\setminus \bigcup \mF$, which is open (by \myref{LEMMA_WStratClopens}, since $\sigma$ is a winning strategy). Hence, $g$ is an open map.

        \item[$g$ is continuous] Suppose $U\subseteq X$ is a non-empty open set. 
        Since $g$ has already been shown to be a bijection, let us fix $R=\seq{U_\beta:\beta<\alpha}\in \mR(\sdom{\sigma})\cap g^{-1}(U)$. 

        Because $\sigma$ is a winning strategy, it follows from \myref{EQ_RayGameContinues2}  that there must exist some $\beta<\alpha$ and some finite $\mF\subseteq \mP_\alpha$ such that $g(R)\in U_\beta\setminus \bigcup\mF \subseteq U$. 
        The standard basic open set 
        \[B = [\seq{U_\theta:\theta\le \beta}, \set{\seq{U_\theta:\theta < \alpha }^\smallfrown V:V\in \mF}]\]
        then satisfies $g[B]= U_\beta\setminus \bigcup\mF$, which concludes the proof.
    \end{description}
\end{proof}

For a given space $X$, let $\Sigma(X)$ denote the (possibly empty) collection of all winning strategies for $\ali$ in $\TreeGame(X)$.
Then:

\begin{thm}\label{THM_TreeCollection}
    For a given space $X$, the collection 
    \[\set{\sdom{\sigma}: \sigma\in \Sigma(X)}\]
    comprises, up to isomorphism, the class of all pruned trees whose ray space is homeomorphic to $X$.
\end{thm}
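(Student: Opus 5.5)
The theorem asserts two inclusions up to isomorphism, and each follows directly from one of the two preceding lemmas. I would prove them separately.

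\emph{Every tree arising from a winning strategy belongs to the class.} Take $\sigma\in\Sigma(X)$. As observed right before \myref{LEMMA_TreeStratIso}, $\sdom{\sigma}$ is a pruned subtree of $\tau^{<|X|^+}$ ordered by inclusion. I would spell out briefly why it is pruned: every node of $\sdom{\sigma}$ ends in a move of $\bob$, so it is followed by some open partition chosen by $\ali$. If that partition is empty, the node is the last set $U_\beta$, but then $\bob$ could not have chosen it, which is absurd. So the node has a successor. \myref{LEMMA_StratHomeo} then gives $\mR(\sdom{\sigma})\simeq X$. Hence $\sdom{\sigma}$ is a pruned tree whose ray space is homeomorphic to $X$.

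\emph{Every tree in the class arises from a winning strategy.} Let $T$ be a pruned tree with a homeomorphism $h\colon \mR(T)\to X$. By \myref{LEMMA_TreeStratIso}, there is a winning strategy $\sigma_0$ for $\ali$ in $\TreeGame(\mR(T))$ with $\sdom{\sigma_0}\cong T$. I would transport $\sigma_0$ along $h$ to a strategy $\sigma$ for $X$:
\begin{itemize}
\item $\sigma$ answers $\bob$'s sequence $\seq{h[U_\beta]:\beta<\alpha}$ with the $h$-image of $\sigma_0$'s answer to $\seq{U_\beta:\beta<\alpha}$;
\item at successor innings this means replacing each partition $\mP$ by $\set{h[U]:U\in\mP}$;
\item at limit innings it means replacing $(\mP_\alpha,x_\alpha)$ by $(\set{h[U]:U\in\mP_\alpha},h(x_\alpha))$.
\end{itemize}
Because $h$ is a homeomorphism and a bijection, it preserves the relevant structure. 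Open partitions go to open partitions. Intersections, disjoint unions, and set differences are preserved, so \myref{EQ_RayGameContinues1} carries over. Local bases go to local bases, so \myref{EQ_RayGameContinues2} carries over. Thus legal positions correspond exactly, and $U\mapsto h[U]$ induces an isomorphism $\dom(\sigma_0)\cong\dom(\sigma)$ that respects sequence length and extension. By \myref{LEMMA_WStratEndOfRun}, $\sigma$ is winning: any sequence of $\bob$'s moves compatible with $\sigma$ pulls back via $h^{-1}$ to one compatible with $\sigma_0$, which lies in $\dom(\sigma_0)$. Restricting the isomorphism to successor-length sequences gives $\sdom{\sigma}\cong\sdom{\sigma_0}\cong T$.

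The argument has no real obstacle. The only care needed is in checking that transport along $h$ preserves every clause of \myref{DEF_TreeGame}, especially the limit-stage conditions; a uniform appeal to $h$ being a bijective homeomorphism handles this.
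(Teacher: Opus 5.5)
Your proposal is correct and is essentially the paper's proof: the forward inclusion is \myref{LEMMA_StratHomeo} together with the prunedness observation stated just before \myref{LEMMA_TreeStratIso}, and the converse transports the strategy from \myref{LEMMA_TreeStratIso} along the homeomorphism, exactly as the paper's $f[\sigma]$ does. Your extra prunedness check needs one small addition: the root $\seq{\,}$ does not end in a move of $\bob$, but it still has a successor because $\ali$'s first move is a partition of the non-empty $X$.
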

\begin{proof}
     \myref{LEMMA_StratHomeo} implies that if $T\in \set{\sdom{\sigma}: \sigma\in \Sigma(X)}$, then $\mR(T)\simeq X$.

    Conversely, suppose $T$ is a pruned tree and $f\colon \mR(T)\to X$ is a homeomorphism.
    By \myref{LEMMA_TreeStratIso}, let $\sigma$ be a winning strategy for $\ali$ in $\TreeGame(\mR(T))$ such that $\sdom{\sigma}\cong T$. 
    Consider the strategy $f[\sigma]$ for $\ali$ in $\TreeGame(X)$ obtained by translating the partitions and points chosen by $\sigma$ via $f$.
    Since $f$ is a homeomorphism, $f[\sigma]\in \Sigma(X)$. 
    Furthermore, the association 
    \[\seq{U_\beta:\beta<\alpha}^\smallfrown U_\alpha\mapsto \seq{f[U_\beta]:\beta<\alpha}^\smallfrown f[U_\alpha]\]
    is an isomorphism between $\sdom{\sigma}$ and $\sdom{f[\sigma]}$; thus, \[
    \sdom{f[\sigma]}\cong
    T. \]\end{proof}

\begin{cor}\label{COR_RaySPaceChar}
    For a non-empty topological space $X$, the following are equivalent:
    \begin{enumerate}[label=(\alph*)]
        \item $X$ is a ray space.
        \item $\ali$ has a winning strategy in $\TreeGame(X)$.
    \end{enumerate}
\end{cor}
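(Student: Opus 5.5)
The plan is to read the corollary off \myref{THM_TreeCollection}, using \myref{LEMMA_TreeStratIso} and \myref{LEMMA_StratHomeo} for the two directions. The one subtlety is that the definition of ray space allows an arbitrary tree $T$, while those lemmas concern pruned trees. So the first step is to reduce to the pruned case.

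\emph{Reduction to pruned trees.} Given any tree $T$, let $T_0$ be the set of nodes $t\in T$ that lie on some ray, i.e.\ for which $[t,\emptyset]\neq\emptyset$. This set is downward closed, so it is again a tree with the same root whenever $\mR(T)\neq\emptyset$.
\begin{itemize}
\item Every ray of $T$ is contained in $T_0$, so $\mR(T_0)=\mR(T)$ as sets.
\item A basic set $[t,F]$ of $\mR(T)$ equals $[t,F\cap T_0]$, because a top $s\notin T_0$ lies on no ray, hence no ray contains it. Therefore the two topologies coincide.
\item To see that $T_0$ is pruned, take $t\in T_0$ on some ray $R$. Then $R$ contains a strictly larger element, which is again in $T_0$.
\end{itemize}
If the paper's notion of ``pruned'' asks for more (for instance, that every node has successors lying on rays), the same trimming achieves it. This is where I expect the only real care to be needed: checking that the ray space is unchanged, in particular that tops which were pruned away do not alter the local bases.

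\emph{(a)$\Rightarrow$(b).} Since $X$ is a non-empty ray space, $X\simeq\mR(T)$ for some tree $T$. After the reduction, $T$ may be taken pruned, and $\mR(T)\neq\emptyset$ ensures $T_0$ is non-empty. By \myref{THM_TreeCollection}, there is $\sigma\in\Sigma(X)$ with $\sdom{\sigma}\cong T$. In particular $\Sigma(X)\neq\emptyset$, so $\ali$ has a winning strategy. Equivalently, one can apply \myref{LEMMA_TreeStratIso} to get a winning strategy in $\TreeGame(\mR(T))$ and transport it along the homeomorphism, exactly as in the proof of \myref{THM_TreeCollection}.

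\emph{(b)$\Rightarrow$(a).} If $\sigma$ is a winning strategy for $\ali$ in $\TreeGame(X)$, then \myref{LEMMA_StratHomeo} gives $X\simeq\mR(\sdom{\sigma})$. Hence $X$ is a ray space, witnessed by the tree $\sdom{\sigma}$.
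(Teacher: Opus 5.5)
Your proposal is correct and is essentially the paper's argument. The paper states the corollary as an immediate consequence of \myref{THM_TreeCollection}, namely \myref{LEMMA_TreeStratIso} transported along a homeomorphism for (a)$\Rightarrow$(b) and \myref{LEMMA_StratHomeo} for (b)$\Rightarrow$(a); your explicit trimming of an arbitrary tree to the subtree of nodes lying on rays, which leaves the ray space unchanged and yields a pruned tree, is a step the paper leaves implicit, and you carry it out correctly.
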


We thus obtain the following topological characterization of when two pruned trees have homeomorphic ray spaces:
\begin{cor}\label{COR_TopTreeCharacterization}
    For any pair $(T,T')$ of pruned trees, $\mR(T)\simeq\mR(T')$ if and only if there exists a winning strategy $\sigma$ for $\ali$ in $\TreeGame(\mR(T))$ such that \[\sdom{\sigma} \cong T'.\]
\end{cor}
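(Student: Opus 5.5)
This follows by specializing \myref{THM_TreeCollection} to $X=\mR(T)$; the plan is to run the two implications separately.

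For the forward direction, I assume $\mR(T)\simeq\mR(T')$ via some homeomorphism $f\colon \mR(T')\to\mR(T)$. Since $T'$ is a pruned tree whose ray space is homeomorphic to $X=\mR(T)$, \myref{THM_TreeCollection} provides some $\sigma\in\Sigma(\mR(T))$ with $\sdom{\sigma}\cong T'$. To make this explicit, I would take the winning strategy $\sigma'$ for $\ali$ in $\TreeGame(\mR(T'))$ from \myref{LEMMA_TreeStratIso}, which satisfies $\sdom{\sigma'}\cong T'$, and transport it along $f$ as in the proof of \myref{THM_TreeCollection}. Concretely, partitions are pushed forward elementwise and the limit points $x_\alpha$ are replaced by $f(x_\alpha)$. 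Because $f$ is a homeomorphism, it preserves open partitions and the identities \myref{EQ_RayGameContinues1} and \myref{EQ_RayGameContinues2}. Hence $f[\sigma']$ is a winning strategy, and the map $\seq{U_\beta}\mapsto\seq{f[U_\beta]}$ is a tree isomorphism $\sdom{\sigma'}\cong\sdom{f[\sigma']}$.

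For the converse, I assume $\sigma$ is a winning strategy for $\ali$ in $\TreeGame(\mR(T))$ with $\sdom{\sigma}\cong T'$. By \myref{LEMMA_StratHomeo}, $\mR(\sdom{\sigma})\simeq\mR(T)$. An order isomorphism of trees induces a bijection between their rays, and this bijection maps each standard basic set $[t,F]$ onto $[\phi(t),\phi[F]]$, because tops and downward closures are preserved. It is therefore a homeomorphism, so $\mR(T')\simeq\mR(\sdom{\sigma})\simeq\mR(T)$.

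There is essentially no obstacle here. The only point needing a brief remark is that isomorphic trees have homeomorphic ray spaces, which follows directly from the definition of the topology via the sets $[t,F]$. One should also note that $\sdom{\sigma}$ is pruned, as observed just before \myref{LEMMA_TreeStratIso}, although the converse direction does not even need this.
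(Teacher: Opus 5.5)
Your proposal is correct and follows the paper's route: the corollary is \myref{THM_TreeCollection} specialized to $X=\mR(T)$. That theorem's proof uses \myref{LEMMA_StratHomeo} for one direction and, for the other, transports the strategy from \myref{LEMMA_TreeStratIso} along a homeomorphism, exactly as you do. Your remark that isomorphic trees have homeomorphic ray spaces just makes explicit a step the paper leaves implicit.
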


Likewise, we can use \myref{THM_PitzRepresentation} together with our ray space characterizations to obtain the following characterizations for end spaces of graphs:

\begin{cor}\label{COR_EndSpaceChar}
    A topological space $X$ is an end space if and only if there is a winning strategy $\sigma$ for $\ali$ in $\TreeGame(X)$ such that $\sdom{\sigma}$ is a special tree.
\end{cor}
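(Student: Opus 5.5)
The plan is to combine \myref{THM_PitzRepresentation} with \myref{THM_TreeCollection}; the only extra care needed is to move between arbitrary trees and pruned trees while preserving specialness.

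For the forward direction, suppose $X$ is an end space. By \myref{THM_PitzRepresentation} there is a special tree $T$ with $X\simeq\mR(T)$. I will replace $T$ by its pruned part
\[
T_0=\set{t\in T: t \text{ belongs to some ray of } T}.
\]
Every ray of $T$ is contained in $T_0$, and $T_0$ is a down-closed subtree. The basic sets $[t,F]$ computed in $T_0$ coincide with the traces of those computed in $T$: tops of rays lying outside $T_0$ play no role, because no ray contains them. Hence $\mR(T_0)=\mR(T)$ as spaces.

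Next I check that $T_0$ is pruned. Each node of $T_0$ lies in a ray, and a ray has no maximal element, so the node has a successor inside that same ray. If the paper's notion of ``pruned'' asks for more, I would adjust $T_0$ by the same device. Specialness passes to $T_0$ because a subset of a countable union of antichains is again such a union.

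Now \myref{THM_TreeCollection}, applied to the pruned tree $T_0$, gives some $\sigma\in\Sigma(X)$ with $\sdom{\sigma}\cong T_0$. Since being special is invariant under order isomorphism, $\sdom{\sigma}$ is special.

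For the converse, suppose $\sigma$ is a winning strategy for $\ali$ and $\sdom{\sigma}$ is special. By \myref{LEMMA_StratHomeo}, $X\simeq\mR(\sdom{\sigma})$, so $X$ is homeomorphic to the ray space of a special tree. By \myref{THM_PitzRepresentation}, $X$ is an end space.

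The only real obstacle is the reduction to pruned trees. It requires verifying that deleting nodes not lying on any ray changes neither the ray space nor specialness. This is routine, but it must be stated, because \myref{THM_TreeCollection} only speaks about pruned trees.
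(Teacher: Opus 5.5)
Your proof is correct and follows the paper's route: the paper obtains this corollary by combining \myref{THM_PitzRepresentation} with \myref{THM_TreeCollection}, and your converse via \myref{LEMMA_StratHomeo} is the relevant half of that theorem. Your reduction to the pruned subtree $T_0$ keeps the same ray space and keeps the tree special. The paper leaves this step implicit, but it is needed, since \myref{THM_TreeCollection} only speaks about pruned trees.
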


\begin{cor}
    For any pair $(G,H)$ of graphs, $\Omega(G)\simeq\Omega(H)$ if and only if there exist winning strategies $\sigma_G$ and $\sigma_H$ for $\ali$ in $\TreeGame(\Omega(G))$ and $\TreeGame(\Omega(H))$, respectively, such that $\sdom{\sigma_G} \cong \sdom{\sigma_H}$.
\end{cor}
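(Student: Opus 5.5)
The plan is to reduce this corollary to \myref{THM_TreeCollection}, once for each of $\Omega(G)$ and $\Omega(H)$. The bridge between end spaces and rays is \myref{THM_PitzRepresentation}. Two facts will be used throughout. First, homeomorphism is an equivalence relation. Second, the class of pruned trees is closed under order isomorphism. I would also note, as a preliminary step, that every end space is a ray space of a special tree that may be taken pruned: pruning a special tree keeps it special, since a subset of a countable union of antichains is again such a union. It also does not change the ray space, because the standard argument (presumably already implicit in how the paper uses pruned trees) deletes only nodes that no ray passes through in the relevant way. If this pruning step turns out to be delicate, I would instead note that $\sdom{\sigma}$ is always pruned, so the trees we actually produce below are automatically pruned.

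\emph{Forward direction.} Suppose $\Omega(G)\simeq\Omega(H)$. By \myref{THM_PitzRepresentation}, there is a special tree $T$ with $\mR(T)\simeq\Omega(G)$, and we take $T$ pruned as above. Then $\mR(T)\simeq\Omega(H)$ as well. Applying \myref{THM_TreeCollection} to $X=\Omega(G)$ gives a winning strategy $\sigma_G\in\Sigma(\Omega(G))$ with $\sdom{\sigma_G}\cong T$. Applying it to $X=\Omega(H)$ gives $\sigma_H\in\Sigma(\Omega(H))$ with $\sdom{\sigma_H}\cong T$. Hence $\sdom{\sigma_G}\cong\sdom{\sigma_H}$.

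\emph{Backward direction.} Given $\sigma_G$ and $\sigma_H$ with $\sdom{\sigma_G}\cong\sdom{\sigma_H}$, \myref{LEMMA_StratHomeo} yields $\Omega(G)\simeq\mR(\sdom{\sigma_G})$ and $\Omega(H)\simeq\mR(\sdom{\sigma_H})$. An order isomorphism between trees induces a bijection of rays that preserves the basic sets $[t,F]$, so isomorphic trees have homeomorphic ray spaces. Composing these homeomorphisms gives $\Omega(G)\simeq\Omega(H)$. Note that this direction needs neither speciality nor \myref{THM_PitzRepresentation}.

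There is essentially no obstacle beyond bookkeeping. The only point needing care is the forward direction's requirement that the tree from \myref{THM_PitzRepresentation} be pruned, so that \myref{THM_TreeCollection} applies. If that is not immediate, the fallback is to avoid it: obtain some winning $\sigma_G$ via \myref{COR_RaySPaceChar}, transport it along a homeomorphism $f\colon\Omega(G)\to\Omega(H)$ exactly as in the proof of \myref{THM_TreeCollection}, and set $\sigma_H=f[\sigma_G]$. This gives $\sdom{\sigma_H}\cong\sdom{\sigma_G}$ directly, and it is the cleanest route overall.
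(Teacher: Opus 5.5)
Your proposal is correct and is essentially the argument the paper leaves implicit: \myref{THM_PitzRepresentation} makes $\Omega(G)$ (hence $\Omega(H)$) a ray space, \myref{THM_TreeCollection} (or, equivalently, transporting a winning strategy along the homeomorphism as in its proof) gives the forward direction, and \myref{LEMMA_StratHomeo} gives the converse. The pruning step is indeed harmless, since restricting to the downward-closed set of nodes lying on some ray changes neither the rays nor the basic sets $[t,F]$ and preserves speciality; speciality itself plays no role in this corollary.
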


In view of \myref{COR_EndSpaceChar}, it is natural to ask:

\begin{question}
    Is there a graph $G$ for which there exists a non-special tree $T$ such that $\Omega(G)\simeq \mR(T)$?
\end{question}

Now, for a graph $G$, let us consider $\Omega_E(G)$ as its so-called \emph{edge-end} space, defined just as $\Omega(G)$, with finite sets of edges playing the role in $\Omega_E(G)$ that finite sets of vertices play in $\Omega(G)$ (these spaces were introduced in \cite{edge-ends} and have since been studied in, for instance,  \cite{PauloGustavoDavide, aurichi2024topologicalremarksendedgeend, aurichilucasedgeconectividade, metrizationtheoremforedgeends,real2025subbasepropertydescribingedgeend}).

We are particularly interested here in the following characterization:
\begin{prop}[Proposition 5.1 in \cite{real2025subbasepropertydescribingedgeend}]
    A space $X$ is homeomorphic to the edge-end space of some graph if and only if $X$ is homeomorphic to the ray space of an order tree $T$ whose height is bounded by $\omega\cdot\omega$ and whose non-maximal rays contain only finitely many nodes with more than one successor. 
\end{prop}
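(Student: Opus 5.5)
We would prove the two implications separately, in both directions using the tree representation of $\Omega_E(G)$ through nested finite edge cuts. Throughout, $\Omega_E(G)$ has the basic open sets $\Omega_E(\varepsilon,F)$ for finite edge sets $F$.

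\emph{From graphs to trees.} We assume $G$ is connected (otherwise we take a disjoint union of the trees obtained for the components, glued at a common root). We build a tree $T$ whose nodes are pairs $(F,C)$ with $F$ a finite edge set and $C$ a component of $G-F$ with $\Omega_E(C)\neq\emptyset$. Each set $\Omega_E(C)$ is clopen in $\Omega_E(G)$, because the edge boundary of $C$ is finite. The construction goes in $\omega$-blocks.

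In block $k$ we descend along an increasing sequence of finite edge sets $F_0\subseteq F_1\subseteq\cdots$. At each step we add one more edge cut, and the successors of a node are the relevant components of the new complement. An $\omega$-ray $(F_n,C_n)_n$ of the block determines $\bigcap_n\Omega_E(C_n)$. When this set is a single edge-end with the usual neighbourhood base, the ray is maximal and that edge-end is its point. Otherwise the nested components $C_n$ converge to a set of vertices of "infinite edge-degree towards the tail". The edge-ends in $\bigcap_n \Omega_E(C_n)$ are then split according to finitely many new cuts around that set, and these pieces become the tops of the ray, starting block $k+1$.

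We would then check, in this order:
\begin{enumerate}[label=(\arabic*)]
\item $\mR(T)\simeq\Omega_E(G)$. It is convenient to verify this via \myref{COR_RaySPaceChar} and \myref{THM_TreeCollection}: the construction is precisely a strategy for $\ali$ in $\TreeGame(\Omega_E(G))$, and one checks that it is winning, i.e.\ that \myref{EQ_RayGameContinues1} and \myref{EQ_RayGameContinues2} hold at every limit inning.
\item The height of $T$ is at most $\omega\cdot\omega$.
\item Every non-maximal ray has only finitely many nodes with more than one successor.
\end{enumerate}

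\emph{From trees to graphs.} Given such a $T$, we build $G$ by induction on the block index $k<\omega$. Nodes of $T$ become vertices, and each node is joined by an edge to each of its immediate successors. For a ray $R$ of height $\omega\cdot(k+1)$ that has tops, we use the hypothesis that $R$ branches only finitely often: some final segment of $R$ is a single path $P_R$. We then join every top of $R$ to infinitely many vertices of $P_R$, taking infinitely many edge-disjoint edges. This makes $P_R$ and the rays through the tops inseparable by finitely many edges, whereas any finite edge cut below a branching node still separates incompatible parts.

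Next we verify that the map $R\mapsto$ (the edge-end containing the corresponding path) is a bijection $\mR(T)\to\Omega_E(G)$ whose basic sets correspond: $[t,F]$ should match $\Omega_E$ of the component cut off by the finitely many edges at $t$ together with those towards $F$. The bound $\omega\cdot\omega$ on the height guarantees that every ray is reached after finitely many blocks, so this induction terminates.

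\emph{Main obstacle.} We expect the hardest step to be (3) in the forward direction, together with the matching fact in the converse that finite branching is exactly what is needed for the edge cuts to separate correctly. Concretely, one must show that when a block-$\omega$ ray has tops, the nested components eventually stop splitting off further edge-ends, because splitting infinitely often would yield a finite edge cut isolating the limit point. The first thing we would try is a compactness/König-type argument on the finite cuts $F_n$. Failing that, we would choose the cuts $F_n$ greedily (minimal cuts) so that every branching along the ray is forced by a genuinely distinct edge-end, which limits the branching by the finiteness of the cut size at the limit.
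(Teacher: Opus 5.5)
The paper gives no proof of this statement. It is quoted from Real's paper and used only to derive the game characterization of edge-end spaces, so your proposal has to stand on its own. It has genuine gaps in both directions.

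\textbf{Forward direction.} The construction cannot work as described. The successors of a node $(F_n,C_n)$ are the components of the connected graph $C_n$ minus the finite set $F_{n+1}$, so every node has at most $|F_{n+1}|+1$ immediate successors. No tree with this property has a countably infinite discrete ray space. To see this, follow nodes $t$ with $[t,\emptyset]$ infinite. At a successor step, one of the finitely many successors qualifies. At a limit, the ray $R$ is isolated, so $[t,F]=\{R\}$ for some $t\in R$ and finite $F\subseteq\Rtop(R)$, which forces some top in $F$ to qualify. This builds a chain of length $|T|^+$, which is impossible. Yet a vertex joined to the first vertices of countably many disjoint rays has exactly such an edge-end space, so pieces cut out by infinitely many edges at once (e.g.\ all edges at a vertex of infinite degree) are unavoidable.

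Independently, your justification of (3) is false: infinite branching along a ray with tops is an artefact of the chosen cuts and yields no isolating cut. Take $\aleph_1$ disjoint rays whose first vertices span a $K_{\aleph_1}$. Cutting off one pendant ray per step along the clique's component branches at every step. The limit is the clique's edge-end, which still has uncountably many tops and is not isolated by any finite edge set.

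What is missing is a rule that does no splitting along branches heading towards an edge-end dominated by a vertex of infinite degree, and instead presents all pieces as tops at the limit. You would also need proofs that the limit intersections are non-empty (cutting off one ray at a time in the star above gives $\bigcap_n\Omega_E(C_n)=\emptyset$), that the local-base condition holds at limits, and that the process stops by $\omega\cdot\omega$. As it stands, (1)--(3) are asserted, not proved.

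\textbf{Converse direction.} Your graph has the wrong edge-end space as soon as a top has infinitely many immediate successors. Let $p_0<p_1<\cdots$ be a ray with a single top $t$, where $t$ has immediate successors $u_k$ ($k\in\omega$), each followed by an $\omega$-chain. This tree satisfies the hypotheses, and $\mR(T)$ is countable and discrete, since $[p_0,\{t\}]$ isolates the spine. In your $G$, however, $t$ is adjacent to infinitely many $p_n$ and to every $u_k$. So every finite edge cut leaves all but finitely many of the chains in the component of a tail of the spine, and $\Omega_E(G)$ is a convergent sequence.

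The repair is to first give every limit node a single new successor carrying its old successors, and to let $P_R$ start at a successor node. Then $P_R\cup\bigcup_{s\in\Rtop(R)}\lfloor s\rfloor$ hangs from one tree edge, and the basic sets $[t,F]$ really are cut out by finitely many edges.

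Finally, it is not true that every ray is reached after finitely many blocks. Rays of order type $\omega\cdot\omega$ are allowed; the ordinal $\omega\cdot\omega$ itself represents the convergent sequence. Their edge-ends come from rays of $G$ that jump from block to block along the added edges. These points, and the absence of further edge-ends created by the added edges, need separate arguments.
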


With it, we thus obtain:

\begin{cor}
    A topological space $X$ is homeomorphic to the edge-end space of some graph if and only if there is a winning strategy $\sigma$ for $\ali$ in $\TreeGame(X)$ such that:
    \begin{itemize}
        \item[(a)] the height of $\sdom{\sigma}$ is bounded by $\omega\cdot\omega$;
        \item[(b)] every non-maximal ray in $\sdom{\sigma}$ contains only finitely many nodes with more than one successor.
    \end{itemize} 
\end{cor}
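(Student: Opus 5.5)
The plan is to combine the cited Proposition 5.1 with \myref{THM_TreeCollection}, in the same way \myref{COR_EndSpaceChar} was obtained from \myref{THM_PitzRepresentation}. The only extra point is that conditions (a) and (b) are invariant under order isomorphism and under passing to the pruned part of a tree.

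For the backward direction, suppose $\sigma$ is a winning strategy for $\ali$ in $\TreeGame(X)$ with $\sdom{\sigma}$ satisfying (a) and (b). By \myref{LEMMA_StratHomeo}, $X\simeq\mR(\sdom{\sigma})$. Applying the cited Proposition 5.1 to the order tree $T=\sdom{\sigma}$, $X$ is homeomorphic to the edge-end space of some graph.

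For the forward direction, suppose $X$ is homeomorphic to an edge-end space. By Proposition 5.1, $X\simeq\mR(T)$ for some order tree $T$ of height bounded by $\omega\cdot\omega$ whose non-maximal rays each contain only finitely many nodes with more than one successor. \myref{THM_TreeCollection} only produces strategies whose $\sdom{\sigma}$ is isomorphic to a \emph{pruned} tree. So I first pass to a pruned subtree $T_0\subseteq T$ with $\mR(T_0)\simeq\mR(T)$. I take $T_0$ to be the set of nodes lying in some ray of $T$. This set is downward closed, and every node of $T_0$ lies in a ray, so it has a successor within $T_0$. The rays of $T_0$ are exactly the rays of $T$, and the basic open sets $[t,F]$ restrict correctly, since every top in $T$ of a ray of $T$ that itself lies on a ray is in $T_0$, and tops not lying on any ray contribute nothing to $[t,F]$ beyond removing no rays. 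Hence $\mR(T_0)=\mR(T)$ as spaces.

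Properties (a) and (b) pass to $T_0$. Heights in $T_0$ equal heights in $T$. A node of $T_0$ with more than one successor in $T_0$ has more than one successor in $T$. A non-maximal ray of $T_0$ is a non-maximal ray of $T$, since it is properly extended within $T_0\subseteq T$. Then \myref{THM_TreeCollection} (via \myref{LEMMA_TreeStratIso} transported along the homeomorphism) yields $\sigma\in\Sigma(X)$ with $\sdom{\sigma}\cong T_0$. Both (a) and (b) are order-isomorphism invariant, which finishes this direction.

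The main obstacle is the pruning step. One must check that ``ray space'' in Proposition 5.1 means the same $\mR(T)$ as in this paper, and that neither branches nor tops are lost when nodes not lying on rays are discarded. If the cited tree is already pruned, this step is vacuous.
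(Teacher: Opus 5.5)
Your proposal is correct and follows the paper's route: the paper obtains this corollary directly from the cited Proposition 5.1 combined with \myref{THM_TreeCollection}, with \myref{LEMMA_StratHomeo} giving the converse, exactly as you do. Your pruning step is a detail the paper leaves implicit, but it is genuinely needed because \myref{THM_TreeCollection} only concerns pruned trees. It replaces $T$ by the downward-closed subtree $T_0$ of nodes lying on rays, which has literally the same rays and the same ray-space topology and inherits conditions (a) and (b).
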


It should be clear by now that \myref{THM_TreeCollection} is particularly useful for characterizing spaces which are notorious for being homeomorphic to some kind of ray space. 
For instance, let us recall that an \emph{ultrametric $d$} over a set $X$ is a metric which satisfies the following stronger version of the triangle inequality:
\begin{equation}\label{EQ_ultrametric}
    \forall x,y,z\in X\left( d(x,y)\le \max\{d(x,z),d(z,y)\} \right).
\end{equation}
We then say that a topological space $X$ is \emph{completely ultrametrizable} if there is some ultrametric $d$ which induces its topology and such that $(X,d)$ is complete.
In this case:

\begin{thm}\label{THM_CompletelyUltrametrizableChar}
    A topological space $X$ is completely ultrametrizable if and only if there is some strategy $\sigma$ for $\ali$ in $\TreeGame(X)$ which always wins at the $\omega$th inning.
\end{thm}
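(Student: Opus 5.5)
The plan is to pass through ray spaces of trees of height $\omega$ with no tops, using \myref{LEMMA_StratHomeo} in one direction and the standard ``balls of radius $2^{-n}$'' strategy in the other.

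\textbf{($\Leftarrow$)} Suppose $\sigma$ is a strategy for $\ali$ in $\TreeGame(X)$ such that every run compatible with $\sigma$ ends at inning $\omega$ with a move $(\emptyset,x_\omega)$. Then $\sigma$ is winning, so \myref{LEMMA_StratHomeo} gives $X\simeq\mR(T)$ with $T=\sdom{\sigma}$.

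Every node of $T$ is a finite sequence of $\bob$'s moves. Every ray of $T$ is a sequence $\seq{U_n:n<\omega}$ compatible with $\sigma$, and by hypothesis every such sequence lies in $\dom(\sigma)$ with $\sigma$'s answer having $\mP_\omega=\emptyset$. Hence no ray has a top, and $T$ has height $\omega$.

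On $\mR(T)$ define $d(R,R')=2^{-n}$, where $n$ is the least level at which $R$ and $R'$ differ, and $d(R,R)=0$.
\begin{itemize}
\item The ultrametric inequality is immediate from this definition.
\item Since rays have no tops, the sets $[t,\emptyset]$ form a base, and these are exactly the $d$-balls. So $d$ induces the topology of $\mR(T)$.
\item For completeness, take a $d$-Cauchy sequence. Its terms eventually agree on each finite level, so these nodes form a chain $\set{t_n:n<\omega}$. Its downward closure is a ray of length $\omega$, which belongs to $\mR(T)$ because every $\omega$-sequence of $\bob$'s moves was legal. The sequence converges to this ray.
\end{itemize}
Transporting $d$ along the homeomorphism shows $X$ is completely ultrametrizable.

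\textbf{($\Rightarrow$)} Fix a complete ultrametric $d$ on $X$. Let $\ali$ play as follows.
\begin{itemize}
\item At inning $0$ she plays the partition of $X$ into open balls of radius $1$.
\item At inning $n+1$, with $U_n$ an open ball of radius $2^{-n}$, she plays the partition of $U_n$ into open balls of radius $2^{-(n+1)}$.
\end{itemize}
By \eqref{EQ_ultrametric}, balls of a fixed radius are either equal or disjoint, and each ball is clopen. Also every ball of radius $2^{-(n+1)}$ meeting $U_n$ lies inside it. So these are legal open partitions.

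At inning $\omega$, choose $y_n\in U_n$. The sequence $(y_n)$ is Cauchy, so it has a limit $x$. Since each $U_n$ is closed and contains $y_m$ for $m\ge n$, we get $x\in\bigcap_n U_n$. The diameters tend to $0$, so $\bigcap_{n<\omega}U_n=\{x\}$. Thus $\ali$ may play $(\emptyset,x)$: \eqref{EQ_RayGameContinues1} holds with $\mP_\omega=\emptyset$, and \eqref{EQ_RayGameContinues2} holds because $\set{U_n:n<\omega}$ are balls around $x$ of radii tending to $0$, hence a local base at $x$. Consequently every run compatible with this strategy ends, won by $\ali$, exactly at inning $\omega$.

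The main point needing care is this limit stage: nonemptiness of the intersection uses completeness together with closedness of ultrametric balls. In the converse direction, the corresponding point is that every $\omega$-chain of $T$ yields a point, which comes from the hypothesis that $\sigma$ answers every $\omega$-length sequence of $\bob$'s moves.
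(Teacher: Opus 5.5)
Your proposal is correct and follows the paper's proof. For the converse, you pass (as the paper does) to the ray space of the height-$\omega$ tree $\sdom{\sigma}$, citing \myref{LEMMA_StratHomeo}, the lemma behind the paper's appeal to \myref{THM_TreeCollection}, and you spell out the standard complete ultrametric; for the forward direction you use the same strategy of partitioning into balls of radius $2^{-n}$, and your explicit use of completeness and closedness of ultrametric balls to guarantee $\bigcap_{n<\omega}U_n\neq\emptyset$ at inning $\omega$ fills in a step the paper leaves implicit, since it only notes that the diameters tend to $0$.
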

\begin{proof}
    If there is some strategy $\sigma$ for $\ali$ in $\TreeGame(X)$ which always wins at the $\omega$th inning, then \myref{THM_TreeCollection} tells us that $X$ is homeomorphic to the ray space of a pruned tree of height $\omega$, which is always completely ultrametrizable.

    On the other hand, suppose $X$ is completely ultrametrizable and fix an ultrametric $d$ over $X$ which induces $X$'s topology and such that $(X,d)$ is complete.

    For each $x\in X$ and $\varepsilon>0$, let $B_\varepsilon(x)$ be the open ball of radius $\varepsilon$ centered at $x$.
    Then \ref{EQ_ultrametric} tells us that, for all $x,y\in X$ and $\varepsilon>0$, 
    \begin{equation}\label{EQ_UltraOpenBall}
        B_\varepsilon(x)\cap B_\varepsilon(y) \neq \emptyset \, \implies \, B_\varepsilon(x) =  B_\varepsilon(y).  
    \end{equation}
    
    In this case, we recursively define a strategy $\sigma$ for $\ali$ in $\TreeGame(X)$ as follows: firstly, let $\sigma\seq{\,} = \set{B_1(x): x\in X}$ (note that \myref{EQ_UltraOpenBall} guarantees that this is a valid move in $\TreeGame(X)$) and assume $\sigma$ has been defined up to the $n$th inning, with $\seq{ B_{2^{-i}}(x_i): i\le n}$ being $\bob$'s choices thus far. 
    Then we let 
    \[
        \sigma\seq{ B_{2^{-i}}(x_i): i\le n} = \set{B_{2^{-(n+1)}}(x):x\in B_{2^{-n}}(x_n)}.
    \]
    (once again,  \myref{EQ_UltraOpenBall} guarantees that this is a valid move in $\TreeGame(X)$).
    
    Hence, since the diameter of $\bob$'s choices throughout a run compatible with $\sigma$ converges to $0$, it follows that every run compatible with $\sigma$ is won by $\ali$ at the $\omega$th inning.
\end{proof}

\begin{rmk}\label{RMK_WinStratSpreadoutMap}
    Suppose $T$ and $T'$ are trees, $\sigma\in \Sigma(\mR(T'))$, and $\varphi\colon T\to\sdom{\sigma}$ is an isomorphism. 
    
    Let $\psi\colon T\to \bbS(T')$ be such that 
    \[
    \psi(t) = \begin{cases}
        \{(r',\emptyset)\}, & \text{if $t$ is the root of $T$ and $r'$ is the root of $T'$}\\
        \set{(t_R,F_R): R\in U_t}, & \text{otherwise,}
    \end{cases}
    \]
    where $U_t$ is such that $\varphi(t) = \seq{U_\beta:\beta<\alpha}^\smallfrown U_t$ and each $(t_R,F_R)$ is as in \myref{LEMMA_BroadestPair} for $R\in U_t$. 
    Then:
    \begin{itemize}
        \item Since $U_t$ is clopen, \myref{COR_Clopen_SpreadoutPartition} and \myref{PROP_spreadout_clopen_eqv} imply that $\psi(t)\in \bbS(T')$ for every $t\in T$.
        
        \item Condition \ref{item_CombChar_RootCondition} of \myref{THM_CombinatorialCharacterization_SpreaoutMap} holds for $\psi$ explicitly.
        
        \item The minimality of each pair $(t_R,F_R)$ (see \myref{LEMMA_BroadestPair}) and the fact that $U_t\supseteq U_s$ if $t\le s$ imply that condition \ref{item_CombChar_OrderMorphismCondition} of \myref{THM_CombinatorialCharacterization_SpreaoutMap} holds for $\psi$. 
        
        \item Since $\sigma$ chooses clopen partitions, \myref{PROP_RayDisjointCollection} ensures that conditions \ref{item_CombChar_IncompatibleCondition} and \ref{item_CombChar_CoveringCondition} of \myref{THM_CombinatorialCharacterization_SpreaoutMap} hold for $\psi$.
        
        \item Finally, the fact that $\sigma$ is a winning strategy for $\ali$ translates, via \myref{PROP_spreadout_clopen_eqv}, to condition \ref{item_CombChar_LimitCondition} holding for $\psi$.
    \end{itemize}

    As previously mentioned, \myref{COR_TopTreeCharacterization} was the first characterization obtained for trees with homeomorphic ray spaces; the construction above thus motivated \myref{THM_CombinatorialCharacterization_SpreaoutMap}.

    We opted to begin the paper with \myref{THM_CombinatorialCharacterization_SpreaoutMap} because it is one of our main results and its proof does not rely on \myref{COR_TopTreeCharacterization}, despite being motivated by it.

    Conversely, the characterization in \myref{COR_TopTreeCharacterization}, as a consequence of \myref{THM_TreeCollection}, will be shown throughout the remainder of this paper to be particularly useful for several applications and generalizations.
\end{rmk}

Now consider the following result from the literature:

\begin{thm}[Particular case of Theorem 4.2 in \cite{carvalho2026coveringpropertiesendray}]\label{THM_ScatteredCharacterization}
    For a ray space $X$, the following are equivalent:
    \begin{itemize}
        \item[(a)] $X$ is scattered (i.e., every non-empty subset of $X$ has an isolated point).
        \item[(b)] $X$ contains no topological copy of the Cantor space $2^\omega$.
    \end{itemize}
\end{thm}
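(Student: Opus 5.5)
The implication (a)$\Rightarrow$(b) is immediate. A subspace of a scattered space is scattered, while $2^\omega$ is non-empty and has no isolated points. So a scattered $X$ cannot contain a copy of $2^\omega$. All the work lies in (b)$\Rightarrow$(a). I will prove its contrapositive: if $X\simeq\mR(T)$ is not scattered, then $\mR(T)$ contains a copy of $2^\omega$.

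\textbf{Reduction to a closed dense-in-itself set.} If $\mR(T)$ is not scattered, fix a non-empty $Y\subseteq\mR(T)$ with no isolated points. Its closure is still dense-in-itself, so I may assume $Y$ is closed. I also assume $T$ is pruned, as the paper does throughout. The aim is a Cantor scheme inside $Y$ built from the standard basic clopen sets $[t,F]$ of $\mR(T)$.

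\textbf{The Cantor scheme.} For every finite binary sequence $s\in 2^{<\omega}$ I choose a ray $R_s\in Y$ and a basic set $[t_s,F_s]\ni R_s$, with the following properties:
\begin{itemize}
\item the children of $s$ satisfy $t_s<t_{s0},t_{s1}$, and $t_{s0},t_{s1}$ are incompatible;
\item $R_{s0}=R_s$, while $R_{s1}\in Y$ is a new ray through $t_{s1}$;
\item $F_{si}\supseteq F_s$ is a finite set of tops of $R_{si}$.
\end{itemize}
The incompatible pair comes from non-isolatedness of $R_s$ in $Y$. For a node $u\in R_s$ above $t_s$, there is some $R'\in Y\cap[u,F_s]$ with $R'\ne R_s$.

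If $R'$ and $R_s$ split, take $t_{s0}\in R_s$ and $t_{s1}\in R'$ to be their first differing nodes. If instead one ray properly contains the other, first shrink the neighbourhood by adding a suitable top to $F$, and use non-isolatedness again to obtain a genuinely splitting ray. This uses \myref{PROP_PairwiseDisjointPairCollection} to make the sets $[t_{s0},F_{s0}]$ and $[t_{s1},F_{s1}]$ disjoint.

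\textbf{Limits along branches.} For $x\in 2^\omega$ the nodes $t_{x|n}$ form a strictly increasing chain. Let $R_x$ be its downward closure, which is a ray of countable cofinality. I then aim to show two things:
\begin{itemize}
\item $R_{x|n}\to R_x$, so that $R_x\in Y$ because $Y$ is closed;
\item the map $x\mapsto R_x$ is a continuous injection.
\end{itemize}
Injectivity follows from the incompatible splitting nodes. Since $2^\omega$ is compact and $\mR(T)$ is Hausdorff, the map is then an embedding.

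Continuity at $x$ is clear for neighbourhoods $[t,\emptyset]$ with $t\in R_x$: such a $t$ lies below some $t_{x|n}$. The real issue is a neighbourhood $[t,F]$ with $F$ a finite set of tops of $R_x$.

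\textbf{Main obstacle.} The step I expect to be hardest is making the rays $R_{x|n}$, and the limit ray $R_x$ of nearby branches, avoid the tops of $R_x$. A priori, $R_{x|n}$ could run straight up $R_x$ and through one of its tops.

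My first attempt is to choose the splitting node $t_{s0}$ strictly inside $R_s$ at each stage, and to require the new ray $R_{s1}$ to leave every ray of the form $R_{x}$ before its limit height. This is possible by picking $R_{s1}$ inside $[t_{s1},F]$ with $t_{s1}$ off the chain. Then any ray through a top of $R_x$ contains the whole chain $\{t_{x|n}\}$. Each $R_{x|n}$, however, leaves that chain at the incompatible node chosen at a later stage, whenever $x$ has a later $1$. If instead $x$ is eventually $0$, the rays $R_{x|n}$ are eventually a single ray. That ray extends past the chain only through finitely many tops, and I will exclude these by enlarging $F_{s}$ at the stage where the constant tail begins.

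If this bookkeeping fails, the fallback is a different route. I would use \myref{LEMMA_StratHomeo} and \myref{LEMMA_TreeStratIso} to pass to the game tree of a winning strategy. There the pairs $(\mP_\alpha,x_\alpha)$ give an explicit local basis at each limit ray, against which the convergence $R_{x|n}\to R_x$ can be checked directly.
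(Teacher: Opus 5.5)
The paper gives no proof of this statement (it is imported from \cite{carvalho2026coveringpropertiesendray}), so your argument has to stand on its own. The direction (a)$\Rightarrow$(b) and the reduction to a closed, non-empty, dense-in-itself $Y$ are fine. The Cantor scheme for (b)$\Rightarrow$(a), however, has a genuine gap at the splitting step.

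You anchor the scheme to rays ($R_{s0}=R_s$ with $t_{s0}\in R_s$). You then claim that non-isolatedness of $R_s$ in $Y$, possibly after adding tops to $F$, produces a ray of $Y$ splitting from $R_s$ at incompatible nodes. This fails when $R_s$ is approached in $Y$ only through its own tops. For example, let $T$ be an $\omega$-chain $C$ with tops $\tau_0,\tau_1,\dots$ and a copy of $2^{<\omega}$ above each $\tau_i$. Then $\mR(T)\simeq 2^\omega$ and $C$ is not isolated, yet every node of $T$ is comparable with every node of $C$. So if $R_\emptyset=C$, no node incompatible with a $t_0\in C$ exists, and adding tops to $F$ never creates one. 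Nothing in your scheme prevents $R_\emptyset$, or a later $R_{s1}$, from being such a ray. In the other case, $R'\subsetneq R_s$, the top of $R'$ lies in $R_s$ and cannot be put into $F$; you would have to raise $u$ instead.

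The eventually-zero branches are also mishandled. For $x=s0^\infty$, the downward closure $R_x$ of the $t_{s0^k}$ equals $R_s$ only if these nodes are cofinal in $R_s$, which is impossible when $R_s$ has uncountable cofinality. If $R_x\subsetneq R_s$, the constant sequence $R_s$ does not converge to $R_x$ at all. Putting the top of $R_x$ that lies in $R_s$ into $F_s$ would exclude $R_s$ itself.

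The missing idea is to split \emph{nodes}, not rays.
\begin{itemize}
\item Let $S$ be the set of nodes lying on some ray of $Y$. Suppose some $t\in S$ had no two incompatible extensions in $S$. Then every ray of $Y\cap[t,\emptyset]$ would be an initial segment of the single chain $\lceil t\rceil\cup(S\cap\lfloor t\rfloor)$. Such rays form a subspace homeomorphic to a set of ordinals, hence scattered. This contradicts that $Y\cap[t,\emptyset]$ is a non-empty open subset of the dense-in-itself $Y$.
\item Hence you can choose $t_s\in S$ with $t_{s0},t_{s1}>t_s$ incompatible, with no sets $F_s$ and no distinguished rays.
\item Define $R_x$ as the downward closure of $\{t_{x|n}:n\in\omega\}$ for \emph{every} $x\in 2^\omega$.
\item Take a ray $Q_n\in Y$ through the sibling node $t_{(x|n)(1-x(n))}$. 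It contains $t_{x|n}$ but not $t_{x|n+1}$, hence contains no top of $R_x$. So $Q_n\to R_x$, and $R_x\in Y$ because $Y$ is closed.
\item The same observation gives injectivity and continuity immediately: for $y\neq x$, $R_y$ contains a node incompatible with a node of $R_x$, so $R_y\not\supseteq R_x$ and $R_y$ avoids every top of $R_x$.
\end{itemize}
With this, your ``main obstacle'' disappears and the game-tree fallback is unnecessary.
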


Let $\RR_l$ denote the Sorgenfrey line. Since $\RR_l$ has no isolated points and every compact $K\subset \RR_l$ is countable (Exercise 3.1.B in \cite{engelking}), it follows from \myref{THM_ScatteredCharacterization} that $\RR_l$ cannot be a ray space. 
However, we can further strengthen this observation (in view of \myref{COR_RaySPaceChar}):
    
\begin{ex}\label{EX_SorgenfreyNotRay}
    Consider the following strategy $\sigma$ for $\bob$ in $\TreeGame(\RR_l)$.

    For every inning in which $\ali$'s chosen partition contains no lower-bounded open set, let $\sigma$'s choice be arbitrary. 
    
    Suppose $\alpha_0$ is the first inning such that there exists some lower-bounded $U_{\alpha_0}\in \mP_{\alpha_0}$, in which case we let $\sigma$ pick $U_{\alpha_0}$ and we set $p_{\alpha_0} = \inf(U_{\alpha_0})$. 
    
    For an arbitrary $\alpha$-th inning after $\alpha_0$, assume $\seq{U_\beta:\alpha_0<\beta<\alpha}$ is the sequence of open sets chosen by $\sigma$ and $\seq{p_\beta:\alpha_0<\beta<\alpha}$ is such that $p_\beta= \inf(U_\beta)$.
    \begin{description}
        \item[If $\alpha = \gamma+1$] Provided there is some $U_\alpha\in \mP_\alpha$ with $p_\gamma\notin U_\alpha$, let $\sigma$ pick such $U_\alpha$. Otherwise, let $\sigma$ pick $U_\alpha=U_\gamma\in \mP_\alpha$.

        \item[If $\alpha$ is a limit ordinal] If $\seq{p_\beta:\alpha_0<\beta<\alpha}$ is unbounded, then $\bigcap_{\beta<\alpha} U_\beta = \emptyset$ and the run ends with $\bob$ winning. Otherwise, let $p = \sup(\set{p_\beta:\alpha_0<\beta<\alpha})\in \RR_l$. If there is some $U_\alpha\in \mP_\alpha$ with $p\notin U_\alpha$, let $\sigma$ pick such $U_\alpha$. 
        If no such $U_\alpha$ exists, let $\sigma$ pick the only possible $U_\alpha\in \mP_\alpha$.
    \end{description}

    We claim that $\sigma$ is a winning strategy for $\bob$.
    Striving for a contradiction, suppose 
    \[
        R = \seq{\mP_0, U_0, \dotsc, (\mP_\gamma,x_\gamma), U_\gamma, \dotsc , (\emptyset, x_\alpha)}
    \] 
    is a run of length $\alpha$ in which $\bob$ played according to $\sigma$, but $\ali$ wins.
    By \myref{LEMMA_WStratClopens}, $U_\beta$ is clopen in $\RR_l$ for every $\beta<\alpha$. 
    There must be some $\alpha_0<\alpha$ for which $U_{\alpha_0}$ is lower-bounded by $x_\alpha$ (since $\set{U_\beta:\beta<\alpha}$ is a local basis for $x_\alpha$). 
    Thus, $\seq{p_\beta:\alpha_0<\beta<\alpha}$ is an increasing sequence in $\RR_l$ bounded above by $x_\alpha$.
    Since $p_\beta=p_{\beta+1}$ if and only if $\mP_{\beta+1} = \{U_\beta\}$, it follows that $x_\alpha > p_\beta$ for every $\alpha_0<\beta<\alpha$.
    
    Hence, $\set{U_\beta:\beta<\alpha}$ is not a local basis for $x_\alpha$ (as no $U_\beta$ is contained in $[x_\alpha,+\infty)$), a contradiction.
\end{ex}

The following simple example illustrates how one can explicitly use winning strategies $\sigma\in \Sigma(X)$ to express some different trees whose ray space is homeomorphic to $X$.

\begin{ex}[The convergent sequence]
    Suppose $X$ is a non-trivial convergent sequence (i.e., $X = \set{x_n:n\le\omega}$ where $\seq{x_n:n\in\omega}$ converges to $x_\omega$). 
    We consider a few examples of winning strategies $\sigma$ for $\ali$ in $\TreeGame(X)$ that produce notable representations of $X$ as a ray space.

    \begin{description}
        \item[Comb of rays] In the first inning, let $\sigma\seq{\,} = \{\{x_0\}, \set{x_k:0 < k \le \omega}\}$.
        At the $(n+1)$-th inning, let $\seq{U_i:i\le n}$ be the choices made by $\bob$, where $U_n$ is either a singleton or $\set{x_k:n<k\le\omega}$. We set 
        \[
          \sigma\seq{U_i:i\le n} = \begin{cases}
              \{U_n\}, & \text{if $U_n$ is a singleton,}\\
              \{\{x_{n+1}\}, \set{x_k:n+1<k\le \omega}\}, & \text{otherwise}.
          \end{cases}  
        \]
        Then $\sigma$ is a winning strategy, and $\sdom{\sigma}$ is a "comb of rays" (see \myref{FIG_RayComb}).
\begin{figure}
    \centering

\tikzset{every picture/.style={line width=0.75pt}} 

\begin{tikzpicture}[x=0.75pt,y=0.75pt,yscale=-1,xscale=1]

\draw    (285.26,226.67) -- (308.63,206.8) ;
\draw [shift={(308.63,206.8)}, rotate = 319.63] [color={rgb, 255:red, 0; green, 0; blue, 0 }  ][fill={rgb, 255:red, 0; green, 0; blue, 0 }  ][line width=0.75]      (0, 0) circle [x radius= 3.35, y radius= 3.35]   ;
\draw [shift={(285.26,226.67)}, rotate = 319.63] [color={rgb, 255:red, 0; green, 0; blue, 0 }  ][fill={rgb, 255:red, 0; green, 0; blue, 0 }  ][line width=0.75]      (0, 0) circle [x radius= 3.35, y radius= 3.35]   ;
\draw    (308.58,206.33) -- (331.95,186.45) ;
\draw [shift={(331.95,186.45)}, rotate = 319.63] [color={rgb, 255:red, 0; green, 0; blue, 0 }  ][fill={rgb, 255:red, 0; green, 0; blue, 0 }  ][line width=0.75]      (0, 0) circle [x radius= 3.35, y radius= 3.35]   ;
\draw [shift={(308.58,206.33)}, rotate = 319.63] [color={rgb, 255:red, 0; green, 0; blue, 0 }  ][fill={rgb, 255:red, 0; green, 0; blue, 0 }  ][line width=0.75]      (0, 0) circle [x radius= 3.35, y radius= 3.35]   ;
\draw    (331.95,186.45) -- (355.33,166.58) ;
\draw [shift={(355.33,166.58)}, rotate = 319.63] [color={rgb, 255:red, 0; green, 0; blue, 0 }  ][fill={rgb, 255:red, 0; green, 0; blue, 0 }  ][line width=0.75]      (0, 0) circle [x radius= 3.35, y radius= 3.35]   ;
\draw [shift={(331.95,186.45)}, rotate = 319.63] [color={rgb, 255:red, 0; green, 0; blue, 0 }  ][fill={rgb, 255:red, 0; green, 0; blue, 0 }  ][line width=0.75]      (0, 0) circle [x radius= 3.35, y radius= 3.35]   ;
\draw    (355.33,166.58) -- (378.7,146.7) ;
\draw [shift={(378.7,146.7)}, rotate = 319.63] [color={rgb, 255:red, 0; green, 0; blue, 0 }  ][fill={rgb, 255:red, 0; green, 0; blue, 0 }  ][line width=0.75]      (0, 0) circle [x radius= 3.35, y radius= 3.35]   ;
\draw [shift={(355.33,166.58)}, rotate = 319.63] [color={rgb, 255:red, 0; green, 0; blue, 0 }  ][fill={rgb, 255:red, 0; green, 0; blue, 0 }  ][line width=0.75]      (0, 0) circle [x radius= 3.35, y radius= 3.35]   ;
\draw    (378.7,146.7) -- (402.08,126.83) ;
\draw [shift={(402.08,126.83)}, rotate = 319.63] [color={rgb, 255:red, 0; green, 0; blue, 0 }  ][fill={rgb, 255:red, 0; green, 0; blue, 0 }  ][line width=0.75]      (0, 0) circle [x radius= 3.35, y radius= 3.35]   ;
\draw [shift={(378.7,146.7)}, rotate = 319.63] [color={rgb, 255:red, 0; green, 0; blue, 0 }  ][fill={rgb, 255:red, 0; green, 0; blue, 0 }  ][line width=0.75]      (0, 0) circle [x radius= 3.35, y radius= 3.35]   ;
\draw    (402.08,126.83) -- (425.45,106.95) ;
\draw [shift={(425.45,106.95)}, rotate = 319.63] [color={rgb, 255:red, 0; green, 0; blue, 0 }  ][fill={rgb, 255:red, 0; green, 0; blue, 0 }  ][line width=0.75]      (0, 0) circle [x radius= 3.35, y radius= 3.35]   ;
\draw [shift={(402.08,126.83)}, rotate = 319.63] [color={rgb, 255:red, 0; green, 0; blue, 0 }  ][fill={rgb, 255:red, 0; green, 0; blue, 0 }  ][line width=0.75]      (0, 0) circle [x radius= 3.35, y radius= 3.35]   ;
\draw    (425.45,106.95) -- (448.83,87.07) ;
\draw [shift={(448.83,87.07)}, rotate = 319.63] [color={rgb, 255:red, 0; green, 0; blue, 0 }  ][fill={rgb, 255:red, 0; green, 0; blue, 0 }  ][line width=0.75]      (0, 0) circle [x radius= 3.35, y radius= 3.35]   ;
\draw [shift={(425.45,106.95)}, rotate = 319.63] [color={rgb, 255:red, 0; green, 0; blue, 0 }  ][fill={rgb, 255:red, 0; green, 0; blue, 0 }  ][line width=0.75]      (0, 0) circle [x radius= 3.35, y radius= 3.35]   ;
\draw    (448.83,87.07) -- (470.68,68.5) ;
\draw [shift={(472.2,67.2)}, rotate = 139.63] [color={rgb, 255:red, 0; green, 0; blue, 0 }  ][line width=0.75]    (10.93,-3.29) .. controls (6.95,-1.4) and (3.31,-0.3) .. (0,0) .. controls (3.31,0.3) and (6.95,1.4) .. (10.93,3.29)   ;
\draw [shift={(448.83,87.07)}, rotate = 319.63] [color={rgb, 255:red, 0; green, 0; blue, 0 }  ][fill={rgb, 255:red, 0; green, 0; blue, 0 }  ][line width=0.75]      (0, 0) circle [x radius= 3.35, y radius= 3.35]   ;
\draw    (263.12,205.43) -- (240.98,184.19) ;
\draw [shift={(240.98,184.19)}, rotate = 223.82] [color={rgb, 255:red, 0; green, 0; blue, 0 }  ][fill={rgb, 255:red, 0; green, 0; blue, 0 }  ][line width=0.75]      (0, 0) circle [x radius= 3.35, y radius= 3.35]   ;
\draw [shift={(263.12,205.43)}, rotate = 223.82] [color={rgb, 255:red, 0; green, 0; blue, 0 }  ][fill={rgb, 255:red, 0; green, 0; blue, 0 }  ][line width=0.75]      (0, 0) circle [x radius= 3.35, y radius= 3.35]   ;
\draw    (240.98,184.19) -- (218.84,162.94) ;
\draw [shift={(218.84,162.94)}, rotate = 223.82] [color={rgb, 255:red, 0; green, 0; blue, 0 }  ][fill={rgb, 255:red, 0; green, 0; blue, 0 }  ][line width=0.75]      (0, 0) circle [x radius= 3.35, y radius= 3.35]   ;
\draw [shift={(240.98,184.19)}, rotate = 223.82] [color={rgb, 255:red, 0; green, 0; blue, 0 }  ][fill={rgb, 255:red, 0; green, 0; blue, 0 }  ][line width=0.75]      (0, 0) circle [x radius= 3.35, y radius= 3.35]   ;
\draw    (218.84,162.94) -- (196.7,141.7) ;
\draw [shift={(196.7,141.7)}, rotate = 223.82] [color={rgb, 255:red, 0; green, 0; blue, 0 }  ][fill={rgb, 255:red, 0; green, 0; blue, 0 }  ][line width=0.75]      (0, 0) circle [x radius= 3.35, y radius= 3.35]   ;
\draw [shift={(218.84,162.94)}, rotate = 223.82] [color={rgb, 255:red, 0; green, 0; blue, 0 }  ][fill={rgb, 255:red, 0; green, 0; blue, 0 }  ][line width=0.75]      (0, 0) circle [x radius= 3.35, y radius= 3.35]   ;
\draw    (196.7,141.7) -- (176,121.84) ;
\draw [shift={(174.56,120.46)}, rotate = 43.82] [color={rgb, 255:red, 0; green, 0; blue, 0 }  ][line width=0.75]    (10.93,-3.29) .. controls (6.95,-1.4) and (3.31,-0.3) .. (0,0) .. controls (3.31,0.3) and (6.95,1.4) .. (10.93,3.29)   ;
\draw [shift={(196.7,141.7)}, rotate = 223.82] [color={rgb, 255:red, 0; green, 0; blue, 0 }  ][fill={rgb, 255:red, 0; green, 0; blue, 0 }  ][line width=0.75]      (0, 0) circle [x radius= 3.35, y radius= 3.35]   ;
\draw    (308.58,206.33) -- (286.44,185.08) ;
\draw [shift={(286.44,185.08)}, rotate = 223.82] [color={rgb, 255:red, 0; green, 0; blue, 0 }  ][fill={rgb, 255:red, 0; green, 0; blue, 0 }  ][line width=0.75]      (0, 0) circle [x radius= 3.35, y radius= 3.35]   ;
\draw [shift={(308.58,206.33)}, rotate = 223.82] [color={rgb, 255:red, 0; green, 0; blue, 0 }  ][fill={rgb, 255:red, 0; green, 0; blue, 0 }  ][line width=0.75]      (0, 0) circle [x radius= 3.35, y radius= 3.35]   ;
\draw    (286.44,185.08) -- (264.3,163.84) ;
\draw [shift={(264.3,163.84)}, rotate = 223.82] [color={rgb, 255:red, 0; green, 0; blue, 0 }  ][fill={rgb, 255:red, 0; green, 0; blue, 0 }  ][line width=0.75]      (0, 0) circle [x radius= 3.35, y radius= 3.35]   ;
\draw [shift={(286.44,185.08)}, rotate = 223.82] [color={rgb, 255:red, 0; green, 0; blue, 0 }  ][fill={rgb, 255:red, 0; green, 0; blue, 0 }  ][line width=0.75]      (0, 0) circle [x radius= 3.35, y radius= 3.35]   ;
\draw    (264.3,163.84) -- (242.16,142.6) ;
\draw [shift={(242.16,142.6)}, rotate = 223.82] [color={rgb, 255:red, 0; green, 0; blue, 0 }  ][fill={rgb, 255:red, 0; green, 0; blue, 0 }  ][line width=0.75]      (0, 0) circle [x radius= 3.35, y radius= 3.35]   ;
\draw [shift={(264.3,163.84)}, rotate = 223.82] [color={rgb, 255:red, 0; green, 0; blue, 0 }  ][fill={rgb, 255:red, 0; green, 0; blue, 0 }  ][line width=0.75]      (0, 0) circle [x radius= 3.35, y radius= 3.35]   ;
\draw    (242.16,142.6) -- (220.02,121.35) ;
\draw [shift={(220.02,121.35)}, rotate = 223.82] [color={rgb, 255:red, 0; green, 0; blue, 0 }  ][fill={rgb, 255:red, 0; green, 0; blue, 0 }  ][line width=0.75]      (0, 0) circle [x radius= 3.35, y radius= 3.35]   ;
\draw [shift={(242.16,142.6)}, rotate = 223.82] [color={rgb, 255:red, 0; green, 0; blue, 0 }  ][fill={rgb, 255:red, 0; green, 0; blue, 0 }  ][line width=0.75]      (0, 0) circle [x radius= 3.35, y radius= 3.35]   ;
\draw    (220.02,121.35) -- (199.32,101.49) ;
\draw [shift={(197.88,100.11)}, rotate = 43.82] [color={rgb, 255:red, 0; green, 0; blue, 0 }  ][line width=0.75]    (10.93,-3.29) .. controls (6.95,-1.4) and (3.31,-0.3) .. (0,0) .. controls (3.31,0.3) and (6.95,1.4) .. (10.93,3.29)   ;
\draw [shift={(220.02,121.35)}, rotate = 223.82] [color={rgb, 255:red, 0; green, 0; blue, 0 }  ][fill={rgb, 255:red, 0; green, 0; blue, 0 }  ][line width=0.75]      (0, 0) circle [x radius= 3.35, y radius= 3.35]   ;
\draw    (331.95,186.45) -- (309.81,165.21) ;
\draw [shift={(309.81,165.21)}, rotate = 223.82] [color={rgb, 255:red, 0; green, 0; blue, 0 }  ][fill={rgb, 255:red, 0; green, 0; blue, 0 }  ][line width=0.75]      (0, 0) circle [x radius= 3.35, y radius= 3.35]   ;
\draw [shift={(331.95,186.45)}, rotate = 223.82] [color={rgb, 255:red, 0; green, 0; blue, 0 }  ][fill={rgb, 255:red, 0; green, 0; blue, 0 }  ][line width=0.75]      (0, 0) circle [x radius= 3.35, y radius= 3.35]   ;
\draw    (309.81,165.21) -- (287.67,143.96) ;
\draw [shift={(287.67,143.96)}, rotate = 223.82] [color={rgb, 255:red, 0; green, 0; blue, 0 }  ][fill={rgb, 255:red, 0; green, 0; blue, 0 }  ][line width=0.75]      (0, 0) circle [x radius= 3.35, y radius= 3.35]   ;
\draw [shift={(309.81,165.21)}, rotate = 223.82] [color={rgb, 255:red, 0; green, 0; blue, 0 }  ][fill={rgb, 255:red, 0; green, 0; blue, 0 }  ][line width=0.75]      (0, 0) circle [x radius= 3.35, y radius= 3.35]   ;
\draw    (287.67,143.96) -- (265.53,122.72) ;
\draw [shift={(265.53,122.72)}, rotate = 223.82] [color={rgb, 255:red, 0; green, 0; blue, 0 }  ][fill={rgb, 255:red, 0; green, 0; blue, 0 }  ][line width=0.75]      (0, 0) circle [x radius= 3.35, y radius= 3.35]   ;
\draw [shift={(287.67,143.96)}, rotate = 223.82] [color={rgb, 255:red, 0; green, 0; blue, 0 }  ][fill={rgb, 255:red, 0; green, 0; blue, 0 }  ][line width=0.75]      (0, 0) circle [x radius= 3.35, y radius= 3.35]   ;
\draw    (265.53,122.72) -- (243.39,101.48) ;
\draw [shift={(243.39,101.48)}, rotate = 223.82] [color={rgb, 255:red, 0; green, 0; blue, 0 }  ][fill={rgb, 255:red, 0; green, 0; blue, 0 }  ][line width=0.75]      (0, 0) circle [x radius= 3.35, y radius= 3.35]   ;
\draw [shift={(265.53,122.72)}, rotate = 223.82] [color={rgb, 255:red, 0; green, 0; blue, 0 }  ][fill={rgb, 255:red, 0; green, 0; blue, 0 }  ][line width=0.75]      (0, 0) circle [x radius= 3.35, y radius= 3.35]   ;
\draw    (355.33,166.58) -- (333.19,145.33) ;
\draw [shift={(333.19,145.33)}, rotate = 223.82] [color={rgb, 255:red, 0; green, 0; blue, 0 }  ][fill={rgb, 255:red, 0; green, 0; blue, 0 }  ][line width=0.75]      (0, 0) circle [x radius= 3.35, y radius= 3.35]   ;
\draw [shift={(355.33,166.58)}, rotate = 223.82] [color={rgb, 255:red, 0; green, 0; blue, 0 }  ][fill={rgb, 255:red, 0; green, 0; blue, 0 }  ][line width=0.75]      (0, 0) circle [x radius= 3.35, y radius= 3.35]   ;
\draw    (333.19,145.33) -- (311.05,124.09) ;
\draw [shift={(311.05,124.09)}, rotate = 223.82] [color={rgb, 255:red, 0; green, 0; blue, 0 }  ][fill={rgb, 255:red, 0; green, 0; blue, 0 }  ][line width=0.75]      (0, 0) circle [x radius= 3.35, y radius= 3.35]   ;
\draw [shift={(333.19,145.33)}, rotate = 223.82] [color={rgb, 255:red, 0; green, 0; blue, 0 }  ][fill={rgb, 255:red, 0; green, 0; blue, 0 }  ][line width=0.75]      (0, 0) circle [x radius= 3.35, y radius= 3.35]   ;
\draw    (311.05,124.09) -- (288.91,102.85) ;
\draw [shift={(288.91,102.85)}, rotate = 223.82] [color={rgb, 255:red, 0; green, 0; blue, 0 }  ][fill={rgb, 255:red, 0; green, 0; blue, 0 }  ][line width=0.75]      (0, 0) circle [x radius= 3.35, y radius= 3.35]   ;
\draw [shift={(311.05,124.09)}, rotate = 223.82] [color={rgb, 255:red, 0; green, 0; blue, 0 }  ][fill={rgb, 255:red, 0; green, 0; blue, 0 }  ][line width=0.75]      (0, 0) circle [x radius= 3.35, y radius= 3.35]   ;
\draw    (288.91,102.85) -- (266.77,81.6) ;
\draw [shift={(266.77,81.6)}, rotate = 223.82] [color={rgb, 255:red, 0; green, 0; blue, 0 }  ][fill={rgb, 255:red, 0; green, 0; blue, 0 }  ][line width=0.75]      (0, 0) circle [x radius= 3.35, y radius= 3.35]   ;
\draw [shift={(288.91,102.85)}, rotate = 223.82] [color={rgb, 255:red, 0; green, 0; blue, 0 }  ][fill={rgb, 255:red, 0; green, 0; blue, 0 }  ][line width=0.75]      (0, 0) circle [x radius= 3.35, y radius= 3.35]   ;
\draw    (378.7,146.7) -- (356.56,125.46) ;
\draw [shift={(356.56,125.46)}, rotate = 223.82] [color={rgb, 255:red, 0; green, 0; blue, 0 }  ][fill={rgb, 255:red, 0; green, 0; blue, 0 }  ][line width=0.75]      (0, 0) circle [x radius= 3.35, y radius= 3.35]   ;
\draw [shift={(378.7,146.7)}, rotate = 223.82] [color={rgb, 255:red, 0; green, 0; blue, 0 }  ][fill={rgb, 255:red, 0; green, 0; blue, 0 }  ][line width=0.75]      (0, 0) circle [x radius= 3.35, y radius= 3.35]   ;
\draw    (356.56,125.46) -- (334.42,104.21) ;
\draw [shift={(334.42,104.21)}, rotate = 223.82] [color={rgb, 255:red, 0; green, 0; blue, 0 }  ][fill={rgb, 255:red, 0; green, 0; blue, 0 }  ][line width=0.75]      (0, 0) circle [x radius= 3.35, y radius= 3.35]   ;
\draw [shift={(356.56,125.46)}, rotate = 223.82] [color={rgb, 255:red, 0; green, 0; blue, 0 }  ][fill={rgb, 255:red, 0; green, 0; blue, 0 }  ][line width=0.75]      (0, 0) circle [x radius= 3.35, y radius= 3.35]   ;
\draw    (334.42,104.21) -- (312.28,82.97) ;
\draw [shift={(312.28,82.97)}, rotate = 223.82] [color={rgb, 255:red, 0; green, 0; blue, 0 }  ][fill={rgb, 255:red, 0; green, 0; blue, 0 }  ][line width=0.75]      (0, 0) circle [x radius= 3.35, y radius= 3.35]   ;
\draw [shift={(334.42,104.21)}, rotate = 223.82] [color={rgb, 255:red, 0; green, 0; blue, 0 }  ][fill={rgb, 255:red, 0; green, 0; blue, 0 }  ][line width=0.75]      (0, 0) circle [x radius= 3.35, y radius= 3.35]   ;
\draw    (312.28,82.97) -- (290.14,61.73) ;
\draw [shift={(290.14,61.73)}, rotate = 223.82] [color={rgb, 255:red, 0; green, 0; blue, 0 }  ][fill={rgb, 255:red, 0; green, 0; blue, 0 }  ][line width=0.75]      (0, 0) circle [x radius= 3.35, y radius= 3.35]   ;
\draw [shift={(312.28,82.97)}, rotate = 223.82] [color={rgb, 255:red, 0; green, 0; blue, 0 }  ][fill={rgb, 255:red, 0; green, 0; blue, 0 }  ][line width=0.75]      (0, 0) circle [x radius= 3.35, y radius= 3.35]   ;
\draw    (402.08,126.83) -- (379.94,105.58) ;
\draw [shift={(379.94,105.58)}, rotate = 223.82] [color={rgb, 255:red, 0; green, 0; blue, 0 }  ][fill={rgb, 255:red, 0; green, 0; blue, 0 }  ][line width=0.75]      (0, 0) circle [x radius= 3.35, y radius= 3.35]   ;
\draw [shift={(402.08,126.83)}, rotate = 223.82] [color={rgb, 255:red, 0; green, 0; blue, 0 }  ][fill={rgb, 255:red, 0; green, 0; blue, 0 }  ][line width=0.75]      (0, 0) circle [x radius= 3.35, y radius= 3.35]   ;
\draw    (379.94,105.58) -- (357.8,84.34) ;
\draw [shift={(357.8,84.34)}, rotate = 223.82] [color={rgb, 255:red, 0; green, 0; blue, 0 }  ][fill={rgb, 255:red, 0; green, 0; blue, 0 }  ][line width=0.75]      (0, 0) circle [x radius= 3.35, y radius= 3.35]   ;
\draw [shift={(379.94,105.58)}, rotate = 223.82] [color={rgb, 255:red, 0; green, 0; blue, 0 }  ][fill={rgb, 255:red, 0; green, 0; blue, 0 }  ][line width=0.75]      (0, 0) circle [x radius= 3.35, y radius= 3.35]   ;
\draw    (357.8,84.34) -- (335.66,63.1) ;
\draw [shift={(335.66,63.1)}, rotate = 223.82] [color={rgb, 255:red, 0; green, 0; blue, 0 }  ][fill={rgb, 255:red, 0; green, 0; blue, 0 }  ][line width=0.75]      (0, 0) circle [x radius= 3.35, y radius= 3.35]   ;
\draw [shift={(357.8,84.34)}, rotate = 223.82] [color={rgb, 255:red, 0; green, 0; blue, 0 }  ][fill={rgb, 255:red, 0; green, 0; blue, 0 }  ][line width=0.75]      (0, 0) circle [x radius= 3.35, y radius= 3.35]   ;
\draw    (335.66,63.1) -- (313.52,41.85) ;
\draw [shift={(313.52,41.85)}, rotate = 223.82] [color={rgb, 255:red, 0; green, 0; blue, 0 }  ][fill={rgb, 255:red, 0; green, 0; blue, 0 }  ][line width=0.75]      (0, 0) circle [x radius= 3.35, y radius= 3.35]   ;
\draw [shift={(335.66,63.1)}, rotate = 223.82] [color={rgb, 255:red, 0; green, 0; blue, 0 }  ][fill={rgb, 255:red, 0; green, 0; blue, 0 }  ][line width=0.75]      (0, 0) circle [x radius= 3.35, y radius= 3.35]   ;
\draw  [dash pattern={on 0.84pt off 2.51pt}]  (414.8,72) -- (403.4,84.6) ;
\draw  [dash pattern={on 0.84pt off 2.51pt}]  (390.8,50.8) -- (379.4,63.4) ;
\draw  [dash pattern={on 0.84pt off 2.51pt}]  (367.6,33.6) -- (356.2,46.2) ;
\draw    (243.39,101.48) -- (222.7,81.62) ;
\draw [shift={(221.25,80.24)}, rotate = 43.82] [color={rgb, 255:red, 0; green, 0; blue, 0 }  ][line width=0.75]    (10.93,-3.29) .. controls (6.95,-1.4) and (3.31,-0.3) .. (0,0) .. controls (3.31,0.3) and (6.95,1.4) .. (10.93,3.29)   ;
\draw [shift={(243.39,101.48)}, rotate = 223.82] [color={rgb, 255:red, 0; green, 0; blue, 0 }  ][fill={rgb, 255:red, 0; green, 0; blue, 0 }  ][line width=0.75]      (0, 0) circle [x radius= 3.35, y radius= 3.35]   ;
\draw    (266.77,81.6) -- (246.07,61.74) ;
\draw [shift={(244.63,60.36)}, rotate = 43.82] [color={rgb, 255:red, 0; green, 0; blue, 0 }  ][line width=0.75]    (10.93,-3.29) .. controls (6.95,-1.4) and (3.31,-0.3) .. (0,0) .. controls (3.31,0.3) and (6.95,1.4) .. (10.93,3.29)   ;
\draw [shift={(266.77,81.6)}, rotate = 223.82] [color={rgb, 255:red, 0; green, 0; blue, 0 }  ][fill={rgb, 255:red, 0; green, 0; blue, 0 }  ][line width=0.75]      (0, 0) circle [x radius= 3.35, y radius= 3.35]   ;
\draw    (290.14,61.73) -- (269.45,41.87) ;
\draw [shift={(268,40.49)}, rotate = 43.82] [color={rgb, 255:red, 0; green, 0; blue, 0 }  ][line width=0.75]    (10.93,-3.29) .. controls (6.95,-1.4) and (3.31,-0.3) .. (0,0) .. controls (3.31,0.3) and (6.95,1.4) .. (10.93,3.29)   ;
\draw [shift={(290.14,61.73)}, rotate = 223.82] [color={rgb, 255:red, 0; green, 0; blue, 0 }  ][fill={rgb, 255:red, 0; green, 0; blue, 0 }  ][line width=0.75]      (0, 0) circle [x radius= 3.35, y radius= 3.35]   ;
\draw    (313.52,41.85) -- (292.82,21.99) ;
\draw [shift={(291.38,20.61)}, rotate = 43.82] [color={rgb, 255:red, 0; green, 0; blue, 0 }  ][line width=0.75]    (10.93,-3.29) .. controls (6.95,-1.4) and (3.31,-0.3) .. (0,0) .. controls (3.31,0.3) and (6.95,1.4) .. (10.93,3.29)   ;
\draw [shift={(313.52,41.85)}, rotate = 223.82] [color={rgb, 255:red, 0; green, 0; blue, 0 }  ][fill={rgb, 255:red, 0; green, 0; blue, 0 }  ][line width=0.75]      (0, 0) circle [x radius= 3.35, y radius= 3.35]   ;
\draw    (285.26,226.67) -- (263.12,205.43) ;
\draw [shift={(263.12,205.43)}, rotate = 223.82] [color={rgb, 255:red, 0; green, 0; blue, 0 }  ][fill={rgb, 255:red, 0; green, 0; blue, 0 }  ][line width=0.75]      (0, 0) circle [x radius= 3.35, y radius= 3.35]   ;


\end{tikzpicture}
    \caption{Representation of the comb of rays, whose ray space is homeomorphic to the convergent sequence.}
    \label{FIG_RayComb}
\end{figure}
        
        \item[$\omega\cdot\omega$] For every inning $n\in\omega$, let $\sigma$ pick the trivial partition $\{X\}$. At each nonzero limit inning $(n+1)\cdot\omega$, for $n\in\omega$, set
        \[
            \sigma\seq{U_\beta:\beta<(n+1)\cdot\omega}
            = (\{\set{x_k:n<k\le\omega}\},x_n).
        \]
        For every successor ordinal $\alpha=\beta+1<\omega\cdot\omega$, let $\sigma$ pick the same trivial partition as in the previous inning. At the terminal inning $\omega\cdot\omega$, set
        \[
            \sigma\seq{U_\beta:\beta<\omega\cdot\omega}=(\emptyset,x_\omega).
        \]
        This $\sigma$ is a winning strategy with a single run ending at $\omega\cdot\omega$. $\sdom{\sigma}$ is illustrated in \myref{FIG_omega.omega}.
\begin{figure}
    \centering

\tikzset{every picture/.style={line width=0.75pt}} 

\begin{tikzpicture}[x=0.75pt,y=0.75pt,yscale=-1,xscale=1]

\draw  [dash pattern={on 0.84pt off 2.51pt}]  (305,14) -- (305.3,33.1) ;
\draw    (305.31,35.1) -- (305.6,78.2) ;
\draw [shift={(305.6,78.2)}, rotate = 89.62] [color={rgb, 255:red, 0; green, 0; blue, 0 }  ][fill={rgb, 255:red, 0; green, 0; blue, 0 }  ][line width=0.75]      (0, 0) circle [x radius= 3.35, y radius= 3.35]   ;
\draw [shift={(305.3,33.1)}, rotate = 89.62] [color={rgb, 255:red, 0; green, 0; blue, 0 }  ][line width=0.75]    (10.93,-4.9) .. controls (6.95,-2.3) and (3.31,-0.67) .. (0,0) .. controls (3.31,0.67) and (6.95,2.3) .. (10.93,4.9)   ;
\draw    (305.61,84.7) -- (305.9,127.8) ;
\draw [shift={(305.9,127.8)}, rotate = 89.62] [color={rgb, 255:red, 0; green, 0; blue, 0 }  ][fill={rgb, 255:red, 0; green, 0; blue, 0 }  ][line width=0.75]      (0, 0) circle [x radius= 3.35, y radius= 3.35]   ;
\draw [shift={(305.6,82.7)}, rotate = 89.62] [color={rgb, 255:red, 0; green, 0; blue, 0 }  ][line width=0.75]    (10.93,-4.9) .. controls (6.95,-2.3) and (3.31,-0.67) .. (0,0) .. controls (3.31,0.67) and (6.95,2.3) .. (10.93,4.9)   ;
\draw    (305.91,134.8) -- (306.2,177.9) ;
\draw [shift={(306.2,177.9)}, rotate = 89.62] [color={rgb, 255:red, 0; green, 0; blue, 0 }  ][fill={rgb, 255:red, 0; green, 0; blue, 0 }  ][line width=0.75]      (0, 0) circle [x radius= 3.35, y radius= 3.35]   ;
\draw [shift={(305.9,132.8)}, rotate = 89.62] [color={rgb, 255:red, 0; green, 0; blue, 0 }  ][line width=0.75]    (10.93,-4.9) .. controls (6.95,-2.3) and (3.31,-0.67) .. (0,0) .. controls (3.31,0.67) and (6.95,2.3) .. (10.93,4.9)   ;
\draw    (306.21,184.9) -- (306.5,228) ;
\draw [shift={(306.5,228)}, rotate = 89.62] [color={rgb, 255:red, 0; green, 0; blue, 0 }  ][fill={rgb, 255:red, 0; green, 0; blue, 0 }  ][line width=0.75]      (0, 0) circle [x radius= 3.35, y radius= 3.35]   ;
\draw [shift={(306.2,182.9)}, rotate = 89.62] [color={rgb, 255:red, 0; green, 0; blue, 0 }  ][line width=0.75]    (10.93,-4.9) .. controls (6.95,-2.3) and (3.31,-0.67) .. (0,0) .. controls (3.31,0.67) and (6.95,2.3) .. (10.93,4.9)   ;
\draw    (313,14) .. controls (386,6.5) and (325,117) .. (370,123.5) ;
\draw    (370,123.5) .. controls (322.5,130.5) and (385.5,238) .. (315,230.5) ;

\draw (380,111.9) node [anchor=north west][inner sep=0.75pt]    {$\omega \ \text{times}$};

\end{tikzpicture}
    \caption{Representation of the ordinal $\omega\cdot\omega$, whose ray space is homeomorphic to the convergent sequence.}
    \label{FIG_omega.omega}
\end{figure}

        \item[A ray with countably infinite tops] For every inning $n\in\omega$, let $\sigma$ pick $\{X\}$. At the $\omega$-th inning, set
        \[
            \sigma\seq{\{X\}:n\in\omega}=(\set{\{x_k\}:k<\omega},x_\omega).
        \]
        After $\bob$ chooses $\{x_k\}$, $\sigma$ plays the trivial partition $\{\{x_k\}\}$ at every successor inning $\omega+n+1$, for $n\in\omega$, and at the terminal inning $\omega+\omega$ it plays $(\emptyset,x_k)$. The game therefore ends at the $(\omega+\omega)$-th inning with $\ali$ winning. See \myref{FIG_RayWithCountableTops}.
\begin{figure}
    \centering

\tikzset{every picture/.style={line width=0.75pt}} 

\begin{tikzpicture}[x=0.75pt,y=0.75pt,yscale=-1,xscale=1]

\draw    (325,70) -- (325,164) ;
\draw [shift={(325,164)}, rotate = 90] [color={rgb, 255:red, 0; green, 0; blue, 0 }  ][fill={rgb, 255:red, 0; green, 0; blue, 0 }  ][line width=0.75]      (0, 0) circle [x radius= 3.35, y radius= 3.35]   ;
\draw [shift={(325,68)}, rotate = 90] [color={rgb, 255:red, 0; green, 0; blue, 0 }  ][line width=0.75]    (10.93,-4.9) .. controls (6.95,-2.3) and (3.31,-0.67) .. (0,0) .. controls (3.31,0.67) and (6.95,2.3) .. (10.93,4.9)   ;
\draw    (325,8.5) -- (325,62) ;
\draw [shift={(325,62)}, rotate = 90] [color={rgb, 255:red, 0; green, 0; blue, 0 }  ][fill={rgb, 255:red, 0; green, 0; blue, 0 }  ][line width=0.75]      (0, 0) circle [x radius= 3.35, y radius= 3.35]   ;
\draw [shift={(325,6.5)}, rotate = 90] [color={rgb, 255:red, 0; green, 0; blue, 0 }  ][line width=0.75]    (10.93,-4.9) .. controls (6.95,-2.3) and (3.31,-0.67) .. (0,0) .. controls (3.31,0.67) and (6.95,2.3) .. (10.93,4.9)   ;
\draw    (345,8) -- (345,61.5) ;
\draw [shift={(345,61.5)}, rotate = 90] [color={rgb, 255:red, 0; green, 0; blue, 0 }  ][fill={rgb, 255:red, 0; green, 0; blue, 0 }  ][line width=0.75]      (0, 0) circle [x radius= 3.35, y radius= 3.35]   ;
\draw [shift={(345,6)}, rotate = 90] [color={rgb, 255:red, 0; green, 0; blue, 0 }  ][line width=0.75]    (10.93,-4.9) .. controls (6.95,-2.3) and (3.31,-0.67) .. (0,0) .. controls (3.31,0.67) and (6.95,2.3) .. (10.93,4.9)   ;
\draw    (284,8.5) -- (284,62) ;
\draw [shift={(284,62)}, rotate = 90] [color={rgb, 255:red, 0; green, 0; blue, 0 }  ][fill={rgb, 255:red, 0; green, 0; blue, 0 }  ][line width=0.75]      (0, 0) circle [x radius= 3.35, y radius= 3.35]   ;
\draw [shift={(284,6.5)}, rotate = 90] [color={rgb, 255:red, 0; green, 0; blue, 0 }  ][line width=0.75]    (10.93,-4.9) .. controls (6.95,-2.3) and (3.31,-0.67) .. (0,0) .. controls (3.31,0.67) and (6.95,2.3) .. (10.93,4.9)   ;
\draw    (304,7.5) -- (304,61) ;
\draw [shift={(304,61)}, rotate = 90] [color={rgb, 255:red, 0; green, 0; blue, 0 }  ][fill={rgb, 255:red, 0; green, 0; blue, 0 }  ][line width=0.75]      (0, 0) circle [x radius= 3.35, y radius= 3.35]   ;
\draw [shift={(304,5.5)}, rotate = 90] [color={rgb, 255:red, 0; green, 0; blue, 0 }  ][line width=0.75]    (10.93,-4.9) .. controls (6.95,-2.3) and (3.31,-0.67) .. (0,0) .. controls (3.31,0.67) and (6.95,2.3) .. (10.93,4.9)   ;

\draw (350,25.4) node [anchor=north west][inner sep=0.75pt]    {$\cdots $};

\end{tikzpicture}
    \caption{Representation of a ray with countably infinite tops, whose ray space is homeomorphic to the convergent sequence.}
    \label{FIG_RayWithCountableTops}
\end{figure}
    \end{description}
\end{ex}
\section{Quasi-ray spaces}\label{SEC_QuasiRay}
The fact that $\TreeGame(X)$ characterizes through the existence of a winning strategy for $\ali$ the property of $X$ being a ray space (as seen in  \myref{COR_RaySPaceChar}) can motivate us to weaken the winning criteria for $\ali$ in order to generalize the ray space property.
To be precise:

\begin{defn}\label{DEF_QuasiTreeGame}
    Given a topological space $X$, the so-called \emph{quasi-ray game over $X$}, denoted by $\PTreeGame(X)$, is the game played between $\ali$ and $\bob$ which is recursively defined as follows: in the first inning, $\ali$ chooses an open partition $\mP_0$ for $X$, while $\bob$ responds by picking $U_0\in \mP_0$. In the $\alpha$-th inning:
    \begin{description}
        \item[if $\alpha=\beta+1$] then $\bob$ chose a $U_\beta\in \mP_\beta$ in the previous inning, so $\ali$ must now choose a clopen partition $\mP_{\beta+1}$ for $U_{\beta}$. In this case, $\bob$ must then respond with a $U_{\beta+1}\in\mP_{\beta+1}$.
        \item[if $\alpha$ is a limit ordinal] then we consider two cases:
        \begin{itemize}
            \item If there exists a pairwise disjoint collection $\mP_\alpha$ of clopen sets such that, for some $x_\alpha\in X$,
            \begin{equation} \label{EQ_QRayGameContinues1}
                \bigcap_{\beta<\alpha} U_\beta   = \left(\bigcup \mP_\alpha\right) \sqcup \{x_\alpha\},
            \end{equation}
            then the game proceeds with $\ali$ choosing one such pair $(\mP_\alpha,x_\alpha)$.
            In this case, if $\mP_\alpha\neq\emptyset$, the game continues with $\bob$ choosing a $U_\alpha\in \mP_\alpha$. Otherwise (i.e., if $\mP_\alpha=\emptyset$), this run of the game ends and $\ali$ is declared the winner.

            \item Otherwise (i.e., if no pair $(\mP_\alpha, x_\alpha)$ satisfying \myref{EQ_QRayGameContinues1} exists), this run of the game ends and $\bob$ is declared the winner.
        \end{itemize}
    \end{description}
\end{defn}

Naturally, the same arguments provided for \myref{PROP_RayGameRunLength} yield:

\begin{prop}\label{PROP_QuasiGameRunLength}
    For a space $X$, every run of the game $\PTreeGame(X)$ has length $<|X|^+$.
\end{prop}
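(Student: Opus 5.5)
The plan is to repeat the counting argument behind \myref{PROP_RayGameRunLength}, checking that it uses only features that survive the weakening of the rules in \myref{DEF_QuasiTreeGame}. Fix a run
\[
    \seq{\mP_0, U_0, \dotsc, (\mP_\gamma,x_\gamma), U_\gamma, \dotsc}
\]
of $\PTreeGame(X)$ that has finished at inning $\alpha$. As in the ray game, a run can only end at a limit inning. At successor innings $\ali$ must always have a legal move, since the trivial partition $\{U_\beta\}$ is a clopen partition of $U_\beta$, and $\bob$ always has a legal reply.

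The key step is to show that the map $\gamma\mapsto x_\gamma$, defined on the limit ordinals $\gamma<\alpha$, is injective. Let $\gamma<\delta$ be limit ordinals below $\alpha$. By \myref{EQ_QRayGameContinues1} at inning $\gamma$, the point $x_\gamma$ does not belong to $\bigcup\mP_\gamma$. Since $U_\gamma\in\mP_\gamma$, this gives $x_\gamma\notin U_\gamma$. On the other hand, $x_\delta\in\bigcap_{\beta<\delta}U_\beta\subseteq U_\gamma$. Hence $x_\gamma\neq x_\delta$.

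It follows that the set of limit ordinals below $\alpha$ has cardinality at most $|X|$. Suppose towards a contradiction that $\alpha\ge|X|^+$. Then below $\alpha$ there are at least $|X|^+$ limit ordinals (the limits below $|X|^+$ already form a set of that size), which is impossible. Therefore $\alpha<|X|^+$.

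One should also note that every run, finished or not, stops by stage $|X|^+$. The injectivity argument applies equally to an unfinished run of any given length, so no run can reach inning $|X|^+$. I expect no step here to be a real obstacle; the only point needing care is to confirm that the injectivity uses only the disjoint-union condition \myref{EQ_QRayGameContinues1}, which remains among the rules of $\PTreeGame(X)$, and not the local-basis condition that the quasi-ray game drops.
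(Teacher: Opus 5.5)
Your proof is correct and follows the paper's route: the paper only says that the counting argument of \myref{PROP_RayGameRunLength} carries over, namely that the limit innings inject into $X$ via $\gamma\mapsto x_\gamma$, and you correctly check that this uses only the disjoint-union condition \myref{EQ_QRayGameContinues1}. One caveat, which the paper shares: the step ``there are $|X|^+$ limit ordinals below $|X|^+$'' needs $X$ to be infinite, and for finite non-empty $X$ every run has length at least $\omega>|X|+1=|X|^+$, so the statement should be read for infinite $X$ (or with $\max\{|X|,\aleph_0\}^+$ in place of $|X|^+$).
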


Furthermore, we obtain the following result as a direct consequence of \myref{LEMMA_WStratClopens}:
\begin{cor}\label{COR_RayImpliesQuasi}
    If $\sigma$ is a winning strategy for $\ali$ in $\TreeGame(X)$, then $\sigma$ is a winning strategy for $\ali$ in $\PTreeGame(X)$.
\end{cor}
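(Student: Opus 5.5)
The plan is to take a winning strategy $\sigma$ for $\ali$ in $\TreeGame(X)$ and check that every move it prescribes is legal in $\PTreeGame(X)$. Once this is done, the $\PTreeGame(X)$-runs compatible with $\sigma$ are exactly the $\TreeGame(X)$-runs compatible with $\sigma$, so $\sigma$ wins there too. The two games differ in only two ways. First, in $\PTreeGame(X)$ the partitions at successor innings and the families $\mP_\alpha$ at limit innings must consist of \emph{clopen} sets, rather than merely open ones. Second, at limit innings $\PTreeGame(X)$ drops the local basis requirement \myref{EQ_RayGameContinues2} and keeps only \myref{EQ_RayGameContinues1}, which coincides with \myref{EQ_QRayGameContinues1}. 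Since the condition at limits is only weakened, the whole issue is clopenness.

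First I fix a partial run $\seq{\mP_0,U_0,\dotsc,(\mP_\gamma,x_\gamma),U_\gamma,\dotsc}$ compatible with $\sigma$. I argue by induction on the inning that it is simultaneously a legal partial run of $\TreeGame(X)$ and of $\PTreeGame(X)$. Since $\sigma$ is winning, \myref{LEMMA_WStratEndOfRun} says that every such partial run can be continued by $\sigma$ until it finishes at some limit $\alpha$ with a move $(\emptyset,x_\alpha)$, and $\ali$ wins that run. I then apply \myref{LEMMA_WStratClopens} to this finished, $\ali$-won run. It gives that every member of every $\mP_\beta$ with $\beta<\alpha$ is clopen in $X$, and this covers every partition already played in the partial run we started from. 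Because any position reachable under $\sigma$ extends to a finished winning run, every move $\sigma$ makes, at successor or at limit innings, is a family of clopen sets. It is therefore a legal $\PTreeGame(X)$ move; at limit innings \myref{EQ_QRayGameContinues1} holds because \myref{EQ_RayGameContinues1} does. Conversely, $\bob$'s legal replies are the same in both games, since he simply chooses a member of the offered family.

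The step needing care is the induction that matches the runs of the two games: $\sigma$ is formally defined on $\bob$'s move sequences in $\TreeGame(X)$, and the domains should agree. I would phrase this as follows: a sequence of $\bob$'s moves is compatible with $\sigma$ in $\PTreeGame(X)$ if and only if it is compatible in $\TreeGame(X)$, checked inning by inning using the clopenness above. From this, every finished $\PTreeGame(X)$-run compatible with $\sigma$ ends with $\sigma$ playing $(\emptyset,x_\alpha)$, which is an $\ali$ win. The main obstacle I expect is only bookkeeping: \myref{LEMMA_WStratClopens} is stated for finished runs, so it has to be applied to partial runs via extension to a finished one, which \myref{LEMMA_WStratEndOfRun} together with \myref{PROP_RayGameRunLength} guarantees.
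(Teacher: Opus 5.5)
Your proposal is correct and follows the paper's route: the paper records this as a direct consequence of \myref{LEMMA_WStratClopens}, namely that every family played by a winning strategy in $\TreeGame(X)$ consists of clopen sets, so $\sigma$'s moves remain legal in $\PTreeGame(X)$, whose limit condition is weaker. Extending partial runs to finished $\ali$-won runs via \myref{LEMMA_WStratEndOfRun} and \myref{PROP_RayGameRunLength} is a sound way to make the bookkeeping explicit, though not strictly needed, since the inductive proof of \myref{LEMMA_WStratClopens} already works verbatim for any legal partial run.
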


As with $\TreeGame(-)$, note that if $\sigma$ is a strategy for $\ali$ in $\PTreeGame(X,\tau)$, then $\sdom{\sigma}$ is a pruned subtree of $\tau^{<|X|^+}$ (ordered by inclusion).
Consequently, from \myref{LEMMA_TreeStratIso} and \myref{COR_RayImpliesQuasi}, we obtain:
\begin{cor}\label{COR_QuasiTreeStratIso}
    If $T$ is a pruned tree, then there is a winning strategy $\sigma$ for $\ali$ in $\PTreeGame(\mR(T))$ such that $\succs(\dom(\sigma))\cong T$.
\end{cor}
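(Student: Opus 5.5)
The plan is to take the strategy that \myref{LEMMA_TreeStratIso} already provides and check that it is also a legal, winning strategy in the quasi-ray game. First, \myref{LEMMA_TreeStratIso} gives a winning strategy $\sigma$ for $\ali$ in $\TreeGame(\mR(T))$ with $\sdom{\sigma}\cong T$. Then \myref{COR_RayImpliesQuasi} says this same $\sigma$ is a winning strategy for $\ali$ in $\PTreeGame(\mR(T))$. It remains to check that $\sdom{\sigma}$ is the same tree in both games, so that the isomorphism with $T$ carries over.

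The first step is to confirm that every move prescribed by $\sigma$ is legal in $\PTreeGame(\mR(T))$. The partitions $\sigma$ plays are families of sets of the form $[t,\emptyset]$. These sets are clopen: a ray not containing $t$ has a basic neighbourhood missing $t$. At limit innings, the pair $(\mP_\alpha,x_\alpha)$ satisfies \eqref{EQ_RayGameContinues1}, which is literally \eqref{EQ_QRayGameContinues1}. The extra local-basis requirement \eqref{EQ_RayGameContinues2} of the ray game is simply dropped in the quasi-ray game, so it costs nothing.

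The second step is to check that the domain of $\sigma$ is unchanged. In both games, the sequences of $\bob$'s replies compatible with $\sigma$ are determined purely by membership in the partitions $\sigma$ plays. Hence $\dom(\sigma)$, and therefore $\succs(\dom(\sigma))$, is identical in the two games. Since \myref{LEMMA_WStratEndOfRun} shows $\sigma$ never runs out of responses, this argument applies verbatim in $\PTreeGame$ as well.

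Putting these together, $\succs(\dom(\sigma))\cong T$ via the map $\Phi$ of \myref{LEMMA_TreeStratIso}. The only point needing care is exactly this identification of $\dom(\sigma)$ across the two games, and it is immediate because the games differ only in the admissibility conditions on $\ali$'s moves, which $\sigma$ satisfies in both.
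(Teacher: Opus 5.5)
Your proposal is correct and follows the paper's route: the paper deduces this corollary in one line from \myref{LEMMA_TreeStratIso} and \myref{COR_RayImpliesQuasi}, exactly as you do. Your extra checks, namely that the sets $[t,\emptyset]$ are clopen and that $\dom(\sigma)$ is the same in both games, make explicit what the paper's deduction leaves implicit.
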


\begin{lemma}\label{LEMMA_QuasiStratHomeo}
    If $X$ is a space and $\sigma$ is a winning strategy for $\ali$ in $\PTreeGame(X)$, then there is a continuous bijection $f\colon X\to \mR(\sdom{\sigma})$.
\end{lemma}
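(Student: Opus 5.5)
We mirror the proof of \myref{LEMMA_StratHomeo}, now with the map running from $X$ to the ray space. First we record that every ray $R=\seq{U_\beta:\beta<\alpha}\in\mR(\sdom{\sigma})$ is a sequence of $\bob$'s moves compatible with $\sigma$, so $R\in\dom(\sigma)$ by the analogue of \myref{LEMMA_WStratEndOfRun} for $\PTreeGame(X)$. Indeed, a winning strategy must answer every compatible sequence of moves, because otherwise $\bob$ wins that run. We then write $\sigma(R)=(\mP_R,g(R))$, where the limit stage of $R$ is the limit ordinal $\alpha$ (rays are limit-length here, since the nodes of $\sdom{\sigma}$ have successor length). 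This defines $g\colon\mR(\sdom{\sigma})\to X$, and we put $f=g^{-1}$.

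The proof that $g$ is a bijection is verbatim the one in \myref{LEMMA_StratHomeo}. For injectivity there are two cases. If $R\subsetneq R'$, then $g(R)\notin R'(\beta)$ for $\beta\ge|R|$ by \eqref{EQ_QRayGameContinues1}. Otherwise $R$ and $R'$ first differ at some $\beta$, where they choose disjoint members of one partition, and $g(R)\in R(\beta)$, $g(R')\in R'(\beta)$. For surjectivity, given $x$, we let $\bob$ always play the unique member of the current partition that contains $x$. At each limit stage, either $x$ is the distinguished point $x_\alpha$ or $x\in\bigcup\mP_\alpha$, so $\bob$ can continue. Since $\sigma$ is winning, this run must end with $x$ chosen as the distinguished point, and the maximal such sequence $R$ satisfies $g(R)=x$.

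For continuity of $f$, it suffices that $f^{-1}$ of each subbasic set $[s,\emptyset]$ is open in $X$, because the basic sets are $[s,F]=[s,\emptyset]\setminus\bigcup_{u\in F}[u,\emptyset]$. Take $s=\seq{U_\theta:\theta\le\beta}\in\sdom{\sigma}$. Then $f^{-1}([s,\emptyset])=g[[s,\emptyset]]=U_\beta$: every ray through $s$ maps into $U_\beta$, and conversely the run built for $x\in U_\beta$ passes through $s$. Every partition that $\ali$ plays in $\PTreeGame(X)$ consists of open sets: the first is an open partition and later ones consist of clopen sets. Hence $U_\beta$ is open, and the complements $X\setminus U_u$ are open because the sets are clopen, which handles the finitely many tops removed in $[s,F]$. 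Thus $f$ is continuous. In fact no analogue of \myref{LEMMA_WStratClopens} is needed, since clopenness is built into the rules for all but $\mP_0$, and $\mP_0$ is an open partition, so its members are clopen too.

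The main point of care is that no local-basis condition \eqref{EQ_RayGameContinues2} is available, so $g$ need not be continuous and we only obtain the continuous bijection $f$ as claimed. We must also check that the identification $g[[s,\emptyset]]=U_\beta$ uses only \eqref{EQ_QRayGameContinues1} and the partition property, not any neighbourhood condition, and it does.
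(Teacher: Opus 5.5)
Your proposal is correct and follows the paper's proof. You define $g\colon \mR(\sdom{\sigma})\to X$ as in \myref{LEMMA_StratHomeo}, reuse its injectivity and surjectivity arguments, and show that $g$ is open (equivalently, that $f=g^{-1}$ is continuous), since $g[[s,F]]=U_\beta\setminus\bigcup_{u\in F}U_u$ with all the sets involved clopen. Your observation that this clopenness now comes directly from the rules of $\PTreeGame(X)$, rather than from \myref{LEMMA_WStratClopens}, is precisely the detail the paper leaves as ``straightforward to verify''.
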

\begin{proof}
    Let $g\colon \mR(\sdom{\sigma})\to X$ be defined as in the proof of \myref{LEMMA_StratHomeo}.
    It is straightforward to verify that the arguments showing that $g$ is an open bijection also apply here.
    Thus, $f=g^{-1}$ is the desired continuous bijection. 
\end{proof}

Letting $\Sigma_{\mathrm{q}}(X)$ denote the (possibly empty) collection of all winning strategies for $\ali$ in $\PTreeGame(X)$, we have:

\begin{thm}\label{THM_TreeQuasiCollection}
    For a given space $X$, the collection 
    \[\set{\sdom{\sigma}: \sigma\in \Sigma_{\mathrm{q}}(X)}\]
    comprises, up to isomorphism, the class of all pruned trees whose ray space is the continuous image of a bijection from $X$.
\end{thm}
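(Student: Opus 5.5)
The proof splits into two inclusions, following the pattern of \myref{THM_TreeCollection}.

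For the first inclusion, suppose $\sigma\in\Sigma_{\mathrm{q}}(X)$. Then \myref{LEMMA_QuasiStratHomeo} directly gives a continuous bijection $f\colon X\to\mR(\sdom{\sigma})$. We also need $\sdom{\sigma}$ to be pruned. This should be automatic: every node of $\sdom{\sigma}$ ends with a non-empty set $U$ chosen by $\bob$, and $\ali$ must then partition $U$ into non-empty pieces, so $U$ has a successor in $\dom(\sigma)$.

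For the converse, let $T$ be a pruned tree and $f\colon X\to\mR(T)$ a continuous bijection. Let $\tau$ be the topology of $X$ and $\tau'=\{f^{-1}[V]:V \text{ open in } \mR(T)\}\subseteq\tau$. Then $f\colon(X,\tau')\to\mR(T)$ is a homeomorphism. By \myref{THM_TreeCollection}, there is a winning strategy $\sigma'$ for $\ali$ in $\TreeGame(X,\tau')$ with $\sdom{\sigma'}\cong T$; it is obtained from the strategy of \myref{LEMMA_TreeStratIso} by pulling back along $f$. By \myref{COR_RayImpliesQuasi}, $\sigma'$ is also winning in $\PTreeGame(X,\tau')$. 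We claim that the same $\sigma'$, read as a strategy in $\PTreeGame(X,\tau)$, is still winning, and its tree of successor moves is literally unchanged.

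The only step needing care is this transfer from the coarser topology $\tau'$ to the finer topology $\tau$. In $\PTreeGame$, the legality of $\ali$'s moves depends only on three things:
\begin{itemize}
\item the pieces of her partitions being clopen (open, in the first inning);
\item the pieces being pairwise disjoint and covering the relevant set;
\item the set-theoretic identity \myref{EQ_QRayGameContinues1}.
\end{itemize}
It does not depend on the local-basis requirement \myref{EQ_RayGameContinues2}. Clopen sets in $\tau'$ are clopen in $\tau$ because $\tau'\subseteq\tau$, and the set-theoretic conditions do not involve the topology at all. $\bob$'s options are the members of $\ali$'s partitions in both games. So the two games, restricted to runs compatible with $\sigma'$, have identical move sequences, and every finished compatible run ends with $\ali$ playing some $(\emptyset,x_\alpha)$ in both.

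Hence $\sigma'\in\Sigma_{\mathrm{q}}(X,\tau)$ and $\sdom{\sigma'}\cong T$. The only subtlety to check explicitly is the first inning, where $\PTreeGame$ asks for an open partition of $X$. This is satisfied, since $\tau'$-open sets are $\tau$-open.
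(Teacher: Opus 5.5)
Your proof is correct and is essentially the argument the paper intends: \myref{LEMMA_QuasiStratHomeo} gives one inclusion. For the other, a winning strategy on $\mR(T)$ (as in \myref{COR_QuasiTreeStratIso}) is transported to $X$ along the continuous bijection, as in the proof of \myref{THM_TreeCollection}, using that $\PTreeGame(-)$ only requires clopen pieces and set-theoretic identities, both of which survive passing to a finer topology; routing this through the coarser topology $\tau'$ via \myref{THM_TreeCollection} and \myref{COR_RayImpliesQuasi} is a repackaging of that same translation, and your prunedness check is fine once you also note that the root has successors given by the non-empty partition $\sigma\seq{\,}$.
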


\myref{THM_TreeQuasiCollection} motivates the following definition:

\begin{defn}\label{DEF_QuasiRay}
    We say that a topological space $(X,\tau)$ is a \emph{quasi-ray space} if there is some topology $\tau'\subseteq \tau$ such that $(X,\tau')$ is a ray space (or, equivalently, if there exists a tree $T$ and a continuous bijection $f\colon X\to \mR(T)$).
\end{defn}

From this, we obtain:

\begin{cor}\label{COR_QuasiRaySPaceChar}
    For a non-empty topological space $X$, the following are equivalent:
    \begin{itemize}
        \item[(a)] $X$ is a quasi-ray space.
        \item[(b)] $\ali$ has a winning strategy in $\PTreeGame(X)$.
    \end{itemize}
\end{cor}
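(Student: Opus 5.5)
The plan is to derive \myref{COR_QuasiRaySPaceChar} directly from \myref{THM_TreeQuasiCollection}, in parallel with how \myref{COR_RaySPaceChar} follows from \myref{THM_TreeCollection}. Since \myref{THM_TreeQuasiCollection} is only stated, I would also sketch its two directions, as each implication of the corollary uses one of them.

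For (b)$\Rightarrow$(a), fix a winning strategy $\sigma$ for $\ali$ in $\PTreeGame(X)$. \myref{LEMMA_QuasiStratHomeo} gives a continuous bijection $f\colon X\to\mR(\sdom{\sigma})$. Pulling back the topology of $\mR(\sdom{\sigma})$ along $f$ then yields a topology $\tau'=\set{f^{-1}(W):W \text{ open in } \mR(\sdom{\sigma})}$ with $\tau'\subseteq\tau$, because $f$ is continuous. Moreover, $f$ is a homeomorphism from $(X,\tau')$ onto $\mR(\sdom{\sigma})$, so $(X,\tau')$ is a ray space and $X$ is quasi-ray by \myref{DEF_QuasiRay}.

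For (a)$\Rightarrow$(b), fix $\tau'\subseteq\tau$ with $(X,\tau')$ a ray space. By \myref{COR_RaySPaceChar}, $\ali$ has a winning strategy $\sigma$ in $\TreeGame(X,\tau')$, and by \myref{COR_RayImpliesQuasi} $\sigma$ also wins $\PTreeGame(X,\tau')$. It then suffices to show that $\sigma$ is a legal, and hence winning, strategy in $\PTreeGame(X,\tau)$. Every partition $\sigma$ plays along a compatible run consists of $\tau'$-clopen sets by \myref{LEMMA_WStratClopens}, and these are $\tau$-clopen since $\tau'\subseteq\tau$. In particular the first move is an open partition, and all later moves are clopen partitions or disjoint clopen families. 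Condition \myref{EQ_QRayGameContinues1} is purely set-theoretic and does not depend on the topology, so each limit move $(\mP_\alpha,x_\alpha)$ chosen by $\sigma$ remains valid. The set of $\bob$'s possible responses is the same in both games, namely members of the chosen partition. Hence the runs compatible with $\sigma$ coincide in the two games, and every finished run ends with $(\emptyset,x_\alpha)$, a win for $\ali$.

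The main point of care is this last transfer step. I must check that $\PTreeGame$ imposes no extra openness or local-basis condition that could fail in the finer topology, and by \myref{DEF_QuasiTreeGame} it imposes only clopenness and \myref{EQ_QRayGameContinues1}. I must also check that $\bob$ gains no new moves, since his moves are determined by $\ali$'s partitions. If both hold, the corollary follows without a separate proof of \myref{THM_TreeQuasiCollection}, although the same pullback and strategy-transfer arguments, combined with \myref{COR_QuasiTreeStratIso}, would give that theorem as well.
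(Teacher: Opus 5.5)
Your proposal is correct and follows essentially the paper's route. The paper obtains this corollary directly from \myref{THM_TreeQuasiCollection} and \myref{DEF_QuasiRay}, whose two directions are exactly \myref{LEMMA_QuasiStratHomeo} (pulled back to a coarser topology) and the transfer of a winning strategy for \ali{} along a continuous bijection, which works because preimages of clopen sets are clopen and \myref{EQ_QRayGameContinues1} is purely set-theoretic. You route that transfer through \myref{COR_RaySPaceChar}, \myref{COR_RayImpliesQuasi} and the identity $(X,\tau)\to(X,\tau')$ instead of through \myref{COR_QuasiTreeStratIso} and a bijection onto $\mR(T)$, which is only a cosmetic rearrangement of the same argument.
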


Now let us apply \myref{COR_QuasiRaySPaceChar} to some examples.
We start by noting that the proof of \myref{PROP_RayGameConnectedSpace} also shows that:

\begin{prop}
    If $X$ is a connected space with $|X|\ge 2$, then $\PTreeGame(X)$ has a single run, which is won by $\bob$. In particular, such an $X$ is not a quasi-ray space.
\end{prop}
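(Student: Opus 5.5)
The plan is to follow the proof of \myref{PROP_RayGameConnectedSpace} almost verbatim. First I would note that in $\PTreeGame(X)$ every move by $\ali$ at a successor inning is an open partition (at inning $0$) or a clopen partition of the set $U_\beta$ chosen by $\bob$. Every member of an open partition of $X$ is clopen, since its complement is a union of the remaining open members. So, by connectedness of $X$, the only partition $\ali$ can play at inning $0$ is $\{X\}$, and $\bob$ is forced to reply with $U_0=X$. By induction on $n\in\omega$, $U_n=X$ at every finite inning, because any clopen partition of $X$ into non-empty sets must be $\{X\}$. Hence the run up to inning $\omega$ is unique.

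At the limit inning $\omega$, I need to show that no valid pair $(\mP_\omega,x_\omega)$ exists. We have $\bigcap_{n<\omega}U_n=X$, so \myref{EQ_QRayGameContinues1} would require $X=\left(\bigcup\mP_\omega\right)\sqcup\{x_\omega\}$, with $\mP_\omega$ a pairwise disjoint family of clopen sets. Since $|X|\ge 2$, $\mP_\omega$ is non-empty. Any $W\in\mP_\omega$ is non-empty (an empty member would not matter in any case), clopen, and misses $x_\omega$, so $W$ is a proper non-empty clopen subset of $X$, contradicting connectedness.

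Therefore the run ends at inning $\omega$ and $\bob$ is declared the winner. Since this is the only run, $\ali$ has no winning strategy, and \myref{COR_QuasiRaySPaceChar} shows that $X$ is not a quasi-ray space. I expect no real obstacle. The one point to watch is whether the paper allows empty members in $\mP_\omega$; the argument only needs $\bigcup\mP_\omega=X\setminus\{x_\omega\}\neq\emptyset$, which already forces some member of $\mP_\omega$ to be a non-empty proper clopen set.
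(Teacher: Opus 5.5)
Your proposal is correct and is the paper's argument: the paper just notes that the proof of \myref{PROP_RayGameConnectedSpace} carries over, since connectedness forces every partition played by $\ali$ to be $\{X\}$, so the unique run reaches inning $\omega$ with $\bigcap_{n<\omega}U_n = X$ and ends there with $\bob$ winning. Your explicit check that no valid pair $(\mP_\omega,x_\omega)$ exists is exactly the detail the paper leaves implicit: since $|X|\ge 2$, some member of $\mP_\omega$ is non-empty, and it would be a proper non-empty clopen set. Your concluding appeal to \myref{COR_QuasiRaySPaceChar} is the intended one.
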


However, in contrast to \myref{EX_omega1_BobWins}:
\begin{ex}\label{EX_OrdinalQuasi}
    Given an ordinal $\gamma$ and a non-empty $X\subseteq\gamma$, consider the strategy $\sigma$ for $\ali$ in $\PTreeGame(X)$ (with the subspace topology induced by the order topology on $\gamma$), defined recursively as follows:

    First, fix $p_0=\min(X)$ and let $\sigma\seq{\,}=\{X\}$. In the $\alpha$-th inning:
    \begin{description}
        \item[If $\alpha=\eta+1$] Let $\sigma\seq{U_\beta:\beta<\alpha}=\{U_\eta\}$.
        \item[If $\alpha$ is a limit ordinal] Let $I_\alpha=\bigcap_{\beta<\alpha}U_\beta$. If $I_\alpha\setminus\{p_0\}\neq\emptyset$, set
        \[
            x_\alpha=\min(I_\alpha\setminus\{p_0\}),\qquad
            \mP_\alpha=\{I_\alpha\setminus\{x_\alpha\}\},\qquad
            \sigma\seq{U_\beta:\beta<\alpha}=(\mP_\alpha,x_\alpha).
        \]
        If $I_\alpha\setminus\{p_0\}=\emptyset$, set
        \[
            \sigma\seq{U_\beta:\beta<\alpha}=(\emptyset,p_0).
        \]
    \end{description}
    
    Since every partition played by $\sigma$ is a singleton, there is a unique run $R$ compatible with $\sigma$. Furthermore, $p_0$ is the unique element in the intersection of $\bob$'s moves in $R$, meaning that $\sigma$ is indeed a winning strategy for $\ali$.  

    Hence, in view of \myref{COR_QuasiRaySPaceChar}, we conclude that any subset $X$ of an ordinal $\gamma$, equipped with the subspace topology induced by the order topology on $\gamma$, is a quasi-ray space.
\end{ex}

\begin{ex}\label{EX_SorgenfreyQuasiRay}
    Let $\RR_l$ denote the Sorgenfrey line and consider the following strategy $\sigma$ for $\ali$ in $\PTreeGame(\RR_l)$: in the first inning, we let 
    \[\sigma\seq{\,} = \mP_0 = \set{[k,k+1):k\in \ZZ}.\]
    In the $(n+1)$-th inning, provided that $\seq{[a_i,b_i):i\le n}$ is the sequence of intervals chosen by $\bob$ against $\sigma$, we let
    \begin{align*}
        \sigma\seq{[a_i,b_i):i\le n} &= \mP_{n+1} \\
        &=  \set{\left[a_n, a_n + \frac{b_n-a_n}{2}\right), \left[a_n + \frac{b_n-a_n}{2}, a_n + \frac{3(b_n-a_n)}{4}\right), \dots}\\
        &= \set{\left[a_n+\sum_{i=1}^k\frac{b_n-a_n}{2^i}, a_n+\sum_{i=1}^{k+1}\frac{b_n-a_n}{2^i}\right):k\in\omega}.
    \end{align*}
     At the $\omega$-th inning,  suppose $\seq{[a_n,b_n):n\in\omega}$ is a sequence of intervals chosen by $\bob$ in a run compatible with $\sigma$. By the construction of $\sigma$, $[a_{n+1},b_{n+1}] \subset [a_n,b_n)$, so 
    \[
        \bigcap_{n\in\omega}[a_n,b_n) = \bigcap_{n\in\omega}[a_n,b_n] \neq \emptyset.
    \] and $\diam([a_n,b_n))$ converges to $0$. Then, for some $x$ we have that $\{x\} =\bigcap_{n\in\omega}[a_n,b_n)$. We set
    \[
        \sigma\seq{[a_n,b_n):n\in\omega}=(\emptyset,x).
    \]

    It follows that $\sigma$ is a winning strategy for $\ali$. 
    \myref{COR_QuasiRaySPaceChar} then implies that $\RR_l$ is a quasi-ray space, despite not being a ray space (see \myref{EX_SorgenfreyNotRay}). By \myref{THM_TreeQuasiCollection}, there exists a topology $\tau$ on $\RR$ weaker than that of $\RR_l$ such that 
    \[(\RR,\tau)\simeq \mR(\sdom{\sigma}) \simeq \omega^\omega.\]
    
    A closer examination of \myref{LEMMA_QuasiStratHomeo} reveals that $\tau$ is the topology generated by the collection of all intervals chosen by $\sigma$. Since the diameter of these intervals containing any given $x\in \RR$ converges to $0$, $\tau$ is strictly stronger than the usual Euclidean topology on $\RR$.
    
\end{ex}

From \myref{THM_ScatteredCharacterization}, we can identify a class of spaces that are not quasi-ray spaces:

\begin{ex}\label{EX_NoIsolatedLessThanContinuum}
    Suppose $(X,\tau)$ is a non-empty space with $|X|<2^{\aleph_0}$ and no isolated points (e.g., a countable dense subspace of $2^\omega$). For every $\tau'\subseteq \tau$, $(X,\tau')$ also has no isolated points. Thus, \myref{THM_ScatteredCharacterization} implies that $X$ cannot be a quasi-ray space.
\end{ex}

For the countable case, though, we can strengthen the conclusion of \myref{EX_NoIsolatedLessThanContinuum}:

\begin{prop}\label{PROP_CountableWithNoIsolated}
    If $X$ is a countable space with no isolated points, then $\bob$ has a winning strategy in $\PTreeGame(X)$.
\end{prop}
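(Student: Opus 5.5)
The plan is to give $\bob$ a "greedy" strategy that always tries to throw away the point of least index. Then show that, if $\ali$ ever won against it, the surviving point $x$ would eventually be the least-indexed point of $\bob$'s current set, and from then on $\ali$ would be forced into moves that contradict the absence of isolated points.

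\textbf{Step 0: enumerate and record basic facts.} Fix an enumeration $X=\set{q_n:n\in\omega}$. Every $U$ that $\bob$ plays is open and non-empty. Since $X$ has no isolated points, every such $U$ is infinite and has no isolated points itself.

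\textbf{Step 1: the strategy $\sigma$.} Let $V$ be the current set $\bob$ must partition from: at a successor inning $V=U_\beta$, and at a limit inning $V=\bigcap_{\beta<\alpha}U_\beta\setminus\{x_\alpha\}$. If $\ali$'s partition has at least two members, $\bob$ picks a member that does \emph{not} contain the least-indexed point of $V$. If the partition is a singleton, $\bob$ picks its only member.

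\textbf{Step 2: locate the stage $\beta_0$.} Suppose a run compatible with $\sigma$ ends at a limit $\alpha$ with $\ali$ winning, so $\bigcap_{\beta<\alpha}U_\beta=\{x\}$ with $x=q_n$. Each $q_m$ with $m<n$ lies outside some $U_\beta$, and there are only finitely many such $m$. Hence there is a $\beta_0<\alpha$ such that $x$ is the least-indexed point of $U_{\beta_0}$, and hence of every later $U_\beta$. (At limit stages we use $x\neq x_\gamma$ for $\gamma<\alpha$, because $x_\gamma\notin U_\gamma\ni x$.) By the rule in Step 1, every partition offered after $\beta_0$ that would let $\bob$ avoid $x$ would make him do so, which is impossible since $x$ survives to the end. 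So every partition $\ali$ offers after $\beta_0$ must be a singleton.

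\textbf{Step 3: derive the contradiction at the next limit.} Let $\lambda$ be the first limit ordinal greater than $\beta_0$, so $\lambda\le\alpha$. All successor moves between $\beta_0$ and $\lambda$ are the trivial partition, so
\[
I:=\bigcap_{\beta<\lambda}U_\beta=U_{\beta_0},
\]
which is clopen, infinite, and has no isolated points. There are two cases.
\begin{itemize}
\item If $\lambda=\alpha$, then $I$ would have to equal $\{x\}$, which is impossible since $I$ is infinite.
\item If $\lambda<\alpha$, then $\ali$ played some $(\mP_\lambda,x_\lambda)$, and by Step 2 we must have $\mP_\lambda=\{I\setminus\{x_\lambda\}\}$ (it is non-empty, since $x$ survives). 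Then $I\setminus\{x_\lambda\}$ is clopen. Since $I$ is open, $x_\lambda$ would be isolated in $X$, a contradiction.
\end{itemize}
Hence no run compatible with $\sigma$ is won by $\ali$, and $\sigma$ is winning for $\bob$.

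The main point needing care is Step 2. We must check that the "least index" bookkeeping also works at limit innings, where $\bob$ chooses from $\mP_\alpha$ and $x_\alpha$ has already been removed. We must also make sure $\beta_0$ can be taken to be an inning at which $\bob$ actually played $U_{\beta_0}$, so that the stretch up to the next limit consists only of successor innings.
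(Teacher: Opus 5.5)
Your proposal is correct and follows essentially the same route as the paper. It uses the same greedy strategy for \bob{} of discarding the least-indexed point, the same argument that all of \ali's partitions must be trivial from some stage $\beta_0$ on, and the same contradiction at the next limit inning $\beta_0+\omega$; your Step 2 spells out the stabilization that the paper compresses into choosing $\gamma$ with $N=n_\gamma$. One small slip: a nonempty open set in a space without isolated points need not be infinite unless $X$ is $T_1$, and the paper assumes no separation axioms. Your case $\lambda=\alpha$ only needs $U_{\beta_0}\neq\{x\}$, which holds because otherwise $x$ would be isolated.
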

\begin{proof}
    Consider the following strategy for $\bob$ in $\PTreeGame(X)$: fix an enumeration $X = \set{x_n:n\in\omega}$. In the first inning, let $\sigma$ pick a clopen set $U_0\in \mP_0$ such that $x_0 \notin U_0$, if such a set exists; otherwise, $\mP_0$ must be a singleton, and $\sigma$ chooses $X$.

    Suppose $\seq{U_\beta:\beta<\alpha}$ is the sequence of sets chosen by $\sigma$ up to the $\alpha$-th inning.
    
    \begin{description}
        \item[If $\alpha=\gamma+1$] Let $n_\alpha\in\omega$ be the least index such that $x_{n_\alpha}\in U_\gamma$. Let $\sigma$ pick $U_\alpha\in \mP_\alpha$ such that $x_{n_\alpha} \notin U_\alpha$, if possible; otherwise, $\sigma$ chooses the only available set.

        \item[If $\alpha$ is a limit ordinal] Provided $\ali$ has just chosen $(\mP_\alpha, p_\alpha)$, with  $p_\alpha \in \bigcap_{\beta<\alpha}U_\beta$ and $\mP_\alpha\neq\emptyset$, let $n_\alpha\in\omega$ be the least index such that 
        \[
            p_\alpha \neq x_{n_\alpha} \in \bigcap_{\beta<\alpha}U_\beta.
        \]
        Let $\sigma$ pick $U_\alpha\in \mP_\alpha$ such that $x_{n_\alpha} \notin U_\alpha$, if possible (or the unique $U_\alpha\in \mP_\alpha$, otherwise).
    \end{description}
    
    We claim that $\sigma$ is a winning strategy for $\bob$. Striving for a contradiction, suppose 
    \[
        R = \seq{\mP_0, U_0, \dotsc, (\mP_\gamma, p_\gamma), U_\gamma, \dotsc , (\emptyset, p_\alpha)}
    \] 
    is a run played according to $\sigma$ that ends at the (limit) $\alpha$-th inning, so that
    \[
        \bigcap_{\beta<\alpha}U_\beta = \{p_\alpha\} = \{x_N\}
    \]
    for some $N\in\omega$. 
    Let $\gamma<\alpha$ be such that $N = n_\gamma$. 
    This implies $n_\gamma = n_{\gamma+k}$ for all $k \in \omega$, which only occurs if $U_\gamma = U_{\gamma+k}$ (by construction of $\sigma)$. 
    In this case, 
    \[
        \bigcap_{\beta<\gamma+\omega}U_\beta = U_\gamma.
    \]
    However, since $X$ has no isolated points, $U_\gamma\setminus \{p_{\gamma+\omega}\}$ is not clopen. This implies that $\alpha\neq\gamma+\omega$ (because $U_\gamma$ cannot be a singleton) and $\mP_{\gamma+\omega} \neq \{U_\gamma \setminus \{p_{\gamma+\omega}\}\}$, allowing $\sigma$ to have chosen $U_{\gamma+\omega}$ such that $x_{n_\gamma} \notin U_{\gamma+\omega}$, a contradiction.
\end{proof}
\section{A brief study of ray and end subspaces}\label{SEC_RaySubspaces}

We shall now apply the characterizations for ray and quasi-ray spaces obtained in Corollaries \ref{COR_RaySPaceChar}, \ref{COR_EndSpaceChar} and \ref{COR_QuasiRaySPaceChar} to determine whether a few classes of subspaces of ray (or end) spaces are themselves ray (or end) spaces. 
Let us begin by noting that not every subspace of a ray space is even a quasi-ray space:
\begin{ex}\label{EX_CountableDenseCantor}
    Consider a countable $X\subset 2^{\omega}$ that is dense in $2^\omega$. In view of \myref{EX_NoIsolatedLessThanContinuum}, $X$ is not a quasi-ray space (in fact, \myref{PROP_CountableWithNoIsolated} further implies that $\bob$ has a winning strategy in $\PTreeGame(X)$).
\end{ex}

On the other hand, although it is known \cite{approximating} that open subspaces of end spaces comprise themselves end spaces, the same cannot be said for ray spaces. Indeed, consider the open subspace $\omega_1 \subset \omega_1+1$. Then:

\begin{cor}
    $\omega_1$ is not a ray space, despite being a quasi-ray space.
\end{cor}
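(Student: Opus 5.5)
The corollary combines two results already in the paper. First, $\omega_1$ is not a ray space. \myref{EX_omega1_BobWins} constructs a winning strategy for $\bob$ in $\TreeGame(\omega_1)$. The game is adversarial: if $\ali$ had a winning strategy $\sigma$, we could play $\sigma$ against $\bob$'s winning strategy. The resulting run is compatible with both strategies, so it would be won by both players, which is absurd. Hence $\ali$ has no winning strategy in $\TreeGame(\omega_1)$, and \myref{COR_RaySPaceChar} (direction (a)$\Rightarrow$(b)) shows that $\omega_1$ is not a ray space.

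One detail needs checking here: that the run produced by the two strategies actually finishes. If $\sigma$ is winning for $\ali$, then by \myref{LEMMA_WStratEndOfRun} every sequence of $\bob$'s moves compatible with $\sigma$ stays in $\dom(\sigma)$. By \myref{PROP_RayGameRunLength}, every run has length $<|X|^+$, so the run generated by the pair of strategies must terminate. At termination it is won by $\ali$ (since $\sigma$ is winning) and also by $\bob$ (by \myref{EX_omega1_BobWins}), which is the contradiction.

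Second, $\omega_1$ is a quasi-ray space. Apply \myref{EX_OrdinalQuasi} with $\gamma=\omega_1$ and $X=\omega_1$ itself, equipped with its order topology; this coincides with the subspace topology from $\omega_1$, and also from $\omega_1+1$. That example exhibits a winning strategy for $\ali$ in $\PTreeGame(\omega_1)$, and \myref{COR_QuasiRaySPaceChar} then yields that $\omega_1$ is a quasi-ray space.

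The only nontrivial ingredient is \myref{EX_omega1_BobWins}, which is already established, so no step is a real obstacle.
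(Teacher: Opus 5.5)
Your proof is correct and follows the paper's argument exactly: \myref{EX_omega1_BobWins} together with \myref{COR_RaySPaceChar} shows that $\omega_1$ is not a ray space, and \myref{EX_OrdinalQuasi} together with \myref{COR_QuasiRaySPaceChar} shows that it is a quasi-ray space. You also check that a winning strategy for \bob{} rules out one for \ali{}: the two strategies are played against each other, and \myref{PROP_RayGameRunLength} guarantees that the joint run terminates. This is a sound elaboration of a step the paper leaves implicit.
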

\begin{proof}
    While \myref{EX_OrdinalQuasi} and \myref{COR_QuasiRaySPaceChar} show that $\omega_1$ is a quasi-ray space, \myref{EX_omega1_BobWins} and \myref{COR_RaySPaceChar} imply that $\omega_1$ is not a ray space.
\end{proof}

Thus, it is only natural to ask:

\begin{question}\label{QUE_AreOpenSubspaceofRayQuasi?}
    If $X$ is an open subspace of a ray space, must it be a quasi-ray space?
\end{question}

Regarding closed subspaces, it has been previously shown in the literature that:

\begin{prop}[Corollary 3.7 in \cite{pitz2023characterisingpathraybranch}]\label{PROP_ClosedRay}
    If $X$ is a closed subspace of a ray space, then $X$ is a ray space.
\end{prop}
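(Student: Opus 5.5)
The plan is to use \myref{COR_RaySPaceChar}. Since $X$ is a closed subspace of a ray space $Y$, \myref{COR_RaySPaceChar} (with \myref{THM_TreeCollection}) gives a winning strategy $\sigma$ for $\ali$ in $\TreeGame(Y)$. From it I will build a winning strategy $\sigma_X$ for $\ali$ in $\TreeGame(X)$ by \emph{restriction}, throwing away the pieces that miss $X$. Applying \myref{COR_RaySPaceChar} again then shows that $X$ is a ray space. The game on $X$ is played by simulating a run of $\TreeGame(Y)$ compatible with $\sigma$.

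\textbf{Successor innings.} Suppose $\sigma$ proposes a partition $\mP$. Then $\sigma_X$ proposes $\set{U\cap X: U\in\mP,\ U\cap X\neq\emptyset}$, which is an open partition of the current set $U_\beta\cap X$. When $\bob$ picks $U\cap X$, we feed the corresponding $U$ to $\sigma$ in the simulated run. Every set in the simulated run meets $X$.

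\textbf{Limit innings.} Suppose $\sigma$ answers $(\mP_\alpha,y_\alpha)$. Two cases arise.
\begin{itemize}
\item If $y_\alpha\in X$, then $\sigma_X$ answers $(\set{U\cap X:U\in\mP_\alpha,\ U\cap X\neq\emptyset},y_\alpha)$. Both \myref{EQ_RayGameContinues1} and the local basis condition \myref{EQ_RayGameContinues2} pass to the subspace, since intersecting a local basis with $X$ gives a local basis in $X$.
\item If $y_\alpha\notin X$, we use that $X$ is closed. There is a basic neighbourhood $U_\beta\setminus\bigcup\mF$ of $y_\alpha$, with $\mF\subseteq\mP_\alpha$ finite, that misses $X$. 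Hence $X\cap\bigcap_{\gamma<\alpha}U_\gamma\subseteq\bigcup\mF$, and also $X\cap U_\beta\subseteq\bigcup\mF$.
\end{itemize}

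The second case is the main obstacle. The intersection of the sets played so far has within $X$ no distinguished point. Moreover, the $X$-run has already committed to nested sets whose intersection in $X$ is split among finitely many clopen pieces. That intersection is $X\cap\bigcap_{\gamma<\alpha}U_\gamma$, and it is clopen in $X$ by \myref{LEMMA_WStratClopens}.

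The remedy I would try is to avoid ever reaching such limits, by modifying $\sigma_X$ \emph{in advance}. Fix a sequence as above and its first index $\beta$ with $X\cap U_\beta\subseteq\bigcup\mF$. At inning $\beta+1$, instead of merely restricting, $\sigma_X$ refines the partition so as to split $U_\beta\cap X$ into the finitely many clopen pieces $U\cap X$, for $U\in\mF$. These pieces are clopen in $X$ by \myref{LEMMA_WStratClopens} applied along the run. $\sigma_X$ then continues inside the chosen piece by following $\sigma$ from $U$ onward. This is a ``jump ahead'' in the simulated run.

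Formally, I would define the simulation recursively so that the $X$-run tracks a $\sigma$-run but may skip stretches of it. The invariant is that whenever the $X$-run reaches a limit, the simulated $Y$-run also reaches a limit whose point $y_\alpha$ lies in $X$. The delicate bookkeeping is that the first such $\beta$ depends on the future $\mP_\alpha$. To avoid this circularity I would instead use the tree directly. Take $T=\sdom{\sigma}$, so that $Y\simeq\mR(T)$ by \myref{LEMMA_StratHomeo}, and let $S$ be the set of nodes $t$ such that $[t,\emptyset]\cap X\neq\emptyset$. Then $S$ is a downward-closed subtree. Closedness of $X$ ensures that for each ray $R$ of $S$ whose corresponding point of $Y$ is not in $X$, only finitely many tops of $R$ lie in $S$. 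Contracting these rays, by identifying $R$ with its finitely many tops, should give a tree whose ray space is $X$. The strategy $\sigma_X$ is then just the one from \myref{LEMMA_TreeStratIso} on this contracted tree, and it wins. Checking that the contraction (iterated transfinitely) yields an honest tree with the right neighbourhoods is where I expect most of the work.
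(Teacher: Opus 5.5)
\textbf{There is a genuine gap: the step that carries the whole proposition is left undone.} The paper gives no proof of this statement (it is imported from \cite{pitz2023characterisingpathraybranch}), so your argument has to stand on its own. Restricting $\sigma$ works at successor innings and at limits with $y_\alpha\in X$, and you correctly locate the failure at limits with $y_\alpha\notin X$. But the route you finally commit to, contracting each ray $R$ of $S$ with $R\notin X$ onto its finitely many tops, is never defined: you do not say which nodes are deleted, what the new order is, in which order the contractions happen, or what happens at limit stages of the iteration. Nothing about the resulting ray space is verified, so ``should give a tree whose ray space is $X$'' is the entire content of the proposition.

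Natural readings of the contraction fail. Let $T$ be a chain of order type $\omega\cdot\omega$ with node $t_\xi$ at height $\xi$, and let $X=\{T\}$, a closed point of $\mR(T)\simeq\omega+1$. Each initial segment $B_n$ of length $\omega\cdot n$ ($1\le n<\omega$) is a ray of $S=T$ outside $X$, with the single top $t_{\omega\cdot n}$. Moreover $[r,\{t_{\omega\cdot n}\}]\cap X=\emptyset$ already holds for the root $r$. Collapsing each $B_n$ onto its top above the least admissible node, simultaneously or one $n$ at a time, deletes every node except $r$ (as $t_{\omega\cdot n}\in B_{n+1}$), leaving no rays at all. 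Here one must instead cut $B_n$ above $t_{\omega\cdot(n-1)}$. In general there is no canonical cut point, since the rays outside $X$ may form increasing chains without a largest element that are cofinal in a ray of $X$.

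\textbf{How to close the gap.} The jump-ahead idea you abandoned does give a proof. It is not circular once it is driven by the current position rather than by the future $\mP_\alpha$.

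Call a move of the simulated $\sigma$-run \emph{inessential} if it does not change the trace on $X$: exactly one of its pieces meets $X$ and, at a limit inning, its point lies outside $X$. After such a move, $\bob$'s reply in $Y$ is forced (the unique piece meeting $X$) and $X\cap U_\delta$ stays the same, so $\sigma_X$ can compute the continuation from the present position.

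Let $\sigma_X$ run the simulation through inessential moves up to the first essential one. These stretches terminate because $\sigma$ is winning. At the essential move:
\begin{itemize}
\item If it is a partition with at least two pieces meeting $X$ (at a successor inning, or at a limit whose point is outside $X$), play its restriction.
\item If it is a limit move $(\mP_\gamma,y_\gamma)$ with $y_\gamma\in X$, play the trivial partition for $\omega$ innings and then the restricted pair. This covers the end of the $Y$-run, where the trace is $\{y_\gamma\}$. The local-basis condition holds because the trace was constant on a final segment below $\gamma$.
\end{itemize}

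At any other limit of the $X$-run, the stages of the essential moves are cofinal in a limit $\alpha$ of the $Y$-run. Suppose $y_\alpha\notin X$. Your closedness observation gives $\delta_0<\alpha$ with $X\cap U_\delta=X\cap U_{\delta_0}$ for all $\delta\in[\delta_0,\alpha)$. This is impossible, since essential moves occur cofinally below $\alpha$ and each one strictly shrinks the trace. So $y_\alpha\in X$ and your first limit case applies. Hence $\sigma_X$ is winning, and \myref{COR_RaySPaceChar} finishes the proof.
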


One might wonder, in light of \myref{QUE_AreOpenSubspaceofRayQuasi?} and \myref{PROP_ClosedRay}, whether open or closed subspaces of quasi-ray spaces are themselves quasi-ray spaces.
However, the following example answers this question negatively, even for clopen subspaces:

\begin{ex}
    Let $X\subseteq 2^\omega$ be countable and dense, as in \myref{EX_CountableDenseCantor}. Let $\tau$ be the topology generated by the usual topology of $2^{\omega}$ together with $X$ and $2^\omega\setminus X$ as basic open sets (so that the usual topology over $2^\omega$ attests that $(2^\omega,\tau)$ is a quasi-ray space). 
    
    Note that $X$ is clopen in $(2^\omega,\tau)$. 
    On the other hand, the topology on $X$ as a subspace of $(2^\omega,\tau)$ coincides with the subspace topology induced by the usual topology of $2^\omega$.
    Hence, by \myref{EX_CountableDenseCantor}, as a subspace of $(2^\omega,\tau)$, $X$ is a clopen subspace of a quasi-ray space that is not itself a quasi-ray space.
\end{ex}

The branch space of a tree $T$ (i.e., its set of maximal rays imbued with the topology generated by the basis $\set{[t]:t\in T}$), denoted by $\mB(T)$, is also a notable subspace of its ray space. In this context:

\begin{prop}\label{PROP_BranchQuasiRay}
    If $X$ is a branch space (i.e., there exists a tree $T$ with $X\simeq \mB(T)$), then $X$ is a quasi-ray space.
\end{prop}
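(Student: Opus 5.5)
We produce a winning strategy for $\ali$ in $\PTreeGame(\mB(T))$ and then invoke \myref{COR_QuasiRaySPaceChar}. An alternative route is to exhibit directly a coarser topology on $\mB(T)$ that is a ray space, but the game makes the construction transparent. Two preliminary remarks fix the setup. First, $\mB(T)\simeq X$, so it suffices to treat $\mB(T)$. Second, we may assume $T$ is pruned in the sense that every node lies on some maximal ray, since removing nodes below which there are no branches does not change $\mB(T)$. Here a branch is a maximal ray, that is, a ray with no top. The sets $[t]=\set{B\in\mB(T): t\in B}$ are clopen in $\mB(T)$: two such sets are either nested or disjoint, and the complement of $[t]$ is the union of the $[s]$ with $s$ incompatible with $t$, together with the $[s]$ for $s$ in the same level as $t$.

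Mimicking \myref{LEMMA_TreeStratIso}, $\ali$ plays as follows. In inning $0$ she plays $\set{[t]:t\in\succs(r)}$. At a successor inning following $\bob$'s choice $[t_\beta]$, she plays $\set{[s]:s\in\succs(t_\beta)}$. These are partitions because every branch through $t_\beta$ passes through exactly one immediate successor of $t_\beta$; this needs $t_\beta$ to have successors, which holds since branches have no maximum. At a limit inning $\alpha$, the nodes $\set{t_\beta:\beta<\alpha}$ form a ray $R$, and $\bigcap_{\beta<\alpha}[t_\beta]$ is the set of branches extending $R$. If $R$ is itself a branch, this intersection is $\{R\}$, and $\ali$ plays $(\emptyset,R)$ and wins. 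Otherwise $R$ has tops, and the intersection is $\bigsqcup_{s\in\Rtop(R)}[s]$, which contains no distinguished point.

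The main obstacle is precisely this limit case: the quasi-ray game demands a point $x_\alpha$ removed from the intersection, while the natural partition $\set{[s]:s\in\Rtop(R)}$ covers everything. To handle it, $\ali$ \emph{plans ahead}: throughout the run she carries a designated branch. At each limit $\alpha$ where $R$ has tops, she chooses a top $s_\alpha$ and a branch $x_\alpha$ through $s_\alpha$ (say, the leftmost one under a fixed well-ordering of successors). She then plays $\mP_\alpha$ consisting of the sets $[s]$ for $s\in\Rtop(R)\setminus\{s_\alpha\}$, together with the clopen pieces obtained by splitting $[s_\alpha]\setminus\{x_\alpha\}$ along $x_\alpha$. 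Explicitly, for each node $u\in x_\alpha$ with $u\ge s_\alpha$, these pieces are the sets $[v]$ with $v\in\succs(u)\setminus x_\alpha$, and, for each limit $\lambda$, the sets $[v]$ with $v$ a top of $x_\alpha\upharpoonright\lambda$ other than $x_\alpha(\lambda)$. These sets are clopen, pairwise disjoint, and cover $\bigcap_{\beta<\alpha}U_\beta\setminus\{x_\alpha\}$, so the move is legal.

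When $\bob$ later picks one of these pieces $[v]$, the play restarts the same procedure in the subtree $\lfloor v\rfloor$. Along any run the chosen nodes keep increasing in $T$, and the open sets are always of the form $[v]$ with $v$ strictly higher. Consequently, at any limit inning the intersection is either a single branch, in which case $\ali$ wins, or again a union of $[s]$ over tops, in which case she repeats the splitting move. Thus $\ali$ always has a legal move, and by the length bound of \myref{PROP_QuasiGameRunLength} every run terminates, necessarily with $\ali$ winning. The delicate verification is that the intersection at a limit after a splitting step really has the stated form. This works because the sets $[v]$ are nested along an increasing chain of nodes, so the intersection is $\set{B\in\mB(T):\sup v_\beta\subseteq B}$, and this reduces to the earlier case.
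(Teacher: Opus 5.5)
Your proposal is correct and is essentially the paper's proof. At a limit inning where the current ray $R$ is not a branch, the paper fixes any branch $B\supseteq R$ and plays $(\set{[t]:t\in A},B)$, where $A$ is the set of minimal nodes of $T\setminus B$ lying above $R$; with $B=x_\alpha$ this is exactly the family you enumerate explicitly, provided the limit levels in your splitting are restricted to $\lambda>\text{ht}(s_\alpha)$. Your remark that after such a move one must pass to the downward closure of \bob's nodes is, if anything, slightly more careful than the paper's $R=\{r\}\cup\set{t_\beta:\beta<\alpha}$.
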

\begin{proof}
    Without loss of generality, let $T$ be a pruned tree such that $X = \mB(T)$ (where $\mB(T)$ denotes the branch space of $T$).

    We recursively define a winning strategy for $\ali$ in $\PTreeGame(X)$ as follows: in the first inning, let 
    \[
        \sigma\seq{\,} = \set{[t]: t\in T \text{ is a successor of the root } r\in T}.
    \]

    Suppose the game has been played up to the $\alpha$-th inning, where $\seq{[t_\beta]:\beta<\alpha}$ are the clopen sets chosen by $\bob$ such that $\seq{t_\beta:\beta<\alpha}$ is strictly increasing in $T$.

    \begin{description}
        \item[If $\alpha = \gamma+1$] We let
        \[
            \sigma\seq{[t_\beta]:\beta < \alpha} = \sigma\seq{[t_\beta]:\beta \le \gamma} = \set{[t]: t\in T \text{ is a successor of } t_\gamma\in T}.
        \]

        \item[If $\alpha$ is a limit ordinal] If $R = \{r\}\cup\set{t_\beta:\beta<\alpha}\in \mB(T)$, the run is finished and 
        \[
            \bigcap_{\beta<\alpha}[t_\beta] = \{R\},
        \] 
        so $\ali$ wins.
        Otherwise, fix any branch $B\supseteq R$ and let 
        \[
            A = \{t\in T\setminus B: \text{$t$ is above $R$ and no $s<t$ is in $T\setminus B$}\} 
        \]
        (note that $t\in A$ if and only if $t\in T\setminus B$ is above $R$ and is either a top of a ray contained in $B$ or a successor of an element of $B$, as illustrated in \myref{FIG_BranchQuasiRay}).
\begin{figure}
    \centering

\tikzset{every picture/.style={line width=0.75pt}} 

\begin{tikzpicture}[x=0.75pt,y=0.75pt,yscale=-1,xscale=1]

\draw  [dash pattern={on 0.84pt off 2.51pt}] (321.51,277) -- (181.27,17) -- (460.92,16.57) -- cycle ;
\draw [color={rgb, 255:red, 0; green, 0; blue, 0 }  ,draw opacity=1 ]   (281.19,132.71) .. controls (287.91,220.42) and (332.89,143.35) .. (321.51,277) ;
\draw [shift={(281,130)}, rotate = 86.27] [color={rgb, 255:red, 0; green, 0; blue, 0 }  ,draw opacity=1 ][line width=0.75]    (10.93,-4.9) .. controls (6.95,-2.3) and (3.31,-0.67) .. (0,0) .. controls (3.31,0.67) and (6.95,2.3) .. (10.93,4.9)   ;
\draw [color={rgb, 255:red, 208; green, 2; blue, 27 }  ,draw opacity=1 ]   (255.55,72.8) -- (263,88) ;
\draw [shift={(254.67,71)}, rotate = 63.89] [color={rgb, 255:red, 208; green, 2; blue, 27 }  ,draw opacity=1 ][line width=0.75]    (8.74,-3.92) .. controls (5.56,-1.84) and (2.65,-0.53) .. (0,0) .. controls (2.65,0.53) and (5.56,1.84) .. (8.74,3.92)   ;
\draw [color={rgb, 255:red, 208; green, 2; blue, 27 }  ,draw opacity=1 ]   (263,88) -- (267.5,97.5) ;
\draw [color={rgb, 255:red, 208; green, 2; blue, 27 }  ,draw opacity=1 ]   (267.5,97.5) -- (272,107) ;
\draw [shift={(272,107)}, rotate = 64.65] [color={rgb, 255:red, 208; green, 2; blue, 27 }  ,draw opacity=1 ][fill={rgb, 255:red, 208; green, 2; blue, 27 }  ,fill opacity=1 ][line width=0.75]      (0, 0) circle [x radius= 2.68, y radius= 2.68]   ;
\draw [color={rgb, 255:red, 208; green, 2; blue, 27 }  ,draw opacity=1 ]   (272,107) -- (276.5,116.5) ;
\draw [shift={(276.5,116.5)}, rotate = 64.65] [color={rgb, 255:red, 208; green, 2; blue, 27 }  ,draw opacity=1 ][fill={rgb, 255:red, 208; green, 2; blue, 27 }  ,fill opacity=1 ][line width=0.75]      (0, 0) circle [x radius= 2.68, y radius= 2.68]   ;
\draw [color={rgb, 255:red, 208; green, 2; blue, 27 }  ,draw opacity=1 ]   (276.5,116.5) -- (281,126) ;
\draw [shift={(281,126)}, rotate = 64.65] [color={rgb, 255:red, 208; green, 2; blue, 27 }  ,draw opacity=1 ][fill={rgb, 255:red, 208; green, 2; blue, 27 }  ,fill opacity=1 ][line width=0.75]      (0, 0) circle [x radius= 2.68, y radius= 2.68]   ;
\draw [color={rgb, 255:red, 208; green, 2; blue, 27 }  ,draw opacity=1 ]   (214.97,22.18) -- (223.85,32.45) ;
\draw [shift={(213.67,20.67)}, rotate = 49.15] [color={rgb, 255:red, 208; green, 2; blue, 27 }  ,draw opacity=1 ][line width=0.75]    (8.74,-3.92) .. controls (5.56,-1.84) and (2.65,-0.53) .. (0,0) .. controls (2.65,0.53) and (5.56,1.84) .. (8.74,3.92)   ;
\draw [color={rgb, 255:red, 208; green, 2; blue, 27 }  ,draw opacity=1 ]   (223.85,32.45) -- (230.56,40.54) ;
\draw [color={rgb, 255:red, 208; green, 2; blue, 27 }  ,draw opacity=1 ]   (230.56,40.54) -- (237.27,48.63) ;
\draw [color={rgb, 255:red, 208; green, 2; blue, 27 }  ,draw opacity=1 ]   (237.27,48.63) -- (243.98,56.72) ;
\draw [shift={(243.98,56.72)}, rotate = 50.33] [color={rgb, 255:red, 208; green, 2; blue, 27 }  ,draw opacity=1 ][fill={rgb, 255:red, 208; green, 2; blue, 27 }  ,fill opacity=1 ][line width=0.75]      (0, 0) circle [x radius= 2.68, y radius= 2.68]   ;
\draw [color={rgb, 255:red, 208; green, 2; blue, 27 }  ,draw opacity=1 ]   (243.98,56.72) -- (250.69,64.81) ;
\draw [shift={(250.69,64.81)}, rotate = 50.33] [color={rgb, 255:red, 208; green, 2; blue, 27 }  ,draw opacity=1 ][fill={rgb, 255:red, 208; green, 2; blue, 27 }  ,fill opacity=1 ][line width=0.75]      (0, 0) circle [x radius= 2.68, y radius= 2.68]   ;
\draw [color={rgb, 255:red, 0; green, 0; blue, 255 }  ,draw opacity=1 ]   (281,126) -- (296,116) ;
\draw [shift={(296,116)}, rotate = 326.31] [color={rgb, 255:red, 0; green, 0; blue, 255 }  ,draw opacity=1 ][fill={rgb, 255:red, 0; green, 0; blue, 255 }  ,fill opacity=1 ][line width=0.75]      (0, 0) circle [x radius= 2.68, y radius= 2.68]   ;
\draw [color={rgb, 255:red, 0; green, 0; blue, 255 }  ,draw opacity=1 ]   (276.5,116.5) -- (289,102.5) ;
\draw [shift={(289,102.5)}, rotate = 311.76] [color={rgb, 255:red, 0; green, 0; blue, 255 }  ,draw opacity=1 ][fill={rgb, 255:red, 0; green, 0; blue, 255 }  ,fill opacity=1 ][line width=0.75]      (0, 0) circle [x radius= 2.68, y radius= 2.68]   ;
\draw [color={rgb, 255:red, 0; green, 0; blue, 255 }  ,draw opacity=1 ]   (272,107) -- (276.5,87.5) ;
\draw [shift={(276.5,87.5)}, rotate = 282.99] [color={rgb, 255:red, 0; green, 0; blue, 255 }  ,draw opacity=1 ][fill={rgb, 255:red, 0; green, 0; blue, 255 }  ,fill opacity=1 ][line width=0.75]      (0, 0) circle [x radius= 2.68, y radius= 2.68]   ;
\draw [color={rgb, 255:red, 0; green, 0; blue, 255 }  ,draw opacity=1 ]   (272,107) -- (285,92.5) ;
\draw [shift={(285,92.5)}, rotate = 311.88] [color={rgb, 255:red, 0; green, 0; blue, 255 }  ,draw opacity=1 ][fill={rgb, 255:red, 0; green, 0; blue, 255 }  ,fill opacity=1 ][line width=0.75]      (0, 0) circle [x radius= 2.68, y radius= 2.68]   ;
\draw [color={rgb, 255:red, 0; green, 0; blue, 255 }  ,draw opacity=1 ]   (290.5,126) ;
\draw [shift={(290.5,126)}, rotate = 0] [color={rgb, 255:red, 0; green, 0; blue, 255 }  ,draw opacity=1 ][fill={rgb, 255:red, 0; green, 0; blue, 255 }  ,fill opacity=1 ][line width=0.75]      (0, 0) circle [x radius= 2.68, y radius= 2.68]   ;
\draw [color={rgb, 255:red, 0; green, 0; blue, 255 }  ,draw opacity=1 ]   (300.5,129) -- (300,126) ;
\draw [shift={(300,126)}, rotate = 260.54] [color={rgb, 255:red, 0; green, 0; blue, 255 }  ,draw opacity=1 ][fill={rgb, 255:red, 0; green, 0; blue, 255 }  ,fill opacity=1 ][line width=0.75]      (0, 0) circle [x radius= 2.68, y radius= 2.68]   ;
\draw [color={rgb, 255:red, 0; green, 0; blue, 255 }  ,draw opacity=1 ]   (261.5,64) ;
\draw [shift={(261.5,64)}, rotate = 0] [color={rgb, 255:red, 0; green, 0; blue, 255 }  ,draw opacity=1 ][fill={rgb, 255:red, 0; green, 0; blue, 255 }  ,fill opacity=1 ][line width=0.75]      (0, 0) circle [x radius= 2.68, y radius= 2.68]   ;
\draw [color={rgb, 255:red, 0; green, 0; blue, 255 }  ,draw opacity=1 ]   (271.17,66.67) -- (270.67,63.67) ;
\draw [shift={(270.67,63.67)}, rotate = 260.54] [color={rgb, 255:red, 0; green, 0; blue, 255 }  ,draw opacity=1 ][fill={rgb, 255:red, 0; green, 0; blue, 255 }  ,fill opacity=1 ][line width=0.75]      (0, 0) circle [x radius= 2.68, y radius= 2.68]   ;
\draw [color={rgb, 255:red, 0; green, 0; blue, 255 }  ,draw opacity=1 ]   (243.98,56.72) -- (256.48,42.72) ;
\draw [shift={(256.48,42.72)}, rotate = 311.76] [color={rgb, 255:red, 0; green, 0; blue, 255 }  ,draw opacity=1 ][fill={rgb, 255:red, 0; green, 0; blue, 255 }  ,fill opacity=1 ][line width=0.75]      (0, 0) circle [x radius= 2.68, y radius= 2.68]   ;
\draw [color={rgb, 255:red, 0; green, 0; blue, 255 }  ,draw opacity=1 ]   (243.98,56.72) -- (247,41) ;
\draw [shift={(247,41)}, rotate = 280.87] [color={rgb, 255:red, 0; green, 0; blue, 255 }  ,draw opacity=1 ][fill={rgb, 255:red, 0; green, 0; blue, 255 }  ,fill opacity=1 ][line width=0.75]      (0, 0) circle [x radius= 2.68, y radius= 2.68]   ;
\draw  [color={rgb, 255:red, 0; green, 0; blue, 255 }  ,draw opacity=1 ][dash pattern={on 0.84pt off 2.51pt}] (243.67,46) .. controls (224,19) and (264.33,32.67) .. (269,38.33) .. controls (273.67,44) and (282.33,71.67) .. (302.33,101.67) .. controls (322.33,131.67) and (285.67,144) .. (286.33,133) .. controls (287,122) and (263.33,73) .. (243.67,46) -- cycle ;

\draw (426,27.51) node [anchor=north west][inner sep=0.75pt]    {$T$};
\draw (199.67,25.07) node [anchor=north west][inner sep=0.75pt]  [color={rgb, 255:red, 208; green, 2; blue, 27 }  ,opacity=1 ]  {$B$};
\draw (302,51.07) node [anchor=north west][inner sep=0.75pt]  [color={rgb, 255:red, 0; green, 0; blue, 255 }  ,opacity=1 ]  {$A$};
\draw (298,136.73) node [anchor=north west][inner sep=0.75pt]    {$R$};

\end{tikzpicture}
    \caption{After fixing a branch $B$ above the ray $R$ played thus far (in the proof of \myref{PROP_BranchQuasiRay}), we consider the clopen sets of the form $[t]$ for each $t$ highlighted in blue, so that we obtain a clopen partition for the set of branches above $R$ which are not $B$.}
    \label{FIG_BranchQuasiRay}
\end{figure}
        
         Then 
         \[
            \bigcap_{\beta<\alpha}[t_\beta] = \{B\}\sqcup\bigcup_{t\in A}[t],
        \]
        so we may let  
        \[  
            \sigma\seq{[t_\beta]:\beta<\alpha} = (\set{[t]:t\in A}, B).
        \]
    \end{description}
    It is then clear that $\sigma$ is a winning strategy, since it has been constructed in a way that $\ali$ is able to keep playing for as long as she has not won yet.
\end{proof}

On the other hand:
\begin{prop}\label{PROP_2omega1NotRay}
    If $X$ is a space with no $G_\delta$ points (i.e., no singleton $\{x\}$ is the intersection of countably many open sets in $X$) and there is a run of the game $\TreeGame(X)$ which is won by $\ali$, then $X$ contains a non-trivial convergent sequence.
\end{prop}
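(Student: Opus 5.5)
Fix a run $\seq{\mP_0,U_0,\dotsc,(\mP_\gamma,x_\gamma),U_\gamma,\dotsc,(\emptyset,x_\alpha)}$ of $\TreeGame(X)$ won by $\ali$ and finishing at the limit inning $\alpha$. The plan is to find a limit ordinal $\gamma\le\alpha$ with $\mathrm{cf}(\gamma)=\omega$ and read off a convergent sequence from the local basis condition \myref{EQ_RayGameContinues2} at $x_\gamma$.

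First, I argue that $\alpha$ has uncountable cofinality. By \myref{EQ_RayGameContinues1} with $\mP_\alpha=\emptyset$, we have $\{x_\alpha\}=\bigcap_{\beta<\alpha}U_\beta$. If $\mathrm{cf}(\alpha)=\omega$, choose a cofinal sequence $\beta_n\to\alpha$. Since the $U_\beta$ are decreasing, $\{x_\alpha\}=\bigcap_n U_{\beta_n}$ is a $G_\delta$ point, contradicting the hypothesis. So $\mathrm{cf}(\alpha)>\omega$, and in particular $\alpha>\omega$.

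Next, I pass to the limit inning $\gamma=\omega<\alpha$, where $\ali$ played some $(\mP_\omega,x_\omega)$. Here $\bigcap_{n}U_n=\left(\bigcup\mP_\omega\right)\sqcup\{x_\omega\}$, and $\mP_\omega\neq\emptyset$ because the game continued. If $\mP_\omega$ were finite, then by \myref{EQ_RayGameContinues2} with $\mF=\mP_\omega$ the sets $U_n\setminus\bigcup\mP_\omega$ would form a countable local basis at $x_\omega$ with intersection $\{x_\omega\}$, contradicting the absence of $G_\delta$ points. Hence $\mP_\omega$ is infinite. I then pick distinct $V_k\in\mP_\omega$ and points $y_k\in V_k$; these are pairwise distinct since the members of $\mP_\omega$ are disjoint.

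The claim is that $y_k\to x_\omega$. Given a neighbourhood $W$ of $x_\omega$, condition \myref{EQ_RayGameContinues2} provides $n$ and a finite $\mF\subseteq\mP_\omega$ with $x_\omega\in U_n\setminus\bigcup\mF\subseteq W$. Every $y_k$ lies in $\bigcup\mP_\omega\subseteq\bigcap_m U_m\subseteq U_n$, and $y_k\notin\bigcup\mF$ whenever $V_k\notin\mF$, which fails for only finitely many $k$. So all but finitely many $y_k$ lie in $W$. Since $y_k\neq x_\omega$ and the $y_k$ are distinct, this is a non-trivial convergent sequence.

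The step I expect to need the most care is ruling out the degenerate cases: checking that $\omega<\alpha$, so that inning $\omega$ actually occurred with a nonempty $\mP_\omega$, and that a finite $\mP_\omega$ really forces a $G_\delta$ point. The argument above handles both. Note also that the no-$G_\delta$ hypothesis is used twice, once at $x_\alpha$ and once at $x_\omega$.
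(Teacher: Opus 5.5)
Your proof is correct and follows essentially the same route as the paper. Both arguments combine the no-$G_\delta$ hypothesis with the local-basis condition at inning $\omega$ to show that $\mP_\omega$ is large (the paper says uncountable; you only need, and only prove, infinite), then pick one point from each of infinitely many members of $\mP_\omega$ to get a sequence converging to $x_\omega$. Your preliminary step, that the run cannot end at inning $\omega$ (indeed $\mathrm{cf}(\alpha)>\omega$), makes explicit something the paper takes for granted when it writes the run as passing through $(\mP_\omega,x_\omega)$ and $U_\omega$.
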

\begin{proof}
    Suppose 
    \[R = \seq{\mP_0, U_0, \dotsc, (\mP_\omega, x_\omega), U_\omega, \dotsc, (\emptyset,x_\alpha) }\]
    is a run of $\TreeGame(X)$ won by $\ali$. 

    Since $X$ does not contain any $G_\delta$ points, it follows from the fact that
    \[
        \mB = \set{U_n\setminus \bigcup\mF: n<\omega \text{ and finite } \mF \subset \mP_\omega}
    \]
    is a local basis for $x_\omega$ (see \myref{EQ_RayGameContinues2}) that $\mP_\omega$ must be uncountable.

    In this case, if we fix an infinite subset $\set{W_n:n\in\omega} \subset \mP_\omega$ and points $x_n \in W_n$ for each $n\in\omega$, then the fact that $\mB$ is a local basis for $x_\omega$ implies that $\seq{x_n:n\in\omega}$ is a non-trivial sequence converging to $x_\omega$.
\end{proof}

Thus:
\begin{cor}
    Let $2^{<\omega_1}$ denote the complete binary tree of height $\omega_1$. Then every run of the game $\TreeGame(\mB(2^{<\omega_1}))$ is won by $\bob$.
    
    In particular, $\mB(2^{<\omega_1})$ is not a ray space, despite being a quasi-ray space.
\end{cor}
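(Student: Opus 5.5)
The plan is to apply \myref{PROP_2omega1NotRay} to $X=\mB(2^{<\omega_1})$. For this, two topological facts about $X$ are needed: it has no $G_\delta$ points, and it contains no non-trivial convergent sequence. Once both are in hand, the contrapositive of \myref{PROP_2omega1NotRay} says that no run of $\TreeGame(X)$ can be won by $\ali$. Every finished run is then won by $\bob$. In particular $\ali$ has no winning strategy, so by \myref{COR_RaySPaceChar} the space $X$ is not a ray space. On the other hand, \myref{PROP_BranchQuasiRay} already tells us that $X$ is a quasi-ray space, which handles the ``despite'' clause.

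For the first fact, recall that the branches of $2^{<\omega_1}$ are exactly the functions $b\colon\omega_1\to 2$, with basic open sets $[t]=\set{b: t\subseteq b}$ for $t\in 2^{<\omega_1}$. Suppose $b\in\bigcap_{n}V_n$ with each $V_n$ open. For each $n$ choose $t_n\subseteq b$ such that $b\in[t_n]\subseteq V_n$. Let $\delta=\sup_n \mathrm{ht}(t_n)<\omega_1$. Then any branch $b'$ that agrees with $b$ below $\delta$ but differs from $b$ at coordinate $\delta$ still lies in every $[t_n]$. Hence $\bigcap_n V_n\neq\{b\}$, so $b$ is not a $G_\delta$ point.

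For the second fact, suppose $b_n\to b$ with every $b_n\neq b$. Let $\delta_n$ be the least coordinate at which $b_n$ and $b$ differ, and let $\delta=\sup_n\delta_n<\omega_1$. The basic set $[b\restriction(\delta+1)]$ is a neighbourhood of $b$, but it contains none of the $b_n$, since each $b_n$ already differs from $b$ below $\delta+1$. This contradicts convergence. The same argument also shows directly that $X$ is a $P$-space-like object in the relevant sense.

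The only mildly delicate step is making sure \myref{PROP_2omega1NotRay} is applied to arbitrary runs rather than only to strategy-compatible ones. Its hypothesis is simply ``there is a run won by $\ali$'', so the contrapositive covers every run, and no extra argument is needed. I expect no real obstacle beyond checking the countable-cofinality computation $\sup_n<\omega_1$, which holds because $\omega_1$ is regular.
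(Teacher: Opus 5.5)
Your proposal is correct and follows the paper's proof. The paper also derives the corollary directly from \myref{PROP_2omega1NotRay}, using the same two facts about $\mB(2^{<\omega_1})$: it has no $G_\delta$ points, because countably many neighbourhoods of a branch $b$ contain a common basic set $[b\upharpoonright\delta]$, and it has no non-trivial convergent sequences. The ``in particular'' clause likewise comes from \myref{COR_RaySPaceChar} and \myref{PROP_BranchQuasiRay}, and your write-up simply supplies the countable-supremum details the paper leaves implicit, correctly working in the branch-space topology rather than the product topology on $2^{\omega_1}$.
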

\begin{proof}
    This is a direct consequence of \myref{PROP_2omega1NotRay}, as $2^{\omega_1}$ contains no non-trivial convergent sequence and, for every point of $\mB(2^{<\omega_1})$, the intersection of countably many open neighborhoods of that point contains an uncountable basic open neighborhood.
\end{proof}

Recall that the \emph{Cantor-Bendixson derivative} of a space $X$ is its (closed) subspace
\[
    \partial(X) = X\setminus\set{x\in X: x \text{ is isolated}}.
\]
We can then recursively define, for an arbitrary ordinal $\alpha$, the (closed) subspace
\[
    \partial^\alpha(X) = \begin{cases}
        \partial(\partial^\beta(X)), &\text{ if $\alpha = \beta+1$},\\
        \bigcap_{\beta<\alpha}\partial^\beta(X)&\text{ if $\alpha$ is a limit ordinal}.
    \end{cases}
\]

The so-called \emph{Cantor-Bendixson rank of $X$} is then the least ordinal $\alpha$ for which $\partial^\alpha(X) = \partial^{\alpha+1}(X)$ (i.e., for which $\partial^\alpha(X)$ has no isolated point).

Note that if $X$ is scattered, then its Cantor-Bendixson rank $\alpha$ is such that $\alpha\ge 1$ and $\partial^\alpha(X) = \emptyset$.

Before we move on to the main result of this section, consider the following result from the literature:

\begin{prop}[Corollary 5.2 in \cite{approximating}]\label{PROP_EndSpacesHeredUltrapara}
    For every $X\subseteq \Omega(G)$ and every open cover $\mU$ of $X$, there exists an open partition $\mP$ of $X$ that refines $\mU$.
\end{prop}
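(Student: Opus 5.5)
The plan is to reduce to ray spaces and then exploit specialness directly. By Theorem~\ref{THM_PitzRepresentation} we may assume $X\subseteq\mR(T)$ for a special tree $T$, and fix a decomposition $T=\bigcup_{n\in\omega}A_n$ into antichains. Since the sets $[t,F]\cap X$ with $(t,F)\in\PP(T)$ form a base of $X$, we may first refine $\mU$. For each $R\in X$ we choose a pair $(t_R,F_R)\in\PP(T)$ with $R\in[t_R,F_R]$ such that $[t_R,F_R]\cap X$ lies inside some member of $\mU$. The choice is canonical: $t_R$ is the least node of $R$ for which such a finite set of tops exists, and $F_R$ is then taken as in the proof of \myref{LEMMA_BroadestPair}, as far as that proof goes through inside the subspace. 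It therefore suffices to partition $X$ into open sets each contained in some $[t_R,F_R]\cap X$.

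The disjointification will be an $\omega$-stage recursion along the antichains. At stage $n$ we have an open set $O_n\subseteq X$ that is already covered by disjoint open pieces. We look at the nodes $a\in A_n$ of the form $a=t_R$ for some $R\in X\setminus O_n$. For each such $a$ we add the single piece
\[
  P_a=\bigl([a,F_a]\cap X\bigr)\setminus \overline{O_n},
\]
where $(a,F_a)$ is one fixed pair with $t_R=a$. Distinct nodes of $A_n$ are incomparable, so the sets $[a,\emptyset]$ are pairwise disjoint clopen sets. Hence the new pieces are pairwise disjoint, and disjoint from the old ones.

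We must make sure each piece is open and that nothing is lost. For this I intend to replace $\overline{O_n}$ by $O_n$, and to keep the invariant that $O_n$ is clopen in $X$. Each piece then has the form $[a,F]\cap X$ minus a clopen set, which is open, and $O_{n+1}=O_n\cup\bigcup_a P_a$. Closedness of this union is exactly where item (a) of \myref{DEFN_Spreadout} enters: a ray $R'\notin O_{n+1}$ must have a neighbourhood $[s,G]$ missing all the new $[a,F_a]$. Such a neighbourhood is found by taking $s\in R'$ above the unique node of $R'\cap A_n$ (if there is one), and $G$ the finitely many relevant tops of $R'$. The finiteness of $G$ is item (b), and it holds because every $a\in A_n$ with $a$ a top of $R'$ carries its own finite $F_a$. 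There may be infinitely many such tops, though, and then one must simply put $R'$ into $O_{n+1}$ via its own canonical pair.

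Finally, every $R\in X$ gets covered at the stage $n$ with $t_R\in A_n$, unless it was already covered earlier. So $\bigcup_n O_n=X$ and the pieces form the required open partition refining $\mU$.

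The main obstacle is the bookkeeping at limit-type accumulation: rays $R'$ lying above infinitely many chosen nodes $a$ from different stages. The failure of the statement for $\omega_1\subseteq\omega_1+1$, where the tree is not special, shows that countability of the stages is essential here. I would first try to show the following. If $R'$ meets chosen nodes from stages $n_0<n_1<\dots$ cofinally, then some stage's piece already contains $R'$. The reason to expect this is that the chosen node from each stage can be taken minimal on $R'$, and only $\omega$ stages exist, so the canonical $t_{R'}$ is reached at a finite stage.
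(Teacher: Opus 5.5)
There is a genuine gap at the coverage step. It comes from attaching a \emph{single} pair $(a,F_a)$ to each node $a\in A_n$. The piece $P_a$ only contains rays avoiding $F_a$. So a ray $R$ with $t_R=a$ that passes through a member of $F_a$ is not covered at the stage of $a$, and nothing in your scheme covers it later.

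Here is a concrete instance. Let $T$ consist of an $\omega$-chain $r_0<r_1<\cdots$ (a ray $Q$), two tops $c,d$ of $Q$, and an $\omega$-chain above each of $c$ and $d$, giving rays $S_c,S_d$. Then $T$ is countable, hence special, and $\mR(T)=\{Q,S_c,S_d\}$ is discrete. Take $X=\mR(T)$ and $\mU=\{\{Q,S_c\},\{Q,S_d\}\}$.
Every ray has $t_R=r_0$, witnessed by $[r_0,\{d\}]=\{Q,S_c\}$ and $[r_0,\{c\}]=\{Q,S_d\}$. Every admissible $F$ at $r_0$ contains $c$ or $d$, so whichever pair $(r_0,F)$ you fix leaves out $S_c$ or $S_d$. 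Since $r_0$ is the only node of the form $t_R$, that ray is never covered.
The same example shows that \myref{LEMMA_BroadestPair} cannot be run ``inside the subspace'': $Q$ has the two $\succeq$-incomparable minimal pairs $(r_0,\{c\})$ and $(r_0,\{d\})$, whereas that proof needs one fixed $U$ closed in $\mR(T)$.

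To repair this you must split the rays sharing a value $t_R=a$ into several pieces. That means recursing into the cones $[s,\emptyset]$ for $s\in F_a$, then above the tops met there, and so on. Along an increasing sequence $s_1<s_2<\cdots$ of such tops, the resulting pieces converge to the ray through all $s_k$. If that ray lies in $X$, it cannot be placed in any piece disjoint from them. This is exactly the limit bookkeeping your last paragraph defers, and the heuristic given there does not address it: reaching the stage of $t_{R'}$ is useless when $R'$ is excluded by the fixed $F_{t_{R'}}$.

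Your justification that $O_{n+1}$ is closed is also incorrect, although the claim itself is true. Item (a) of \myref{DEFN_Spreadout} concerns all nodes of $A_n$ that branch off $R'$, whose meets with $R'$ can be cofinal, not just the node of $R'\cap A_n$. Putting $R'$ into $O_{n+1}$ ``via its own pair'' would destroy disjointness, since $[t_{R'},F_{R'}]\supseteq[a,\emptyset]$ for every top $a$ of $R'$ outside $F_{R'}$.

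The claim follows instead from the minimality of the $t$-values.
Nodes $a\le t_{R'}$ lie on $R'$, so there is at most one such node in $A_n$. Nodes incomparable with $t_{R'}$ give pieces disjoint from $[t_{R'},\emptyset]$.
If $P_a$ meets $[t_{R'},F_{R'}]\cap X$ for some $a>t_{R'}$, then the ray $R_a$ witnessing $t_{R_a}=a$ must meet $F_{R'}$ strictly above $a$. Otherwise $R_a\in[t_{R'},F_{R'}]\cap X$, which would give $t_{R_a}\le t_{R'}$. Each element of $F_{R'}$ lies above at most one node of the antichain $A_n$, so this happens for at most $|F_{R'}|$ nodes.
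Hence $[t_{R'},F_{R'}]\cap X$ meets only finitely many pieces of stage $n$, each contained in a clopen set $[a,F_a]$ that misses $R'$. So $O_{n+1}$ is clopen in $X$.

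Your scheme therefore does produce a pairwise disjoint family of open sets refining $\mU$; the missing idea is how to make it exhaust $X$. For comparison, the paper gives no proof of this proposition: it is quoted from \cite{approximating}, where it is obtained from the approximation of graphs by normal trees. That is a route quite different from disjointifying along the antichains of a special tree.
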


We now have the necessary tools to show:

\begin{thm}\label{THM_ScatteredIsEnd}
    Let $T$ be a special tree and suppose $X\subseteq \mR(T)$ is scattered. Then $X$ is an end space. 
\end{thm}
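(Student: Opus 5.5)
We will use \myref{COR_EndSpaceChar}: it suffices to build a winning strategy $\sigma$ for $\ali$ in $\TreeGame(X)$ with $\sdom{\sigma}$ special. Since $\mR(T)$ is an end space by \myref{THM_PitzRepresentation}, \myref{PROP_EndSpacesHeredUltrapara} applies to $X$: every open cover of $X$ is refined by an open partition of $X$. The strategy will exploit the Cantor--Bendixson structure of $X$, producing a tree in which each node has rank at most $\omega$ and runs end at the $\omega$th inning or shortly after.

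\textbf{Plan.} For each $x\in X$ let $\rho(x)$ be the unique ordinal with $x\in\partial^{\rho(x)}(X)\setminus\partial^{\rho(x)+1}(X)$. Then $x$ has an open neighbourhood $V$ in $X$ with $V\cap\partial^{\rho(x)}(X)=\{x\}$, so every other point of $V$ has smaller rank. Intersecting with basic sets $[t,F]\cap X$ containing $x$ keeps this property.

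\ali{} proceeds as follows. Given the current clopen $U$ (initially $X$) and an ordinal bound, she picks a countable decreasing local base at the relevant points. To keep the tree special, each move at finite inning $n$ is an open partition refining the cover of $U$ by sets $W$ of the form $[t,F]\cap X$ that additionally satisfy:
\begin{itemize}
\item[(1)] $W$ has a unique point $x_W$ of maximal rank $\rho(x_W)$;
\item[(2)] $W$ has "size at most $2^{-n}$" relative to $x_W$: concretely, $W\subseteq [t,F]$ with $t$ at least as high as the $n$th element in a fixed cofinal $\omega$-sequence in $x_W$ when $\mathrm{cf}$ allows, and $F$ containing the first $n$ tops of $x_W$ under a fixed enumeration.
\end{itemize}
This is possible because $T$ is special, so each ray has countable cofinality and countably many tops. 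The refinement by an open partition comes from \myref{PROP_EndSpacesHeredUltrapara}.

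At inning $\omega$, the chosen sets decrease. Let $I=\bigcap_n U_n$. The key claim is that the ranks $\rho(x_{U_n})$ are non-increasing and eventually constant, being a descending sequence of ordinals, with eventual maximal point $x$. Hence $I=\{x\}\sqcup(I\setminus\{x\})$. \ali{} plays $\mP_\omega$ as an open partition of $I\setminus\{x\}$ into pieces of the form $[s,\emptyset]\cap X$ for tops $s$ of $x$, together with the pieces converging along the ray. Then \eqref{EQ_RayGameContinues2} holds because $\set{[t,F]}$ is a local base at $x$ in $\mR(T)$. Every point of $I\setminus\{x\}$ has strictly smaller rank, so the game restarts on each piece with smaller maximal rank. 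Well-foundedness of ordinals then guarantees every run ends, which gives a winning strategy.

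The tree $\sdom{\sigma}$ has height below $\omega_1$ along every branch. Indeed, each restart strictly lowers the rank and the ranks are ordinals, but there could be uncountably many restarts. To get specialness instead, we index the antichains by pairs (finite inning within a block, position of the block), and argue that the rank drop at each limit inning forces each branch to have only countably many limit stages. A ray with uncountably many limit stages would produce an uncountable decreasing sequence of ranks, which is impossible. We then show $\sdom{\sigma}$ embeds into $\mathbb{Q}$-like levels, making it special.

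\textbf{Main obstacle.} The hard step is exactly this final specialness verification. The cleanest route I would try first is to make the tree order-embed into $\omega^{<\omega}\times$ (countable ordinals). Each branch then has countable height, because the block counts are bounded by decreasing ranks and $\rho$ takes countably many values along a branch (decreasing sequence). A tree with all branches countable and levels chosen so that each node has a canonical countable code is special by the standard coding. The other delicate point is verifying condition \eqref{EQ_RayGameContinues2} at inning $\omega$ using the local base $[t,F]$ of $\mR(T)$ restricted to $X$.
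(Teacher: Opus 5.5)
The gap is at the limit inning, and it begins with a false premise in condition (2). Specialness makes every ray of $T$ countable, hence of countable cofinality. It does not bound the number of tops. An $\omega$-chain with $\aleph_1$ tops, each continued by an $\omega$-chain, is a special tree of height $\omega\cdot 2$ whose ray space is $A_{\aleph_1}$ (cf.\ \myref{EX_ProdNotRay}). If your premise were true, (2) would already force $\bigcap_n U_n=\{x\}$ with $\set{U_n:n\in\omega}$ a local base, every run would end at inning $\omega$, and no restarts would be needed.

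In general, only countably many tops of the stabilized point $x$ ever enter your sets $F$. So $I=\bigcap_n U_n$ contains points above the other tops, and you must check that the limit move is legal: $\mP_\omega$ must consist of sets \emph{open in $X$} whose union is exactly $I\setminus\{x\}$. Your $U_n$ are arbitrary pieces of refinements supplied by \myref{PROP_EndSpacesHeredUltrapara}, already shrunk along $x$, so $I$ is merely a countable intersection of clopen sets. Nothing prevents the following: a point $y\in I$ above such a top is a limit of isolated points $z_k$, and the partition at inning $k$ splits $\{z_k\}$ off the piece containing $x$. Then $I$ is not a neighbourhood of $y$, no legal move $(\mP_\omega,x)$ exists, and the sets $[s,\emptyset]\cap X$ you want to play need not lie inside $I$. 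The ``pieces converging along the ray'' have likewise already been cut away.

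The missing idea is to control the piece containing the current maximal-rank point $x$. Make it exactly $[t_m,\emptyset]\cap U$, for a fixed cofinal sequence $(t_m)$ in $x$, and refine only its complement. Once the Cantor--Bendixson ranks of \bob's sets stabilize, \bob{} is forced to take these sets. (The same stabilization argument also justifies your implicit assumption that \bob's piece $U_n\subseteq W$ contains $x_W$.) Then $I=\set{y\in U_N: x\subseteq y}$, and $I\setminus\{x\}=\bigcup_{s\in\Rtop(x)}[s,\emptyset]\cap U_N$ is a disjoint union of clopen sets. The sets $U_n\setminus\bigcup\mF$ are the sets $[t_m,F]\cap U_N$ with $F\subseteq\Rtop(x)$ finite; these form a local base at $x$, which gives \eqref{EQ_RayGameContinues2}.

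The paper gets the same effect differently. Having chosen $U$ with $U\cap\partial^\gamma(X)=\{x\}$, it plays $\{U\}$ for $\omega$ innings, so $I=U$ is clopen. Only then does it play the annuli $[t_n,\{t_{n+1}\}]\cap U$ together with all $[s,\emptyset]\cap U$, $s\in\Rtop(x)$, of which there may be uncountably many. The rest is a transfinite induction on the Cantor--Bendixson rank, gluing the strategies that \myref{COR_EndSpaceChar} provides for lower-rank clopen pieces.

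Your ``main obstacle'' is a misdiagnosis, and the argument you sketch for it would not work. The Cantor--Bendixson rank drops strictly at every limit inning. So along any run there are only \emph{finitely} many limit innings, and ``uncountably many restarts'' along a branch contradicts that strict drop. Every run therefore ends at some inning $\omega\cdot k$ with $0<k<\omega$. Hence $\sdom{\sigma}$ has countable height, so it is the union of its countably many levels, each an antichain; that is, it is special, and nothing more is needed. The principle you reach for instead, that a tree all of whose branches are countable is special ``by the standard coding'', is false. Suslin trees, and in ZFC Kurepa's tree $\sigma\mathbb{Q}$, have no uncountable branches and are not special. With these repairs your one-shot strategy is a legitimate alternative to the paper's induction. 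It even shows that the representing special tree can be taken of height at most $\omega\cdot\omega$, whereas the paper's gluing only inherits specialness from the pieces.
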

\begin{proof}
    We proceed by induction on the Cantor-Bendixson rank of $X$.
    First, if the rank of $X$ is $1$, then $X$ is discrete and the result is immediate.
    Assume the result holds whenever the rank of $X$ is $\beta$, for all $\beta < \alpha$.
    \begin{description}
        \item[If $\alpha = \gamma+1$] Let $X_\gamma = X\setminus \partial^\gamma(X)$. 

        Since
        \[
            \partial^{\gamma+1}(X) = \partial(\partial^\gamma(X)) = \emptyset,
        \]
        $\partial^{\gamma}(X)$ is discrete. 
        So for each $x\in \partial^{\gamma}(X)$ there is an open set $U_x$ such that $U_x \cap \partial^{\gamma}(X) = \{x\}$.
        Moreover, as $\partial^{\gamma}(X)$ is closed, $\mU = \set{U_x : x\in \partial^{\gamma}(X)} \cup \{X_\gamma\}$ is an open cover for $X$.
        By \myref{THM_PitzRepresentation} and \myref{PROP_EndSpacesHeredUltrapara}, there exists an open partition $\mP$ refining $\mU$.

        We define a strategy $\sigma$ for $\ali$ in $\TreeGame(X)$ as follows: Let $\sigma\seq{\,} = \mP$. 
        
        If $\bob$ responds with $U\in \mP$ such that $U \cap \partial^{\gamma}(X) = \emptyset$, then $U \subseteq X_\gamma$. 
        By the induction hypothesis and \myref{COR_EndSpaceChar}, there exists a winning strategy $\sigma_U$ for $\ali$ in $\TreeGame(U)$ such that $\sdom{\sigma_U}$ is special. 
        We then let $\sigma$ follow $\sigma_U$ from the second inning onwards. 

        If $\bob$ chooses $U\in \mP$ such that $U \cap \partial^{\gamma}(X) \neq \emptyset$, then by the definition of $\mP$, there is a unique $x \in \partial^{\gamma}(X)$ such that $U \cap \partial^{\gamma}(X) = \{x\}$.
        In this case, let $\sigma$ respond with $\{U\}$ in every inning $n < \omega$, so that, at inning $\omega$, the intersection of $\bob$'s choices is $U$ itself. 

        Since $x \in \mR(T)$ is a countable ray, let us fix a strictly increasing sequence $\set{t_n:n\in\omega}$ in $T$ which is cofinal in $x$ and such that $t_0$ is the root of $T$. 
        Now consider
        \[
            \mP_x = \left(\set{[t_{n},\{t_{n+1}\}]\cap U: n\in\omega}\cup\{[t,\emptyset]\cap U: t\in \Rtop(x)\}\right)\setminus\{\emptyset\}.
        \]
        Then $U = \left(\bigcup\mP_x\right)\sqcup\{x\}$.
        Furthermore, if $t\in x$ and $F\subseteq\Rtop(x)$ is finite, then we can find an $N\in\omega$ such that $t_n\ge t$ for every $n\ge N$.
        In this case,
        \[
            x\in U\setminus \left(\left(\bigcup_{n\le N}[t_{n},\{t_{n+1}\}]\right)\cup\left(\bigcup_{t\in F}[t,\emptyset]\right)\right)\subseteq [t,F],
        \]
        so 
        \[
            \set{U\setminus \bigcup\mF: \text{finite }\mF\subseteq \mP_x }
        \]
        is a local basis at $x$ and we may let $\sigma\seq{U : n < \omega} = (\mP_x, x)$.
        
        For every $V \in \mP_x$, we have $V \subset X_\gamma$ (since $V\subset U$ and $U\cap\partial^\gamma(X) = \{x\}$). 
        Then the induction hypothesis and \myref{COR_EndSpaceChar} tell us that there is a winning strategy $\sigma_V$ for $\ali$ in $\TreeGame(V)$ with special $\sdom{\sigma_V}$. 
        Let $\sigma$ follow $\sigma_V$ after $\bob$'s response $V$ at this $\omega$th inning.
        Then it is clear that the defined $\sigma$ is a winning strategy for $\ali$ in $\TreeGame(X)$ with special $\sdom{\sigma}$.  We thus reach the desired conclusion with \myref{COR_EndSpaceChar}.
        
        \item[If $\alpha$ is a limit ordinal] Then $ \bigcap_{\beta < \alpha} \partial^{\beta}(X)=\emptyset$. 
        For each $\gamma < \alpha$, let $X_\gamma = X\setminus  \partial^{\gamma}(X)$. 
        Since $\mU = \set{X_\gamma : \gamma < \alpha}$ is an open cover for $X$, we can use \myref{THM_PitzRepresentation} and \myref{PROP_EndSpacesHeredUltrapara} to once again find an open partition $\mP$ refining $\mU$.

        Define $\sigma$ with $\sigma\seq{\,} = \mP$. 
        If $\bob$ responds with $U \in \mP$, then $U \subseteq X_\gamma = X\setminus \partial^\gamma(X)$ for some $\gamma < \alpha$.
        By the induction hypothesis, there is a winning strategy $\sigma_U$ for $\ali$ in $\TreeGame(U)$ with special $\sdom{\sigma_U}$. Letting $\sigma$ follow $\sigma_U$ from this point onward defines a winning strategy for $\ali$ in $\TreeGame(X)$ such that $\sdom{\sigma}$ is special, so the conclusion follows from \myref{COR_EndSpaceChar}.
    \end{description}
\end{proof}

\begin{cor}\label{THM_CountSpecRaySubSpaceChar}
    Let $G$ be a graph and suppose $X \subseteq \Omega(G)$ satisfies $|X| < 2^{\aleph_0}$. Then the following are equivalent:
    \begin{enumerate}[label=(\alph*)]
        \item $X$ is scattered.
        \item $X$ is an end space.
    \end{enumerate}
\end{cor}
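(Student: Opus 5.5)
For (a)$\Rightarrow$(b), combine \myref{THM_PitzRepresentation} with \myref{THM_ScatteredIsEnd}. By \myref{THM_PitzRepresentation} there is a special tree $T$ and a homeomorphism $h\colon\Omega(G)\to\mR(T)$. Then $h[X]\subseteq\mR(T)$ is a scattered subspace homeomorphic to $X$. By \myref{THM_ScatteredIsEnd}, $h[X]$ is an end space, and hence so is $X$. The cardinality hypothesis is not needed in this direction.

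For (b)$\Rightarrow$(a), assume $X$ is an end space. By \myref{THM_PitzRepresentation}, $X$ is homeomorphic to a ray space, so \myref{THM_ScatteredCharacterization} applies. It then suffices to show that $X$ contains no topological copy of the Cantor space $2^\omega$. This follows at once from $|X|<2^{\aleph_0}=|2^\omega|$: no subset of $X$ has cardinality $2^{\aleph_0}$. So $X$ is scattered.

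Neither direction presents a real obstacle. The only point to check is that \myref{THM_ScatteredCharacterization} is applied to $X$ itself, which is legitimate because $X$ is assumed to be an end space and is therefore homeomorphic to a ray space. Without that assumption, scatteredness could fail even for small $X$; \myref{EX_CountableDenseCantor} gives a countable non-scattered subspace of a ray space. The hypothesis $|X|<2^{\aleph_0}$ is used only in this direction.
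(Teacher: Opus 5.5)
Your proof is correct and is essentially the paper's own argument. You get (a)$\Rightarrow$(b) from \myref{THM_PitzRepresentation} together with \myref{THM_ScatteredIsEnd}, and (b)$\Rightarrow$(a) from \myref{THM_PitzRepresentation} together with \myref{THM_ScatteredCharacterization}, where the bound $|X|<2^{\aleph_0}$ rules out a copy of $2^\omega$.
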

\begin{proof}
    Theorems \ref{THM_PitzRepresentation} and \ref{THM_ScatteredIsEnd} show that (a) implies (b). The reverse implication follows from Theorems \ref{THM_PitzRepresentation} and \ref{THM_ScatteredCharacterization}.
\end{proof}

It was shown in \cite{endspacesandtreedecompositions} that if $G$ is a graph and $X \subseteq \Omega(G)$ is closed, then there exists an induced subgraph $H \subseteq G$ such that $X \simeq \Omega(H)$. In light of \myref{THM_ScatteredIsEnd}, it is natural to ask:

\begin{question}
    Suppose $G$ is a graph and $X \subseteq \Omega(G)$ is scattered. Is there an induced subgraph $H \subseteq G$ such that $X \simeq \Omega(H)$? If not, is there an $H \subset G$ (not necessarily induced) satisfying this property?
\end{question}
\section{A brief study of products of ray and end spaces}\label{SEC_RayProducts}
An example of a pair of end spaces whose product is not itself an end space was shown in \cite{PauloGustavoDavide} with the help of a game-theoretic characterization.
We shall now generalize that example with the help of our characterizations in Corollaries \ref{COR_RaySPaceChar} and \ref{COR_QuasiRaySPaceChar} and, with these generalizations, we conjecture a possible characterization for the classes of ray and end spaces which are stable under finite products (see \myref{QUESTION_FirsCountRayProd0}). 

Firstly, let $A_{\kappa}$ denote the 1-point compactification of the discrete space of cardinality $\kappa$. Then:

\begin{thm}\label{THM_ConvSeq&UncountTopsNotProd}
    Suppose that $X$ contains a non-trivial convergent sequence and that $Y$ contains a copy of $A_{\aleph_1}$. Then $\bob$ has a winning strategy in $\PTreeGame(X\times Y)$.
\end{thm}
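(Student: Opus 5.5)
The plan is to define an explicit strategy for $\bob$ that always keeps in his chosen set the ``corner'' of a product $K = S\times A$. Here $S=\set{x_n:n\le\omega}\subseteq X$ is a non-trivial sequence converging to $x_\omega$, and $A=\set{y_\xi:\xi<\omega_1}\cup\{\infty\}\subseteq Y$ is a copy of $A_{\aleph_1}$ with $\infty$ its non-isolated point. Write $p=(x_\omega,\infty)$. Every set $\ali$ plays in $\PTreeGame(X\times Y)$ is clopen, and $K$ is compact. So whenever $\bob$'s current set contains $p$, its trace on $K$ is a clopen subset of $K$ containing $p$. The complement of that trace is then a compact subset of $K\setminus\{p\}$, hence contained in finitely many columns $\{x_n\}\times A$ together with finitely many rows $S\times\{y_\xi\}$. $\bob$'s strategy is: at every successor inning, and at every limit inning where $\ali$ did not choose $x_\alpha=p$, pick the unique member of $\ali$'s partition containing $p$. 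Since $\ali$ can only win by making the intersection of $\bob$'s moves equal to $\{x_\alpha\}$ for the final point, and $p$ stays in all of $\bob$'s sets, she must at some limit inning $\alpha$ declare $x_\alpha=p$. At that inning
\[
    I:=\bigcap_{\beta<\alpha}U_\beta = \Bigl(\bigcup\mP_\alpha\Bigr)\sqcup\{p\}
\]
with $\mP_\alpha$ a pairwise disjoint family of clopen sets. The whole proof then reduces to showing that this decomposition cannot exist.

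The contradiction I intend is a pigeonhole argument. First, I will show that $I$ still contains $\{x_\omega\}\times\set{y_\xi:\xi\in\Xi_0}$ for an uncountable $\Xi_0$. At successor stages only finitely many rows are removed, so if $\alpha$ is countable, only countably many rows are lost in total. For every $\xi\in\Xi_0$, let $W_\xi\in\mP_\alpha$ be the piece containing $(x_\omega,y_\xi)$. Because $W_\xi$ is open and $x_n\to x_\omega$, there is $n_\xi$ with $(x_n,y_\xi)\in W_\xi$ for all $n\ge n_\xi$ such that $(x_n,y_\xi)\in I$. Each $W_\xi$ is clopen and misses $p$, so it meets the column $\{x_\omega\}\times A$ in a finite set. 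Therefore the $W_\xi$ are distinct for uncountably many $\xi$. Passing to an uncountable $\Xi\subseteq\Xi_0$ with $n_\xi=N$ constant, the points $(x_N,y_\xi)$, $\xi\in\Xi$, converge to $(x_N,\infty)$. If $(x_N,\infty)\in I$, the piece $W\in\mP_\alpha$ containing it is a neighbourhood, so it contains $(x_N,y_\xi)$ for all but finitely many $\xi\in\Xi$. Then $W=W_\xi$ for infinitely many distinct pieces, contradicting disjointness.

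The main obstacle is guaranteeing that enough of $K$ survives in $I$ for this argument to run. Specifically, I need an $N$ such that $(x_N,\infty)\in I$ and $(x_N,y_\xi)\in I$ for uncountably many $\xi$ with $n_\xi\le N$. Over $\omega$ innings, the finitely many columns removed at each step could exhaust every $x_n$. I would first handle this by strengthening $\bob$'s strategy with bookkeeping. At successor innings, among pieces containing $p$ he has no choice, but at limit innings where $x_\alpha\ne p$ he may prefer pieces keeping more of $K$. If that bookkeeping fails, I would instead retarget: replace $S$ by the surviving sequence, or use that $I\cap K$ is compact and contains $p$ together with an uncountable part of its column, and run the argument inside $I\cap K$ with any sequence in $I\cap(S\times\{\infty\})$. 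If $I$ meets $S\times\{\infty\}$ only in $p$, I would apply the pigeonhole directly to the pieces covering $I\cap K\setminus\{p\}$ via compactness.

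I also need to control the case where $\alpha$ is uncountable, so that the ``only countably many rows are lost'' count breaks down. I would handle this by having $\bob$ deviate and target a point $(x_\omega,y_\xi)$ from a fixed countable bookkeeping list, in the spirit of \myref{PROP_CountableWithNoIsolated}. Alternatively, I would use \myref{LEMMA_WRunOrdinal}-type reasoning to show that the point $p$ must be isolated within some countable stage.
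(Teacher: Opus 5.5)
Your strategy for \bob{} (always take the piece containing $p$) and your pigeonhole on the $n_\xi$ are the same as in the paper's proof. However, the proposal has a gap. The two steps you single out as ``the main obstacle'' are never settled, and the fallbacks you sketch would not settle them.

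Letting \bob{} deviate towards a countable bookkeeping list destroys the invariant $p\in U_\beta$ on which everything else rests; Proposition~\ref{PROP_CountableWithNoIsolated} is about countable crowded spaces anyway. Lemma~\ref{LEMMA_WRunOrdinal} concerns winning runs of $\TreeGame$ and uses the local-base condition \eqref{EQ_RayGameContinues2}, which $\PTreeGame$ does not impose. In fact, runs compatible with your strategy can have uncountable length. For instance, with $X=\omega+1$ and $Y=A_{\aleph_1}\oplus\omega_2$, \ali{} can stall on the clopen copy $\{0\}\times\omega_2$ as in Example~\ref{EX_OrdinalQuasi}, never touching $p$. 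So ``only countably many rows are lost if $\alpha$ is countable'' cannot be rescued by bounding $\alpha$.

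Two short observations are missing.

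First, for every $\beta<\alpha$ the set $U_\beta$ is an open neighbourhood of $p$, so $F_\beta=(\{x_\omega\}\times A)\setminus U_\beta$ is finite. Since the $U_\beta$ decrease, the $F_\beta$ form a $\subseteq$-chain of finite sets. Such a chain has at most one member of each cardinality, so $\bigcup_{\beta<\alpha}F_\beta$ is countable for every $\alpha$. Hence uncountably many $(x_\omega,y_\xi)$ lie in $I$, with no change to the strategy.

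Second, each $W_\xi\in\mP_\alpha$ is clopen in $X\times Y$, not merely relative to $I$. So $(x_n,y_\xi)\in W_\xi\subseteq I$ for all $n\ge n_\xi$. Your qualifier ``such that $(x_n,y_\xi)\in I$'' is therefore vacuous, and the columns cannot be exhausted once the decomposition exists. Moreover, $I$ is an intersection of clopen sets and hence closed. Every neighbourhood of $(x_N,\infty)$ contains $(x_N,y_\xi)$ for some $\xi\in\Xi$, so $(x_N,\infty)\in I$ immediately, with no bookkeeping or retargeting.

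With these two facts your final step completes the proof: the piece containing $(x_N,\infty)$ equals $W_\xi$ for cofinitely many $\xi\in\Xi$, so this closed set missing $p$ meets the column $\{x_\omega\}\times A$ infinitely often, which is impossible. The paper phrases the same contradiction as $p\in\overline{U}$ for a clopen $U\not\ni p$.
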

\begin{proof}
    Let us first fix an $x_\infty\in X$ which is the limit of a non-trivial convergent sequence $\seq{x_n:n\in\omega}$ in $X$, as well as an uncountable $D\subset Y$ with a $y_\infty\in Y$ for which every open set $V\subset Y$ with $y_\infty\in V$ is such that $D\setminus V$ is finite.  
    
    We define $\bob$'s winning strategy $\sigma$ as follows: from the beginning of the game, let $\sigma$ choose the open sets in $\ali$'s partitions containing $(x_\infty,y_\infty)$, up to the (limit) inning $\alpha_\infty$, at which no further open set in $\ali$'s partitions contains it (either because the game ended at this inning $\alpha_\infty$, or because $\ali$ chose $p_{\alpha_\infty} = (x_\infty,y_\infty)$).  Afterwards, if the game is not finished at the inning $\alpha_\infty$, we may let $\sigma$ pick arbitrary clopen sets.
    
    In order to show that this $\sigma$ is a winning strategy, suppose 
    \[
        R = \seq{\mP_0, W_0, \dotsc, (\mP_\gamma, p_\gamma), W_\gamma,  \dotsc},
    \]
    is a run compatible with $\sigma$ (so that its length $\alpha$ is greater than or equal to $\alpha_\infty$). 
    Let $W = \bigcap_{\beta<\alpha_\infty}W_\beta$.
    
    Note that, for each $\beta<\alpha_\infty$, $(\{x_\infty\}\times D)\setminus W_\beta = F_\beta$ is finite. 
    Then $\seq{F_\beta:\beta<\alpha_\infty}$ is an increasing sequence of finite subsets of $\{x_\infty\}\times D$, so that $\bigcup_{\beta<\alpha_\infty}F_\beta$ is, at most, countable. 
    Since $D$ is uncountable, it follows that 
    \begin{equation}\label{EQ_ConvSeq&UncountTopsNotProd}
        (\{x_\infty\}\times D)\cap W = (\{x_\infty\}\times D)\cap \left(\bigcap_{\beta<\alpha_\infty}W_\beta\right) = (\{x_\infty\}\times D) \setminus \left(\bigcup_{\beta<\alpha_\infty}F_\beta\right)
    \end{equation}
    is uncountable. In particular, it follows that $\bob$ wins in $R$ if it ends at $\alpha=\alpha_\infty$.

    We claim that, indeed, $R$ must end at $\alpha=\alpha_\infty$.
    Striving for a contradiction, suppose not -- so that $\ali$ will be able to choose a non-empty open partition $\mP_{\alpha_\infty}$  with $p_{\alpha_\infty} = (x_\infty,y_\infty)$ (by definition of $\sigma$ and $\alpha_\infty$) at this inning.

    In this case, for each $(x_\infty, d)\in (\{x_\infty\}\times D)\cap W$, there must be a (unique) $U_d\in \mP_{\alpha_\infty}$ such that $(x_\infty, d)\in U_d$ (since $(x_\infty, d)\neq p_{\alpha_\infty}$). 
    Then let $n_d\in \omega$ be the least natural number for which $(x_k,d)\in U_d$ for every $k\ge n_d$ (using the fact that $\seq{x_n:n\in\omega}$ converges to $x_\infty$). 
    Thus, note that there must be an $N\in\omega$ such that $n_d = N$ for uncountably many $d\in D$, because $(\{x_\infty\}\times D)\cap W$ is uncountable.

    We claim that $(x_N, y_\infty)\in W$. 
    Indeed, suppose not: then, since $W$ is closed (being the intersection of clopen sets), there must be some open $V\subset Y$ containing $y_\infty$ such that $\{x_N\}\times V$ is in the complement of $W$. However, as $D\setminus V$ is finite, it would follow that $(\{x_N\}\times D) \cap W$ is finite, contradicting the definition of $N$.

    We can now use the fact that $(x_N, y_\infty)\in W$ and $p_{\alpha_\infty}\neq (x_N, y_\infty)$ to find some $V\subset Y$ containing $y_\infty$ such that $\{x_N\}\times V\subseteq U$ for some $U\in \mP_{\alpha_\infty}$. But, since $D\setminus V$ is finite, there must then be uncountably many $d\in D$ such that
    \[(x_N, d)\in (\{x_N\}\times V) \cap U_d\subseteq U\cap U_d.\]
    Thus, for uncountably many $d\in D$, $U_d = U$ (as $\mP_{\alpha_\infty}$ is a partition). 
    However, this means that $p_{\alpha_\infty} = (x_\infty, y_\infty)\in \overline{U}$, contradicting the fact that every element of $\mP_{\alpha_\infty}$ is clopen (since $p_{\alpha_\infty}\notin \bigcup\mP_{\alpha_\infty}$). 

    Hence, every run compatible with $\sigma$ must end at the $\alpha_\infty$-th inning and it is won by $\bob$. 
    So we have, indeed, defined a winning strategy for $\bob$.
\end{proof}

Now consider the following simple results:

\begin{prop}\label{PROP_DiscreteProduct}
    If $T'$ is a tree such that $\mR(T')$ is discrete, then $\mR(T)\times\mR(T')$ is a ray space for every tree $T$. 
\end{prop}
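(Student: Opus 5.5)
Since $\mR(T')$ is discrete, the product $\mR(T)\times\mR(T')$ is homeomorphic to a topological sum of $|\mR(T')|$ copies of $\mR(T)$, namely $\bigsqcup_{R'\in\mR(T')}\mR(T)\times\{R'\}$. The plan is therefore to show that a disjoint sum of copies of a ray space is again a ray space. There are two natural routes, and I would take the game-theoretic one, since it fits the machinery just developed.

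\textbf{Step 1.} Apply \myref{LEMMA_TreeStratIso} to fix a winning strategy $\sigma$ for $\ali$ in $\TreeGame(\mR(T))$. If $\mR(T)$ is empty, the product is empty and the claim is trivial, so assume it is non-empty. Likewise, if $\mR(T')=\emptyset$ the product is empty; otherwise set $X=\mR(T)\times\mR(T')$.

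\textbf{Step 2.} Define a strategy $\tau$ for $\ali$ in $\TreeGame(X)$ as follows.
\begin{itemize}
    \item In the first inning, $\tau$ plays the open partition $\set{\mR(T)\times\{R'\}:R'\in\mR(T')}$. This is legitimate because each $\{R'\}$ is open in the discrete space $\mR(T')$.
    \item After $\bob$ picks $\mR(T)\times\{R'\}$, $\tau$ copies $\sigma$ in that slice via the homeomorphism $R\mapsto(R,R')$. Partitions are translated to partitions of the corresponding sets, and limit-stage pairs $(\mP,x)$ become $(\mP\times\{R'\},(x,R'))$.
\end{itemize}

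\textbf{Step 3.} Check that $\tau$ is winning via \myref{LEMMA_WStratEndOfRun}. Every run compatible with $\tau$ is, after its first move, a translated run compatible with $\sigma$ shifted by one successor inning, so $\tau$ always has a legal response. At limit innings, the intersection $\bigcap_{\beta<\alpha}U_\beta$ equals the translated intersection, because the first move $\mR(T)\times\{R'\}$ contains all later ones. Conditions \ref{EQ_RayGameContinues1} and \ref{EQ_RayGameContinues2} transfer for the same reason. In particular, sets that are open in the slice are open in $X$, since the slice is open, and local bases at $(x,R')$ in the slice are local bases in $X$. The only subtle point is the index shift: limit ordinals $\alpha$ of $\tau$'s run correspond to limit ordinals of $\sigma$'s run via $1+\alpha'=\alpha$, and $1+\alpha'=\alpha'$ for infinite $\alpha'$.

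\textbf{Step 4.} Conclude by \myref{COR_RaySPaceChar}.

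Alternatively, one could argue directly: build the tree consisting of a new root $r^*$ with immediate successors being copies of $T$ indexed by $\mR(T')$. Its rays are exactly the rays of the copies, each prefixed by $r^*$ (the ray $\{r^*\}$ is excluded because $r^*$ has successors, though one must check whether $\{r^*\}$ itself counts as a ray — it does not, since it has a maximum). The basic sets $[t,F]$ then live inside single copies, which gives the homeomorphism.

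The main (mild) obstacle is the bookkeeping at limit stages: making sure the translated limit conditions, especially the local-basis condition \ref{EQ_RayGameContinues2}, hold in $X$ rather than only in the slice. This reduces to the slice being clopen in $X$.
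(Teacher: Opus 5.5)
Your argument is correct, but your main route differs from the paper's. The paper argues directly. It takes $|\mR(T')|$-many disjoint copies of $T$, identifies all their roots to obtain a tree $\tilde T$, and observes that $\mR(T)\times\mR(T')\simeq\mR(\tilde T)$. This is essentially your ``alternative'' argument: you put a new root below the copies instead of gluing their roots, which changes nothing, since both trees have ray space homeomorphic to the topological sum of $|\mR(T')|$ copies of $\mR(T)$.

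Your primary route instead builds a winning strategy $\tau$ for \ali{} in $\TreeGame(X)$. It first plays the partition into slices, then copies $\sigma$ under the shift $\beta\mapsto 1+\beta$, and you conclude with \myref{COR_RaySPaceChar}. Your bookkeeping is sound. Limit innings match because $1+\alpha=\alpha$ for limit $\alpha$, and the intersection of \bob{}'s moves at a limit is the translated one. The local-basis condition \eqref{EQ_RayGameContinues2} transfers because the slice is open. The extra sets $U_0\setminus\bigcup\mF$ are open and contain $U_1\setminus\bigcup\mF$, so they do no harm.

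If you unwind your strategy, $\sdom{\tau}$ is exactly your new-root tree, so the two routes are the same construction. The game version hands the verification of the homeomorphism to \myref{LEMMA_StratHomeo}, at the cost of the limit-stage transfer. The direct version is shorter and does not depend on the game machinery. Both extend verbatim to arbitrary topological sums of ray spaces.

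One small point: \myref{LEMMA_TreeStratIso} is stated for pruned trees, while the proposition allows any $T$. Either first replace $T$ by the subtree $\bigcup\mR(T)$, which has the same ray space, or obtain $\sigma$ from \myref{COR_RaySPaceChar} applied to the non-empty ray space $\mR(T)$.
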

\begin{proof}
    Suppose $|\mR(T')|=\kappa$. Consider $\tilde{T}$ as the tree obtained by taking the disjoint union of $\kappa$-many copies of $T$ and then identifying all of their roots as the root of $\tilde{T}$ (see \myref{FIG_DiscreteProduct}).
    Then it follows that $\mR(T)\times\mR(T') \simeq \mR(\tilde{T})$, so $\mR(T)\times\mR(T')$ is a ray space.
\begin{figure}
    \centering

\tikzset{every picture/.style={line width=0.75pt}} 

\begin{tikzpicture}[x=0.75pt,y=0.75pt,yscale=-1,xscale=1]

\draw  [dash pattern={on 0.84pt off 2.51pt}] (330.44,320.68) -- (68.73,65.55) -- (592.14,65.55) -- cycle ;
\draw    (209,190) -- (330.44,320.68) ;
\draw [shift={(209,190)}, rotate = 47.1] [color={rgb, 255:red, 0; green, 0; blue, 0 }  ][fill={rgb, 255:red, 0; green, 0; blue, 0 }  ][line width=0.75]      (0, 0) circle [x radius= 3.35, y radius= 3.35]   ;
\draw    (299.33,190) -- (330.44,320.68) ;
\draw [shift={(299.33,190)}, rotate = 76.61] [color={rgb, 255:red, 0; green, 0; blue, 0 }  ][fill={rgb, 255:red, 0; green, 0; blue, 0 }  ][line width=0.75]      (0, 0) circle [x radius= 3.35, y radius= 3.35]   ;
\draw  [dash pattern={on 0.84pt off 2.51pt}] (209,190) -- (174,87.33) -- (244,87.33) -- cycle ;
\draw    (389.5,190.5) -- (330.44,320.68) ;
\draw [shift={(389.5,190.5)}, rotate = 114.4] [color={rgb, 255:red, 0; green, 0; blue, 0 }  ][fill={rgb, 255:red, 0; green, 0; blue, 0 }  ][line width=0.75]      (0, 0) circle [x radius= 3.35, y radius= 3.35]   ;
\draw  [dash pattern={on 0.84pt off 2.51pt}] (299.33,190) -- (264.33,87.33) -- (334.33,87.33) -- cycle ;
\draw  [dash pattern={on 0.84pt off 2.51pt}] (389.5,190.5) -- (354.5,87.83) -- (424.5,87.83) -- cycle ;
\draw    (167.52,94.79) .. controls (159.32,21.87) and (325.93,87.06) .. (332,42) ;
\draw    (332,42) .. controls (339.46,89.43) and (512.82,27.43) .. (506,98) ;

\draw (440,119.4) node [anchor=north west][inner sep=0.75pt]  [font=\Large]  {$\cdots $};
\draw (222.5,88.4) node [anchor=north west][inner sep=0.75pt]    {$T$};
\draw (313.5,89.4) node [anchor=north west][inner sep=0.75pt]    {$T$};
\draw (403,88.4) node [anchor=north west][inner sep=0.75pt]    {$T$};
\draw (546,70.9) node [anchor=north west][inner sep=0.75pt]    {$\tilde{T}$};
\draw (256,16.4) node [anchor=north west][inner sep=0.75pt]    {$| R( T') |\ -\ \text{many times}$};

\end{tikzpicture}
    \caption{The tree $\tilde{T}$ whose ray space is homeomorphic to $\mR(T)\times\mR(T')$.}
    \label{FIG_DiscreteProduct}
\end{figure}
\end{proof}

\begin{prop}\label{PROP_DiscreteConvSequenceChara}
    For a tree $T$, $\mR(T)$ is discrete if and only if $\mR(T)$ contains no non-trivial convergent sequence.
\end{prop}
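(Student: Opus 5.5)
The forward direction is immediate: if $\mR(T)$ is discrete, every convergent sequence is eventually constant, so it contains no non-trivial convergent sequence. The plan for the converse is to argue by contraposition. Assuming $\mR(T)$ is not discrete, we will fix a non-isolated ray $R\in\mR(T)$ and build a sequence of distinct rays converging to $R$.

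Recall that the sets $[t,F]$ with $t\in R$ and finite $F\subseteq\Rtop(R)$ form a local basis at $R$. We split into two cases according to the cofinality of $R$ (the order type of $R$ is a limit ordinal, since $R$ has no maximum).

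\emph{Case 1: $R$ has countable cofinality.} Fix a strictly increasing cofinal sequence $(t_n:n\in\omega)$ in $R$. Since $R$ is not isolated, for each $n$ the set $[t_n,F]$ is never $\{R\}$. The difficulty is that $\Rtop(R)$ may be infinite, so we choose the rays as follows.
\begin{itemize}
    \item If $\Rtop(R)$ is infinite, choose distinct tops $s_n\in\Rtop(R)$ and rays $R_n\ni s_n$, which exist because the successor/tops structure gives rays through any node; if necessary we first restrict attention to a pruned tree, or take $R_n=\lceil s_n\rceil$ minus nothing, i.e.\ a ray containing $s_n$, which exists whenever $s_n$ is not maximal. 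Any $[t,F]$ contains all $R_n$ with $s_n\notin F$, so $R_n\to R$.
    \item If $\Rtop(R)=F_0$ is finite, then $[t_n,F_0]\setminus\{R\}\neq\emptyset$ by non-isolation. Pick $R_n$ in this set; then $R_n$ leaves $R$ at some node not in $R$, and $R_n\not\supseteq R$ beyond a top. Passing to a subsequence makes the $R_n$ distinct, and every basic $[t,F]$ with $F\subseteq F_0$ contains $R_n$ for $n$ large, since $t\le t_n$.
\end{itemize}

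\emph{Case 2: $R$ has uncountable cofinality.} Here convergent sequences need not exist directly; for instance, $R$ could be the ray $\omega_1$ with a top. The plan is to show that this case does not yield a non-isolated point without also producing a convergent sequence elsewhere. Since $R$ is non-isolated, some $[t,F]$ with $t\in R$ always contains another ray, and such a ray branches off $R$ or passes through a top. Choose the branching nodes so that they yield a ray $R'$ of countable cofinality that is non-isolated, for example a limit of an $\omega$-sequence of branch points inside $R$, or a ray through infinitely many tops. Then reduce to Case 1. This reduction is the main obstacle. If it fails, I would instead try to show directly that the paper's notion of rays ensures that accumulation at $R$ comes from tops (infinitely many tops give a sequence, as in Case 1) or from branchings cofinal in $R$. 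In the latter case I would pick an $\omega$-sequence of branchings below some $t_\omega\in R$ of countable cofinality and argue that the ray $\{s\in R: s<t_\omega\}$ is non-isolated with a top $t_\omega$, which returns us to Case 1.
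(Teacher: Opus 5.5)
\textbf{Assessment.} The forward direction is correct, and Case 1 is correct in substance. In the second bullet, each $R_n$ avoids all tops of $R$, so $R_n\not\supseteq R$; hence each ray occurs only finitely often and $R_n\to R$.

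\textbf{The gap: Case 2.} You give no argument there, only tentative strategies, one of which you flag yourself as ``the main obstacle''. This case cannot be avoided. If $\Rtop(R)$ is finite and $R$ has uncountable cofinality, then no non-trivial sequence converges to $R$ at all. Indeed, each term $R_n\neq R$ lying in $[t,\Rtop(R)]$ misses some element of $R$, and countably many such elements are bounded in $R$. So the convergent sequence must converge to a \emph{different} ray, and your proposal never constructs one. Your last suggestion is close, but as stated it does not work: branch points need not be cofinal below a prescribed $t_\omega$. The limit ray has to be defined \emph{from} an increasing $\omega$-sequence of branch points, each chosen above the previous one. That choice is precisely the step you have not carried out.

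\textbf{The missing idea.} Choose the cofinal sequence adaptively; this also makes the cofinality split unnecessary.
\begin{enumerate}
    \item Handle infinitely many tops first. Your first bullet works for any cofinality. So assume $F_R=\Rtop(R)$ is finite.
    \item Pick $t_0\in R$. Given $t_n\in R$, use non-isolation to pick $R_n\in[t_n,F_R]\setminus\{R\}$.
    \item Since $R_n$ contains no top of $R$, we have $R_n\not\supseteq R$. So pick $t_{n+1}\in R\setminus R_n$; automatically $t_{n+1}>t_n$.
    \item Let $R_\infty=\bigcup_n\lceil t_n\rceil\subseteq R$. This is a ray of countable cofinality, possibly a proper initial segment of $R$.
\end{enumerate}
Now check convergence. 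From $t_{n+1}\le t_m\in R_m$ for $m>n$ and $t_{n+1}\notin R_n$, the $R_n$ are pairwise distinct and differ from $R_\infty$. Every $t\in R_\infty$ lies in $R_n$ for all large $n$. Every top $s$ of $R_\infty$ satisfies $s>t_{n+1}$, so $s\notin R_n$ for every $n$. Hence $R_n\to R_\infty$. This is the paper's argument.

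\textbf{A minor point.} Your justification for rays through the tops $s_n$ is garbled: a non-maximal node need not lie on any ray. First restrict to the downward-closed subtree of nodes lying on some ray. It is pruned and has the same ray space, and you should count only the tops lying in it.
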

\begin{proof}
    It is clear that $\mR(T)$ being discrete implies that it contains  no non-trivial convergent sequence.

    Now suppose $\mR(T)$ contains no non-trivial convergent sequence and fix $R\in \mR(T)$ (and, without loss of generality, let us assume that $T$ is pruned).
    For each $t\in \Rtop(R)$, fix an $R_t$ containing $t$.
    Note that if $\set{R_t:t\in \Rtop(R)}$ were infinite, then any countable subset of it would determine a non-trivial sequence convergent to $R$.
    Thus, $\Rtop(R) = F_R$ is finite.

    We claim that we can find a $t_R\in R$ such that $[t_R,F_R] = \{R\}$, which concludes the proof.
    Indeed, striving for a contradiction, suppose not. Choose $t_0\in R$. Recursively, after $t_n$ has been chosen, select
    \[
        R_n\in[t_n,F_R]\setminus\{R\},
    \]
    and then choose $t_{n+1}\in R\setminus R_n$ above $t_n$. Thus $R_n\in[t_n,\{t_{n+1}\}]$ for every $n\in\omega$. The sequence $(R_n)_{n\in\omega}$ converges to
    \[
        R_\infty = \bigcup_{n\in\omega}\lceil t_n\rceil \subseteq R
    \]
    (since $\seq{t_n:n\in\omega}$ is cofinal in $R_\infty$),
    contradicting our hypothesis.
\end{proof}

\begin{cor}\label{COR_ConvSeq&UncountTopsNotProd}
    Suppose $T$ is a pruned tree with a ray containing uncountably many tops. Then the following are equivalent for a tree $T'$: 
    \begin{enumerate}[label=(\alph*)]
        \item\label{item_T'Discrete} $\mR(T')$ is discrete.
        \item\label{item_ProdisRaySpace} $\mR(T)\times \mR(T')$ is a ray space.
    \end{enumerate}
\end{cor}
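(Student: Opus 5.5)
The implication (a)$\Rightarrow$(b) is exactly \myref{PROP_DiscreteProduct}, so the work lies in (b)$\Rightarrow$(a). I will argue by contraposition: assume $\mR(T')$ is not discrete and show that $\mR(T)\times\mR(T')$ is not even a quasi-ray space. In particular it is then not a ray space, by \myref{COR_RayImpliesQuasi} together with \myref{COR_RaySPaceChar} and \myref{COR_QuasiRaySPaceChar}. The plan is to feed the two factors into \myref{THM_ConvSeq&UncountTopsNotProd} in the right order.

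First, since $\mR(T')$ is not discrete, \myref{PROP_DiscreteConvSequenceChara} gives a non-trivial convergent sequence in $\mR(T')$. This factor plays the role of $X$ in \myref{THM_ConvSeq&UncountTopsNotProd}.

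Second, I need a copy of $A_{\aleph_1}$ inside $\mR(T)$, which will play the role of $Y$.
\begin{itemize}
\item Let $R$ be a ray of $T$ with uncountably many tops. Fix an uncountable set $\set{t_\xi:\xi<\omega_1}\subseteq\Rtop(R)$.
\item Since $T$ is pruned, for each $\xi$ choose a ray $R_\xi\in\mR(T)$ with $t_\xi\in R_\xi$. These rays are pairwise distinct, because the $t_\xi$ are pairwise incomparable.
\item Put $K=\set{R_\xi:\xi<\omega_1}\cup\{R\}$.
\item Each $R_\xi$ is isolated in $K$, witnessed by the open set $[t_\xi,\emptyset]$, which meets $K$ only in $R_\xi$.
\item Every basic neighbourhood $[t,F]$ of $R$, with $t\in R$ and $F\subseteq\Rtop(R)$ finite, contains all $R_\xi$ with $t_\xi\notin F$. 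Indeed, $t<t_\xi$, and $R_\xi$ cannot contain any other top of $R$.
\end{itemize}
Hence every neighbourhood of $R$ omits only finitely many points of $K$, and $K\cong A_{\aleph_1}$.

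Third, by \myref{THM_ConvSeq&UncountTopsNotProd}, $\bob$ has a winning strategy in $\PTreeGame(\mR(T')\times\mR(T))$, so $\ali$ has none. By \myref{COR_QuasiRaySPaceChar}, $\mR(T')\times\mR(T)$ is not a quasi-ray space. Since this product is homeomorphic to $\mR(T)\times\mR(T')$, the latter is not a quasi-ray space either, and therefore not a ray space.

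The main point needing care is the second step. One must check that $R_\xi$ avoids every member of $F$ other than possibly $t_\xi$ (distinct tops of $R$ are incomparable), and that $R$ itself has a neighbourhood base of the form $[t,F]$ with $F\subseteq\Rtop(R)$. Both facts are routine from the definition of the topology on $\mR(T)$.
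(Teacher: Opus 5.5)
Your proposal is correct and follows essentially the same route as the paper. For (a)$\Rightarrow$(b) you use \myref{PROP_DiscreteProduct}; for the converse you combine \myref{PROP_DiscreteConvSequenceChara}, the copy $\{R\}\cup\set{R_\xi:\xi<\omega_1}\simeq A_{\aleph_1}$ built from rays through the tops, and \myref{THM_ConvSeq&UncountTopsNotProd}, only spelling out details the paper calls clear: that this set is a copy of $A_{\aleph_1}$, the order of the factors, and the step from $\bob$ winning $\PTreeGame$ to the product not being a (quasi-)ray space.
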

\begin{proof}
     In view of \myref{PROP_DiscreteProduct}, it is clear that \ref{item_T'Discrete} implies \ref{item_ProdisRaySpace}.

    On the other hand, suppose $\mR(T)\times \mR(T')$ is a ray space. 
    If a ray $R_\infty\in \mR(T)$ has uncountably many tops in $T$, then fix, for each $t\in \Rtop(R_\infty)$, a ray $R_t$ containing $t$.
    In this case, it is clear that
    \[
        \mR(T)\supseteq\{R_\infty\} \cup \set{R_t:t\in\Rtop(R_\infty)}\simeq A_{|\Rtop(R_\infty)|}\supseteq A_{\aleph_1}.
    \]

    Thus, the conclusion follows directly from \myref{THM_ConvSeq&UncountTopsNotProd} and \myref{PROP_DiscreteConvSequenceChara}.
\end{proof}

\begin{cor}\label{COR_Aaleph1^2NotRay}
    If $X$ is a space containing a copy of $A_{\aleph_1}$, then $X^2$ is not a quasi-ray space. 
    In particular, $X^2$ is not a ray space.
\end{cor}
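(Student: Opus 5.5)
The plan is to apply \myref{THM_ConvSeq&UncountTopsNotProd} with both factors taken to be $X$, and then pass from a winning strategy for $\bob$ to the failure of the quasi-ray property via \myref{COR_QuasiRaySPaceChar}.

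\textbf{Step 1: the convergent sequence.} The first step is to observe that $A_{\aleph_1}$ contains a non-trivial convergent sequence. Choose countably many distinct isolated points $d_n$ of $A_{\aleph_1}$. Every neighbourhood of the point at infinity $\infty$ is cofinite, so it contains all but finitely many of the $d_n$. Hence $d_n \to \infty$, and the sequence is non-trivial because the $d_n$ are pairwise distinct and different from $\infty$. Since $X$ contains a copy of $A_{\aleph_1}$, it follows that $X$ also contains a non-trivial convergent sequence.

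\textbf{Step 2: the winning strategy.} Now $X$ plays both roles required in \myref{THM_ConvSeq&UncountTopsNotProd}: as the first factor it contains a non-trivial convergent sequence, and as the second factor it contains a copy of $A_{\aleph_1}$. That theorem then gives $\bob$ a winning strategy in $\PTreeGame(X\times X)=\PTreeGame(X^2)$.

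\textbf{Step 3: conclusion.} The game $\PTreeGame(X^2)$ cannot admit winning strategies for both players. If $\ali$ and $\bob$ each had one, playing them against each other would produce a single compatible run. This run terminates, since every run has length $<|X^2|^+$ by \myref{PROP_QuasiGameRunLength}, and it would then be won by both players, which is absurd. Therefore $\ali$ has no winning strategy, and by \myref{COR_QuasiRaySPaceChar} the space $X^2$ is not a quasi-ray space. Every ray space is a quasi-ray space (take $\tau'=\tau$ in \myref{DEF_QuasiRay}), so $X^2$ is not a ray space either.

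The only step needing any care is the compatibility argument in Step 3, namely that a winning strategy for $\bob$ rules out one for $\ali$. This follows from the definitions of the game, together with the observation that strategies are defined at every stage, so the combined run is well-defined.
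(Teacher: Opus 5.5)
Your proposal is correct and follows the argument the paper leaves implicit. The paper states this corollary without proof, right after \myref{THM_ConvSeq&UncountTopsNotProd}: take both factors equal to $X$, which works because $A_{\aleph_1}$ already contains a non-trivial convergent sequence, and then invoke \myref{COR_QuasiRaySPaceChar}; your Step 3, showing that a winning strategy for \bob{} rules out one for \ali{}, simply makes explicit a step the paper takes for granted.
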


\begin{ex}\label{EX_ProdNotRay}
    Let $G$ be the graph obtained by taking the disjoint union of $\aleph_1$-many rays and then adding an edge between all pairs of starting vertices of such rays. Then $\Omega(G)\simeq A_{\aleph_1}$. 
    However, in view of Corollaries \ref{COR_QuasiRaySPaceChar} and \ref{COR_Aaleph1^2NotRay}, $(\Omega(G))^2 = (A_{\aleph_1})^2$ is not a quasi-ray space (let alone a ray or an end space).
\end{ex}

\begin{thm}\label{THM_ConvSeq&Omega1NotProd}
    Suppose that $X$ contains a non-trivial convergent sequence and that $Y$ contains a copy of $\omega_1+1$. Then $\bob$ has a winning strategy in $\PTreeGame(X\times Y)$.
\end{thm}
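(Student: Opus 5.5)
We mimic the proof of \myref{THM_ConvSeq&UncountTopsNotProd}, replacing the uncountable set $D$ with points converging to $y_\infty$ by a closed copy of $\omega_1+1$ and the "finite complement" argument by a club/pressing-down argument. Fix $x_\infty\in X$ as the limit of a non-trivial sequence $\seq{x_n:n\in\omega}$ of distinct points different from $x_\infty$. Identify a copy of $\omega_1+1$ in $Y$ with $\set{y_\xi:\xi\le\omega_1}$ and put $y_\infty=y_{\omega_1}$. $\bob$'s strategy $\sigma$ is the same as before. He always picks the member of $\ali$'s partition containing $(x_\infty,y_\infty)$ until the limit inning $\alpha_\infty$ at which this point is no longer covered. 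After that he plays arbitrarily. Let $R$ be a compatible run and $W=\bigcap_{\beta<\alpha_\infty}W_\beta$. The goal is again to show that the run must end at $\alpha_\infty$ and that $\bob$ wins there.

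The first step is to show that $W$ contains a "club" of the copy. For each $\beta<\alpha_\infty$, the set $W_\beta$ is a clopen neighborhood of $(x_\infty,y_\infty)$ (all partitions in $\PTreeGame$ are clopen). Hence $\set{\xi<\omega_1:(x_\infty,y_\xi)\in W_\beta}$ contains a tail $[\eta_\beta,\omega_1)$ and is closed. A run has length $<|X\times Y|^+$, so $\alpha_\infty$ might be uncountable, and we cannot simply take a supremum. Instead I would argue via the first countable ordinals. The sets $C_\beta=\set{\xi<\omega_1:(x_\infty,y_\xi)\in W_\beta}$ form a decreasing sequence of clubs of $\omega_1$ indexed by $\beta<\alpha_\infty$. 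If $\operatorname{cf}(\alpha_\infty)=\omega$, the intersection $C=\set{\xi:(x_\infty,y_\xi)\in W}$ is a club, since countably many clubs intersect in a club. If $\operatorname{cf}(\alpha_\infty)>\omega$, I would use the point $(x_\infty,y_\infty)\in W$ and the fact that $W$ is closed. In particular, $W\ne\{(x_\infty,y_\infty)\}$ whenever $C$ is uncountable, which is the case needed for $\bob$ to win if the game ends at $\alpha_\infty$.

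The hard case, and the main obstacle, is when $C$ might be countable because the decreasing clubs are too many. I would try to handle it as follows. If the chain $C_\beta$ eventually stabilizes, then $C$ is a club. Otherwise it strictly decreases uncountably often, and the sets $C_\beta\setminus C_{\beta+1}$ are disjoint nonempty subsets of $\omega_1$, so there are at most $\aleph_1$ such steps. I expect one can then pick a cofinal $\omega_1$-sequence of points removed and take its limit along the copy. The fallback is to define $\sigma$ more cleverly: $\bob$ tracks the points $(x_n,y_\xi)$ as well and prefers sets keeping a club. If this fails, I would restrict to showing $\alpha_\infty$ has countable cofinality by a Lemma \ref{LEMMA_WRunOrdinal}-type argument applied to $\omega_1+1$.

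Assuming $C$ is a club, the second step runs the contradiction exactly as in \myref{THM_ConvSeq&UncountTopsNotProd}. Suppose $\ali$ plays $(\mP_{\alpha_\infty},(x_\infty,y_\infty))$ with $\mP_{\alpha_\infty}$ nonempty. For $\xi\in C$, let $U_\xi\in\mP_{\alpha_\infty}$ contain $(x_\infty,y_\xi)$, and choose $n_\xi$ minimal with $(x_k,y_\xi)\in U_\xi$ for all $k\ge n_\xi$. Some $N$ has $S=\set{\xi\in C:n_\xi=N}$ stationary. Since $W$ is closed and $S$ is unbounded, $(x_N,y_\infty)\in W$, and this point differs from $p_{\alpha_\infty}$. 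So it lies in some clopen $U\in\mP_{\alpha_\infty}$, which contains $\{x_N\}\times\set{y_\xi:\xi>\eta}$ for some $\eta$. Therefore $U_\xi=U$ for all $\xi\in S$ above $\eta$. These $(x_\infty,y_\xi)$ accumulate at $(x_\infty,y_\infty)=p_{\alpha_\infty}\notin U$, contradicting that $U$ is closed. Consequently the run ends at $\alpha_\infty$, with $\bigcap W_\beta=W\supsetneq\{(x_\infty,y_\infty)\}$ and $\mP=\emptyset$ impossible, so $\bob$ wins.
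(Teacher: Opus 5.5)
Your second step is sound. Once $C$ is uncountable, pigeonholing the $n_\xi$ (an uncountable $S$ suffices; stationarity is not needed) together with closedness of $W$ and of the pieces of $\mP_{\alpha_\infty}$ shows that $\ali$ can neither win at $\alpha_\infty$ nor continue there. This is Claim~\ref{CLAIM_omega1Basis} of the paper in contrapositive form; the paper derives it instead from Lemma~\ref{LEMMA_Omega1Partition} applied to $A\simeq\omega_1$. Your first step also handles $\operatorname{cf}(\alpha_\infty)=\omega$ (countably many clubs) and $\operatorname{cf}(\alpha_\infty)\ge\omega_2$ (the chain $C_\beta$ changes at most $\aleph_1$ times). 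The gap is the case $\operatorname{cf}(\alpha_\infty)=\omega_1$, which you flag but leave open. This is exactly the case the paper's proof is about: by Claim~\ref{CLAIM_omega1Basis}, a win for $\ali$ forces $C$ to be countable, and hence forces $\operatorname{cf}(\alpha_\infty)=\omega_1$.

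None of your remedies closes this case.

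\emph{First remedy.} It uses only the copy of $\omega_1+1$. The removed points $(x_\infty,y_\xi)$ converge along the copy to $(x_\infty,y_{\omega_1})$, which lies in $W$ anyway, so nothing is contradicted. No argument confined to the $Y$-coordinate can work. In $\PTreeGame(\omega_1+1)$ itself, $\ali$ may split off the least remaining point at every inning: she plays $\set{\{\beta+1\},(\beta+1,\omega_1]}$ after $\bob$'s $(\beta,\omega_1]$, and $(\set{(\lambda,\omega_1]},\lambda)$ at limits $\lambda<\omega_1$. This keeps $\omega_1$ throughout and reaches inning $\omega_1$ with $C=\emptyset$. So the sequence $(x_n)$ in $X$ must be used.

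\emph{Second and third remedies.} Redefining $\sigma$ is not an argument. The appeal to Lemma~\ref{LEMMA_WRunOrdinal} has no basis here: that lemma concerns runs of $\TreeGame$ won by $\ali$ and rests on the local-base condition \eqref{EQ_RayGameContinues2}, which $\PTreeGame$ does not impose.

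\emph{The missing idea.} It is the paper's. Let $n_\beta$ be least with $(x_k,y_{\omega_1})\in W_\beta$ for all $k\ge n_\beta$. The map $\beta\mapsto n_\beta$ is non-decreasing into $\omega$ and $\operatorname{cf}(\alpha_\infty)=\omega_1$, so it stabilizes at some $N$. Hence $\set{x_k:k\ge N}\times\{y_{\omega_1}\}\subseteq W$, and in particular $\ali$ cannot win at $\alpha_\infty$. Suppose she continues with $p_{\alpha_\infty}=(x_\infty,y_{\omega_1})$, and let $\gamma<\omega_1$ bound the countable set $C$. Each $(x_k,y_{\omega_1})$ with $k\ge N$ lies in a clopen piece of $\mP_{\alpha_\infty}$ containing a tail $\{x_k\}\times\set{y_\eta:\theta_k<\eta\le\omega_1}$, and we may take $\theta_k>\gamma$. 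Put $\theta=\sup_k(\theta_k+1)<\omega_1$. Then every $(x_k,y_\theta)$ lies in $W$, so $(x_\infty,y_\theta)\in W$ by closedness of $W$, contradicting $\theta>\gamma$.

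Your assertion that $C$ is uncountable is in fact true for every run compatible with $\sigma$. Proving it directly in this case, however, needs the same stabilization plus an analysis of all earlier limit innings $\lambda$: each can absorb only one point of $\bigcap_{\beta<\lambda}W_\beta$ lacking a neighbourhood inside that set. A club-intersection argument is then also required. That is more work than the paper's route of arguing only from a hypothetical win for $\ali$.
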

\begin{proof}
    Let us first fix an $x_\infty\in X$ which is the limit of a non-trivial convergent sequence $\seq{x_n:n\in\omega}$ in $X$, as well as a copy $\set{y_\alpha:\alpha\le \omega_1}\subseteq Y$ of $\omega_1+1$ (with the map $\alpha\mapsto y_\alpha$ defining a homeomorphism).

    We define $\bob$'s winning strategy $\sigma$ as follows: from the beginning of the game, let $\sigma$ choose the open sets in $\ali$'s partitions containing $(x_\infty,y_{\omega_1})$, up to the (limit) inning $\alpha_\infty$, at which no further open set in $\ali$'s partitions contains it (either because the game ended at this inning $\alpha_\infty$, or because $\ali$ chose $p_{\alpha_\infty} = (x_\infty,y_{\omega_1})$).  Afterwards, if the game is not finished at the inning $\alpha_\infty$, we may let $\bob$ pick arbitrary open sets.

    Striving for a contradiction, suppose $\ali$ wins in a finished run 
    \[R = \seq{\mP_0, W_0, \dotsc, (\mP_\gamma, p_\gamma),W_\gamma, \dotsc, (\emptyset, p_\alpha)},\]
    compatible with $\sigma$ (in particular, note that the length $\alpha$ of $R$ must be such that $\alpha\ge \alpha_\infty$ and $p_{\alpha_\infty} = (x_\infty, y_{\omega_1})$). 
    Let $W = \bigcap_{\beta<\alpha_\infty}W_\beta$.

    \begin{claim}\label{CLAIM_omega1Basis}
        $A = (\{x_\infty\}\times \set{y_\eta:\eta<\omega_1})\cap W$ is countable.
    \end{claim}
    \begin{proof}
        Since $R$ is assumed to be won by $\ali$, we have two cases to check:
        \begin{itemize}
            \item If $\alpha = \alpha_\infty$, then $W = \{(x_\infty,y_{\omega_1})\}$ and the claim is trivially true.
            \item Otherwise, striving for a contradiction, suppose $A$ is uncountable. 
            Then $A\simeq \omega_1$, as a subspace of $X\times Y$ (since $A$ is an uncountable closed subspace of $(\{x_\infty\}\times \set{y_\eta:\eta<\omega_1})\simeq\omega_1 $). 
            The collection
            \[
                \set{V\cap A:V\in\mP_{\alpha_\infty}\text{ and }V\cap A\neq\emptyset}
            \]
            is an open partition of $A$. By \myref{LEMMA_Omega1Partition}, there is a corresponding $V\in\mP_{\alpha_\infty}$ such that $A\setminus V$ is countable.
            However, since $V$ is closed, this means that $(x_\infty, y_{\omega_1})\in V\in \mP_{\alpha_\infty}$, contradicting our definition of $\alpha_\infty$.
        \end{itemize}
    \end{proof}

    Now, for every $\beta<\alpha_\infty$, the fact that $(x_\infty,y_{\omega_1})\in W_\beta$ (by definition of $\alpha_\infty$) implies that  $(\{x_\infty\}\times \set{y_\eta:\eta<\omega_1})\cap W_\beta$ is co-countable.
    In particular, in view of \myref{CLAIM_omega1Basis}, this tells us that $\alpha_\infty$ must have cofinality $\omega_1$.
    Furthermore, we can also find a $\gamma<\omega_1$ such that 
    \[(\{x_\infty\}\times \set{y_\eta:\gamma< \eta<\omega_1})\cap W = \emptyset.\]

    For each $\beta<\alpha_\infty$, let $n_\beta\in \omega$ be the least for which $(x_k, y_{\omega_1})\in W_\beta$ for every $k\ge n_\beta$ (which can be found, since $\seq{x_n:n\in\omega}$ converges to $x_\infty$ in $X$ and $(x_\infty, y_{\omega_1})\in W_\beta$ for every $\beta<\alpha_\infty$).
    Note that $\seq{n_\beta:\beta<\alpha_\infty}$ is an increasing sequence of natural numbers. 
    Thus, since $\alpha_\infty$'s cofinality is $\omega_1$, it must follow that there is some $\xi<\alpha_\infty$  such that $n_\beta=n_\xi=N$ for every $\beta>\xi$.
    Hence,
    \[\set{x_k:k\ge N}\times \{y_{\omega_1}\} \subseteq W. \]
    In particular, since we are assuming $R$ is a run won by $\ali$, this means that $\alpha>\alpha_\infty$ and thus $\mP_{\alpha_\infty}\neq\emptyset$.
    
    Now, since $p_{\alpha_\infty}\neq (x_k, y_{\omega_1})\in W$ for each $k\ge N$, there must be a $\gamma<\theta_k<\omega_1$ and a $V_k\in \mP_{\alpha_\infty}$ such that
    \[(\{x_k\}\times \set{y_\eta:\theta_k< \eta\le\omega_1}) \subseteq V_k.\]
    In this case, let $\theta = \sup\set{\theta_k+1:k\ge N}<\omega_1$. 
    Note that, by construction, $\theta\ge \gamma$. 
    In particular, $(x_k,y_\theta)\in W$ for every $k\ge N$.
    However, since $W$ is closed (being the intersection of clopen sets), this would entail that $(x_\infty, y_\theta)\in W$, contradicting the fact that $\theta\ge\gamma$ and the definition of $\gamma$.

    Hence, no such run $R$ won by $\ali$ can exist.
\end{proof}

\begin{cor}\label{COR_ConvSeq&Omega1NotProd}
    Suppose $T$ is a pruned tree with an uncountable ray. Then the following are equivalent for a tree $T'$: 
    \begin{enumerate}[label=(\alph*)]
        \item\label{item_T'Discrete_omega1} $\mR(T')$ is discrete.
        \item\label{item_ProdisRaySpace_omega1} $\mR(T)\times \mR(T')$ is a ray space.
    \end{enumerate}
\end{cor}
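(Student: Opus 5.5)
The plan mirrors the proof of \myref{COR_ConvSeq&UncountTopsNotProd}, with \myref{THM_ConvSeq&Omega1NotProd} in place of \myref{THM_ConvSeq&UncountTopsNotProd}. The implication (a)$\Rightarrow$(b) follows directly from \myref{PROP_DiscreteProduct}. For (b)$\Rightarrow$(a) we argue by contraposition: assume $\mR(T')$ is not discrete. Then \myref{PROP_DiscreteConvSequenceChara} gives a non-trivial convergent sequence in $\mR(T')$. If we also find a copy of $\omega_1+1$ inside $\mR(T)$, then \myref{THM_ConvSeq&Omega1NotProd}, applied with $X=\mR(T')$ and $Y=\mR(T)$ (the product is symmetric up to homeomorphism), yields a winning strategy for $\bob$ in $\PTreeGame(\mR(T)\times\mR(T'))$. \myref{COR_QuasiRaySPaceChar} then shows that the product is not a quasi-ray space, and \myref{COR_RayImpliesQuasi} shows it is not a ray space.

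The main step is embedding $\omega_1+1$ into $\mR(T)$. Let $R$ be an uncountable ray of $T$. Its initial segment of order type $\omega_1$ is again a ray; call it $R_{\omega_1}$, and write it as $\set{s_\eta:\eta<\omega_1}$ in increasing order. For each $\eta\le\omega_1$ define $y_\eta$ as follows:
\begin{itemize}
    \item for successor $\eta$ (and $\eta=0$), since $T$ is pruned, choose a ray $y_\eta$ containing $s_\eta$ but not $s_{\eta+1}$; one can take a ray through a successor of $s_\eta$ other than $s_{\eta+1}$ if one exists, and otherwise use the ray $\lceil s_\eta\rceil$ extended appropriately;
    \item for limit $\eta$, let $y_\eta=\set{s_\xi:\xi<\eta}$, which is a ray;
    \item let $y_{\omega_1}=R_{\omega_1}$.
\end{itemize}
To keep this uniform, it is simpler to use only limit stages. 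Set $y_\eta=\set{s_\xi:\xi<\omega\cdot(1+\eta)}$ for $\eta<\omega_1$ and $y_{\omega_1}=R_{\omega_1}$. Each of these is a ray because its index is a limit ordinal, and they form a strictly increasing chain of initial segments of $R_{\omega_1}$.

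Next we verify that $\eta\mapsto y_\eta$ is a homeomorphism onto its image with the order topology on $\omega_1+1$. Take a basic set $[t,F]$ with $t=s_\xi$ and $F$ a finite set of tops. Because the $y_\eta$ are initial segments of one ray, $y_\eta\in[t,F]$ holds exactly when $s_\xi\in y_\eta$ and no element of $F$ lies in $y_\eta$. An element of $F$ can lie in $y_\eta$ only if it is some $s_\zeta$. So the trace of $[t,F]$ on the image is an interval $(\alpha,\beta]$ intersected with the image, which is open. Conversely, each interval $(\alpha,\eta]$ is realized as such a trace. Take $t=s_{\omega\cdot(1+\alpha)}$. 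If $\eta<\omega_1$, add to $F$ the element $s_{\omega\cdot(1+\eta)}$, which is a top of $y_\eta$. If $\eta=\omega_1$, take $F=\emptyset$. This establishes continuity in both directions.

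I expect the only delicate point to be this bookkeeping: which nodes are tops of which $y_\eta$, and checking that a general $F\subseteq\Rtop(y_\eta)$ contains at most one node of $R_{\omega_1}$, namely $s_{\omega\cdot(1+\eta)}$. Nodes of $F$ that lie off $R_{\omega_1}$ exclude none of the other $y_\zeta$, so they do not affect the trace. Once the embedding is in place, the corollary follows as described.
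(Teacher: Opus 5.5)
Your proposal is correct and follows the paper's proof. You get (a)$\Rightarrow$(b) from \myref{PROP_DiscreteProduct}, and (b)$\Rightarrow$(a) from \myref{PROP_DiscreteConvSequenceChara} and \myref{THM_ConvSeq&Omega1NotProd} after embedding $\omega_1+1$ into $\mR(T)$. Your family $\set{y_\eta:\eta\le\omega_1}$ of limit-length initial segments is exactly the copy $\mR(R_{\omega_1})$ that the paper uses (there written as $\mR(R)$ or $\mR(\lceil t\rceil)$), and your trace computation checks by hand that this subspace is homeomorphic to $\omega_1+1$. The successor-stage construction you sketch first would fail when $s_{\eta+1}$ is the only successor of $s_\eta$, since then no ray contains $s_\eta$ but not $s_{\eta+1}$; you correctly discard it for the limit-stage family.
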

\begin{proof}
    In view of \myref{PROP_DiscreteProduct}, it is clear that \ref{item_T'Discrete_omega1} implies \ref{item_ProdisRaySpace_omega1}.

    Now suppose $\mR(T)\times \mR(T')$ is a ray space. If a ray $R\in \mR(T)$ is uncountable, then we find a copy of $\omega_1+1$ in $\mR(T)$ by considering $\mR(R)\subseteq \mR(T)$, if $R\subseteq T$ has order type $\omega_1$, or, otherwise, $\mR(\lceil t \rceil)\subseteq \mR(T)$ for some $t\in R$ such that the set of strict predecessors $\lceil\mathring{t}\rceil$ has order type $\omega_1$ (and hence $\lceil t\rceil$ has order type $\omega_1+1$).

    Thus, the conclusion follows directly from \myref{THM_ConvSeq&Omega1NotProd} and \myref{PROP_DiscreteConvSequenceChara}.
\end{proof}

\begin{cor}
    If $X$ is a space containing a copy of $\omega_1+1$, then $X^2$ is not a quasi-ray space.
    In particular, $X^2$ is not a ray space.
\end{cor}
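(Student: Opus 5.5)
The statement follows by combining \myref{THM_ConvSeq&Omega1NotProd} with the characterization of quasi-ray spaces in \myref{COR_QuasiRaySPaceChar}. Let $X$ contain a copy $\set{y_\alpha:\alpha\le\omega_1}$ of $\omega_1+1$. The plan is to show that $X$ itself, as the first factor, also contains a non-trivial convergent sequence. Then \myref{THM_ConvSeq&Omega1NotProd}, applied with both factors equal to $X$, gives $\bob$ a winning strategy in $\PTreeGame(X\times X)=\PTreeGame(X^2)$. Hence $\ali$ has no winning strategy there, and by \myref{COR_QuasiRaySPaceChar} $X^2$ is not a quasi-ray space.

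The convergent sequence comes from inside the copy of $\omega_1+1$. The ordinal $\omega$ is a limit point of $\omega_1+1$: in the order topology, the sequence $\seq{n:n\in\omega}$ converges to $\omega$, and it is non-trivial since its terms are distinct and different from $\omega$. Transporting this through the homeomorphism $\alpha\mapsto y_\alpha$, the sequence $\seq{y_n:n\in\omega}$ converges to $y_\omega$ within the subspace $\set{y_\alpha:\alpha\le\omega_1}$. Any neighborhood of $y_\omega$ in $X$ meets this subspace in a neighborhood of $y_\omega$ there, so the convergence also holds in $X$. This verifies the hypothesis on the first factor, and the second hypothesis of \myref{THM_ConvSeq&Omega1NotProd} is exactly the assumption on $X$.

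For the final clause, suppose $X^2$ were a ray space. Then \myref{COR_RaySPaceChar} gives $\ali$ a winning strategy in $\TreeGame(X^2)$. By \myref{COR_RayImpliesQuasi} this is also a winning strategy in $\PTreeGame(X^2)$, so $X^2$ would be quasi-ray by \myref{COR_QuasiRaySPaceChar}, a contradiction. More simply, every ray space is trivially quasi-ray, by taking $\tau'=\tau$ in \myref{DEF_QuasiRay}.

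The only point needing any care is the observation that $\omega_1+1$ contains a non-trivial convergent sequence; everything else is direct invocation of earlier results, so I expect no real obstacle.
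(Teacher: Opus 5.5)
Your proposal is correct and is the intended argument. The paper states this corollary without proof as an immediate consequence of \myref{THM_ConvSeq&Omega1NotProd}, applied with both factors equal to $X$, where the sequence $y_n\to y_\omega$ inside the copy of $\omega_1+1$ supplies the convergent sequence. This is combined with \myref{COR_QuasiRaySPaceChar} and the trivial fact that every ray space is quasi-ray, exactly as you do.
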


\begin{ex}
    Despite $\omega_1+1$ being a ray space, $(\omega_1+1)^2$ is not a quasi-ray space (let alone a ray space).
\end{ex}

In view of Corollaries \ref{COR_ConvSeq&UncountTopsNotProd} and \ref{COR_ConvSeq&Omega1NotProd}, it is only natural to ask whether the complement of these presented classes of trees is stable under finite products. 
In other words: 

\begin{question}\label{QUESTION_FirsCountRayProd0}
    Suppose $T$ and $T'$ are trees in which
    \begin{enumerate}[label=(\roman*)]
        \item every branch is countable,
        \item every ray has at most countably many tops.
    \end{enumerate}
    Is $\mR(T)\times \mR(T')$, then, a ray space?
\end{question}

Or, equivalently:

\begin{question}\label{QUESTION_FirsCountRayProd}
    Suppose $X$ and $Y$ are first-countable ray spaces. Is $X\times Y$, then, a ray space? 
\end{question}

In the case of end spaces, it was shown in Theorem 3.1 of \cite{pitz2023characterisingpathraybranch} that every metrizable end space is homeomorphic to the ray space of a tree of height $\omega$, thus:

\begin{prop}
    If $X$ and $Y$ are metrizable end spaces, then $X \times Y$ is a metrizable end space.
\end{prop}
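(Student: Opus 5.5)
The statement to prove is: if $X$ and $Y$ are metrizable end spaces, then $X\times Y$ is a metrizable end space. The plan is to reduce everything to trees of height $\omega$ via Theorem 3.1 of \cite{pitz2023characterisingpathraybranch}, take their product as a tree of height $\omega$, and then invoke \myref{THM_PitzRepresentation}.

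\textbf{Step 1: height-$\omega$ representations.} By Theorem 3.1 of \cite{pitz2023characterisingpathraybranch}, fix trees $T_X,T_Y$ of height $\omega$ with $X\simeq\mR(T_X)$ and $Y\simeq\mR(T_Y)$. Without loss of generality both are pruned. In a tree of height $\omega$ every ray is a branch of order type $\omega$ and has no tops. So every basic set $[t,F]$ reduces to $[t,\emptyset]$, and the ray space is just the usual branch space $\mB$ with the cone topology.

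\textbf{Step 2: the product tree.} Let $T$ be the tree of pairs $(s,u)\in T_X\times T_Y$ with $\mathrm{ht}(s)=\mathrm{ht}(u)$, ordered coordinatewise. This is a tree of height $\omega$, since predecessors of $(s,u)$ are the pairs of predecessors at equal heights, which are well-ordered of type $\mathrm{ht}(s)$. It is pruned, and it is special because each level $T(n)$ is an antichain and there are countably many levels. A ray in $T$ is an $\omega$-sequence of pairs, which corresponds bijectively to a pair of rays $(R_X,R_Y)$. Under this correspondence the basic set $[(s,u),\emptyset]$ corresponds to $[s,\emptyset]\times[u,\emptyset]$.

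\textbf{Step 3: matching the topologies.} Sets of the form $[s,\emptyset]\times[u,\emptyset]$ with $\mathrm{ht}(s)=\mathrm{ht}(u)$ form a base of $\mR(T_X)\times\mR(T_Y)$. Indeed, given $[s,\emptyset]\times[u,\emptyset]$ around $(R_X,R_Y)$, we can raise the lower of $s,u$ along its ray to equal height, and this shrinks the box. Hence $\mR(T)\simeq X\times Y$. Alternatively, a winning strategy for $\ali$ in $\TreeGame(X\times Y)$ that plays products of $\bob$-moves from strategies winning at inning $\omega$ yields the same tree, in the spirit of \myref{THM_CompletelyUltrametrizableChar}. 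Since $T$ is special, \myref{THM_PitzRepresentation} gives that $X\times Y$ is an end space. Metrizability follows because a product of two metrizable spaces is metrizable, or directly because ray spaces of height-$\omega$ trees are (ultra)metrizable.

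\textbf{Main obstacle.} The only real point is Step 3: checking that there are no tops and that equal-height boxes really form a base. Both are routine. The substantive input is the cited Theorem 3.1, which we assume.
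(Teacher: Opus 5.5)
Your proposal is correct and is essentially the paper's argument: the paper derives the statement directly from Theorem 3.1 of \cite{pitz2023characterisingpathraybranch}, and leaves implicit what your Steps 2--3 spell out. That is, the level-wise product of two height-$\omega$ trees is a height-$\omega$ (hence special) tree whose ray space is $X\times Y$, so \myref{THM_PitzRepresentation} applies, while metrizability of $X\times Y$ is immediate from metrizability of each factor.
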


Hence, when restricted to end spaces, the investigation of \myref{QUESTION_FirsCountRayProd} reduces to non-metrizable end spaces, such as (see Corollary 2.18 in \cite{pitz2023characterisingpathraybranch}):

\begin{question}
    If $T$ is a special Aronszajn tree, is $\mR(T)^2$ an end (or a ray) space?
\end{question}
\section{Final remarks}\label{SEC_FinalRmk}

We note that, throughout this paper, there was only one example of a non-ray space (and a non-quasi-ray space) for which no winning strategy for $\bob$ in $\TreeGame(X)$ (or $\PTreeGame(X)$) was provided: \myref{EX_NoIsolatedLessThanContinuum}. Thus, in view of the characterizations presented in \myref{COR_RaySPaceChar} and \myref{COR_QuasiRaySPaceChar}, it is natural to ask whether these games are always \emph{determined}. That is:

\begin{question}
    Is there a space $X$ such that neither $\ali$ nor $\bob$ has a winning strategy in $\TreeGame(X)$? Furthermore, what is the status of determinacy for $\PTreeGame(X)$?
\end{question}


\begin{thebibliography}{10}

\bibitem{maxflowmincut}
Ron Aharoni, Eli Berger, Agelos Georgakopoulos, Amitai Perlstein, and Philipp
  Spr{\"u}ssel.
\newblock The {Max}-{Flow} {Min}-{Cut} theorem for countable networks.
\newblock {\em J. Comb. Theory, Ser. B}, 101(1):1--17, 2011.

\bibitem{PauloGustavoDavide}
Leandro Aurichi, Gustavo Boska, Davide Giacopello, and Paulo~Magalhães
  Júnior.
\newblock A topological characterization of end space of infinite graphs via
  games, subspaces and products.
\newblock \href{https://arxiv.org/abs/2604.17164}{arXiv:2604.17164}, 2026.

\bibitem{aurichi2024topologicalremarksendedgeend}
Leandro Aurichi, Paulo Magalhães-Júnior, and Lucas Real.
\newblock Topological remarks on end and edge-end spaces.
\newblock \href{https://arxiv.org/abs/2404.17116}{arXiv:2404.17116}, 2024.

\bibitem{aurichilucasedgeconectividade}
Leandro Aurichi and Lucas Real.
\newblock Edge-connectivity between edge-ends of infinite graphs.
\newblock {\em J. Graph Theory}, 109(4):454--465, 2025.

\bibitem{cycle-cocycle}
Henning Bruhn, Reinhard Diestel, and Maya Stein.
\newblock Cycle-cocycle partitions and faithful cycle covers for locally finite
  graphs.
\newblock {\em J. Graph Theory}, 50(2):150--161, 2005.

\bibitem{carvalho2026coveringpropertiesendray}
Rodrigo Carvalho, Matheus Duzi, and Vinicius Rodrigues.
\newblock On covering properties of end and ray spaces.
\newblock \href{https://arxiv.org/abs/2510.10825}{arXiv:2510.10825}, 2026.

\bibitem{Diestelquestion}
Reinhard Diestel.
\newblock The end structure of a graph: {Recent} results and open problems.
\newblock {\em Discrete Math.}, 100(1-3):313--327, 1992.

\bibitem{engelking}
Ryszard Engelking.
\newblock {\em General Topology}, volume~6 of {\em Sigma Ser. Pure Math.}
\newblock Berlin: Heldermann Verlag, rev. and compl. ed. edition, 1989.

\bibitem{edge-ends}
Ge{\v{n}}a Hahn, Fran{\c{c}}ois Laviolette, and Jozef {\v{S}}ir{\'a}{\v{n}}.
\newblock Edge-ends in countable graphs.
\newblock {\em J. Comb. Theory, Ser. B}, 70(2):225--244, 1997.

\bibitem{halin}
Rudolf Halin.
\newblock \"{U}ber unendliche {W}ege in {G}raphen.
\newblock {\em Mathematische Annalen}, 157:125--137, 1964/65.

\bibitem{endspacesandtreedecompositions}
Marcel Koloschin, Thilo Krill, and Max Pitz.
\newblock End spaces and tree-decompositions.
\newblock {\em J. Comb. Theory, Ser. B}, 161:147--179, 2023.

\bibitem{approximating}
Jan Kurkofka, Ruben Melcher, and Max Pitz.
\newblock Approximating infinite graphs by normal trees.
\newblock {\em J. Comb. Theory, Ser. B}, 148:173--183, 2021.

\bibitem{representationtheorem}
Jan Kurkofka and Max Pitz.
\newblock A representation theorem for end spaces of infinite graphs.
\newblock {\em Mathematical Proceedings of the Cambridge Philosophical
  Society}, pages 1--27, 2026.

\bibitem{pitz2023characterisingpathraybranch}
Max Pitz.
\newblock Characterising path-, ray- and branch spaces of order trees, and end
  spaces of infinite graphs.
\newblock \href{https://arxiv.org/abs/2303.00547}{arXiv:2303.00547}, 2023.

\bibitem{metrizationtheoremforedgeends}
Max Pitz.
\newblock A metrization theorem for edge-end spaces of infinite graphs.
\newblock {\em Proc. Am. Math. Soc.}, 154(5):2209--2219, 2026.

\bibitem{real2025subbasepropertydescribingedgeend}
Lucas Real.
\newblock A subbase property for describing edge-end spaces.
\newblock \href{https://arxiv.org/abs/2508.17424}{arXiv:2508.17424}, 2025.

\bibitem{arboricityandtree-packing}
Maya Stein.
\newblock Arboricity and tree-packing in locally finite graphs.
\newblock {\em J. Comb. Theory, Ser. B}, 96(2):302--312, 2006.

\end{thebibliography}
\end{document}